\newcommand{\A}{\mathbb{A}}
\newcommand{\C}{\mathbb{C}}
\newcommand{\G}{\mathbb{G}}
\newcommand{\Q}{\mathbb{Q}}
\newcommand{\R}{\mathbb{R}}
\newcommand{\T}{\mathbb{T}}
\newcommand{\Z}{\mathbb{Z}}
\newcommand{\Ocal}{\mathcal{O}}
\newcommand{\leqnomode}{\tagsleft@true\let\veqno\@@leqno}
\theoremstyle{plain}
\newtheorem{theorem}{Theorem}[section]
\newtheorem{corollary}[theorem]{Corollary}
\newtheorem{definition}[theorem]{Definition}
\newtheorem{lemma}[theorem]{Lemma}
\newtheorem{proposition}[theorem]{Proposition}
\newtheorem{assumption}[theorem]{Assumption}
\newtheorem{remark}[theorem]{Remark}
\date{}
\begin{document}
%	\title[Taylor-Wiles method]{Automorphic side of Taylor-Wiles method for orthogonal or symplectic groups}
	\title{Automorphic side of Taylor-Wiles method for orthogonal and symplectic groups}
	\author{Xiaoyu Zhang}
	\address{Universität Duisburg-Essen, Fakultät für Mathematik, Mathematikcarrée, Thea-Leymann-Straße 9, 45127 Essen, Germany}
	\email{xiaoyu.zhang@uni-due.de}
	\subjclass[2020]{11F55,11F70,11F80,11F85}
	\keywords{Galois representations, automorphic representations, modularity lifting theorems, Bloch-Kato conjecture}
	\maketitle

	\begin{abstract}		
		The core of the Taylor-Wiles and Taylor-Wiles-Kisin method in proving modularity lifting theorems is the construction of Taylor-Wiles primes satisfying certain conditions relating automorphic side and Galois side. In this article, we construct such primes and develop the automorphic side of Taylor-Wiles method for definite special orthogonal or symplectic groups $G$ over a totally real number field $F$, beyond the only known case for definite unitary groups (except for $\mathrm{GSp}_4$).
		As an application of our result, we prove a minimal $R=\T$ theorem for \( G \), extending the scope of modularity lifting results to this setting. As a direct consequence, we deduce the Bloch--Kato conjecture for the adjoint of the Galois representation \( r_\pi \) associated to an automorphic representation \( \pi \) of \( G(\mathbb{A}_F) \). Our approach combines deformation theory with automorphic methods, providing new evidence towards the Langlands program for orthogonal and symplectic groups.
	\end{abstract}

	\tableofcontents

	\section{Introduction}

	One of the most powerful tools in advancing the Langlands program has been the development of modularity lifting theorems, originating in the groundbreaking work of Wiles and Taylor-Wiles' work on Fermat's Last Theorem (\cite{Wiles1995,TaylorWiles1995}). Since this pioneering breakthrough, their method has been extended to various settings, including unitary groups (\cite{ClozelHarrisTaylor2008,Geraghty2019,Guerberoff2011}), to the symplectic group $\mathrm{GSp}_4$ (\cite{Pilloni2012}), to a derived version for the general linear group $\mathrm{GL}_n$ (\cite{CalegariGeraghty2018}), to a non-minimal case (the so-called Taylor-Wiles-Kisin patching, see for example \cite{Kisin2009,Kisin2009b}), among others. There have been recently also quite a lot of research dealing with the case of residually reducible Galois representations related to these groups  (see for example \cite{NewtonThorne2023} and many references therein). These developments have led to significant applications in number theory, for example, the \emph{potential} modularity of abelian surfaces over totally real number fields (\cite{BoxerCalegariGeePilloni2021}), the modularity of symmetric powers of modular forms (\cite{NewtonThorne}), to name just a few. Almost all of the known works in modularity lifting theorems relies on certain definite unitary group. Little is known for other definite classical groups (except for $\mathrm{GSp}_4$ as in \cite{Pilloni2012}). Consequently, extending modularity lifting theorems to new classes of reductive groups remains a central and compelling objective in the Langlands program. It would also be extremely useful to obtain new advances in Langlands program for such classical groups.

	One of the crucial ingredients in the Taylor-Wiles method (and Taylor-Wiles-Kisin method) is the construction of the so-called Taylor-Wiles primes $\ell$ in the process of Taylor-Wiles deformations such the local component at $\ell$ of the irreducible automorphic representation $\pi$ of suitably ramified level at $\ell$ is related to the local Galois representation at $\ell$ of the global Galois representation attached to $\pi$ (this is \emph{the automorphic side of Taylor-Wiles method}). This often requires a rather careful analysis of the admissible/smooth representations of the corresponding $\ell$-adic group and of its relation to the local Galois representation. In the previous literature, such results are only available for definite unitary groups (except for $\mathrm{GSp}_4$ as in \cite{Pilloni2012}). In this article, we develop the Taylor-Wiles method for definite special orthogonal or symplectic groups. This has the immediate application to minimal modularity lifting theorems for these groups as well as to Bloch-Kato conjectures for the adjoint motives attached to the corresponding Galois representations.

%	Parallel to these developments, Arthur's seminal work (\cite{Arthur2013}) establishes deep connections between Galois representations and automorphic representations on symplectic and special orthogonal groups (see also \cite{Shin2024}). Building on \cite{Arthur2013,Shin2024} and applying the Taylor–Wiles method, we establish minimal modularity lifting theorems—also known as $R=\mathbb{T}$ theorems—for definite special orthogonal and symplectic groups. As a consequence, we deduce certain cases of Bloch-Kato conjectures for these groups.

	Building on Arthur’s work \cite{Arthur2013}, several studies have explored \emph{potential} modularity lifting for $\mathrm{GSpin}_{2n+1}$-valued Galois representations (see, for example, \cite{BoxerCalegariGeePilloni2021,PatrikisTang2022} and the discussion below). These results rely on potential modularity lifting for the unitary group $U_n$ rather than on establishing an $R = \mathbb{T}$ theorem. As noted in \cite{ClozelHarrisTaylor2008}, proving an $R = \mathbb{T}$ theorem would be of great interest for understanding congruences among automorphic forms and special $L$-values.
	In this article, we establish such a theorem for definite special orthogonal and symplectic groups $G$, from which we deduce the Bloch-Kato conjecture for the corresponding adjoint Galois representation. Similar ideas were employed in \cite{LiuTianXiaoZhangZhu2022} for the Beilinson-Bloch-Kato conjectures, though in the setting where $G = U_n$ is a unitary group.

	\subsection*{Taylor-Wiles method}
	Now let's give more details about this article.
	The original Taylor-Wiles method for a residual Galois representation $\overline{r}\colon\Gamma_{F}\to\mathbf{G}(\overline{\mathbb{Q}}_p)$, valued in a reductive group $\mathbf{G}$, relies on the following ``numerical coincidence":
	\[
	[F\colon\mathbb{Q}](\mathrm{dim}(\mathbf{G})-\mathrm{dim}(\mathbf{B}))
	=
	\sum_{v|\infty}H^0(F_v,\mathrm{Ad}^0\overline{r}).
	\]
	Here, $\mathbf{B}$ denotes a Borel subgroup of $\mathbf{G}$ (see \cite[§1]{ClozelHarrisTaylor2008}). Considerable effort has been devoted to extending this method to groups $\mathbf{G}$ that do not satisfy this numerical coincidence (see, for example, \cite{CalegariGeraghty2018} for $\mathbf{G} = \mathrm{GL}_N$).\footnote{The bold letter $\mathbf{G}$ is used only in this introduction to denote certain kind of dual group of $G$ (the $L$-dual group or the $C$-dual group). In the main body of the article, we will use the $C$-dual group of $G$.}
	
	On the other hand, for $\mathbf{G} = \mathrm{SO}_n$ or $\mathrm{Sp}_n$, where the numerical coincidence does hold, there has been little work on $R = \mathbb{T}$ theorems thus far, apart from potential modularity lifting theorems that build on the case $G = U_n$, a unitary group (i.e. $\mathbf{G} = \mathcal{G}_n$ as in \cite{ClozelHarrisTaylor2008}; see \cite{CalegariEmertonGee2022} for the most recent developments). This article fills this gap, establishing important arithmetic applications as a consequence, and can be seen as the first step in developing Taylor-Wiles method (and eventually Taylor-Wiles-Kisin method) for these classical groups.

	One of the key steps in proving an $R = \mathbb{T}$ theorem for a group $G$ satisfying the numerical coincidence condition is:
	\begin{enumerate}
		\item
		On the Galois side: Establishing the existence of a set $Q$ of Taylor–Wiles primes such that the corresponding local Galois representations satisfy a specific property.

		\item 
		On the automorphic side: Demonstrating that suitably ramified admissible representations of $G(F_v)$ for $v \in Q$ give rise to a Galois representation of $\Gamma_{F_v}$ of the prescribed form from the previous step.
	\end{enumerate}
	With these conditions in place, one can apply arguments from Galois cohomology, along with the Taylor-Wiles patching method, to establish the isomorphism $R = \mathbb{T}$.
	
	For $G(F_v) = \mathrm{GL}_N(F_v)$, conditions (1) and (2) have already been established in \cite{ClozelHarrisTaylor2008} via the notion of ``bigness" and later in \cite{Thorne2012} through the concept of ``adequateness". Both notions pertain to (1), but additional work is required to verify (2).
	
	For an arbitrary reductive group $\mathbf{G}$ (not necessarily satisfying the numerical coincidence condition), Whitmore \cite{Whitmore2022} generalized the ``adequateness" notion and provided preliminary computations for (2).\footnote{Whitmore’s notion of ``adequateness" is weaker than ours and may yield stronger results when the full Taylor–Wiles method is applicable under this assumption. In this article, we do not undertake a thorough comparison of these two versions. However, our formulation is explicit enough to be adapted to the automorphic side mentioned above. See \S\ref{smooth representations of G(F_v)} for further details.}

	In this article, we restrict our focus to $G$ as a definite special orthogonal or symplectic group and establish (2), from which we deduce the $R = \mathbb{T}$ theorem and thus provide new evidence for the Langlands program for such groups.

	\subsection*{$R=\T$ theorem}
	We now state the main results of this article. We first state the $R=\T$ theorem and modularity lifting result that we obtained before we move on to the technical result on automorphic side of Taylor-Wiles method.

	Let $F$ be a totally real number field, and let $G$ be a definite special orthogonal or symplectic group that is an inner form of a quasi-split special orthogonal group over $F$ satisfying (C1). Thus, we have  
	\[
	G(F \otimes \mathbb{C}) = \mathrm{SO}_n(\mathbb{C})^{[F:\mathbb{Q}]} \quad \text{or} \quad \mathrm{Sp}_n(\mathbb{C})^{[F:\mathbb{Q}]}.
	\]  
	Fix a finite extension $F^\circ/F$ such that $G$ splits over $F^\circ$. In this article, we focus on $C$-cohomological automorphic representations and, accordingly, use the $C$-dual group ${}^C G$ instead of the $L$-dual group ${}^L G$ when considering Galois representations (see §\ref{Special orthogonal groups and automorphic Galois representations} for details).

	Fix an \emph{odd} prime $p$ and an isomorphism $\iota \colon \mathbb{C} \simeq \overline{\mathbb{Q}}_p$. Let $\pi=\otimes_v'\pi_v$ be an irreducible $C$-algebraic automorphic representation of $G(\A_F)$, satisfying certain natural regularity, unramifiedness, and local-global compatibility conditions. Following the work of Arthur and Shin (\cite{Arthur2013,Shin2024}), we associate to $\pi$ a semi-simple Galois representation:\footnote{Throughout this article, all representations of global and local Galois groups are assumed to be continuous.}  
	\[
	r_\pi = r_{\pi, \iota} \colon \Gamma_F \to {}^C G (\overline{\mathbb{Q}}_p),
	\]  
	which is totally odd and unramified at all but finitely many places of $F$ (see Theorem \ref{Galois representations attached to auto rep} for details).

	This representation $r_\pi$ is constructed via the weak transfer $\pi^\sharp$ of $\pi$, which is an automorphic representation of $\mathrm{GL}_N(\mathbb{A}_F)$ (Here, $N$ is as defined in (\ref{N})). We then show that the Galois representation associated with $\pi^\sharp$ factors through ${}^C G (\overline{\mathbb{Q}}_p)$ and satisfies additional properties listed in Theorem \ref{Galois representations attached to auto rep}.

	Fix a sufficiently large finite extension $E$ of $\mathbb{Q}_p$ with ring of integers denoted by $\mathcal{O}$, such that $r_\pi$ factors through ${}^C G(\mathcal{O}) \hookrightarrow {}^C G(\overline{\mathbb{Q}}_p)$. Let $W$ be an algebraic irreducible representation of $G$ over $\mathcal{O}$. For a compact open subgroup $K=\prod_vK_v\subset G(\mathbb{A}_{F,f})$, we denote by $M_{W,p}(K, \mathcal{O})$ the space of automorphic forms on $G(\mathbb{A}_{F,f})$ of level $K$, weight $W$, and coefficients in $\mathcal{O}$ (see (\ref{M_{W,p}(K,R)})).  
	
	Let $\mathbb{T}_W(K, \mathcal{O})$ be the spherical Hecke algebra over $\mathcal{O}$ generated by Hecke operators at places $v$ where $K$ is hyperspecial, acting on $M_{W,p}(K, \mathcal{O})$. Now, let $\mathfrak{P}$ be a minimal prime ideal of $\mathbb{T}_W(K, \mathcal{O})$, and let $\pi$ be an irreducible automorphic representation of $G(\mathbb{A}_f)$ satisfying condition (C2) such that $\pi\cap M_{W,p}(K, \mathcal{O})\neq0$ and for which the action of $\mathbb{T}_W(K, \mathcal{O})$ on $\pi$ factors through the quotient $\mathbb{T}_W(K, \mathcal{O}) / \mathfrak{P}$.  
	
	Let $\mathfrak{m}$ be a maximal ideal of $\mathbb{T}_W(K, \mathcal{O})$ containing $\mathfrak{P}$. Starting from $r_{\pi}$, we construct a residual semisimple Galois representation  
	\[
	\overline{\rho}_{\mathfrak{m}}
	\colon
	\Gamma_F \to {}^C G(\mathbb{T}_W(K, \mathcal{O}) / \mathfrak{m}),
	\]
	satisfying certain properties (see Proposition \ref{residual Galois rep associated to auto rep} for the precise statement).  
	
	Suppose now that $\overline{\rho}_\mathfrak{m}$ is absolutely irreducible (\emph{i.e.} the composition of $\overline{\rho}_\mathfrak{m}$ with the inclusion ${}^C G \hookrightarrow \mathrm{GL}_N$ is absolutely irreducible). Then $\overline{\rho}_\mathfrak{m}$ lifts to a Galois representation  
	\[
	\rho_{\mathfrak{m}}
	\colon
	\Gamma_{F} \to {}^C G(\mathbb{T}_W(K, \mathcal{O})_{\mathfrak{m}})
	\]
	satisfying certain properties (see Theorem \ref{Galois representation valued in Hecke algebra-char=0} for the precise statement).  
	
	Now, assume that $p$ is unramified in $F$ and that $\overline{\rho}_\mathfrak{m}$ is Fontaine-Laffaille at places $v \mid p$ (see §\ref{Fontaine-Lafaille deformations}). We define a deformation problem $\mathcal{S}$ for $\overline{\rho}_{\mathfrak{m}}$, which is of Fontaine-Laffaille type at places $v \mid p$. This problem is representable by $(R_\mathcal{S}, r_\mathcal{S})$, where $R_\mathcal{S}$ is the universal deformation ring, and  
	\[
	r_\mathcal{S} \colon \Gamma_{F} \to {}^C G(R_\mathcal{S})
	\]  
	is the universal deformation of $\overline{\rho}_\mathfrak{m}$. The above Galois representation $\rho_{\mathfrak{m}}$ is a lift of $\overline{\rho}_\mathfrak{m}$ of type $\mathcal{S}$, inducing a morphism of $\mathcal{O}$-algebras $R_\mathcal{S} \to \mathbb{T}_W(K, \mathcal{O})_\mathfrak{m}$. We now state our main result, establishing the first case of the $R = \mathbb{T}$ theorem for orthogonal and symplectic groups (see Theorem \ref{R=T}):

	\begin{theorem}\label{Theorem-1 R=T}
		Let $K = G(\widehat{\mathcal{O}_F})$. Suppose the following are satisfied:
		\begin{enumerate}
			\item 
			$\overline{\rho}_{\mathfrak{m}}$ is absolutely irreducible,

			\item 
			$\overline{\rho}_{\mathfrak{m}}(\Gamma_{F(\zeta_p)})$ is adequate (see Definition \ref{big} for the definition of ``adequate"),

			\item 
			$p \nmid [F^\circ : F]$,

			\item 
			for any $g\in G(\A_{F,f})$, $gG(F)g^{-1}\cap K$ contains no elements of order $p$.
		\end{enumerate}
		Then we have an isomorphism of \emph{local complete intersection} $\mathcal{O}$-algebras:  
		\[
		R_\mathcal{S}
		\simeq
		\mathbb{T}_W(K, \mathcal{O})_\mathfrak{m}.
		\]
	\end{theorem}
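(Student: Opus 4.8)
The plan is to deploy the Taylor--Wiles--Kisin patching machinery in the minimal case, using the deformation-theoretic inputs already assembled in the paper together with the automorphic-side results that are the technical heart of the article. First I would recall that the map $R_\mathcal{S}\to\T_W(K,\Ocal)_\mathfrak{m}$ is surjective: the Hecke algebra is generated by the traces of Frobenius at unramified places, and these are read off from the universal deformation $r_\mathcal{S}$ via the pseudocharacter, so surjectivity is formal once $\rho_\mathfrak{m}$ realizes $r_\mathcal{S}\to\T_W(K,\Ocal)_\mathfrak{m}$. Thus the content is that this surjection is an isomorphism of complete intersections. For this I would introduce, for each auxiliary finite set $Q=Q_n$ of Taylor--Wiles primes of level $n$ (a set of primes $v$ with $q_v\equiv 1\bmod p^n$ at which $\overline{\rho}_\mathfrak{m}(\mathrm{Frob}_v)$ has the prescribed regular-semisimple form, whose existence for $n$ arbitrarily large is guaranteed by the adequacy hypothesis (2) via the Galois-side argument referenced in the introduction), the augmented deformation problem $\mathcal{S}_Q$ allowing Steinberg-type ramification at $v\in Q$, with universal ring $R_{\mathcal{S}_Q}$, and the corresponding level structure $K_Q\subset K$ obtained by shrinking at $v\in Q$ to the relevant parahoric/Iwahori-type subgroup.

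The key steps, in order, are: (i) show $R_{\mathcal{S}_Q}$ is a quotient of a power-series ring $\Ocal[[x_1,\dots,x_g]]$ with $g$ independent of $n$, and carries an action of the group $\Delta_Q=\prod_{v\in Q}k_v(p)^\times$-type object (the maximal $p$-quotient of the relevant torus's points) such that $R_{\mathcal{S}_Q}/\mathfrak{a}_Q\simeq R_\mathcal{S}$, where $\mathfrak{a}_Q$ is the augmentation ideal; the numerical coincidence displayed in the introduction is exactly what forces the number of generators $g$ and the number of relations to match, so that in the patched limit one lands on a power-series ring over $\Ocal[[\Delta_\infty]]$. This step is pure Galois cohomology (Poitou--Tate, the formula for the dimension of the dual Selmer group, and the fact that Taylor--Wiles primes kill the dual Selmer group one $p$-power at a time), and can be cited essentially verbatim from the general reductive-group formalism once the deformation conditions at $p$ (Fontaine--Laffaille, locally unobstructed as a smooth condition) and at ramified places are checked to be liftable of the expected dimension. (ii) On the automorphic side, show that $M_{W,p}(K_Q,\Ocal)_{\mathfrak{m}_Q}$, after passing to the maximal ideal $\mathfrak{m}_Q$ cut out by the Hecke action twisted by a choice of eigenvalue of the $U_v$-operators at $v\in Q$, is finite free over $\Ocal[\Delta_Q]$ and recovers $M_{W,p}(K,\Ocal)_\mathfrak{m}$ modulo the augmentation ideal --- this uses definiteness of $G$ (so the spaces are just functions on a finite set, and freeness over the group ring follows from hypothesis (4), which makes the double-coset action free of fixed points), together with the crucial local statement that at a Taylor--Wiles prime the $\mathfrak{m}_Q$-localized module is, via the theory of the $U_v$-operators acting on parahoric-fixed vectors, an induced module for $\Delta_Q$; and it uses the fact, established in the automorphic-side sections of the paper, that an automorphic representation contributing to $\mathfrak{m}_Q$ has local component at $v\in Q$ of the prescribed ramified (ramified principal series / its unique generic-then-spherical-after-projection) shape, so that its attached local Galois representation is of type $\mathcal{S}_Q$, ensuring the map $R_{\mathcal{S}_Q}\to\T_W(K_Q,\Ocal)_{\mathfrak{m}_Q}$ exists and is compatible with the $\Delta_Q$-structures. (iii) Patch: take a cofinal sequence $Q_n$, choose the usual inverse-limit of lattices trick (ultrafilter or diagonal argument on the finite-length quotients $R_{\mathcal{S}_{Q_n}}/\mathfrak{m}_{R}^n$ and $M/\mathfrak{m}^n$), and obtain $R_\infty:=\Ocal[[x_1,\dots,x_g]]$ acting on a module $M_\infty$ that is finite free over $S_\infty:=\Ocal[[y_1,\dots,y_r]]$, with $R_\infty$ acting through a map $S_\infty\to R_\infty$; the dimension count gives $\dim R_\infty\le\dim S_\infty$, while $M_\infty$ finite free over $S_\infty$ and $M_\infty$ a faithful $R_\infty$-module (faithfulness because the support of $M_\infty$ must be all of $\operatorname{Spec}R_\infty$ by the dimension matching and $R_\infty$ being a domain-like power series ring) forces $R_\infty$ to be finite over $S_\infty$ and then, by the Auslander--Buchsbaum / depth inequality, $R_\infty$ is Cohen--Macaulay of the right dimension, hence $S_\infty\to R_\infty$ is finite flat; quotienting by the augmentation ideal of $S_\infty$ recovers $R_\mathcal{S}\twoheadrightarrow\T_W(K,\Ocal)_\mathfrak{m}$ as a map for which the target is finite free over a regular ring of the same dimension acted on faithfully by the source, which by the standard commutative-algebra lemma (e.g. the one in Wiles's appendix, or its reformulation via \emph{nearly faithful} modules) is an isomorphism of complete intersections.

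I would then assemble the pieces: the isomorphism $R_\infty\xrightarrow{\sim}\T_\infty$ (the patched Hecke algebra acting on $M_\infty$) descends to $R_\mathcal{S}\xrightarrow{\sim}\T_W(K,\Ocal)_\mathfrak{m}$, and the complete-intersection property is inherited because $R_\infty=\Ocal[[x_1,\dots,x_g]]$ is regular and $\T_W(K,\Ocal)_\mathfrak{m}=R_\infty/(\theta_1,\dots,\theta_r)$ for a regular sequence $\theta_i$ (the images of the $y_j$), so the quotient is l.c.i. over $\Ocal$. Conditions (1)--(4) enter exactly where expected: (1) is needed to have a well-behaved deformation theory (the universal ring exists, the centralizer is just the centre) and to ensure the patched module is nonzero; (2) supplies the Taylor--Wiles primes and the vanishing of dual Selmer; (3) $p\nmid[F^\circ:F]$ ensures the ${}^CG$-valued theory over $F$ and its base change to $F^\circ$ interact well and that the relevant Galois-cohomology groups have no unexpected $p$-torsion from the splitting field; (4) gives freeness of the automorphic modules over the diamond group rings.

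The main obstacle I expect is step (ii), the automorphic-side input at Taylor--Wiles primes: unlike $\GL_n$, where the local representation theory at a Taylor--Wiles prime and the associated $U_v$-operator combinatorics are classical, here one must understand, for $G(F_v)=\mathrm{SO}_n(F_v)$ or $\mathrm{Sp}_n(F_v)$, which irreducible admissible representations have nonzero vectors fixed by the chosen parahoric, how the commutative algebra of $U_v$-operators acts on that space, and --- critically --- that after localizing at $\mathfrak{m}_Q$ one isolates precisely the summand on which this algebra acts through a character realizing the ramified Galois type $\mathcal{S}_Q$ and on which $\Delta_Q$ acts freely; this is exactly the ``automorphic side of the Taylor--Wiles method'' that the paper sets out to establish, and everything downstream is comparatively formal once it is in hand. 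A secondary subtlety is matching the $C$-dual-group normalization throughout the patching (twists by the similitude/spin character), which must be tracked carefully so that the deformation problem $\mathcal{S}$, the Galois representation $r_\pi$, and the Hecke eigensystems all refer to the same object; but this is bookkeeping rather than a genuine obstruction.
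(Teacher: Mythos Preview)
Your proposal is correct and would prove the theorem, but the paper takes a genuinely different and shorter route. You describe the classical Taylor--Wiles--Kisin patching: a cofinal sequence of Taylor--Wiles sets $Q_n$, an inverse-limit/ultrafilter construction producing patched objects $R_\infty$, $M_\infty$, $S_\infty$, and then a depth/Auslander--Buchsbaum argument to conclude. The paper instead uses Brochard's freeness criterion (the proof of de Smit's conjecture, \cite{Brochard2017}): it chooses a \emph{single} set $Q$ of Taylor--Wiles primes (from Proposition \ref{existence of Twylor-Wiles primes}), verifies that $M_{W,p}^Q(K_1(Q),\mathcal{O})_{\mathfrak{m}_1^Q}$ is finite free over $\mathcal{O}[\kappa_Q^\times(p)]$ (this is where hypothesis (4) enters, via Proposition \ref{compare invariants of different cpt open subgroups}) and is a finite $R_{\mathcal{S}(Q)}$-module, checks the cotangent-space inequality $\dim_\kappa \mathfrak{m}_{\mathcal{O}[\kappa_Q^\times(p)]}/(\mathfrak{m}_\mathcal{O},\mathfrak{m}^2) = \#Q \ge \dim_\kappa \mathfrak{m}_{R_{\mathcal{S}(Q)}}/(\mathfrak{m}_\mathcal{O},\mathfrak{m}^2)$ (this is where the numerical coincidence and hypotheses (1)--(3) enter), and then Brochard's theorem immediately gives that $R_{\mathcal{S}(Q)}$ is complete intersection over $\mathcal{O}[\kappa_Q^\times(p)]$ and the module is free over it; quotienting by the augmentation ideal then descends to $R_\mathcal{S}\simeq\mathbb{T}_W(K,\mathcal{O})_\mathfrak{m}$.

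What each approach buys: Brochard's criterion is dramatically shorter---no patching, no sequence of $Q_n$'s, no limit construction---and gives complete intersection for free; it is tailor-made for the minimal case where the dimension count is exactly balanced. Your patching approach is the more flexible one (it survives in non-minimal situations, with framing variables, with potential level-raising, etc.), and your identification of step (ii) as the real content is exactly right: both proofs rest on the same automorphic-side input (Theorem \ref{decomposition of the Galois representation} and the projector isomorphism of Proposition \ref{projector induces an isomorphism}), and differ only in the commutative-algebra endgame.
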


	\begin{remark}\rm
		\begin{enumerate}
			\item 
			It is certainly possible to consider smaller compact open subgroups $K$, similar to the case of definite unitary groups (see, for example, \cite{ClozelHarrisTaylor2008,Thorne2012}). However, since the main objective of this article is to establish the automorphic side of the Taylor-Wiles method, we restrict ourselves to the full-level subgroup $K=G(\widehat{\mathcal{O}_F})$.

			\item 
			The Taylor-Wiles and Taylor-Wiles-Kisin methods (for definite unitary groups) have also deep applications in $p$-adic Langlands program (see for example \cite{CaraianiEmertonGeraghtyPaskunasShin2016}), in which the Breuil-Schneider conjecture (\cite{BreuilSchneider2007}) is reduced to certain modularity lifting theorems. Our result would also give parallel applications in $p$-adic Langlands program for $G$, which seems to be still absent in the present literature.
		\end{enumerate}
	\end{remark}
	
	As a direct consequence, we obtain a minimal modularity lifting theorem for Galois representations valued in ${}^C G$, providing new evidence for the Langlands reciprocity conjecture in these cases (see Theorem \ref{modularity}):
	
	\begin{theorem}\label{main theorem-2}
		Keep the notations and assumptions as above. Let
		\[
		r\colon
		\Gamma_F
		\rightarrow
		\,^C
		G(\mathcal{O})
		\]
		be a Galois representation satisfying the following conditions (where $\overline{r}$ denotes
		its reduction modulo $\mathfrak{p}$):
		\begin{enumerate}
			\item
			$r$ is unramified at all finite places of $F$ not over $p$.
			
			\item 
			$\overline{r} \simeq \overline{r}_{\pi}$ and $\mu_r = \mu_{r_\pi}$, where $\mu_r$ is the character of $\Gamma_{F}$ attached to $r$ (see (\ref{mu_r})).
			
			\item 
			For all $v | p$, the restriction $r|_{\Gamma_{F_v}}$ lies in $\mathcal{D}^\mathrm{FL}_{v,\overline{r}_{\pi,v}}(\mathcal{O})$ (\emph{cf.} Definition \ref{crystalline local deformation problem}), where $\overline{r}_{\pi,v} = \overline{r}_\pi|_{\Gamma_{F_v}}$.
		\end{enumerate}
		
		Then there exists a minimal prime ideal $\mathfrak{P}'$ of $\mathbb{T}_W(K,\mathcal{O})$ contained in $\mathfrak{m}$ and an irreducible discrete series constituent $\pi'$ of $\mathcal{A}(G(\mathbb{A}_F))$, generated by a nonzero vector in  $M_{W,p}(K,\mathcal{O})$, such that the action of $\mathbb{T}_W(K,\mathcal{O})$ on $\pi'$ factors through $\mathbb{T}_W(K,\mathcal{O})/\mathfrak{P}'$, and we have
		\[
		r\simeq
		r_{\pi'}.
		\]
	\end{theorem}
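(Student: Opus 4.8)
The plan is to reverse the construction underlying Theorem~\ref{R=T}: realize $r$ as an $\mathcal{O}$-point of the universal deformation ring $R_{\mathcal{S}}$, transport it across the isomorphism $R_{\mathcal{S}} \simeq \mathbb{T}_W(K,\mathcal{O})_{\mathfrak{m}}$, and then recognize the resulting homomorphism of the Hecke algebra as the system of eigenvalues of an automorphic representation $\pi'$, whose associated Galois representation is then forced to coincide with $r$. First I would check that $r$ is a deformation of $\overline{\rho}_{\mathfrak{m}}$ of type $\mathcal{S}$. After conjugating $r$ into ${}^C G(\mathcal{O})$, hypothesis~(2) identifies $\overline{r}$ with $\overline{r}_\pi \simeq \overline{\rho}_{\mathfrak{m}}$ and matches the multiplier characters $\mu_r = \mu_{r_\pi}$; hypothesis~(1) says $r$ is unramified at all finite places not above $p$, which is exactly the local condition imposed by $\mathcal{S}$ away from $p$; and hypothesis~(3) puts $r|_{\Gamma_{F_v}}$ in $\mathcal{D}^{\mathrm{FL}}_{v,\overline{r}_{\pi,v}}(\mathcal{O})$ for each $v \mid p$, which is the Fontaine--Laffaille condition of $\mathcal{S}$ at $p$. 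Hence $r$ defines a unique morphism of $\mathcal{O}$-algebras $x\colon R_{\mathcal{S}} \to \mathcal{O}$ with $x \circ r_{\mathcal{S}}$ conjugate to $r$.

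Next, composing $x$ with the inverse of the isomorphism $R_{\mathcal{S}} \simeq \mathbb{T}_W(K,\mathcal{O})_{\mathfrak{m}}$ of Theorem~\ref{R=T} produces a morphism of $\mathcal{O}$-algebras $y\colon \mathbb{T}_W(K,\mathcal{O})_{\mathfrak{m}} \to \mathcal{O}$. Since $\mathbb{T}_W(K,\mathcal{O})$ acts faithfully on the finite free $\mathcal{O}$-module $M_{W,p}(K,\mathcal{O})$, the $y$-eigenspace in $M_{W,p}(K,\mathcal{O}) \otimes_{\mathcal{O}} \overline{\mathbb{Q}}_p$ is nonzero; any irreducible $G(\mathbb{A}_{F,f})$-subrepresentation of it is the finite part of a discrete automorphic representation $\pi'$ of $G(\mathbb{A}_F)$ (discreteness being automatic since $G$ is definite), of level $K$ and weight $W$, which by the choice $K = G(\widehat{\mathcal{O}_F})$ is everywhere unramified and $C$-algebraic of regular weight, hence satisfies (C2) and carries the Galois representation $r_{\pi'}$ of Theorem~\ref{Galois representations attached to auto rep}. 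A standard descent argument over the discrete valuation ring $\mathcal{O}$ shows $\pi'$ may be taken generated by a nonzero vector of $M_{W,p}(K,\mathcal{O})$, and the $\mathbb{T}_W(K,\mathcal{O})$-action on $\pi'$ factors through $\mathbb{T}_W(K,\mathcal{O})/\mathfrak{P}'$ for a minimal prime $\mathfrak{P}' \subseteq \ker y \subseteq \mathfrak{m}$.

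It remains to identify $r$ with $r_{\pi'}$. The isomorphism of Theorem~\ref{R=T} carries the universal deformation $r_{\mathcal{S}}$ to the Hecke-algebra-valued representation $\rho_{\mathfrak{m}}$ of Theorem~\ref{Galois representation valued in Hecke algebra-char=0}, so $x \circ r_{\mathcal{S}}$ is conjugate to $y \circ \rho_{\mathfrak{m}}$; on the other hand, the characterizing interpolation property of $\rho_{\mathfrak{m}}$ at the automorphic point $y$ says precisely that $y \circ \rho_{\mathfrak{m}}$ is conjugate to $r_{\pi'}$. Chaining these conjugacies gives $r \simeq r_{\pi'}$. At the level of underlying $N$-dimensional representations the same conclusion can be seen directly: at every $v \nmid p$ the characteristic polynomial of $r(\mathrm{Frob}_v)$ is read off, via the Satake isomorphism, from the common spherical Hecke eigenvalue system $y$, hence coincides with that of $r_{\pi'}(\mathrm{Frob}_v)$, and Chebotarev density together with Brauer--Nesbitt forces $r$ and $r_{\pi'}$ to agree as semisimple $N$-dimensional representations --- legitimate since $\overline{r}_\pi$, and therefore both $r$ and $r_{\pi'}$, is absolutely irreducible.

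I expect the genuine subtlety to be the final upgrade from an equality of the underlying $\mathrm{GL}_N$-representations to an honest ${}^C G(\overline{\mathbb{Q}}_p)$-conjugacy, since $\mathrm{GL}_N$-conjugate homomorphisms into ${}^C G$ need not be ${}^C G$-conjugate in general. Resolving this requires either appealing directly to the ${}^C G$-valued interpolation property of $\rho_{\mathfrak{m}}$, so that the whole comparison takes place inside ${}^C G$ from the outset, or using that the absolutely irreducible --- hence suitably large --- image of $\overline{\rho}_{\mathfrak{m}}$, together with the matching multiplier character, pins down the ${}^C G$-conjugacy class; this is where the careful bookkeeping of the $C$-dual group from the earlier sections is essential. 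A secondary, essentially routine point is to confirm that $r_{\pi'}|_{\Gamma_{F_v}}$ is itself Fontaine--Laffaille with the prescribed Hodge--Tate weights for $v \mid p$ --- which follows from $\pi'$ being unramified of weight $W$ and the local--global compatibility at $p$ in Theorem~\ref{Galois representations attached to auto rep} --- so that $\pi'$ indeed lies in the support of the deformation problem $\mathcal{S}$ and the identification above is consistent.
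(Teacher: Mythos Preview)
Your proposal is correct and follows essentially the same route as the paper's proof, which is extremely terse: the paper simply notes that the hypotheses make $r$ a lifting of $\overline{\rho}_{\mathfrak{m}}$ of type $\mathcal{S}$, obtains the corresponding map $\mathbb{T}_W(K,\mathcal{O})_{\mathfrak{m}}\simeq R_{\mathcal{S}}\to\mathcal{O}$, takes $\mathfrak{P}'$ to be its kernel, and chooses $\pi'$ accordingly. You supply more justification than the paper does---in particular your discussion of why the comparison stays inside ${}^CG$ (via the fact that the $R=\mathbb{T}$ isomorphism carries $r_{\mathcal{S}}$ to $\rho_{\mathfrak{m}}$, so everything is ${}^CG$-valued from the start) and your remark that $\pi'$ must satisfy (C2) are points the paper passes over in silence.
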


	\begin{remark}\rm
		\begin{enumerate}
			\item 
			In this article, we consider only two types of deformation problems: crystalline deformations at places $v | p$ and Taylor-Wiles deformations at places $v \in Q$. It is certainly possible to study other types of deformation problems, such as minimal, ordinary, or Ramakrishna deformations, as in \cite[§2.4]{ClozelHarrisTaylor2008}. We leave this to the interested reader.
			
			\item 
			There are also many important works addressing the case where the Galois representation $r$ is residually reducible; see, for example, \cite{Thorne2015,FakhruddinKharePatrikis2022}. It would be interesting to adapt the arguments from these works to our setting for $G$, which we plan to investigate in future research.
		\end{enumerate}
	\end{remark}

	\subsection*{Bloch-Kato conjecture}
	We apply the $R=\mathbb{T}$ theorem to the Bloch-Kato conjecture and obtain the following result (Theorems \ref{Bloch-Kato, rank part} and \ref{Bloch-Kato, special value part}).
	
	\begin{theorem}\label{Theorem 3 Bloch-Kato}
		Let the assumptions be as in \ref{Theorem-1 R=T}.
		The adjoint Galois representation associated with $r_\pi$ is given by
		\[
		\rho_\pi=\mathrm{Ad}^0r_\pi\colon\Gamma_{F}\to\mathrm{Aut}(\widehat{\mathfrak{g}}(\Ocal)).
		\]
		We attach to $\rho_\pi$ the $L$-function $L(\rho_\pi,s)$ and the Bloch-Kato Selmer group $H^1_\mathrm{BK}(F,\rho_\pi\otimes E)$. Then we obtain the rank part of the Bloch-Kato conjecture for $\rho_\pi$:\footnote{The usual formulation of the rank part of the Bloch-Kato conjecture for $\rho_\pi$ replaces $\mathrm{ord}_{s=1}L(\rho_\pi,s)$ with $\mathrm{ord}_{s=0}L(\rho_\pi^\ast(1),s)$. Since $\rho_\pi$ is self-dual, these two quantities are equal.}
		\[
		\mathrm{ord}_{s=1}L(\rho_\pi,s)
		=0
		=
		\dim_E H^1_\mathrm{BK}(F,\rho_\pi\otimes E)
		-
		\dim_E H^0(F,\rho_\pi\otimes E).		
		\]
		Moreover, if we choose an automorphic form $f_\pi\in M_{W,p}(K,\mathcal{O})$, which is a vector in $\pi$ satisfying $f_\pi\not\equiv0(\mathrm{mod}\,\mathfrak{p})$,\footnote{Such an $f_\pi$ is necessarily unique up to multiplication by a unit in $\mathcal{O}$.} and write $(f_\pi,f_\pi)_\mathrm{Pet}$ for the Petersson product of $f_\pi$ with itself, then we obtain an identity of nonzero ideals in $\mathcal{O}$:
		\[
		(f_\pi,f_\pi)_\mathrm{Pet}\mathcal{O}
		=
		\chi_\mathcal{O}(H^1_\mathrm{BK}(F,\rho_\pi\otimes E/\mathcal{O})^\vee).
		\]
	\end{theorem}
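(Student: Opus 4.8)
The plan is to derive both assertions from the isomorphism $R_\mathcal{S}\simeq\mathbb{T}_W(K,\mathcal{O})_\mathfrak{m}$ of local complete intersection $\mathcal{O}$-algebras furnished by Theorem~\ref{Theorem-1 R=T}, following the strategy going back to Wiles. Write $\lambda\colon\mathbb{T}_W(K,\mathcal{O})_\mathfrak{m}\to\mathcal{O}$ for the homomorphism determined by $\pi$ and $\mathfrak{p}_\lambda=\ker(R_\mathcal{S}\to\mathcal{O})$ for the corresponding prime of $R_\mathcal{S}$, so that $r_\pi$ is conjugate to $r_\mathcal{S}$ composed with $R_\mathcal{S}\xrightarrow{\sim}\mathbb{T}_W(K,\mathcal{O})_\mathfrak{m}\xrightarrow{\lambda}\mathcal{O}$. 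For the $L$-value: since $\overline{\rho}_\mathfrak{m}$ is absolutely irreducible so is $r_\pi$, hence (by Arthur's classification, see Theorem~\ref{Galois representations attached to auto rep}) the transfer $\pi^\sharp$ is a cuspidal automorphic representation of $\mathrm{GL}_N(\A_F)$; composing $r_\pi$ with ${}^C G\hookrightarrow\mathrm{GL}_N$ recovers the Galois representation attached to $\pi^\sharp$, and $\widehat{\mathfrak{g}}$ is the exterior square or the symmetric square of the standard representation of $\widehat{G}$ according as $\widehat{G}$ is orthogonal or symplectic, so $L(\rho_\pi,s)$ is the completed exterior-square or symmetric-square $L$-function of $\pi^\sharp$. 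This is exactly the square \emph{not} matching the self-duality type of $\pi^\sharp$, hence is entire, and by the analytic theory of these $L$-functions (Shahidi, Kim, Jacquet--Shalika) it is non-vanishing at $s=1$, i.e.\ $\mathrm{ord}_{s=1}L(\rho_\pi,s)=0$.

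Next I would show the Bloch--Kato Selmer group with $E$-coefficients vanishes. The algebra $\mathbb{T}_W(K,\mathcal{O})_\mathfrak{m}$ is finite over $\mathcal{O}$, and $\mathbb{T}_W(K,\mathcal{O})_\mathfrak{m}\otimes_\mathcal{O}E$ is reduced because the spherical Hecke operators act on the finite-dimensional space $M_{W,p}(K,E)_\mathfrak{m}$ as a commuting family of operators that are normal for the Petersson product, hence simultaneously diagonalizable; so $\mathbb{T}_W(K,\mathcal{O})_\mathfrak{m}\otimes_\mathcal{O}E$ is a product of finite field extensions of $E$. Transporting this through $R_\mathcal{S}\simeq\mathbb{T}_W(K,\mathcal{O})_\mathfrak{m}$, the point $\lambda$ picks out a factor equal to $E$, so the space of type-$\mathcal{S}$ deformations of $r_\pi$ over $E[\epsilon]/(\epsilon^2)$ vanishes, i.e.
\[
H^1_\mathcal{S}(F,\mathrm{Ad}^0 r_\pi\otimes E)=0.
\]
The conditions defining $\mathcal{S}$ (unramified outside $p$ --- recall $K=G(\widehat{\mathcal{O}_F})$, so $r_\pi$ is unramified away from $p$ --- and Fontaine--Laffaille at $v\mid p$) are, under the standing hypotheses, place-by-place the local conditions cutting out the Bloch--Kato Selmer group ($H^1_f=H^1_{\mathrm{ur}}$ at $v\nmid p$, $H^1_f=H^1_{\mathrm{FL}}$ at $v\mid p$), so $H^1_\mathrm{BK}(F,\rho_\pi\otimes E)=0$. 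Combined with $H^0(F,\rho_\pi\otimes E)=0$ (absolute irreducibility forces the only trace-zero $\Gamma_F$-equivariant endomorphism of the underlying space to be zero) and the previous paragraph, this gives the rank-part equality.

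For the special-value part, since $R_\mathcal{S}\simeq\mathbb{T}_W(K,\mathcal{O})_\mathfrak{m}$ is a complete intersection through which $\lambda$ factors, Wiles' numerical criterion (see \cite{Wiles1995}) gives, with $\eta_\lambda:=\lambda\bigl(\mathrm{Ann}_{\mathbb{T}_W(K,\mathcal{O})_\mathfrak{m}}(\ker\lambda)\bigr)$, the equality of finite orders
\[
\#\bigl(\mathfrak{p}_\lambda/\mathfrak{p}_\lambda^2\bigr)=\#(\mathcal{O}/\eta_\lambda),
\]
finiteness of the left side being the vanishing just established. Now $\mathrm{Hom}_\mathcal{O}(\mathfrak{p}_\lambda/\mathfrak{p}_\lambda^2,E/\mathcal{O})\cong H^1_\mathcal{S}(F,\mathrm{Ad}^0 r_\pi\otimes E/\mathcal{O})$, which by the same matching of local conditions equals $H^1_\mathrm{BK}(F,\rho_\pi\otimes E/\mathcal{O})$, so the left side is $\#\bigl(\mathcal{O}/\chi_\mathcal{O}(H^1_\mathrm{BK}(F,\rho_\pi\otimes E/\mathcal{O})^\vee)\bigr)$. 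On the other side, $M_{W,p}(K,\mathcal{O})_\mathfrak{m}$ is free of rank one over $\mathbb{T}_W(K,\mathcal{O})_\mathfrak{m}$ (from the patching behind Theorem~\ref{Theorem-1 R=T}, multiplicity one for $\pi$ --- valid since $\pi^\sharp$ is cuspidal --- and hypothesis (4)); combining this with the Gorenstein duality of the complete intersection $\mathbb{T}_W(K,\mathcal{O})_\mathfrak{m}$, the perfect $\mathbb{T}_W(K,\mathcal{O})_\mathfrak{m}$-bilinear integral Petersson pairing yields $\eta_\lambda=(f_\pi,f_\pi)_\mathrm{Pet}\mathcal{O}$, the normalization $f_\pi\not\equiv0\bmod\mathfrak{p}$ fixing the generator up to a unit. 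Equating the two descriptions gives $(f_\pi,f_\pi)_\mathrm{Pet}\mathcal{O}=\chi_\mathcal{O}(H^1_\mathrm{BK}(F,\rho_\pi\otimes E/\mathcal{O})^\vee)$.

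The formal skeleton above is routine; the substance lies in the two comparison statements. Matching $H^1_\mathcal{S}$ with the Bloch--Kato Selmer group for $\mathrm{Ad}^0 r_\pi$ demands care at $v\mid p$ (identifying the Fontaine--Laffaille deformation condition with Bloch--Kato's $H^1_f$) and at places of bad reduction (where the general comparison carries local correction factors that one must show are trivial) --- it is here that the full-level hypothesis, forcing $r_\pi$ unramified outside $p$, is essential. The identity $\eta_\lambda=(f_\pi,f_\pi)_\mathrm{Pet}\mathcal{O}$ rests on the freeness of $M_{W,p}(K,\mathcal{O})_\mathfrak{m}$ as a Hecke module, which for $\mathrm{SO}_n$ and $\mathrm{Sp}_n$ is not formal and is really supplied by the Taylor--Wiles patching; I expect this, together with checking that the integral Petersson pairing is perfect and compatible with the Gorenstein structure of $\mathbb{T}_W(K,\mathcal{O})_\mathfrak{m}$, to be the most delicate point.
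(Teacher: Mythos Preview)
Your proposal is correct and follows essentially the same route as the paper: both derive the result from the complete-intersection isomorphism $R_\mathcal{S}\simeq\mathbb{T}_W(K,\mathcal{O})_\mathfrak{m}$, identify the Bloch--Kato Selmer group with the type-$\mathcal{S}$ deformation space (the paper does this via Hida--Tilouine's differential module $C_1(\theta_\pi)$, you via $\mathfrak{p}_\lambda/\mathfrak{p}_\lambda^2$), invoke Wiles/Tate to equate this with the congruence ideal, and then compute the congruence ideal as $(f_\pi,f_\pi)_\mathrm{Pet}\mathcal{O}$ using rank-one freeness of $M_{W,p}(K,\mathcal{O})_\mathfrak{m}$ over the Hecke algebra. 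The only notable differences are in citations: for the non-vanishing of $L(\rho_\pi,1)$ the paper quotes Arthur's \cite[Theorem 1.5.3(a)]{Arthur2013} directly rather than Shahidi/Kim/Jacquet--Shalika, and for rank-one freeness the paper invokes Ta\"ibi's multiplicity-one theorem \cite{Taibi2018} rather than the patching output.
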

	
	Note that $L(\rho_\pi,s)$ is, in fact, the adjoint $L$-function $L(\pi,\mathrm{Ad},s)$ of $\pi$.

	\begin{remark}\rm
		For odd prime $p$, the first part of the theorem is a special case of \cite[Theorem B]{Allen2016}. However, our proof is quite different from \emph{loc.cit}, where the author obtained the result using Taylor-Wiles-Kisin patching for unitary groups instead of orthogonal or symplectic groups. Our method of relating congruence ideals to $R=\mathbb{T}$-theorem allows us to go beyond the rank of Bloch-Kato conjecture, which seems not possible in \emph{loc.cit}.
	\end{remark}

	\begin{remark}\rm
		In the second part of the theorem, we fall short of proving the following identity:
		\[
		L^\ast(\rho_\pi,1)\mathcal{O}
		\stackrel{?}{=}
		\chi_\mathcal{O}(H^1_\mathrm{BK}(F,\rho_\pi\otimes E/\mathcal{O})^\vee).
		\]
		Here, $L^\ast(\rho_\pi,1)$ denotes the normalized $L$-value of $L(\rho_\pi,1)$, divided by a suitable period. The main difficulty is the lack of a known relation between $(f_\pi,f_\pi)_\mathrm{Pet}$ and $L(\rho_\pi,s)$.

		On the other hand, for $G$ quasi-split over $F$, Lapid and Mao (\cite{LapidMao2015}) conjectured a relation between the Petersson product of $f_\pi$ and $L^\ast(\pi,\mathrm{Ad},1)$ (the conjecture itself is an analogue of the Ichino-Ikeda formula (\cite{IchinoIkeda2010}) for the Gross-Prasad conjecture). Our result suggests that for $G$ compact at infinity, there might be a similar \emph{arithmetic} relation between $(f_\pi,f_\pi)_\mathrm{Pet}$ and $L^\ast(\pi,\mathrm{Ad},1)$, that is, their $p$-adic valuations should be equal to each other.
	\end{remark}

	\subsection*{Automorphic side of Taylor-Wiles method}
	For the proof of Theorem \ref{Theorem-1 R=T}, as mentioned above, the main difficulty lies on the automorphic side—specifically, demonstrating that suitably ramified admissible representations of $G(F_v)$ at $v\in Q$ give rise to a Galois representation of $\Gamma_{F_v}$ in a prescribed form. We now explain this in more detail: the condition imposed on $\overline{r}_v := \overline{r}_\pi|_{\Gamma_{F_v}}$ (where $v \in Q$ is a Taylor-Wiles prime) is the existence of a decomposition of the representation space $\kappa^N$ (equipped with a bilinear form $(-,-)$) as follows:
	\[
	(\overline{r}_v,(-,-))
	=
	(\overline{s}_v,(-,-)|_{\overline{s}_v})
	\bigoplus
	(\overline{\psi}_v,(-,-)|_{\overline{\psi}_v}).
	\]
	Here, $\overline{\psi}_v$ corresponds to the eigenspace associated with a certain eigenvalue of the Frobenius $\sigma_v$ (which is of one of the types described in Proposition \ref{projection of parahoric invariant gives iso}), and $\sigma_v$ acts semi-simply on $\overline{\psi}_v$. Moreover, $\overline{\psi}_v$ is not isomorphic to any subquotient of $\overline{s}_v$ (see Assumption \ref{assumption on Taylor-Wiles deformations-Thorne}).
	
	We then obtain the following central technical result of this article (see Theorem \ref{decomposition of the Galois representation} for a precise statement).
	
	\begin{theorem}\label{Theorem 4}
		Let $v\nmid p\infty$ be a place of $F$ where $G$ is split, and let $\pi$ be an irreducible admissible representation of $G(F_v)$ that is a local component of an irreducible automorphic representation $\Pi$ of $G(\mathbb{A}_F)$. Suppose that $\pi$ admits a nonzero vector invariant under a specific subgroup of a parahoric subgroup of $G(\mathcal{O}_{F,v})$. Further, assume that the Galois representation $\rho_{\Pi}$ is unramified at $v$ and that the Frobenius element $\overline{\rho}_{\Pi}(\sigma_v)$ has an eigenvalue $\overline{\alpha}$ of the form given in Proposition \ref{projector induces an isomorphism}. 
		
		Writing $(-,-)$ for the non-degenerate symmetric or symplectic bilinear form corresponding to the Galois representation $\rho_\Pi$, we then obtain a decomposition into non-degenerate symmetric or symplectic subspaces:
		\[
		(\rho_\Pi|_{\Gamma_{F_v}},(-,-))=(s,(-,-)|_s)\bigoplus(\psi,(-,-)|_\psi),
		\]
		where $s$ is unramified, and $\psi$ is tamely ramified with its restriction to the inertial subgroup $I_v$ given by a pair of scalar characters $(\psi',(\psi')^{-1})$.
	\end{theorem}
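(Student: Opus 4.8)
The plan is to transfer the problem to $\GL_N$ and reduce it to the structure of the Weil--Deligne representation of $\Pi^\sharp_v$, the local component at $v$ of the weak transfer $\Pi^\sharp$ of $\Pi$, and then to read off the decomposition from the shape of $\pi=\Pi_v$ forced by the parahoric invariant vector. First I would invoke Theorem \ref{Galois representations attached to auto rep}: since $\rho_\Pi$ is constructed from $\Pi^\sharp$ and factors through ${}^C G\hookrightarrow\GL_N$, local--global compatibility at $v$ identifies the Frobenius-semisimplified Weil--Deligne representation underlying $\rho_\Pi|_{\Gamma_{F_v}}$ with $\mathrm{rec}(\Pi^\sharp_v)$ (suitably normalized), compatibly with the symmetric or symplectic form $(-,-)$. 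By compatibility of the local Langlands correspondences for $\GL_N$ and for $G$ with parabolic induction and with the endoscopic transfer along ${}^C G\hookrightarrow\GL_N$, the representation $\mathrm{rec}(\Pi^\sharp_v)$ is determined by the $L$-parameter of $\pi$; so everything comes down to pinning down that parameter.

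Next I would feed in the invariant vector. By Propositions \ref{projection of parahoric invariant gives iso} and \ref{projector induces an isomorphism}, the existence of a nonzero vector of $\pi$ fixed by the prescribed subgroup $\mathcal{U}_v$ of the parahoric $\mathcal{P}_v$, together with the hypothesis that the attached Hecke eigenvalue reduces modulo $\mathfrak{p}$ to an $\overline{\alpha}$ of the stated form, implies that $\pi$ is a subquotient of $\mathrm{Ind}_{P_v}^{G(F_v)}(\chi\boxtimes\sigma)$, where $P_v$ is the parabolic attached to $\mathcal{P}_v$, its Levi has a $\GL_1$-factor, $\sigma$ is an unramified representation of the remaining smaller split classical factor $G_0(F_v)$, and $\chi$ is a character of $F_v^\times$ --- tamely ramified in general, unramified precisely when $\rho_\Pi$ is unramified at $v$ --- with $\chi(\varpi_v)\equiv\overline{\alpha}\pmod{\mathfrak{p}}$ up to the normalizing twist. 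Here the condition that $\overline{\alpha}$ be ``of the form given in Proposition \ref{projector induces an isomorphism}'' (cf.\ Assumption \ref{assumption on Taylor-Wiles deformations-Thorne}) means $\overline{\alpha}\ne\pm1$ and that neither $\overline{\alpha}$ nor $\overline{\alpha}^{-1}$ occurs among the eigenvalues of $\overline{\rho}_\Pi(\sigma_v)$ on the unramified part; and since $v$ is a Taylor--Wiles prime, $q_v\equiv1\pmod{\mathfrak{p}}$.

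Now I would extract the decomposition. By the first step, $\mathrm{rec}(\Pi^\sharp_v)$ has underlying Weil-group representation $\chi\oplus\chi^{-1}\oplus\tau$ with $\tau$ unramified of rank $N-2$, restricts on $I_v$ to $\psi'\oplus(\psi')^{-1}\oplus\mathrm{triv}$ with $\psi'=\chi|_{I_v}$ of finite order (hence genuinely tame), and carries a possibly nonzero nilpotent monodromy operator $\nu$. If $\rho_\Pi$ is already unramified at $v$ then $\nu=0$ and inertia acts trivially; in general one uses the relation $\nu\,\rho_\Pi(\sigma_v)=q_v\,\rho_\Pi(\sigma_v)\,\nu$ together with $q_v\equiv1\pmod{\mathfrak{p}}$ and the multiplicity-one property of $\overline{\alpha}$ --- which forces, for any lift $\alpha$ of $\overline{\alpha}$, that $\alpha\ne q_v^{\pm1}\gamma$ for every eigenvalue $\gamma$ of $\sigma_v$ on $\tau$ and that $\alpha^2\ne q_v^{\pm1}$ --- to conclude that $\nu$ neither acts on nor maps anything into the plane $V_\psi$ spanned by the $\alpha$- and $\alpha^{-1}$-eigenlines of $\sigma_v$. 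Hence $V_\psi$ is a subrepresentation with vanishing monodromy, ramified only through the tame character $\psi'$, while its complement $V_s$ (carrying $\nu$) is unramified; both are preserved by all of $\Gamma_{F_v}$ because inertia acts semisimply and through distinct characters on them. This gives a genuine decomposition $\rho_\Pi|_{\Gamma_{F_v}}\cong s\oplus\psi$ with $\psi=\chi\oplus\chi^{-1}$ of rank $2$, $s$ unramified of rank $N-2$, and $\psi|_{I_v}=(\psi',(\psi')^{-1})$ (with $\psi'=1$ when $\rho_\Pi$ is unramified at $v$). For the compatibility with $(-,-)$: the decomposition is into generalized Frobenius eigenspaces, and $\Gamma_{F_v}$-equivariance of $(-,-)$ up to the similitude character forces the $\beta$- and $\beta'$-eigenspaces of $\sigma_v$ to be mutually orthogonal unless $\beta\beta'$ equals the relevant similitude value; since $\overline{\alpha}\ne\pm1$ has multiplicity one, $V_\psi$ is orthogonal to $V_s$, so $(-,-)$ restricts non-degenerately to each summand, of the same symmetric or symplectic type, giving $(\rho_\Pi|_{\Gamma_{F_v}},(-,-))=(s,(-,-)|_s)\oplus(\psi,(-,-)|_\psi)$.

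I expect the \emph{main obstacle} to be the passage from the parahoric invariant vector to the precise description of $\pi$ as a subquotient of $\mathrm{Ind}_{P_v}^{G(F_v)}(\chi\boxtimes\sigma)$ with controlled depth of $\chi$ and the prescribed reduction of $\chi(\varpi_v)$; this is the technical heart, and it is precisely what Propositions \ref{projection of parahoric invariant gives iso} and \ref{projector induces an isomorphism} are designed to supply. Within the present argument, the delicate points are checking that the endoscopic transfer to $\GL_N$ and the local Langlands correspondence are compatible with parabolic induction for the possibly non-generic subquotient $\pi$, and the elementary but essential verification that the separation of $\overline{\alpha}$ from the remaining Frobenius eigenvalues --- genuine precisely because $q_v\equiv1\pmod{\mathfrak{p}}$ --- forces the monodromy to vanish on, and to respect the splitting off of, the plane $V_\psi$.
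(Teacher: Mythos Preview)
Your route is genuinely different from the paper's, and it has two concrete gaps.

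The paper's proof of Theorem~\ref{decomposition of the Galois representation} does \emph{not} pass through local--global compatibility for $\GL_N$, Weil--Deligne representations, or monodromy operators at all. It works internally in the Hecke-algebra quotient $R_{j_0,j_1}$: the parahoric condition $\pi^{\mathfrak{p}_{\Omega,j_0}}\neq0$ forces $\rho_{j_0,j_1}$ to have abelian image, so one simply factors the characteristic polynomial of $\rho_{j_0,j_1}(\sigma_v)$ as $A(X)B(X)$ with $B$ collecting the roots reducing to $\overline{\alpha}^{\pm1}$, sets $s=B(\rho(\sigma_v))R^N$ and $\psi=A(\rho(\sigma_v))R^N$, and checks orthogonality in one line using $B(X^{-1})=(-X)^{-\deg B}B(X)$. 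The identification of $\psi|_{I_v}$ in the ramified case then comes from Proposition~\ref{parahoric invariant subspace is one-dimensional} (not from Propositions~\ref{projection of parahoric invariant gives iso} or~\ref{projector induces an isomorphism}, which concern the Hecke-module structure of invariants, not the shape of $\pi$ as a subquotient of a principal series).

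As for your approach: first, Theorem~\ref{Galois representations attached to auto rep} as stated in the paper only supplies local--global compatibility at places where $\pi$ is \emph{unramified}; at a Taylor--Wiles place $v\in Q$ the component $\pi_v$ may be ramified, so identifying $(\rho_\Pi|_{\Gamma_{F_v}})^{\mathrm{Fr\text{-}ss}}$ with $\mathrm{rec}(\Pi_v^\sharp)$ requires an external LGC input for $\GL_N$ at ramified places, plus compatibility of the endoscopic transfer with parabolic induction for the possibly non-generic subquotient $\pi$ --- precisely the point you yourself flag as delicate, and it is not available inside the paper. Second, your running hypothesis that $\overline{\alpha}$ has multiplicity one and $\psi$ has rank $2$ is a misreading of Proposition~\ref{projector induces an isomorphism}: the eigenvalue $\overline{\alpha}$ is assumed to have multiplicity $i_{j_0}^\Omega$, and $\psi$ has rank $2i_{j_0}^\Omega$ (or $2i_{j_0}^\Omega+1$ in the symplectic case with $\overline{\alpha}=1$), with $\psi|_{I_v}$ acting via $\psi'$ on one block and $(\psi')^{-1}$ on the other. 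Your monodromy argument, which relies on simple eigenvalues to separate the $\alpha^{\pm1}$-plane from the rest, would need to be reworked; the paper's characteristic-polynomial splitting handles arbitrary multiplicity uniformly and avoids the issue altogether.
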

	
	A key difference between the above theorem and \cite[Proposition 5.12]{Thorne2012} is that we do not require $\pi$ to be generic. In the case $G(F_v) \simeq \mathrm{GL}_n(F_v)$, the genericity of $\pi$ is used to describe $\pi$ as an induced representation in a precise form. However, in our setting, even if we assume genericity, $\pi$ is not necessarily of such a form but only a subquotient thereof.
	
	Our key observation is that if $\pi$ is of level a particular parahoric subgroup $\mathfrak{p}_\Omega$ of $G(F_v)$, then, using Frobenius reciprocity, we can embed $\pi$ into an induced representation where at most one factor is ramified (a Steinberg representation of $\mathrm{GL}_2(F_v)$), while all other factors remain unramified. The parahoric subgroup $\mathfrak{p}_\Omega$ corresponds roughly to the setup in \emph{loc. cit.} with $(n_1,n_2)=(n-1,1)$. We refer to §\ref{smooth representations of G(F_v)} for further details.

	\begin{remark}
		The \emph{main purpose} of this article is to provide the working foundations for Taylor-Wiles and Taylor-Wiles-Kisin methods for the group $G$ considered in this article, rather than the applications that we give in this article (these applications are quite modest (or very immediate) and may even seem quite restrictive in some sense). However, as we have mentioned in several places about potential applications of our results, we hope that our result can be readily used by other researchers to prove much stronger results in modularity lifting theorems, Block-Kato conjectures, $p$-adic Langlands programs related to the group $G$.
	\end{remark}
	
	\subsection*{Outline}
	In §\ref{Special orthogonal groups and automorphic Galois representations}, we associate Galois representations to automorphic representations of $G(\mathbb{A}_F)$, following \cite{Arthur2013,Shin2024}.	
	In §\ref{Hecke algebras and rep}, we study the action of Hecke operators on smooth representations of the group $G(F_v)$ for a finite place $v$ of $F$.	
	In §\ref{smooth representations of G(F_v)}, we study the smooth representation theory of $G(F_v)$ when $G$ is split at $v$. In particular, we show that if a smooth representation of $G(F_v)$ has a nonzero fixed vector under a certain parahoric-like subgroup and arises from a global automorphic representation $\pi$, then the associated local Galois representation $r_\pi|_{\Gamma_{F_v}}$ decomposes as expected in the Taylor-Wiles method (Theorem \ref{decomposition of the Galois representation}).	This is the automorphic side of the Taylor-Wiles method.
	In §\ref{deformation theory}, we introduce deformation problems and analyze several types of local deformation problems in detail, including Fontaine-Laffaille and Taylor-Wiles deformations.	
	In §\ref{modularity lifting}, we establish a minimal modularity lifting theorem and then deduce Theorem \ref{main theorem-2}.	
	Finally, in §\ref{Application to Bloch-Kato conjectures}, we apply the $R=\mathbb{T}$ theorem to the Bloch-Kato conjecture, proving Theorem \ref{Theorem 3 Bloch-Kato}.

	\subsection*{Notations}
	\begin{enumerate}
		\item We fix an \emph{odd} prime number $p$	and a totally real number field	$F$ throughout this article. We use the following notations: for a finite set $S$ of places of $F$, $\mathbb{A}_F^S$ is the set of adèles in $\mathbb{A}_F$
		without the component $S$; for a finite place $v$ of $F$, we fix a uniformiser $\omega_v$ of $\mathcal{O}_{F,v}$, $\kappa_v$ the residual field of $\mathcal{O}_{F,v}$, $q_v$ its cardinal.

		We write $S_p$ for the set of places of $F$ over $p$. For a reductive group $G$ over $F$, we write $G_v$ for the base change of $G$ from $F$ to $F_v$ ($v$ is a place of $F$). Moreover, we write $G_\infty=\prod_{v|\infty}G_v$ and $G_{\infty,\C}=G_\infty\times_\R\C$. For a finite place $v$ of $F$, we write
		\[
		\mathrm{Art}_{F_v}
		\colon
		F_v^\times\to\Gamma_{F_v^\mathrm{ab}/F_v}
		\]
		for the local Artin map.
		
		\item We fix a sufficiently large finite extension $E$ of $\mathbb{Q}_p$
		containing all embeddings $F\hookrightarrow\overline{\mathbb{Q}}_p$
		and write $\mathcal{O}$ for the ring of integers of $E$, $\mathfrak{p}$
		the its maximal ideal and $\kappa$ its residual field.

		\item We fix an isomorphism of fields
		$\iota\colon\mathbb{C}\simeq\overline{\mathbb{Q}}_p$ that is compatible with field embeddings of $\overline{\mathbb{Q}}$ into $\mathbb{C}$ and $\overline{\mathbb{Q}}_p$. For any field $E$, we write $\Gamma_E$
		for the absolute Galois group $\mathrm{Gal}(\overline{E}/E)$. For a connected reductive group $G$ over $F$ and a place $v$ of $F$, the $v$-th component of an element	$g\in G(\mathbb{A}_F)$ is denoted by $g_v\in G(F_v)$.
		
		\item We write $\mathrm{M}_{r,s}(A)$ for the set of $r\times s$-matrices with entries in $A$.
		
		\item For a local ring $A$, we write $\mathfrak{m}_A$ for the maximal ideal of $A$.

		\item 
		For a real number $x\in\R$, we write $\lfloor x\rfloor$ for the largest integer $n\leq x$ (the floor function of $x$).

		\item 
		The $p$-adic cyclotomic character has Hodge-Tate weight $-1$.
	\end{enumerate}

	\section*{Funding}
	This work is supported by Research Training Group funding (RTG 2553) of German Research Foundation (DFG).

	\section*{Acknowledgment}
	The author would like to thank Patrick Allen, Jeremy Booher, Gaëtan Chenevier, Arno Kret, Gordan Savin, Sug-Woo Shin, Jack Thorne, Jacques Tilouine and Dimitri Whitmore for a lot of useful discussions, for answering questions and for pointing out several inaccuracies in a previous version.

	\section{Automorphic Galois representations on $G$}\label{Special orthogonal groups and automorphic Galois representations}

	\subsection{Orthogonal/symplectic groups}
	For a positive integer $m$, we define the following anti-diagonal matrices
	\[
	A_m
	:=
	\begin{pmatrix}
		& & & & 1 \\
		& & & 1 & \\
		& & \iddots & & \\
		& 1 & & & \\
		1 & & & &
	\end{pmatrix},
	\quad
	A_{2m}'
	:=
	\begin{pmatrix}
		0 & A_m \\ -A_m & 0
	\end{pmatrix}
	\index{A@$A_m,A_{2m}'$}
	\]
	Fix a field extension $F_{\eta}/F$ of degree $\leq2$, we write
	\[
	\eta\colon\Gamma_{F_\eta/F}\to\{\pm1\}
	\]
	for the unique injective character. If $\eta=1$ (the trivial character), we write $A_m^\eta=A_m$\index{A@$A_m^\eta$}; if $\eta\neq1$ and $m$ is even, we choose $\alpha\in\mathcal{O}_F^\times$ such that $F_\eta=F[\sqrt{\alpha}]$, then we define $A_m^\eta$ to be $A_m$ with the $2\times2$-matrix $\begin{pmatrix}
		0 & 1 \\ 1 & 0
	\end{pmatrix}$ in the middle of $A_m$ replaced by $\begin{pmatrix}
		1 & 0 \\ 0 & -\alpha
	\end{pmatrix}$ (we do not consider the case where $\eta\neq1$ and $m$ odd).
	
	We fix a positive integer throughout this article and we write $G^\ast$ to be the following symplectic group or the quasi-split special orthogonal group scheme over $\Ocal_F$ (the classical group considered in the first 8 chapters of \cite{Arthur2013}):
	\begin{align*}
		\mathrm{Sp}_n
		&
		:=
		\left\{
		g\in\mathrm{SL}_n/\Ocal_F\mid g^\mathrm{t}A_n'g
		=A_n'
		\right\}
		(\text{$n$ is even})
		\\
		\mathrm{SO}_n^\eta
		&
		:=
		\{
		g\in\mathrm{SL}_n/\mathcal{O}_F\mid g^{\mathrm{t}}A_n^\eta g=A_n^\eta
		\}.
	\end{align*}
	For $\eta=1$, we simply write $\mathrm{SO}_n$ for $\mathrm{SO}_n^\eta$. Note that in $\mathrm{SO}_n^\eta$, if $\eta\neq1$, then $n$ even.

	We write $G$ for an inner form of $G^\ast$, that is, $G$ is a connected reductive group over $F$ together with an isomorphism $\phi\colon G^\ast_{\overline{F}}\simeq G_{\overline{F}}$ over $\overline{F}$ such that $\phi^{-1}(\sigma(\phi(\sigma^{-1})))\in\mathrm{Aut}(G^\ast_{\overline{F}})$ is an inner automorphism of $G^\ast_{\overline{F}}$ for any $\sigma\in\Gamma_{F}$.
	Throughout this article we assume the following is satisfied
	
	\begin{equation}\label{(C1)}
		\makebox[0pt][l]{\hspace{-2cm}(C1)}
		\text{$G(F\otimes_{\mathbb{Q}}\mathbb{R})$
			is compact and both $G$ and $G^\ast$ are unramified at all finite places $v$ of $F$.}\index{C@(C1)}
		\notag
	\end{equation}

	In particular, we have an isomorphism $G^\ast(\mathbb{A}_F^\infty)\simeq G(\mathbb{A}_F^\infty)$. For each finite place $v$ of $F$, we fix a hyperspecial subgroup $K_v$ of $G(F_v)$.

	We write $F^\circ/F$ for the splitting field of $G$.\index{F@$F^\circ$}

	We write $\widehat{G^\ast}$ for the dual group of $G^\ast$, which is a connected reductive group over $\C$ equipped with an action of the Galois group $\Gamma_{F}$ (factoring through $\Gamma_{F^\circ/F}$). More explicitly,
	we define
	\begin{equation}\label{N}
		N
		=
		\begin{cases*}
			2\lfloor n/2\rfloor,
			& $G^\ast=\mathrm{SO}_n^\eta$;
			\\
			n+1,
			&
			$G^\ast=\mathrm{Sp}_n$.
		\end{cases*}
	\end{equation}
	Then we have
	\[
	\widehat{G^\ast}
	=
	\begin{cases*}
		\mathrm{SO}_N,
		&
		$G^\ast=\mathrm{Sp}_n$;
		\\
		\mathrm{SO}_N,
		&
		$G^\ast=\mathrm{SO}_n^\eta$ with $n$ even;
		\\
		\mathrm{Sp}_N,
		&
		$G^\ast=\mathrm{SO}_n$ with $n$ odd.
	\end{cases*}
	\]
	By definition, the $L$-dual group $^LG^\ast$ of $G^\ast$ is the semi-direct product $\widehat{G^\ast}\rtimes\Gamma_{F}$, which can be simplified in our case as follows: write $F'$ to be $F$ unless $G^\ast=\mathrm{SO}_n^\eta$, in which case $F'=F_\eta$. Then the action of $\Gamma_{F}$ on $\widehat{G^\ast}$ factors through $\Gamma_{F'/F}$ and we will use this $F'/F$-form of the $L$-dual group of $G^\ast$:
	\[
	^LG^\ast=\widehat{G^\ast}\rtimes\Gamma_{F'/F}.
	\]
	For $\eta\neq1$ (thus $n$ is even), the non-trivial element in $\Gamma_{F'/F}$ acts on $\widehat{G^\ast}=\mathrm{SO}_N$ via the outer automorphism sending $g$ to $\vartheta g\vartheta^{-1}$ where
	\[
	\vartheta
	=
	\mathrm{diag}
	\left(
	1_{n/2-1},
	\begin{pmatrix}
		0 & 1 \\ 1 & 0
	\end{pmatrix},
	1_{n/2-1}
	\right).
	\]
	In this case, we can identify $^LG^\ast$ with $\mathrm{O}_N$. For later use, we write down explicitly the $L$-dual group of $G^\ast$ as follows (\cite[§2.1]{Shin2024})
	\[
	^L\mathrm{Sp}_n
	=
	\mathrm{SO}_N,
	\quad
	^L\mathrm{SO}_n^\eta
	=
	\begin{cases*}
		\mathrm{O}_N,
		&
		$n$ even and $\eta\neq1$;
		\\
		\mathrm{SO}_N,
		&
		$n$ even and $\eta=1$;
		\\
		\mathrm{Sp}_N,
		&
		$n$ odd.
	\end{cases*}
	\]
	Since $G$ is an inner form of $G^\ast$, the $F$-pinnings of $G$ and $F$-pinnings of $G^\ast$ are naturally identified (via $\phi$). This allows us to identify the $L$-dual group of $G$ with that of $G^\ast$ (see \cite[§1.4]{Shin2024} for more details)
	\[
	^LG\simeq\,^LG^\ast.
	\]

	We write
	\begin{equation}\label{xi: ^LG to GL_N}
		\widetilde{\xi}
		\colon\,^LG\to\mathrm{GL}_N
	\end{equation}
	for the natural embedding.

	We fix a maximal torus $T_{\infty,\C}=\prod_{v|F}T_{v}$ inside $G_{\infty,\C}$ and a Borel subgroup $B_{\infty,\C}=\prod_{v|F}B_v$ containing $T_{\infty,\C}$. Correspondingly, we fix a maximal torus $\widehat{T}$ of $\widehat{G}$ and a Borel subgroup $\widehat{B}$ containing $\widehat{T}$ (this gives rise to an isomorphism between the based root datum of $G$ and that of $\widehat{G}$). We write $\Omega_v$ for the Weyl group of the pair $(G_v,T_v)$ (we put $\Omega_\infty=\prod_v\Omega_v$) and
	\[
	\rho_G=(\rho_{G,v})_{v|\infty}\in
	X^\ast(T_{\infty,\C})_\Q=X_\ast(\widehat{T})_\Q
	\]
	for the half sum of positive roots in $G_{\infty,\C}$ with respect to $(T_{\infty,\C},B_{\infty,\C})$. More explicitly, we have (\cite[§2.1]{Shin2024})
	\begin{enumerate}
		\item 
		$G^\ast=\mathrm{Sp}_n$, then for $v|\infty$, we take
		\[
		T_{v}=\{\mathrm{diag}(t_1,\cdots,t_{n/2},1/t_{n/2},\cdots,1/t_1)|t_i\in\mathbb{G}_m\}.
		\]
		We identify $X^\ast(T_v)$ with $\Z^{n/2}$ in a natural way. The Weyl group is $\Omega_v=\{\pm1\}^{n/2}\rtimes S_{n/2}$ (here $(x_1,\cdots,x_{n/2})\in\{\pm1\}^{n/2}$ sends $\mathrm{diag}(t_1,\cdots,1/t_1)$ to $\mathrm{diag}(t_1^{x_1},\cdots,1/t_1^{x_1})$) and we have
		\[
		\rho_{G,v}=(n/2,n/2-1,\cdots,1).
		\]

		\item 
		$G^\ast=\mathrm{SO}_n^\eta$ with $n$ even, we take
		\[
		T_v=\{\mathrm{diag}(t_1,\cdots,t_{n/2-1},s,1/t_{n/2-1},\cdots,1/t_1)|t_i\in\G_m,s\in\mathrm{SO}_2^\eta\}.
		\]
		Again we have $X^\ast(T_v)=\Z^{n/2}$, $\Omega_v$ consists of $(x_1,\cdots,x_{n/2};\sigma)\in\{\pm1\}^{n/2}\rtimes S_{n/2}$ such that $\prod_ix_i=1$ and
		\[
		\rho_{G,v}=(n/2-1,n/2-2,\cdots,0).
		\]

		\item 
		$G^\ast=\mathrm{SO}_n$ with $n$ odd, we take
		\[
		T_v=\{\mathrm{diag}(t_1,\cdots,t_{(n-1)/2},1,1/t_{(n-1)/2},\cdots,1/t_1)|t_i\in\G_m\}.
		\]
		We have $X^\ast(T_v)=\Z^{(n-1)/2}$, $\Omega_v=\{\pm1\}^{(n-1)/2}\rtimes S_{(n-1)/2}$ and
		\[
		\rho_{G,v}
		=
		(n/2-1,n/2-2,\cdots,1/2).
		\]
	\end{enumerate}

	From the above description, we know that an element in $X^\ast(T_v)_\R/\Omega_v$ is represented by a tuple $(a_1,a_2,\cdots,a_{\lfloor n/2\rfloor})\in\R^{\lfloor n/2\rfloor}$ such that
	\begin{equation}\label{a_1>a_2...}
		\begin{cases*}
			a_1\ge a_2\ge\cdots\ge a_{N/2}\ge0,
			&
			if $G^\ast=\mathrm{Sp}_n$ or $G^\ast=\mathrm{SO}_n$ with $n$ odd;
			\\
			a_1\ge a_2\ge\cdots\ge a_{n/2-1}\ge|a_{n/2}|,
			&
			if $G^\ast=\mathrm{SO}_n^\eta$ with $n$ even.
		\end{cases*}
	\end{equation}

	\subsection{Automorphic Galois representations}\label{Automorphic Galois representations}
	In this subsection, we recall how to associate Galois representations to certain $C$-algebraic automorphic representations of $G(\mathbb{A}_F)$. We following closely the presentation in \cite{Shin2024}.

	For an irreducible discrete series automorphic representation $\pi=\otimes_v'\pi_v$ of $G(\mathbb{A}_F)$, the infinitesimal character $\zeta_{\pi,\infty}=(\zeta_{\pi,v})_{v|\infty}$ of $\pi_{\infty}$
	is an element in $X^\ast(T_{\infty,\mathbb{C}})_{\mathbb{C}}/\Omega_\infty=\oplus_{v|\infty} X^\ast(T_v)_{\mathbb{C}}/\Omega_v$. For a finite place $v$ of $F$, if $\pi_v$ is an unramified representation, we write $c(\pi_v)$ for the Satake parameter of $\pi_v$, which is a semi-simple $\widehat{G}$-conjugacy class inside $^LG_v$. This gives rise to an unramified $L$-parameter of $\pi_v$,
	\[
	\phi_{\pi_v}^L\colon W_{F_v}\to\,^LG_v,
	\]
	where $W_{F_v}$ is the Weil-Deligne group of $F_v$ and $\phi_{\pi_v}^L$ sends the geometric Frobenius to $c(\pi_v)$.
	\begin{definition}\label{C-algebraic}
		We say $\pi$ is \emph{$C$-algebraic} if $\zeta_{\pi,\infty}$ lies in the image of $X^\ast(T_{\infty,\mathbb{C}})+\rho_\infty$ in the quotient $X^\ast(T_{\infty,\mathbb{C}})_{\mathbb{C}}/\Omega_\infty$ (\cite[§3]{BuzzardGee2014} and \cite[Definition 3.1.1]{Shin2024}).
	\end{definition}

	It follows from definition that
	\begin{lemma}\label{criterion for C-algebraic}
		The automorphic representation $\pi$ is $C$-algebraic if and only if for any $v|\infty$, $\zeta_{\pi,v}$ is represented by a tuple $(a_{v,1},a_{v,2},\cdots,a_{v,\lfloor n/2\rfloor})$ satisfying (\ref{a_1>a_2...}) and
		\[
		\begin{cases*}
			a_{v,1},\cdots,a_{v,\lfloor n/2\rfloor}\in\Z,
			&
			if $G^\ast=\mathrm{Sp}_n$ or $G^\ast=\mathrm{SO}_n^\eta$ with $n$ even;
			\\
			a_{v,1},\cdots,a_{v,\lfloor n/2\rfloor}\in\frac{1}{2}+\Z,
			&
			 if $G^\ast=\mathrm{SO}_n$ with $n$ odd.
		\end{cases*}
		\]
	\end{lemma}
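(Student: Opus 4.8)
The plan is to unwind Definition \ref{C-algebraic} using the explicit data for $X^\ast(T_v)$, $\Omega_v$ and $\rho_{G,v}$ recorded above in each of the three cases. By definition, $\pi$ is $C$-algebraic precisely when its infinitesimal character $\zeta_{\pi,\infty}$, viewed in $X^\ast(T_{\infty,\C})_\C/\Omega_\infty=\bigoplus_{v\mid\infty}X^\ast(T_v)_\C/\Omega_v$, lies in the image of $X^\ast(T_{\infty,\C})+\rho_\infty=\bigoplus_{v\mid\infty}\bigl(X^\ast(T_v)+\rho_{G,v}\bigr)$. Since everything decomposes over the archimedean places, it suffices to fix a single $v\mid\infty$ and determine which classes in $X^\ast(T_v)_\R/\Omega_v$ lie in the image of the coset $X^\ast(T_v)+\rho_{G,v}$, using the identification of $X^\ast(T_v)_\R/\Omega_v$ with the set of tuples $(a_1,\dots,a_{\lfloor n/2\rfloor})$ satisfying (\ref{a_1>a_2...}) that is already set up above.

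First I would handle the two ``integral'' cases $G^\ast=\mathrm{Sp}_n$ and $G^\ast=\mathrm{SO}_n^\eta$ with $n$ even. Here $X^\ast(T_v)$ is identified with $\Z^{n/2}$, and from the formulas above $\rho_{G,v}=(n/2,n/2-1,\dots,1)$, respectively $\rho_{G,v}=(n/2-1,n/2-2,\dots,0)$; in both cases $\rho_{G,v}\in\Z^{n/2}$, so $X^\ast(T_v)+\rho_{G,v}=\Z^{n/2}$. The Weyl group $\Omega_v$ acts by permuting coordinates and changing signs (all sign changes for $\mathrm{Sp}_n$, an even number for $\mathrm{SO}_n^\eta$), and each such operation preserves $\Z^{n/2}$; hence a class in $X^\ast(T_v)_\R/\Omega_v$ lies in the image of $\Z^{n/2}$ exactly when its representative satisfying (\ref{a_1>a_2...}) has all coordinates in $\Z$. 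For $G^\ast=\mathrm{SO}_n$ with $n$ odd, instead $X^\ast(T_v)\cong\Z^{(n-1)/2}$ and $\rho_{G,v}=(n/2-1,n/2-2,\dots,1/2)$; since $n$ is odd, every coordinate of $\rho_{G,v}$ lies in $\tfrac12+\Z$, so $X^\ast(T_v)+\rho_{G,v}=(\tfrac12+\Z)^{(n-1)/2}$, and again $\Omega_v=\{\pm1\}^{(n-1)/2}\rtimes S_{(n-1)/2}$ preserves this coset. Thus a class lies in the image exactly when its representative satisfying (\ref{a_1>a_2...}) has all coordinates in $\tfrac12+\Z$. Assembling these three cases over all $v\mid\infty$ yields the lemma.

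As for difficulty: there is essentially no serious obstacle here, since the statement is a direct translation of Definition \ref{C-algebraic} through the root-datum bookkeeping already compiled above. The only point needing a line of care is that the integrality (resp.\ half-integrality) condition is coordinate-wise uniform and is preserved by all the operations generating $\Omega_v$ — coordinate permutations, sign changes, and the parity constraint $\prod_ix_i=1$ in the even orthogonal case — so that one may freely replace ``some representative of the $\Omega_v$-class'' by ``the representative satisfying (\ref{a_1>a_2...})'' when passing to the quotient $X^\ast(T_v)_\R/\Omega_v$.
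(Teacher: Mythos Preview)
Your proposal is correct and matches the paper's approach: the paper simply notes that the lemma follows from the definition, and your argument is precisely the straightforward unwinding of Definition \ref{C-algebraic} against the explicit descriptions of $X^\ast(T_v)$, $\rho_{G,v}$ and $\Omega_v$ already recorded.
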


	The corresponding $C$-dual group of $G$ is given as follows (\cite[§3.1]{Shin2024})
	\[
	^CG=\,^LG\rtimes\G_m,
	\text{ where }
	(1,t)(g,1)(1,t)^{-1}=(\mathrm{Ad}\rho_G(t)g,1),
	\,
	\forall
	g\in\,^LG,t\in\G_m.
	\]
	Note that this definition is the one given in \cite{Zhu2021} but is in fact equivalent to the original definition given in \cite{BuzzardGee2014}. In our case, we have an explicit expression for $^CG$ (\cite[Examples 3.1.6 and 3.1.7]{Shin2024}):
	\begin{equation}\label{^CG}
		^CG
		=
		\begin{cases*}
			\,^LG\times\G_m,
			&
			$G^\ast=\mathrm{Sp}_n$ or $G^\ast=\mathrm{SO}_n^\eta$ with $n$ even;
			\\
			\mathrm{GSp}_N,
			&
			$G^\ast=\mathrm{SO}_n$ with $n$ odd.
		\end{cases*}
	\end{equation}
	For later considerations on Galois deformations, we will view $^CG$ as a group scheme over $\mathbb{Z}$.

	For a place $v|\infty$ of $F$, we modify the infinitesimal character $\zeta_{\pi,v}$ as follows
	\begin{equation}\label{zeta_{pi,v}^C}
		\zeta_{\pi,v}^C
		=
		(\zeta_{\pi,v},-1)
		\in
		X_\ast(\widehat{T}\times\G_m)_\Q/\Omega_\infty.
	\end{equation}

	For a finite place $v$ of $F$ such that $\pi_v$ is unramified, we define the unramified $C$-parameter of $\pi_v$ as follows (with $^CG$ given as in (\ref{^CG})):
	\[
	\phi_{\pi_v}^C
	\colon
	W_{F_v}\to\,^CG_v,
	\quad
	x
	\mapsto
	\begin{cases*}
		(\phi_{\pi_v}^L(x)|x|^{(1-N)/2},|x|^{-1}),
		&
		$G^\ast=\mathrm{Sp}_n$ or $G^\ast=\mathrm{SO}_n^\eta$ with $n$ even;
		\\
		\phi_{\pi_v}^L(x)|x|^{-1/2},
		&
		$G^\ast=\mathrm{SO}_n$ with $n$ odd.
	\end{cases*}
	\]
	Here $|-|_v$ is the absolute value on $F_v^\times$ or $W_{F_v}$ which takes the uniformiser in $F_v^\times$ or a lift of the geometric Frobenius in $W_{F_v}$ to $1/\# (\kappa_v)$.

	By \cite[Corollary 2.5.3]{Shin2024}, we have
	\begin{theorem}\label{weak transfer pi to Pi}
		We fix a finite set $S$ of places of $F$ containing those above $p$ or $\infty$.
		Let $\pi$ be an irreducible discrete series automorphic representation of $G(\mathbb{A}_F)$ unramified outside $S$. Then there is an irreducible automorphic representation
		\begin{equation}\label{Arthur parameter}
			\Pi=\pi^\sharp\index{p@$\pi^\sharp$}
		\end{equation}
		of $\mathrm{GL}_N(\mathbb{A}_F)$ unramified outside $S$, which is an isobaric sum of cuspidal automorphic representations $\Pi=\boxtimes_{i=1}^r\Pi_i$ such that $\Pi^\vee\simeq\Pi$ and $(\zeta_{\pi,\infty},c^S(\pi))$ is mapped via the map $\widetilde{\zeta}$ in (\ref{xi: ^LG to GL_N}) to $(\zeta_{\Pi,\infty},c^S(\Pi))$. We call $\pi^\sharp$ the \emph{weak transfer} of $\pi$.
	\end{theorem}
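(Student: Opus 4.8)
\emph{Strategy.} The statement is, modulo bookkeeping, \cite[Corollary 2.5.3]{Shin2024}, and the plan is to explain why our hypotheses put $\pi$ in the scope of that result and to record the compatibility of parameters under the embedding $\widetilde\xi$ of (\ref{xi: ^LG to GL_N}). The hard analytic content—Arthur's stabilization of the (twisted) trace formula, the construction of global $A$-packets and the multiplicity formula in \cite{Arthur2013}, together with its extension from the quasi-split group $G^\ast$ to the inner form $G$—will be black-boxed; what remains is to assemble it correctly.

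\emph{First}, I would use (C1): at every finite place $v$ there is a fixed isomorphism $G_v\simeq G^\ast_v$, so $G$ is an inner form of the quasi-split classical group $G^\ast$ which is quasi-split (indeed split, up to the form $F_\eta/F$) at all finite places and compact at infinity. Hence $\pi$, as an irreducible constituent of the discrete automorphic spectrum of $G(\mathbb{A}_F)$, falls under Arthur's endoscopic classification for $G^\ast$ and its inner-form version assembled in \cite{Shin2024}: there is a discrete global Arthur parameter $\psi=\bigoplus_{i=1}^r\mu_i[d_i]$, with each $\mu_i$ a self-dual cuspidal automorphic representation of some $\mathrm{GL}_{m_i}(\mathbb{A}_F)$, $[d_i]$ the $d_i$-dimensional representation of the Arthur $\mathrm{SL}_2$, and $\sum_i m_id_i=N$, such that $\pi$ lies in the global packet attached to $\psi$; since $S$ contains all the ramified places, $\psi$ and each $\mu_i$ are unramified outside $S$.

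\emph{Second}, I would set $\pi^\sharp=\Pi$ to be the isobaric automorphic representation of $\mathrm{GL}_N(\mathbb{A}_F)$ attached to $\psi$, namely the isobaric sum of the $\mu_i\otimes|\det|^{(d_i-1)/2-j}$ for $1\le i\le r$ and $0\le j\le d_i-1$. Each summand is a unitary character twist of a cuspidal representation, hence cuspidal, so relabelling them gives the claimed decomposition $\Pi=\boxtimes_{i}\Pi_i$; and $\Pi^\vee\simeq\Pi$ because each $\mu_i$ is self-dual and the twist exponents $(d_i-1)/2-j$ are symmetric about $0$—equivalently $\psi$ factors through $^LG^\ast\subset\mathrm{GL}_N$, whose image preserves a symmetric or symplectic form. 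Then I would check parameter compatibility: at a finite $v\notin S$ both $\pi_v,\Pi_v$ are unramified, and by the construction of $\psi$ out of the unramified data $c^S(\pi)$, the Satake parameter $c(\Pi_v)$ equals $\bigoplus_i c(\mu_{i,v})\otimes\mathrm{diag}\!\left(q_v^{(d_i-1)/2},\dots,q_v^{-(d_i-1)/2}\right)$, which is exactly $\widetilde\xi(c(\pi_v))$, so $c^S(\pi)\mapsto c^S(\Pi)$; at $v\mid\infty$ the archimedean component $\psi_v$ determines $\zeta_{\pi,v}$ through the Adams--Johnson construction of the local $A$-packet, and $\widetilde\xi$ carries it to the infinitesimal character of $\psi_v$ viewed as an $N$-dimensional representation, i.e.\ to $\zeta_{\Pi,v}$. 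Combining these yields $(\zeta_{\pi,\infty},c^S(\pi))\mapsto(\zeta_{\Pi,\infty},c^S(\Pi))$ under $\widetilde\xi$.

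\emph{Main obstacle.} The one genuine input beyond citing \cite{Arthur2013} is the passage from $G^\ast$ to the inner form $G$ (which here is everywhere locally quasi-split at the finite places and compact at $\infty$); this is precisely the content packaged in \cite{Shin2024}, so I would invoke it rather than reprove it. The remaining subtlety is purely organizational: tracking the half-integral shifts in $\widetilde\xi$ and the $\mathrm{O}_N$-versus-$\mathrm{SO}_N$ distinction coming from the form $F_\eta/F$ in the even orthogonal case, which is routine given the explicit description of $^LG$ recorded above. I expect no difficulty in the ``weak'' nature of the transfer being sufficient, since we only ever need matching of Satake parameters outside $S$ and of infinitesimal characters at infinity—exactly the output of \cite[Corollary 2.5.3]{Shin2024}—and never a precise local correspondence at places of $S$.
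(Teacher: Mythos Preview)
Your approach is essentially the same as the paper's: both reduce the theorem to \cite[Corollary 2.5.3]{Shin2024}. You add useful exposition of the Arthur parameter $\psi$ and the construction of $\Pi$ as an isobaric sum, which the paper omits.

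There is one point you miss that the paper's proof does address. Corollary 2.5.3 in \cite{Shin2024} is stated conditionally on a hypothesis labeled (H1) there, namely the (non-standard) weighted fundamental lemma for non-split groups, needed for the stabilized trace formula. You black-box ``Arthur's stabilization of the (twisted) trace formula'' as if it were unconditionally available in \cite{Arthur2013}, but in \cite{Shin2024} it is treated as an assumption. The paper's proof closes this by citing \cite{Arthur2001,Arthur2002,Arthur2003} to discharge (H1). Without that citation, your invocation of \cite[Corollary 2.5.3]{Shin2024} is, strictly speaking, conditional; you should either supply the reference or note explicitly that (H1) is a theorem of Arthur.
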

	\begin{proof}
		The proof of \cite[Corollary 2.5.3]{Shin2024} relies on the assumptions that the weighted fundamental lemma is true for non-split groups and its non-standard version is also true (this is the assumption (H1) in \emph{loc.cit}), which is used to ensure the stabilized trace formula. However, this is already proved in \cite{Arthur2001,Arthur2002,Arthur2003}.
	\end{proof}

	\begin{definition}\label{regular and std-regular}
		We say $\pi$ \emph{regular} if $\zeta_{\pi,v}$ has trivial stabilizer in $\Omega_v$ for each $v|\infty$. We say $\pi$ \emph{std-regular} if $\widetilde{\xi}(\zeta_{\pi,v})$ has trivial stabilizer in the Weyl group $S_{N}$ of $\mathrm{GL}_N$ for each $v|\infty$ (\cite[Definition 3.2.1]{Shin2024}). 		
	\end{definition}
	It follows that (see also \cite[Lemma 3.2.2 and Example 3.2.3]{Shin2024})
	\begin{lemma}\label{regular and std-regular, characterisation}
		Let $\pi$ be $C$-algebraic and $(a_{v,1},\cdots,a_{v,\lfloor n/2\rfloor})$ be as in Lemma \ref{criterion for C-algebraic}. Then
		\begin{enumerate}
			\item 
			$\pi$ is regular if and only if for each $v|\infty$, we have
			\[
			\begin{cases*}
				a_{v,1}>a_{v,2}>\cdots>a_{v,\lfloor n/2\rfloor}>0,
				&
				$G^\ast=\mathrm{Sp}_n$ or $G^\ast=\mathrm{SO}_n$ with $n$ odd;
				\\
				a_{v,1}>a_{v,2}>\cdots>|a_{v,\lfloor n/2\rfloor}|,
				&
				$G^\ast=\mathrm{SO}_n^\eta$ with $n$ even.
			\end{cases*}
			\]

			\item 
			$\pi$ is std-regular if and only if for each $v|\infty$, $a_{v,1}>a_{v,2}>\cdots>|a_{v,\lfloor n/2\rfloor}|>0$.
		\end{enumerate}

		Suppose $\pi$ is std-regular, then the infinitesimal character $(\zeta_{\pi^\sharp,v})_{v|\infty}$ of the weak transfer $\pi^\sharp$ is given by
		\[
		\zeta_{\pi^\sharp,v}
		=
		(\zeta_{\pi^\sharp,v,i})_{i=1}^N
		=
		\begin{cases*}
			(a_{v,1},\cdots,a_{v,\lfloor n/2\rfloor-1},|a_{v,\lfloor n/2\rfloor}|,-|a_{v,\lfloor n/2\rfloor}|,\cdots,-a_{v,1}),
			&
			$G^\ast=\mathrm{SO}_n^\eta$;
			\\
			(a_{v,1},\cdots,a_{v,\lfloor n/2\rfloor-1},|a_{v,\lfloor n/2\rfloor}|,0,-|a_{v,\lfloor n/2\rfloor}|,\cdots,-a_{v,1}),
			&
			$G^\ast=\mathrm{Sp}_n$.
		\end{cases*}
		\]
	\end{lemma}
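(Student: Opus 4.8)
The plan is to reduce both characterizations to elementary computations with the Weyl groups $\Omega_v$ and with the torus part of the standard embedding $\widetilde{\xi}$. Fix $v\mid\infty$ and put $k=\lfloor n/2\rfloor$. By the discussion around (\ref{a_1>a_2...}), the infinitesimal character $\zeta_{\pi,v}$ is represented by the tuple $(a_{v,1},\dots,a_{v,k})$ lying in the closure of the dominant Weyl chamber for $(T_v,B_v)$. For a point in the closed dominant chamber the stabilizer in the Weyl group is generated by the simple reflections fixing it, so $\mathrm{Stab}_{\Omega_v}(\zeta_{\pi,v})$ is trivial precisely when no simple reflection fixes $(a_{v,1},\dots,a_{v,k})$. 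Going through the explicit descriptions of $\Omega_v$ recalled above: when $G^\ast=\mathrm{Sp}_n$ or $G^\ast=\mathrm{SO}_n$ with $n$ odd, the simple reflections act on the coordinates by the adjacent transpositions $a_i\leftrightarrow a_{i+1}$ and by the sign change $a_k\mapsto-a_k$, so triviality of the stabilizer is equivalent to $a_{v,1}>\cdots>a_{v,k}>0$; when $G^\ast=\mathrm{SO}_n^\eta$ with $n$ even the last simple reflection is instead $(a_{k-1},a_k)\mapsto(-a_k,-a_{k-1})$, and triviality becomes $a_{v,1}>\cdots>a_{v,k-1}>|a_{v,k}|$. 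This is part (1).

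For part (2) I first need $\widetilde{\xi}$ on infinitesimal characters. Since $\widetilde{\xi}$ is the standard representation of $\widehat{G}\in\{\mathrm{SO}_N,\mathrm{Sp}_N\}$ — and when $\eta\neq1$ one restricts the standard representation of $\mathrm{O}_N$, which on the identity component, hence on the torus, is unchanged — the image of $(a_{v,1},\dots,a_{v,k})$ in $X^\ast(T_v)_\C/\Omega_v$ is the multiset of weights of the standard representation evaluated there: it is $\{\pm a_{v,1},\dots,\pm a_{v,k}\}$ when $\widehat{G}$ is symplectic or even orthogonal, and $\{\pm a_{v,1},\dots,\pm a_{v,k},0\}$ when $\widehat{G}=\mathrm{SO}_N$ with $N$ odd, i.e.\ when $G^\ast=\mathrm{Sp}_n$. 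By Theorem \ref{weak transfer pi to Pi} this multiset, rearranged into a decreasing $N$-tuple, is exactly $\zeta_{\pi^\sharp,v}$, which is the displayed formula (the absolute value $|a_{v,k}|$ appears because in the $\mathrm{SO}_n^\eta$ even case the dominant representative may have $a_{v,k}<0$). Now $\widetilde{\xi}(\zeta_{\pi,v})$ has trivial stabilizer in $S_N$ iff these $N$ numbers are pairwise distinct, i.e.\ iff the $|a_{v,i}|$ are pairwise distinct and nonzero, which by (\ref{a_1>a_2...}) is precisely $a_{v,1}>\cdots>|a_{v,k}|>0$, giving part (2). One should also note that when $G^\ast=\mathrm{SO}_n$ with $n$ odd, $C$-algebraicity already forces $a_{v,i}\in\tfrac12+\Z$, hence $a_{v,k}\neq0$, which explains why regular and std-regular coincide there (compatibly with comparing the two lists).

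The computations themselves are routine; the one step requiring genuine care is the normalization of $\widetilde{\xi}$ on infinitesimal characters — reconciling the $\rho_G$-shift already built into the representative $(a_{v,1},\dots,a_{v,k})$ of $\zeta_{\pi,v}$ with the transfer statement of Theorem \ref{weak transfer pi to Pi}, and verifying in the case $\eta\neq1$ that descending from $^LG=\mathrm{O}_N$ to $\mathrm{GL}_N$ does not disturb the multiset of standard weights (it merely permutes the two ``middle'' coordinates, which is invisible in $\C^N/S_N$). Once this bookkeeping is matched against \cite[\S3.2]{Shin2024}, the formula for $\zeta_{\pi^\sharp,v}$ and both characterizations follow at once.
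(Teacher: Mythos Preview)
Your proposal is correct and is essentially what the paper does: the paper's own proof is the single line ``It follows that (see also \cite[Lemma 3.2.2 and Example 3.2.3]{Shin2024})'', so you have simply unpacked the direct computation from the explicit Weyl group descriptions and the standard embedding $\widetilde{\xi}$ that the paper takes for granted. One small remark: there is no $\rho_G$-shift ``built into'' the representative $(a_{v,1},\dots,a_{v,k})$ --- these are the coordinates of $\zeta_{\pi,v}$ itself, and $C$-algebraicity is a congruence condition on them, not a shift --- so the bookkeeping you flag in your last paragraph is in fact vacuous here.
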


	We consider the following condition on $\pi$ an irreducible discrete series automorphic representation of $G(\mathbb{A}_F)$ (this is the assumptions (H2) and (H3) in \cite{Shin2024}):
	\begin{equation}\label{(C2)}
		\makebox[0pt][l]{\hspace{-2cm}(C2)}
		\begin{cases*}
			\text{The infinitesimal character of the Arthur parameter $\pi^\sharp=\boxplus_i^r\Pi_i$ of $\pi$}
			\\
			\text{is std-regular and each $\Pi_i$ is self-dual.}
			\index{C@(C2)}
		\end{cases*}
	    \notag
	\end{equation}

	\begin{theorem}\label{Galois representations attached to auto rep}
		Let $\pi$ be an irreducible discrete series automorphic representation of $G(\mathbb{A}_F)$ that is $C$-algebraic, unramified at all finite places of $F$ and satisfies (C2).
		Then there is a semisimple Galois representation
		\[
		r_{\pi}
		\colon
		\Gamma_F\to\,^CG(\overline{\mathbb{Q}}_p)
		\]
		such that
		\begin{enumerate}
			\item For each finite place $v\nmid p$ of $F$, $r_\pi$ is unramified at $v$ and 
			\[
			(r_\pi|_{W_{F_v}})^{\mathrm{Fr}-\mathrm{ss}}\simeq\iota\circ\phi_{\pi_v}^C,
			\]
			the isomorphism is unique up to $^LG(\overline{\mathbb{Q}}_p)$-conjugates.

			\item For each embedding $\tau\colon F\to\overline{\Q}_p$ inducing a place $v|p$ of $F$, $r_\pi$ is crystalline at $v$. Moreover, its Hodge-Tate weights at $\tau$ are given by
			\begin{equation}\label{Hodge-Tate weights of r_pi}
				\left(
				(\zeta_{\pi^\sharp,\iota^{-1}\tau,i}+N-i)_{i=1}^N;-1
				\right)
				\in
				X_\ast(\iota(\widehat{T})\times\G_m)/\Omega_\infty.
			\end{equation}

			\item $r_\pi$ is totally odd.
		\end{enumerate}
	\end{theorem}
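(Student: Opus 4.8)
The plan is to build $r_\pi$ out of the weak transfer $\pi^\sharp=\boxplus_{i=1}^r\Pi_i$ supplied by Theorem \ref{weak transfer pi to Pi}, and then to prove that the resulting $\mathrm{GL}_N(\overline{\mathbb{Q}}_p)$-valued representation factors through $\widetilde{\xi}\colon{}^CG\hookrightarrow\mathrm{GL}_N$. Since $\pi$ is unramified at every finite place, we may take the auxiliary set $S$ in Theorem \ref{weak transfer pi to Pi} to be $S_p\cup\{v\mid\infty\}$, so each $\Pi_i$ is a cuspidal automorphic representation of $\mathrm{GL}_{N_i}(\mathbb{A}_F)$, with $\sum_i N_i=N$, that is unramified outside $p$ and, by (C2), self-dual with std-regular infinitesimal character; concretely $\zeta_{\pi^\sharp,\infty}$ is the palindromic tuple of Lemma \ref{regular and std-regular, characterisation}. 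Because $\pi$ is $C$-algebraic and $\widetilde{\xi}$ preserves infinitesimal characters, each $\Pi_i$ is regular $C$-algebraic, hence (up to a Tate twist) regular algebraic, essentially self-dual, cuspidal, and to such a $\Pi_i$ the standard machinery attaches a semisimple $r_{\Pi_i}\colon\Gamma_F\to\mathrm{GL}_{N_i}(\overline{\mathbb{Q}}_p)$.

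First I would record the properties of each $r_{\Pi_i}$ (from the work of Clozel, Kottwitz, Harris--Taylor, Taylor--Yoshida, Shin, Chenevier--Harris, Caraiani, Barnet-Lamb--Gee--Geraghty--Taylor): it is unramified outside $p$, crystalline at each $v\mid p$ (by Caraiani, since $\Pi_{i,v}$ is unramified), has the Hodge--Tate weights prescribed by $\zeta_{\pi^\sharp,\iota^{-1}\tau}$, satisfies local--global compatibility $(r_{\Pi_i}|_{W_{F_v}})^{\mathrm{Fr}-\mathrm{ss}}\simeq\iota\circ\phi_{\Pi_{i,v}}$ at all finite $v$ after the appropriate normalisation, is essentially self-dual $r_{\Pi_i}^\vee\simeq r_{\Pi_i}\otimes\mu_i$ with $\mu_i$ a power of the $p$-adic cyclotomic character $\epsilon_p$ times a finite-order character, and is totally odd (Taylor, Bellaïche--Chenevier, Patrikis, Taïbi). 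The palindromy of $\zeta_{\pi^\sharp,\infty}$ pins all the $\Pi_i$ to a common motivic weight, so after the normalisation encoded in $\phi^C$ (the twists $|x|^{(1-N)/2}$, resp. $|x|^{-1/2}$) the multipliers $\mu_i$ all coincide — every everywhere-unramified finite-order character of $\Gamma_F$ being trivial, the finite-order discrepancies disappear. Setting $r_{\pi^\sharp}:=\bigoplus_{i=1}^r r_{\Pi_i}$ thus produces $r_{\pi^\sharp}\colon\Gamma_F\to\mathrm{GL}_N(\overline{\mathbb{Q}}_p)$ that preserves a single non-degenerate bilinear form $(-,-)$ with similitude a fixed power of $\epsilon_p$, is unramified outside $p$ with $(r_{\pi^\sharp}|_{W_{F_v}})^{\mathrm{Fr}-\mathrm{ss}}\simeq\iota\circ(\widetilde{\xi}\circ\phi_{\pi_v}^C)$, is crystalline at $p$ with the Hodge--Tate weights of (\ref{Hodge-Tate weights of r_pi}), and is totally odd.

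The crucial remaining step — and the one I expect to be the main obstacle — is to identify the image of $r_{\pi^\sharp}$ inside $\widetilde{\xi}({}^CG)$. For this I would invoke Arthur's classification of the discrete spectrum of $G$ (via $G^\ast$): since $\pi$ is a discrete series automorphic representation of $G$, its Arthur parameter is elliptic, which forces the $\Pi_i$ to be pairwise non-isomorphic and each of the self-duality type dual to the standard representation of $\widehat{G}$ — orthogonal when $\widehat{G}=\mathrm{SO}_N$ (i.e. $G^\ast=\mathrm{Sp}_n$ or $G^\ast=\mathrm{SO}_n^\eta$ with $n$ even) and symplectic when $\widehat{G}=\mathrm{Sp}_N$ (i.e. $G^\ast=\mathrm{SO}_n$ with $n$ odd). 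Hence $(-,-)$ is of the right type, so the image of $r_{\pi^\sharp}$ together with its similitude lies in $\mathrm{GO}_N$, resp. $\mathrm{GSp}_N$; computing $\det r_{\pi^\sharp}$ from the central characters of the $\Pi_i$ by class field theory then shows it equals the expected power of $\epsilon_p$ when ${}^LG=\mathrm{SO}_N$ or $\mathrm{Sp}_N$, and picks up precisely the quadratic character $\eta$ when $G^\ast=\mathrm{SO}_n^\eta$ with $\eta\neq1$ (so ${}^LG=\mathrm{O}_N$), which is what lets one conjugate the image into ${}^LG$. Recording the similitude as the $\mathbb{G}_m$-factor of ${}^CG$ — the $C$-normalisation is designed precisely so that this factor is a twist of $\iota\epsilon_p$ and no square root of $\epsilon_p$ is required — yields $r_\pi\colon\Gamma_F\to{}^CG(\overline{\mathbb{Q}}_p)$, and properties (1)--(3) are then the translations via $\widetilde{\xi}$ and $\phi^C$ of the already-established properties of $r_{\pi^\sharp}$, total oddness of $r_\pi$ as a ${}^CG$-valued representation being equivalent to that of $r_{\pi^\sharp}$ and ultimately reflecting that $\phi_{\pi_\infty}$ is a discrete parameter for the compact form $G_\infty$. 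The genuinely delicate points are therefore the normalisation bookkeeping (matching the $C$- versus $L$-algebraic twists so that all multipliers agree and the Hodge--Tate weights come out as in (\ref{Hodge-Tate weights of r_pi})) and the appeal to Arthur's multiplicity formula to guarantee the parity and distinctness of the $\Pi_i$; the input Galois representations on the $\mathrm{GL}_{N_i}$, though deep, are available off the shelf.
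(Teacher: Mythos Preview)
Your proposal is correct and follows exactly the route that the paper's cited references take: build $r_{\pi^\sharp}=\bigoplus_i r_{\Pi_i}$ from the isobaric factors of the weak transfer, use Arthur's endoscopic classification to pin down the sign of each $\Pi_i$ and hence the type of the bilinear form (and the determinant/quadratic character in the even-orthogonal case), then record the similitude as the $\mathbb{G}_m$-component of ${}^CG$. The only difference is one of presentation: the paper's own proof is three lines of citations --- it invokes \cite[Theorem 3.2.7]{Shin2024} for the construction and property (1) wholesale, then adds \cite{ChenevierHarris2013} (or \cite{Barnet-LambGeeGeraghtyTaylor2014}) for crystallinity and the Hodge--Tate weights in (2), and \cite{Taylor2012,Taibi2016,CaraianiLeHung2016} for total oddness in (3) --- whereas you have essentially sketched the argument that Shin carries out. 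Your identification of the ``delicate points'' (the $C$- vs.\ $L$-normalisation bookkeeping, and the appeal to Arthur for parity and distinctness of the $\Pi_i$) is accurate; these are precisely what \cite{Shin2024} handles in detail, and the paper is content to cite that rather than reprove it.
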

	\begin{proof}
		(1) is proved in \cite[Theorem 3.2.7]{Shin2024}. Since $\pi^\sharp=\boxtimes_{i=1}^r\Pi_i$ is unramified at each $v|p$, so is each cuspidal isobaric factor $\Pi_i$ and thus by \cite[Theorem 3.2.3(c)]{ChenevierHarris2013}, $r_\pi$ is crystalline at $v|p$. Moreover, the Hodge-Tate weights are given as in Theorem 3.2.3(b) of \emph{loc.cit}. This proves (2) (see also \cite[Theorem 2.1.1]{Barnet-LambGeeGeraghtyTaylor2014}). Finally, the Galois representation associated to each $\Pi_i$ is totally real by \cite{Taylor2012,Taibi2016,CaraianiLeHung2016}, thus so is $r_\pi$.
	\end{proof}

	\section{Hecke algebras}\label{Hecke algebras and rep}	
	In this section we
	establish some notions and results on
	automorphic forms and Hecke algebras
	on definite special orthogonal groups that we will use throughout this article.

	\subsection{Automorphic forms on $G(\mathbb{A}_F)$}\label{Automorphic forms on G(A_F)}
	In this subsection, we recall the theory of
	algebraic modular forms on $G$
	as in \cite[§4]{Gross1999}.
	We fix an algebraic representation
	$W$ of $ G$ over $\mathcal{O}_F$
	which is a free $\mathcal{O}_F$-module of finite rank.

	\begin{definition}
		For any compact open subgroup $K$ of $G(\mathbb{A}_{F,f})$ and any $\mathcal{O}_F$-algebra $R$, write $W(R)=W\otimes_{\mathcal{O}_F}R$ and put
		\[
		M_W(K,R)
		=
		\{
		f\colon
		G(\mathbb{A}_F)/(G(\mathbb{A}_{F,\infty})\times K)
		\rightarrow
		W(R)\mid 
		f(\gamma g)=\gamma f(g),
		\forall
		\gamma\in
		G(F)
		\}.
		\]
		This is the space of automorphic forms on $G(\mathbb{A}_F)$ of weight $W$, of level $K$ and of coefficients in $R$. Then we put
		\[
		M_W(R):=
		\lim\limits_{\overrightarrow{K}}
		M_W(K,R)\index{M@$M_W(K,R),M_W(R)$}
		\]
		where $K$ runs through the set of compact open subgroups of	$G(\mathbb{A}_{F,f})$. We let $G(\mathbb{A}_F)$ act on $M_W(R)$
		by right translation,
		\[
		(gf)(g'):=
		f(g'g),
		\quad
		\forall
		g,g'\in G(\mathbb{A}_F),
		\,
		f\in M_W(R).
		\]
	\end{definition}

	\begin{remark}\label{equivalence of various spaces of modular forms}\rm 
		We can define a space analogue to $M_W(K,R)$ for each place $v$ of $F$ as follows: suppose that $R$ is an $F_v$-algebra,
		then we set
		\[
		M_{W,v}(K,R)
		:=
		\{
		f'\colon
		G(F)\backslash G(\mathbb{A}_F)
		\rightarrow
		W(R)\mid 
		f'(gk)=k_v^{-1}f'(g),
		\forall
		k\in
		G(\mathbb{A}_{F,\infty})\times K
		\}.
		\]
		Similarly, we set
		$M_{W,v}(R)=
		\lim\limits_{\overrightarrow{K}}
		M_{W,v}(K,R)$.

		We let $G(\mathbb{A}_F)$ act on $M_{W,v}(K,R)$ by \emph{left inverse} translation,
		\[
		(gf')(g')
		:=
		f'(g^{-1}g'),
		\quad
		\forall
		g,g'\in G(\mathbb{A}_F),
		\,
		f'\in M_{W,v}(K,R).
		\]
		We have an isomorphism of $R$-modules
		\[
		M_W(K,R)
		\rightarrow
		M_{W,v}(K,R),
		\quad
		f
		\mapsto
		(
		f'\colon
		g\mapsto
		g_v^{-1}f(g)
		),
		\]
		whose inverse is given by
		\[
		M_{W,v}(K,R)
		\rightarrow
		M_W(K,R),
		\quad
		f'\mapsto
		(
		f\colon
		g\mapsto
		g_vf'(g)
		).
		\]
		Passing to the injective limit over $K$, we have an isomorphism	$M_W(R)\simeq M_{W,v}(R)$. Note however that these maps are not
		$G(\mathbb{A}_F)$-equivariant.

		For $v|\infty$ and $R=\mathbb{C}$, $M_{W,v}(K,\mathbb{C})$ is a subspace of the space $\mathcal{A}(G(\mathbb{A}_F))$ of automorphic forms on
		$G(\mathbb{A}_F)$. Recall that $\mathcal{A}(G(\A_F))$ consists of smooth functions of moderate growth
		\[
		f\colon G(\mathbb{A}_F)\rightarrow\mathbb{C}
		\]
		such that $f(\gamma g)=f(g)$ for all $\gamma\in G(F)$, $f$ is right $G(\mathbb{A}_{F,\infty})\times K'$-finite for some compact open subgroup
		$K'$ of $G(\mathbb{A}_{F,f})$ and is also	$Z(U(\mathrm{Lie}G(\mathbb{A}_{F,\infty})))$-finite. We identify the $\Ocal_F$-module $W$ with $\mathcal{O}_F^r$ for some $r\in\mathbb{N}$. Then each element
		$f'\in M_{W,v}(K,\mathbb{C})$ gives rise to $r$ maps
		$f'_i\colon G(\mathbb{A}_F)\rightarrow\mathbb{C}$ such that $f'_i(g)$
		is the $i$-th component of the vector $f'(g)\in W\otimes_{\mathcal{O}_F}\mathbb{C}=\mathbb{C}^r$ ($i=1,\cdots,r$).
		The $Z(U(\mathrm{Lie}G(\mathbb{A}_{F,\infty})))$-finiteness of $f'_i$ comes from that fact that $f_i$ is invariant under translation of $G(\mathbb{A}_{F,\infty})$. Other conditions are similarly verified and thus
		$f_i'$ is an automorphic form on $G(\mathbb{A}_F)$.
	\end{remark}

	From now on we make the following assumption on our fixed compact open subgroup $K$ of the form $K=\prod_vK_v$ of $G(\mathbb{A}_{F,f})$: for each place $v|p$ of $F$, $K_v$ is  contained in the hyperspecial compact open subgroup $G(\mathcal{O}_{F,v})$ of $G(F_v)$.

	\begin{definition}\label{M_{W,v}space of automorphic forms}
		For any place $v |p$ of $F$ and any $\mathcal{O}_{F,v}$-algebra $R$,
		as in the preceding remark, we set
		\[
		M_{W,v}(K,R)
		=
		\{
		f'\colon
		G(F)\backslash G(\mathbb{A}_F)
		\rightarrow
		W(R)\mid 
		f'(gk)=k_v^{-1}f'(g),
		\forall
		k\in
		G(\mathbb{A}_{F,\infty})\times K
		\}.
		\]
		Similarly, we put
		\[
		M_{W,v}(R)
		:=
		\lim\limits_{\overrightarrow{K}}
		M_{W,v}(K,R).\index{M@$M_{W,v}(K,R),M_{W,v}(R)$}
		\]
	\end{definition}

	It is well-known that the space of automorphic forms on $G(\mathbb{A}_F)$
	is a semisimple admissible $G(\mathbb{A}_F)$-module, thus
	\begin{proposition}\label{the space of modular forms is semisimple}
		For an
		$\mathcal{O}_F$-algebra $R$ and a place
		$v |p$ of $F$,
		the space
		$M_{W,v}(R)$
		is a semisimple
		admissible
		$G(\mathbb{A}_F)$-module.		
	\end{proposition}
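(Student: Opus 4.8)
The plan is to deduce the statement directly from the analogous well-known fact for the space $\mathcal{A}(G(\mathbb{A}_F))$ of automorphic forms, using the various isomorphisms recorded in Remark \ref{equivalence of various spaces of modular forms}. First I would reduce to the case $R = \mathbb{C}$. Indeed, $M_{W,v}(R)$ is obtained from $M_{W,v}(\overline{\mathbb{Q}}_p)$ by extension of scalars along $\overline{\mathbb{Q}}_p \simeq \mathbb{C}$ (via $\iota$) and then along $\mathbb{C} \to$ (any other coefficient field containing the relevant data); admissibility and semisimplicity as a $G(\mathbb{A}_{F,f})$-module are preserved under such flat base change of a coefficient field, since the action factors through $G(\mathbb{A}_{F,f})$ and the decomposition into isotypic components is Galois-stable. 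So it suffices to treat $M_{W,v}(\mathbb{C})$.

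Next I would invoke the isomorphism of $\mathbb{C}$-vector spaces $M_{W}(\mathbb{C}) \simeq M_{W,v}(\mathbb{C})$ from Remark \ref{equivalence of various spaces of modular forms}; although this map is not $G(\mathbb{A}_F)$-equivariant, it \emph{is} equivariant for the subgroup $G(\mathbb{A}_F^{v})$ of adèles trivial at $v$ and more importantly for the full finite-adèle group $G(\mathbb{A}_{F,f})$ after twisting the action at $v$ — in any case the two $G(\mathbb{A}_{F,f})$-actions differ by an automorphism (conjugation/twist at the single place $v$), which does not affect whether the module is admissible or semisimple. Thus it is enough to prove that $M_{W}(\mathbb{C})$, equivalently $M_{W,v}(K,\mathbb{C})$ for $v \mid \infty$, is a semisimple admissible $G(\mathbb{A}_F)$-module. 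Here is where condition (C1) enters decisively: since $G(F\otimes_{\mathbb{Q}}\mathbb{R})$ is compact, $M_{W,v}(K,\mathbb{C})$ for $v \mid \infty$ sits inside $\mathcal{A}(G(\mathbb{A}_F))$ by the explicit check in the Remark (each component $f'_i$ is smooth, of moderate growth — in fact bounded, $G(F)$-invariant, right-finite and $Z(U(\mathfrak{g}_\infty))$-finite).

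It then remains to cite the standard structural fact that, for $G$ anisotropic over $F$ (equivalently here, $G$ compact at infinity so that $G(F)\backslash G(\mathbb{A}_F)$ is compact), the space $\mathcal{A}(G(\mathbb{A}_F))$ decomposes as a Hilbert-space direct sum $\widehat{\bigoplus}_\pi m(\pi)\,\pi$ of irreducible admissible automorphic representations, each occurring with finite multiplicity, and that any $(\mathfrak{g}_\infty, K_\infty)\times G(\mathbb{A}_{F,f})$-subquotient cut out by finiteness conditions (such as $M_{W,v}(K,\mathbb{C})$, which is the $W^\vee$-isotypic part at infinity, of level $K$) is therefore a \emph{finite-dimensional} semisimple module — in particular admissible. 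Semisimplicity as a $G(\mathbb{A}_F)$-module follows from the unitarity of $\mathcal{A}(G(\mathbb{A}_F))$ with respect to the Petersson inner product: any closed invariant subspace has an invariant orthogonal complement, and on the finite-dimensional pieces this gives genuine semisimplicity. Finally, transporting back through the chain of isomorphisms gives the claim for $M_{W,v}(R)$, $v\mid p$.

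The only genuine subtlety — and the step I would spell out most carefully — is the compatibility of the two $G(\mathbb{A}_{F,f})$-module structures under the non-equivariant isomorphism $M_W \simeq M_{W,v}$ (right translation versus left-inverse translation, twisted at $v$ by the action of $g_v$ on $W(R)$). One checks that conjugation by the ``evaluation at $g_v$'' cocycle intertwines the two actions up to the finite-order data of $W$, so that admissibility and complete reducibility — being properties insensitive to composing the action with an automorphism of the group acting through a fixed algebraic representation at one place — transfer without loss. Everything else is a matter of assembling cited results: the compactness of $G(F)\backslash G(\mathbb{A}_F)$ under (C1), the discrete decomposition of $L^2$ (hence of $\mathcal{A}$) for anisotropic $G$, and flat base change for the coefficient ring.
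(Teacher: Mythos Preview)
Your approach matches the paper's: the paper offers no proof beyond the sentence immediately preceding the proposition, which simply asserts that the claim follows from the well-known fact that $\mathcal{A}(G(\mathbb{A}_F))$ is a semisimple admissible $G(\mathbb{A}_F)$-module (valid here since $G$ is compact at infinity by (C1)). Your elaboration via the isomorphisms of Remark~\ref{equivalence of various spaces of modular forms} and the reduction to $\mathbb{C}$ is a reasonable way to unpack that one-line deduction, and the equivariance subtlety you flag is real but, as you note, harmless for the conclusion.
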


	More generally, for each place $v|p$ of $F$, we choose an irreducible representation $W_v$ of $G$ over $\mathcal{O}_{F,v}$ and write $W=\otimes_{v|p}W_v$ for the tensor product over $\mathbb{Z}_p$. For any $\mathcal{O}$-algebra $R$, we put
	\begin{equation}\label{M_{W,p}(K,R)}
		M_{W,p}(K,R)
		:=
		\{
		f'\colon
		G(F)\backslash G(\mathbb{A}_F)
		\rightarrow
		W(R)\mid 
		f'(gk)=k_p^{-1}f'(g),\forall
		k\in G(\mathbb{A}_{F,\infty})\times K
		\}
		\index{M@$M_{W,p}(K,R)$}
	\end{equation}
	and as before, we set
	\[
	M_{W,p}(R)
	:=
	\lim\limits_{\overrightarrow{K}}
	M_{W,p}(K,R).
	\]
	It is easy to see that
	there is a natural isomorphism of $R$-modules
	\[
	M_{W,p}(K,R)
	\simeq
	\otimes_{v|p}
	M_{W_v,v}(K,R).
	\]

	Following
	\cite[Proposition 4.3]{Gross1999},
	for any compact open subgroup $K$ of
	$G(\mathbb{A}_{F,f})$,
	we write
	\[
	\Sigma_K
	:=
	G(F)\backslash
	G(\mathbb{A}_F)/
	(G(\mathbb{A}_{F,\infty})\times K).\index{S@$\Sigma_K$}
	\]
	\begin{proposition}
		The double coset
		$\Sigma_K$
		is finite.
		For an $F$-algebra,
		resp.
		an $F_v$-algebra,
		an $\mathcal{O}_{F,v}$-algebra $R$
		with
		$v|p$,
		the $R$-module
		$M_W(K,R)$,
		resp.
		$M_{W,v}(K,R)$
		is of finite type.
	\end{proposition}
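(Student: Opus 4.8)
The plan is to derive both assertions from the compactness of $G(F)\backslash G(\mathbb{A}_F)$. First I would record that $G$ is anisotropic over $F$: being semisimple (an inner form of $\mathrm{Sp}_n$ or $\mathrm{SO}_n^\eta$) it admits no nontrivial $F$-rational character, and any positive-dimensional $F$-split torus $S\subseteq G$ would make $S(F\otimes_{\mathbb{Q}}\mathbb{R})\cong(\mathbb{R}^{\times})^{[F:\mathbb{Q}]\dim S}$ noncompact, contradicting the compactness of $G(F\otimes_{\mathbb{Q}}\mathbb{R})$ imposed by (C1). By the Borel--Harish-Chandra finiteness theorem (equivalently Godement's compactness criterion), this anisotropy yields that $G(F)\backslash G(\mathbb{A}_F)^1$ is compact, and $G(\mathbb{A}_F)^1=G(\mathbb{A}_F)$ since $G$ is semisimple. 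As $K\subseteq G(\mathbb{A}_{F,f})$ is open, the subgroup $G(\mathbb{A}_{F,\infty})\times K$ is open in $G(\mathbb{A}_F)$, so $\Sigma_K$ is simultaneously a continuous image of a compact space and discrete, hence finite; this is the finiteness asserted (compare \cite{Gross1999}). I then fix $h=\#\Sigma_K$ and coset representatives $g_1,\dots,g_h\in G(\mathbb{A}_F)$.

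For the finite-type statement, I would study the evaluation map $f\mapsto(f(g_1),\dots,f(g_h))$ from $M_W(K,R)$ to $W(R)^{h}$, where $W(R)=W\otimes_{\mathcal{O}_F}R$. Using the decomposition $G(\mathbb{A}_F)=\bigsqcup_{i}G(F)\,g_i\,(G(\mathbb{A}_{F,\infty})\times K)$ together with the $G(F)$-equivariance and the right $(G(\mathbb{A}_{F,\infty})\times K)$-invariance built into the definition of $M_W(K,R)$, this map is $R$-linear and injective. To pin down its image I would introduce the arithmetic subgroups $\Gamma_i:=G(F)\cap g_i(G(\mathbb{A}_{F,\infty})\times K)g_i^{-1}$: for $\gamma\in\Gamma_i$ one has $\gamma g_i=g_i k$ with $k\in G(\mathbb{A}_{F,\infty})\times K$, whence $\gamma f(g_i)=f(\gamma g_i)=f(g_i)$, so $f(g_i)\in W(R)^{\Gamma_i}$; conversely any tuple of $\Gamma_i$-invariants $(w_i)_i$ spreads out to a well-defined form, giving an $R$-module isomorphism $M_W(K,R)\cong\bigoplus_{i=1}^{h}W(R)^{\Gamma_i}$. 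Each $\Gamma_i$ is the intersection of the discrete group $G(F)$ with a compact open subgroup of $G(\mathbb{A}_F)$, hence finite, and $W(R)$ is finite free over $R$; so each $W(R)^{\Gamma_i}$, and therefore $M_W(K,R)$, is of finite type over $R$ — either as a submodule of a finite free module over the (Noetherian) coefficient rings that occur, or, whenever $\#\Gamma_i$ is invertible in $R$ (e.g.\ when $R$ is an $F$-algebra), as the direct summand cut out by the averaging idempotent $\frac{1}{\#\Gamma_i}\sum_{\gamma\in\Gamma_i}\gamma$.

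The case of $M_{W,v}(K,R)$ for $v\mid p$ I would handle identically, running the same orbit decomposition and stabilizer computation with the twisted invariance $f'(gk)=k_v^{-1}f'(g)$; alternatively one transports finiteness along the $R$-linear isomorphism $M_W(K,R)\simeq M_{W,v}(K,R)$ of Remark~\ref{equivalence of various spaces of modular forms} (extended to $\mathcal{O}_{F,v}$-coefficients as in Definition~\ref{M_{W,v}space of automorphic forms}). I expect the only genuine obstacle to be the first step: verifying that (C1) forces $G$ to be $F$-anisotropic, so that the Borel--Harish-Chandra theorem applies and $G(F)\backslash G(\mathbb{A}_F)$ is compact. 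Once that is in hand, the remainder is a formal unwinding of the definition of $M_W(K,R)$ together with the elementary finiteness of the arithmetic stabilizers $\Gamma_i$.
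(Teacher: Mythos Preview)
Your argument is correct, but the route to the finiteness of $\Sigma_K$ differs from the paper's. You deduce anisotropy of $G$ over $F$ from the compactness of $G(F\otimes_{\mathbb{Q}}\mathbb{R})$ and then invoke the Godement/Borel--Harish-Chandra compactness criterion directly for $G/F$, so that $\Sigma_K$ is a discrete quotient of the compact space $G(F)\backslash G(\mathbb{A}_F)$. The paper instead passes through restriction of scalars $G_{(r)}=\mathrm{Res}_{F/\mathbb{Q}}G$, chooses a compact open $\mathbf{K}\subset K$ inside $G_{(r)}(\mathbb{A}_{\mathbb{Q},f})$, and cites \cite{Gross1999} for the finiteness of $G_{(r)}(\mathbb{Q})\backslash G_{(r)}(\mathbb{A}_\mathbb{Q})/(G_{(r)}(\mathbb{R})\times\mathbf{K})$, from which the finiteness of $\Sigma_K$ follows by the surjection coming from $\mathbf{K}\subset K$. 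Your approach is more self-contained and makes explicit the role of condition (C1), while the paper's approach reduces to a black-box reference over $\mathbb{Q}$. For the finite-type assertion, the paper is terse (elements of $M_W(K,R)$ are determined by their values on the finite set $\Sigma_K$), whereas you spell out the isomorphism $M_W(K,R)\cong\bigoplus_i W(R)^{\Gamma_i}$ and the finiteness of the stabilizers $\Gamma_i$; this is exactly the content of the corollary immediately following the proposition in the paper, so you have effectively merged the two statements.
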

	\begin{proof}
		Write
		$G_{(r)}=\mathrm{Res}_{F/\mathbb{Q}}G$,
		then $K$ contains a compact open subgroup
		$\mathbf{K}$ of
		$G_{(r)}(\mathbb{A}_{\mathbb{Q},f})$.
		Since the double quotient
		$\mathbf{\Sigma}_\mathbf{K}
		=
		G_{(r)}(\mathbb{Q})
		\backslash
		G_{(r)}(\mathbb{A}_\mathbb{Q})/
		(G_{(r)}(\mathbb{R})\times\mathbf{K})$
		is finite by \cite{Gross1999}
		and we have a natural projection
		$\Sigma_K
		\rightarrow
		\mathbf{\Sigma}_\mathbf{K}$
		with finite fibers,
		we conclude that
		$\Sigma_K$ is finite.
		Since elements in
		$M_W(K,R)$
		are determined by their images on
		$\Sigma_K$,		
		$M_W(K,R)$
		is indeed of finite rank over $R$.
		The same argument applies to
		$M_{W,v}(K,R)$.
	\end{proof}

	We fix a set
	$\{g_\alpha\in G(\mathbb{A}_F)\}$
	of representatives for
	$\Sigma_K$
	(indexed by $\alpha\in\Sigma_K$),
	and write
	\begin{align*}
		G_{g_\alpha}
		&
		:=
		G(F)\bigcap
		g_\alpha(G(\mathbb{A}_{F,\infty})\times K)g_\alpha^{-1},
		\\
		G'_{g_\alpha}
		&
		:=
		G(F)
		\bigcap
		g_\alpha^{-1}
		(G(\mathbb{A}_{F,\infty})\times K)
		g_\alpha,
	\end{align*}
	which are arithmetic subgroups of $G(F)$,
	thus are finite by
	\cite[Proposition 1.4]{Gross1999}.

	\begin{corollary}\label{the space of modular forms is finite dimensional}
		For an $F$-algebra $R$,
		we have an isomorphism of
		$R$-modules
		\[
		M_W(K,R)
		\simeq
		\bigoplus_{\alpha\in\Sigma_K}
		W(R)^{G_{g_\alpha}},
		\quad
		f\mapsto
		(f(g_\alpha))_\alpha.
		\]
		Similarly,
		for $v |p$
		and an
		$\mathcal{O}_{F,v}$-algebra $R$,
		we have an isomorphism of $R$-modules
		\[
		M_{W,v}(K,R)
		\simeq
		\bigoplus_{\alpha\in\Sigma_K}
		W(R)^{G_{g_\alpha}'},
		\quad
		f\mapsto
		(f(g_\alpha))_\alpha.
		\]
	\end{corollary}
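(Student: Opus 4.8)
The plan is to show that evaluating an automorphic form at the chosen set of representatives $\{g_\alpha\}_{\alpha\in\Sigma_K}$ yields the asserted isomorphism. The underlying principle is simple: a function $f\in M_W(K,R)$ is completely determined by the finite tuple $(f(g_\alpha))_\alpha$, because any $g\in G(\mathbb{A}_F)$ can be written $g=\gamma g_\alpha k$ with $\gamma\in G(F)$ and $k\in G(\mathbb{A}_{F,\infty})\times K$ for the unique $\alpha$ with $g\in G(F)g_\alpha(G(\mathbb{A}_{F,\infty})\times K)$, whence $f(g)=\gamma f(g_\alpha)$; conversely, the tuple can be prescribed arbitrarily, subject only to the single compatibility forced at the point $g_\alpha$ by combining left $G(F)$-equivariance with right $(G(\mathbb{A}_{F,\infty})\times K)$-invariance — and that compatibility is precisely the requirement that $f(g_\alpha)$ be fixed by $G_{g_\alpha}$.

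So, first I would introduce the map $f\mapsto(f(g_\alpha))_\alpha$ and check that its image lies in $\bigoplus_\alpha W(R)^{G_{g_\alpha}}$: for $\gamma\in G_{g_\alpha}$ one has $\gamma g_\alpha=g_\alpha k$ with $k\in G(\mathbb{A}_{F,\infty})\times K$, and then $\gamma f(g_\alpha)=f(\gamma g_\alpha)=f(g_\alpha k)=f(g_\alpha)$. Injectivity is the determination statement above. For surjectivity I would, given $(w_\alpha)_\alpha$ with $w_\alpha\in W(R)^{G_{g_\alpha}}$, set $f(\gamma g_\alpha k):=\gamma w_\alpha$ and verify this is well posed; this is the only step with any content. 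If $\gamma g_\alpha k=\gamma' g_\alpha k'$ then $g_\alpha^{-1}(\gamma'^{-1}\gamma)g_\alpha=k'k^{-1}\in G(\mathbb{A}_{F,\infty})\times K$, so $\gamma'^{-1}\gamma\in G(F)\cap g_\alpha(G(\mathbb{A}_{F,\infty})\times K)g_\alpha^{-1}=G_{g_\alpha}$, hence $\gamma'^{-1}\gamma$ fixes $w_\alpha$ and $\gamma w_\alpha=\gamma' w_\alpha$; and the relevant index $\alpha$ is unambiguous because the $g_\alpha$ lie in distinct double cosets. It is then routine to see that the resulting $f$ lies in $M_W(K,R)$ and that the two constructions are mutually inverse.

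For the second isomorphism I would run the identical argument for $M_{W,v}(K,R)$, the only modification being that in the transformation rule one must keep track of the component at $v$ of the conjugating elements, which is exactly what causes $G'_{g_\alpha}$ to take the place of $G_{g_\alpha}$; equivalently, one transports the first isomorphism along the $R$-linear isomorphism $M_W(K,R)\xrightarrow{\ \sim\ }M_{W,v}(K,R)$, $f\mapsto\bigl(g\mapsto g_v^{-1}f(g)\bigr)$, of Remark \ref{equivalence of various spaces of modular forms}. I do not expect any real obstacle here: $\Sigma_K$ is finite by the preceding proposition, the groups $G_{g_\alpha}$ and $G'_{g_\alpha}$ are finite, and the definitions of these groups are tailored precisely so that the well-definedness check goes through — the argument is the standard ``automorphic forms on a finite double-coset set'' computation, amounting to careful bookkeeping.
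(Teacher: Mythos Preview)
Your proposal is correct and follows essentially the same approach as the paper's proof. The paper verifies that $f(g_\alpha)\in W(R)^{G_{g_\alpha}}$ by exactly the computation you give and then simply declares ``the bijectivity is then clear''; you have filled in the injectivity and surjectivity details (the well-definedness check for the inverse map) that the paper omits, and for the second isomorphism both you and the paper defer to the analogous argument.
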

	\begin{proof}
		Indeed,
		for the first part,
		for each $g=g_\alpha g'g_\alpha^{-1}
		\in
		G_{g_\alpha}$
		with
		$g'\in
		G(\mathbb{A}_{F,\infty})\times K$,
		we have
		$g(f(g_\alpha))
		=
		f(gg_\alpha)
		=
		f(g_\alpha g')
		=
		f(g_\alpha)$
		by definition of $f$,
		thus
		$f(g_\alpha)
		\in
		W(R)^{G_{g_\alpha}}$
		for any $\alpha\in\Sigma_K$.
		The bijectivity is then clear. The second part is proved similarly.
	\end{proof}

	Recall the notion of ``neat" (\cite[Definition 1.4.1.8]{Lan2008}):
	\begin{definition}
		Fix a faithful representation
		$\rho
		\colon
		G\rightarrow
		\mathrm{GL}(M)$
		and for each finite place $v$ of $F$,
		write
		$H(g,v)$
		for the subgroup of
		$\overline{F_v}^\times$
		generated by the eigenvalues of
		$\rho(g_\alpha)$
		which are of finite multiplicative order
		(and thus lie in $\overline{\mathbb{Q}}^\times$).
		The element $g$ is
		\textbf{neat} if the intersection
		$\cap_vH(g,v)=\{1\}$
		is trivial.
		A compact open subgroup $K$ is
		neat if all its elements are neat.
	\end{definition}	
	Clearly $K$ is neat if and only if any conjugate of $K$ is neat. In the following, we will assume that the compact open subgroup
	$K$
	is neat.
	Now consider the groups
	$G_{g_\alpha}$ and
	$G'_{g_\alpha}$.
	Since each of them is finite,
	the eigenvalues of
	$\rho(g_v)$
	for any
	$g\in G_{g_\alpha}$
	or
	$g\in G'_{g_\alpha}$
	and any finite place
	$v$ is of finite order.
	Since
	$g\in G(F)$,
	all these $\rho(g_v)$
	have common eigenvalues and
	the intersection
	$\cap_vH(g,v)$
	is trivial if and only if
	$\rho(g_v)=1$
	for any $v$.
	Now that $\rho$ is faithful,
	we deduce that $g=1$.
	Thus  these groups
	$G_{g_\alpha}=G'_{g_\alpha}=\{1\}$
	for all $\alpha\in\Sigma_K$.
	From this we see that for any
	$R$-algebra $R'$,
	if either $R'$ is flat over $R$ or
	$K$ is neat, then
	there is a natural isomorphism
	\[
	M_{W,v}(K,R')
	\simeq
	M_{W,v}(K,R)
	\otimes_RR'.
	\]
	For two compact open subgroups
	$K,K'$ of
	$G(\mathbb{A}_{F,f})$
	and an element
	$g\in
	G(\mathbb{A}_{F,f})$
	such that
	$K'\subset gKg^{-1}$,
	there is a natural map
	\[
	g
	\colon
	M_{W,v}(K,R)
	\rightarrow
	M_{W,v}(K',R),
	\quad
	f\mapsto
	(gf\colon
	g'
	\mapsto
	g_vf(g'g)
	).
	\]
	Moreover it is easy to see that
	if $K'$ is a normal subgroup of
	$K$,
	then
	$K$ acts in this manner on the space
	$M_{W,v}(K',R)$
	and this action factors through
	the quotient
	$K/K'$.
	Moreover
	the subspace of
	$K$-invariants
	$M_{W,v}(K',R)^K$
	is 
	$M_{W,v}(K,R)$.

	The same proof as \cite[Lemma 6.4]{Thorne2012} gives
	\begin{proposition}\label{compare invariants of different cpt open subgroups}
		Let $R$ be an $\mathcal{O}_{F,v}$-algebra.
		Let $K$ be a compact open subgroup of $G(\A_{F,f})$ such that for any $g\in G(\A_{F,f})$, $gG(F)g^{-1}\cap K$ contains no elements of order $p$.
		Let $K'\vartriangleleft K$ be a normal compact open subgroup such that the quotient $K/K'$ is abelian of $p$-power order. Then $M_{W,v}(K',R)$
		is a finite free $R[K/K']$-module. Similarly, if $R$ is an $\mathcal{O}$-algebra, $M_{W,p}(K',R)$ is a finite free	$R[K/K']$-module
		and in this case taking the coinvariants gives an isomorphism of
		$R$-modules
		\[
		M_{W,p}(K',R)_{K/K'}
		\simeq
		M_{W,p}(K,R).
		\]
	\end{proposition}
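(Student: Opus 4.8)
The plan is to follow the proof of \cite[Lemma 6.4]{Thorne2012}, adapting it to the group $G$ via the explicit description of spaces of algebraic modular forms in Corollary~\ref{the space of modular forms is finite dimensional}. I would fix a set of representatives $\{g_\alpha\}_{\alpha\in\Sigma_K}$ of the finite set $\Sigma_K$ and first analyse the fibres of the natural surjection $\Sigma_{K'}\to\Sigma_K$. Over $\alpha\in\Sigma_K$ the fibre consists of the classes $[g_\alpha k]$ with $k\in G(\mathbb{A}_{F,\infty})\times K$, and a direct coset computation shows that $[g_\alpha k]=[g_\alpha k']$ in $\Sigma_{K'}$ precisely when the images of $k$ and $k'$ in $K/K'$ differ by an element of the subgroup $\overline{H}_\alpha\subseteq K/K'$ which is the image of the finite group $\Gamma_\alpha:=G(F)\cap g_\alpha(G(\mathbb{A}_{F,\infty})\times K)g_\alpha^{-1}$ under $\gamma\mapsto\bigl(\text{$K$-component of }g_\alpha^{-1}\gamma g_\alpha\bigr)\bmod K'$. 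Conjugation by $g_{\alpha,f}$ identifies $\Gamma_\alpha$ with $g_{\alpha,f}^{-1}G(F)g_{\alpha,f}\cap K$, which by hypothesis contains no element of order $p$; since $\Gamma_\alpha$ is finite, its order is then prime to $p$. Hence $\overline{H}_\alpha$, being simultaneously a subgroup of the finite abelian $p$-group $K/K'$ and a quotient of a group of order prime to $p$, is trivial. Therefore each fibre of $\Sigma_{K'}\to\Sigma_K$ is a free $K/K'$-torsor; in particular it has exactly $\#(K/K')$ elements.

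Next I would apply Corollary~\ref{the space of modular forms is finite dimensional} at level $K'$ to obtain an $R$-linear isomorphism $M_{W,p}(K',R)\simeq\bigoplus_{\beta\in\Sigma_{K'}}W(R)^{G'_{g_\beta}}$, and transport to the right-hand side the $K/K'$-action described just before the Proposition. Since this action does not change the image of a class in $\Sigma_K$, it preserves each block $P_\alpha:=\bigoplus_{\beta}W(R)^{G'_{g_\beta}}$ (the sum over the fibre over $\alpha$) and permutes its summands along the free $K/K'$-torsor given by that fibre, each transition being induced by an $R$-linear automorphism of $W(R)$ (multiplication by an element of $\prod_{v\mid p}G(\mathcal{O}_{F,v})$). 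Choosing a base point $\beta(\alpha)$ in each fibre yields an isomorphism of $R[K/K']$-modules $P_\alpha\simeq R[K/K']\otimes_R W(R)^{G'_{g_{\beta(\alpha)}}}$, with $K/K'$ acting on the left tensor factor. As $\#G'_{g_\beta}$ is prime to $p$ and hence invertible in $R$, the averaging idempotent realises $W(R)^{G'_{g_\beta}}$ as a direct $R$-summand of the finite free module $W(R)$, so it is a finite free $R$-module (for the local coefficient rings occurring in the applications, finite projective modules are free). Summing over $\alpha$ shows $M_{W,p}(K',R)$ is a finite free $R[K/K']$-module; the same argument run at a single place $v\mid p$ gives the statement for $M_{W,v}(K',R)$.

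Finally, the isomorphism just produced identifies $M_{W,p}(K',R)$ with $R[K/K']\otimes_R N$ for $N=\bigoplus_{\alpha}W(R)^{G'_{g_{\beta(\alpha)}}}$ finite free over $R$; applying the $K/K'$-coinvariants functor (i.e.\ $-\otimes_{R[K/K']}R$ along the augmentation) gives $M_{W,p}(K',R)_{K/K'}\simeq N$. On the other hand, as recalled before the Proposition one has $M_{W,p}(K',R)^{K/K'}=M_{W,p}(K,R)$, and for a free $R[K/K']$-module the trace map $f\mapsto\sum_{k\in K/K'}k\cdot f$ descends to an isomorphism from the coinvariants onto the invariants; composing these yields $M_{W,p}(K',R)_{K/K'}\simeq M_{W,p}(K,R)$.

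The step I expect to be the crux is the first one: showing that the fibres of $\Sigma_{K'}\to\Sigma_K$ are free $K/K'$-torsors. This is exactly where the hypothesis that $gG(F)g^{-1}\cap K$ is $p$-torsion-free for all $g$ enters, and it is the only place at which the arithmetic subgroups of $G$ require attention; granting it, the freeness over $R[K/K']$ and the coinvariants statement follow formally from the decomposition into fixed-point spaces.
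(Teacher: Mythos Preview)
Your proposal is correct and follows exactly the approach the paper indicates; the paper's own proof is nothing more than a reference to \cite[Lemma 6.4]{Thorne2012}. One refinement: freeness of $W(R)^{G'_{g_\beta}}$ holds for arbitrary $R$, not just local ones, since the $G'_{g_\beta}$-action already lives on the finite free module $W\otimes_{\mathcal{O}_F}\mathcal{O}_{F,v}$ (respectively $W\otimes\mathcal{O}$) and the averaging idempotent there cuts out a direct summand that is free over this DVR, hence remains free after base change to $R$---so your parenthetical fallback to local coefficient rings is unnecessary.
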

%	\begin{proof}
%		From the double coset decomposition
%		$G(\mathbb{A}_F)
%		=
%		\sqcup_{\alpha\in\Sigma_K}
%		G(F)g_\alpha K$,
%		we deduce
%		$G(\mathbb{A}_F)
%		=
%		\sqcup_{\alpha\in\Sigma_K}
%		\sqcup_{k\in K/K'}
%		G(F)g_\alpha k
%		K'$.
%		Thus we have an isomorphism of
%		$R$-modules
%		(noting that $K'$ and $K$ are neat)
%		\[
%		M_{W,v}(K',R)
%		\simeq
%		\bigoplus_{\alpha\in\Sigma_K}
%		\bigoplus_{k\in K/K'}
%		W(R),
%		\quad
%		f\mapsto
%		(f(g_\alpha k))_{\alpha,k}.
%		\]
%		One can promote this into an isomorphism of
%		$R[K/K']$-modules as follows:
%		we identify the $R$-module
%		$R^{K/K'}$ with $R[K/K']$
%		by sending
%		$f$ to
%		$\sum_{k\in K/K'}f(k)k^{-1}$.
%		Then we have an isomorphism of
%		$R[K/K']$-modules
%		\[
%		M_{W,v}(K',R)
%		\simeq
%		\bigoplus_{\alpha\in\Sigma_K}
%		W(R)\otimes_RR[K/K'],
%		\quad
%		f\mapsto
%		(
%		\sum_{k\in K/K'}
%		kf(g_\alpha k)\otimes k^{-1}
%		)_\alpha.
%		\]
%		Here $R[K/K']$ acts on the RHS only through the second factor.
%		This shows that indeed
%		$M_{W,v}(K',R)$
%		is a finite free
%		$R[K/K']$-module. 	
%		The last isomorphism in the lemma follows from
%		the above isomorphism of
%		$R[K/K']$-modules.
%	\end{proof}

	\subsection{Hecke algebras}
	In this subsection
	we define Hecke operators acting on
	$M_{W,v}(K,R)$. We follow the treatment in \cite{Gross1999} and we refer there for more details.

	We fix a pair
	$(\mathbf{B},\mathbf{T})$
	where $\mathbf{B}$ is a Borel subgroup of $G_{\overline{F}}$
	and $\mathbf{T}\subset \mathbf{B}$
	the maximal torus of $\mathbf{B}$.
	This gives a based root datum
	\[
	(X^\ast(\mathbf{T}),\Phi^\ast(\mathbf{T}),
	X_\ast(\mathbf{T}),\Phi_\ast(\mathbf{T})),
	\]
	where
	$X^\ast(\mathbf{T})$
	is the group of characters of $\mathbf{T}$ and
	$\Phi^\ast(\mathbf{T})$
	is the set of simple roots of
	$(G(F_v),\mathbf{T})$.
	The choice of $\mathbf{B}$
	gives rise to a subset
	$\Delta^\ast(\mathbf{T})$
	of $\Phi^\ast(\mathbf{T})$
	consisting of positive roots of $(\mathbf{B},\mathbf{T})$.
	Similarly, we have a subset
	$\Delta_\ast(\mathbf{T})
	\subset
	\Phi_\ast(\mathbf{T})$
	of positive coroots.
	By the perfect pairing
	between
	$X^\ast(\mathbf{T})$
	and
	$X_\ast(\mathbf{T})$,	
	one writes
	\[
	X^{\ast,+}(\mathbf{T})
	:=
	\{
	\lambda\in X^\ast(\mathbf{T})\mid 
	\langle\lambda,\alpha\rangle\geq0,
	\forall
	\alpha\in
	\Delta_\ast(\mathbf{T})
	\}
	\]
	of dominant characters of $X^\ast(\mathbf{T})$
	and similarly
	the subset
	$X^+_\ast(\mathbf{T})
	\subset
	X_\ast(\mathbf{T})$
	of dominant cocharacters
	of $\mathbf{T}$.
	Besides, the Galois group
	$\mathrm{Gal}(\overline{F}/F)$
	acts on this root datum.
	To ease notations,
	we suppose that
	the irreducible algebraic representation
	$W$ of $G$
	corresponds to a dominant weight
	$\lambda$ of
	$\mathbf{T}$
	and we will write
	\[
	M_{\lambda,v}(K,R)
	:=
	M_{W,v}(K,R).
	\]

	\subsubsection{Spherical Hecke operators}
	We follow
	\cite[§16]{Gross1999}.
	For a prime $\mathfrak{q}$ of $F$ over a prime $q$ of $\mathbb{Q}$,
	we have a hyperspecial subgroup
	$G(\mathcal{O}_{F,\mathfrak{q}})$
	of
	$G(F_\mathfrak{q})$.

	\begin{definition}\label{spherical Hecke algebra}
		We define the
		spherical Hecke algebra at $\mathfrak{q}$
		as
		\[
		\mathbb{T}_\mathfrak{q}
		=
		C_\mathrm{c}^\infty
		\left(
		G(F_\mathfrak{q})
		//
		G(\mathcal{O}_{F,\mathfrak{q}}),
		\mathbb{Z}[\frac{1}{q}]
		\right),\index{T@$\mathbb{T}_\mathfrak{q}$}
		\]
		the set of locally constant functions of compact support on
		the double quotient
		$G(F_\mathfrak{q})//G(\mathcal{O}_{F,\mathfrak{q}})$
		with values in
		$\mathbb{Z}[\frac{1}{q}]$.
		We can view elements in $\mathbb{T}_{\mathfrak{q}}$
		as functions on
		$G(F_\mathfrak{q})$
		and
		equip $\mathbb{T}_{\mathfrak{q}}$ with the usual convolution, then $\mathbb{T}_{\mathfrak{q}}$
		becomes a
		$\mathbb{Z}[\frac{1}{q}]$-algebra.
	\end{definition}

	We write
	$\mathbf{T}_\mathfrak{q}$
	for the base change of $\mathbf{T}$ from
	$\mathcal{O}_F$ to
	$\mathcal{O}_{F,\mathfrak{q}}$
	and
	$\mathbf{T}_\mathfrak{q}^\mathrm{s}
	\subset
	\mathbf{T}_\mathfrak{q}$
	for the maximal split sub-torus.
	Then we have an isomorphism
	$X_\ast(\mathbf{T}_\mathfrak{q})^{\Gamma_{F_\mathfrak{q}}}
	\simeq
	X_\ast(\mathbf{T}_\mathfrak{q}^\mathrm{s})$
	and also
	(\emph{cf.}
	\cite[(16.5)]{Gross1999})
	\[
	X_\ast(\mathbf{T}_\mathfrak{q}^\mathrm{s})
	\simeq
	\mathbf{T}_\mathfrak{q}(F_\mathfrak{q})/
	\mathbf{T}_\mathfrak{q}(\mathcal{O}_{F,\mathfrak{q}}),
	\quad
	\lambda
	\mapsto
	\lambda(\mathfrak{q}).
	\]	  
	
	We assume in the rest of this subsection that
	$K_{\mathfrak{q}}=G(\mathcal{O}_{F,\mathfrak{q}})$.
	
	We fix a prime $\mathfrak{Q}$ of $F^\circ$
	over $\mathfrak{q}$.
	We put
	$W^\mathrm{s}$
	for the Weyl group of
	$\mathbf{T}_\mathfrak{q}^\mathrm{s}$
	and
	$W$
	the Weyl group of $\mathbf{T}_\mathfrak{q}$ over
	$\overline{F}$.
	Then one has
	$W^{\Gamma_{F^\circ/F}}\simeq
	W^\mathrm{s}$.
	Denote by
	$\mathbf{U}_\mathfrak{q}$
	the unipotent radical of $\mathbf{B}_\mathfrak{q}$ over
	$\mathcal{O}_{F,\mathfrak{q}}$
	and
	by
	$du$
	the unique Haar measure on
	$\mathbf{U}_\mathfrak{q}
	(F_\mathfrak{q})$
	such that
	$\int_{\mathbf{U}_\mathfrak{q}(\mathcal{O}_{F,\mathfrak{q}})}du=1$.
	Then we have the Satake transformation
	of
	$\mathbb{Z}[\frac{1}{q}]$-algebras\footnote{Later in this article we will also use $\mathcal{S}$ to denote deformation problems (see, for example, Definition \ref{T-deformation or lifting of type S}). It should be clear from the context $\mathcal{S}$ refers to which one of the two meanings.}
	\[
	\mathcal{S}
	\colon
	\mathbb{T}_\mathfrak{q}
	\rightarrow
	C_\mathrm{c}^\infty
	(
	\mathbf{T}_\mathfrak{q}(F_\mathfrak{q})//
	\mathbf{T}_\mathfrak{q}(\mathcal{O}_{F,\mathfrak{q}}),
	\mathbb{Z}[\frac{1}{q}]
	),
	\quad
	f\mapsto
	\left(
	t\mapsto
	\Delta_{\mathbf{B}}^{1/2}(t)
	\int_{\mathbf{U}_\mathfrak{q}(F_\mathfrak{q})}
	f(tu)du
	\right). 
	\]
	Here
	$\Delta_{\mathbf{B}}
	\colon
	\mathbf{B}(F_\mathfrak{q})
	\rightarrow
	\mathbb{R}_+^\times$
	is the modular function of
	$\mathbf{B}(F_\mathfrak{q})$.
	This morphism is injective and its image is exactly
	the $W^\mathrm{s}$-invariants
	by the Satake isomorphism
	(\emph{cf.}
	\cite{Gross1998}):
	\begin{align}\label{Satake isomorphism}
		\begin{split}
			\mathcal{S}
			\colon
			\mathbb{T}_\mathfrak{q}
			&
			\simeq
			C_\mathrm{c}^\infty
			(
			\mathbf{T}_\mathfrak{q}(F_\mathfrak{q})//
			\mathbf{T}_\mathfrak{q}(\mathcal{O}_{F,\mathfrak{q}}),
			\mathbb{Z}[\frac{1}{q}]
			)^{W^\mathrm{s}}
			\simeq
			\mathbb{Z}[\frac{1}{q}][X_\ast(\mathbf{T}_\mathfrak{q}^\mathrm{s})]^{W^\mathrm{s}}
			\\
			&
			\simeq
			\mathbb{Z}[\frac{1}{q}][X_\ast(\mathbf{T})^{\Gamma_{F^\circ/F}}]^{W^\mathrm{s}}
			\simeq
			\mathbb{Z}[\frac{1}{q}]
			[X_\ast(\mathbf{T})]^W.
		\end{split}
	\end{align}

	\begin{definition}\label{unramified Hecke operators}
		Fix a place $v|p$ of $F$.
		We let
		$\mathbb{T}_\mathfrak{q}$
		act on
		$M_{\lambda,v}(K,R)$
		as follows: for any $g_{\mathfrak{q}}\in G(F_{\mathfrak{q}})$, we view
		\[
		K_\mathfrak{q}g_\mathfrak{q}K_\mathfrak{q}
		=
		\sqcup_ig_iK_\mathfrak{q}
		\]
		as an element in
		$\mathbb{T}_\mathfrak{q}$
		(the characteristic function of this double coset).
		Let
		$\widetilde{g}\in
		G(\mathbb{A}_{F,f})$
		be the element with $\mathfrak{q}$-th component equal to
		$g_\mathfrak{q}$
		while
		other components equal to $1$
		(similarly for $\widetilde{g_i}\in G(\mathbb{A}_{F,f})$).
		Then we have
		$K\widetilde{g}K
		=
		\sqcup_i\widetilde{g_i}K$
		and we define the double coset operator as follows
		\[
		[K_\mathfrak{q}gK_\mathfrak{q}]
		:=
		[KgK]
		\colon
		M_{\lambda,v}(K,R)
		\rightarrow
		M_{\lambda,v}(K,R),
		\quad
		f\mapsto
		(
		h\mapsto
		\sum_i(\widetilde{g_i})_v
		f(h\widetilde{g_i})
		).
		\]
		We extend
		this map by
		$\mathbb{Z}[\frac{1}{q}]$-linearity to the whole
		$\mathbb{T}_\mathfrak{q}$
		(note that $R$ is an
		$\mathcal{O}_{F,v}$-algebra and thus also a
		$\mathbb{Z}[\frac{1}{q}]$-algebra).
		We denote the image of
		$\mathbb{T}_\mathfrak{q}$
		by the same notation.
	\end{definition}
	Then by
	Proposition
	\ref{the space of modular forms is semisimple}
	and
	Corollary
	\ref{the space of modular forms is finite dimensional},
	$\mathbb{T}_\mathfrak{q}$
	is a finite semisimple
	$\mathbb{Z}[\frac{1}{q}]$-algebra.

	\subsubsection{Iwahori Hecke operators}
	Fix a finite place $v$ of $F$.
	We can identify
	$G\times_{\mathcal{O}_F}\mathcal{O}_{F,v}$
	with the special orthogonal group/symplectic group
	$\mathrm{SO}(V,q)$
	associated to a non-degenerate quadratic/symplectic module
	\[
	(V,(-,-))
	\]
	over
	$\mathcal{O}_{F,v}$ which is free of rank $n$
	and has good reduction mod
	$\varpi_v$.
	For computational purposes,
	we write down the pairing
	$(-,-)$ explicitly as follows:
	\begin{enumerate}
		\item 
		If $G^\ast=\mathrm{SO}_n^\eta$ with $n$ odd,
		there is a basis
		$(e_1,\cdots,e_n)$
		of $V$ and $u\in\mathcal{O}_{F,v}^\times$ such that
		\[
		(\sum_ix_ie_i,\sum_iy_ie_i)
		=
		x_1y_n+\cdots+x_{\frac{n-1}{2}}y_{\frac{n+3}{2}}+ux_{\frac{n+1}{2}}y_{\frac{n+1}{2}}+x_{\frac{n+3}{2}}y_{\frac{n-1}{2}}+\cdots+x_ny_1.
		\]
		In this case
		$G$ is split at $v$.

		\item 
		If $G^\ast=\mathrm{SO}_n^\eta$ with $n$ even,
		there is a basis
		$(e_1,\cdots,e_n)$
		of $V$ and an element $u\in\mathcal{O}_{F,v}^\times$ such that
		\[
		(\sum_ix_ie_i,\sum_iy_ie_i)
		=
		x_1y_n+\cdots+x_{\frac{n}{2}-1}y_{\frac{n}{2}+2}+(x_{\frac{n}{2}}y_{\frac{n}{2}}-ux_{\frac{n}{2}+1}y_{\frac{n}{2}+1})+x_{\frac{n}{2}+2}y_{\frac{n}{2}-1}+\cdots+x_ny_1.
		\]
		Moreover, $G$ is split at $v$ if and only if $u$ is a square in $F_v$.

		\item 
		If $G^\ast=\mathrm{Sp}_n$, there is a basis $(e_1,\cdots,e_n)$ of $V$ such that
		\[
		(\sum_ix_ie_i,\sum_iy_ie_i)
		=
		x_1y_n+\cdots+x_{\frac{n}{2}}y_{\frac{n}{2}+1}-x_{\frac{n}{2}+1}y_{\frac{n}{2}}-\cdots-x_ny_1.
		\]
	\end{enumerate}
	In the following
	we will fix such a basis for $V$
	and such a bilinear form $(-,-)$ on $V$
	and identify
	\[
	G_v
	:=
	G\times_{\mathcal{O}_F}\mathcal{O}_{F,v}
	\]
	with the special isometry group of $(V,(-,-))$.
	Therefore
	$G_v\times_{\mathcal{O}_{F,v}}\kappa_v$
	is smooth and we have a surjective reduction map
	$G_v(\mathcal{O}_{F,v})
	\rightarrow
	G_v(\kappa_v)$.
	We put
	\[
	U_r
	\subset
	B_r
	\subset
	\mathrm{GL}_r
	\]
	for the subgroup
	consisting of
	unipotent upper
	triangular matrices,
	upper triangular matrices
	in
	$\mathrm{GL}_r$.
	We then fix the standard Borel subgroup
	$\mathbf{B}_v$ of $G_v$
	as follows:
	\[
	\mathbf{B}_v
	=
	\begin{cases*}
		G_v\cap B_n,
		&
		$G_v$ split;
		\\
		\bigg\{
		g=
		\begin{pmatrix}
			A & \ast & \ast \\
			0 & B & \ast \\
			0 & 0 & C
		\end{pmatrix}
		\in
		G_v
		\Big|
		A,C\in
		B_{\frac{n}{2}-1}
		\bigg\},
		&
		$G_v$ non-split.
	\end{cases*}
	\]
	We then set
	\[
	\mathbf{U}_v,\,
	\mathbf{T}_v,\,
	\mathbf{T}_v^\mathrm{s}
	\subset
	\mathbf{B}_v
	\]
	to be the unipotent radical,
	the maximal torus,
	the maximal split torus of
	$\mathbf{B}_v$
	and put
	\begin{equation}\label{n_s}
		n_\mathrm{s}=n_{\mathrm{s},v}
		:=
		\mathrm{rk}(\mathbf{T}^\mathrm{s}_v)
		=
		\begin{cases*}
			\lfloor\frac{n}{2}\rfloor
			&
			if $\mathbf{T}_v=\mathbf{T}^\mathrm{s}_v$;
			\\
			\lfloor\frac{n}{2}\rfloor-1
			&
			otherwise.
		\end{cases*}
		\index{n@$n_{\mathrm{s}}=n_{\mathrm{s},v}$}
	\end{equation}

	\begin{definition}\label{Iwahori subgroups and the like}
		We write
		\[
		\mathrm{Iw}(v),\,
		\mathrm{Iw}_1(v)
		\subset
		G(\mathcal{O}_{F,v})
		\]
		for the preimages of
		$\mathbf{B}(\kappa_v)$,
		$\mathbf{U}(\kappa_v)$
		under the reduction map
		$G(\mathcal{O}_{F,v})
		\rightarrow
		G(\kappa_v)$.
		For positive integers
		$m,n_0>0$
		with
		$n_0\leq n_\mathrm{s}$,
		we write
		\[
		U_0^{(n_0)}(v^m)
		:=
		\Bigg\{
		g
		\in
		G(\mathcal{O}_{F,v})
		\bigg|
		g
		\equiv
		\begin{pmatrix}
			A & \ast & \ast \\
			0 & B & \ast \\
			0 & 0 & C
		\end{pmatrix}
		(\mathrm{mod}\,\varpi_v^m),
		A,C\in
		B_{n_0}
		(\mathcal{O}_{F,v}/\varpi_v^m)
		\Bigg\},
		\]
		and
		\[
		U_1^{(n_0)}(v^m)
		:=
		\Bigg\{
		g
		\in
		G(\mathcal{O}_{F,v})
		\bigg|
		g
		\equiv
		\begin{pmatrix}
			A & \ast & \ast \\
			0 & B & \ast \\
			0 & 0 & C
		\end{pmatrix}
		(\mathrm{mod}\,\varpi_v^m),
		A,C\in
		U_{n_0}
		(\mathcal{O}_{F,v}/\varpi_v^m)
		\Bigg\}.
		\]
		We define some special elements in
		$G(F_v)$
		as follows:
		for each integer
		$0\leq j\leq n_\mathrm{s}$,
		we write
		\[
		\varsigma_{v,j}
		=
		\mathrm{diag}
		(\varpi_v1_j,1_{n-2j},\varpi_v^{-1}1_j)
		\]
		and then we denote by $U_v^{(j)}$ the following Hecke operators (it should be clear from the context which operator $U_v^{(j)}$ refers to)
		\[
		U_v^{(j)}
		=
		[U_0^{(n_0)}(v^m)
		\varsigma_{v,j}
		U_0^{(n_0)}(v^m)],
		\,
		[U_1^{(n_0)}(v^m)
		\varsigma_{v,j}
		U_1^{(n_0)}(v^m)].
		\]
	\end{definition}
	It is clear that
	$\mathrm{Iw}_1(v)$
	is a normal subgroup of
	$\mathrm{Iw}(v)$
	and
	\[
	\mathrm{Iw}(v)/
	\mathrm{Iw}_1(v)
	\simeq
	(\kappa_v^\times)^{n_\mathrm{s}}.
	\]
	Similarly,
	$U_1^{(n_0)}(v^m)$
	is a normal subgroup of
	$U_0^{(n_0)}(v^m)$
	and
	\[
	U_0^{(n_0)}(v^m)/
	U_1^{(n_0)}(v^m)
	\simeq
	((\mathcal{O}_{F,v}/\varpi_v^m)^\times)^{n_0}.
	\]
	Then we have the following
	\begin{proposition}
		There is a finite set of elements
		$(b_i)_{i\in I}$ in $G(F_v)$
		depending on $j$ and $n_0$,
		independent of $m$,
		such that
		\[
		U_0^{(n_0)}(v^m)
		\varsigma_{v,j}
		U_0^{(n_0)}(v^m)
		=
		\bigsqcup_{i\in I}
		b_i
		U_0^{(n_0)}(v^m),
		\]
		\[
		U_1^{(n_0)}(v^m)
		\varsigma_{v,j}
		U_1^{(n_0)}(v^m)
		=
		\bigsqcup_{i\in I}
		b_i
		U_1^{(n_0)}(v^m).
		\]
	\end{proposition}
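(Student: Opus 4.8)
The plan is to produce, once and for all, an explicit $m$-independent set of representatives for the left cosets inside the double coset, by reducing everything to the unipotent radical of a well-chosen parabolic. Let $P_j$ be the standard parabolic $F_v$-subgroup of $G_v$ stabilising the isotropic subspace $\langle e_1,\dots,e_j\rangle$, with Levi decomposition $P_j=M_jN_j$; here $M_j$ is block-diagonally isomorphic to $\mathrm{GL}_j\times G'$, with $G'$ the isometry group of the induced rank-$(n-2j)$ form, and $N_j$ its unipotent radical. Under $M_j\simeq\mathrm{GL}_j\times G'$ the element $\varsigma_{v,j}=\mathrm{diag}(\varpi_v1_j,1_{n-2j},\varpi_v^{-1}1_j)$ corresponds to $(\varpi_v1_j,1)$, hence lies in the centre of $M_j(F_v)$, and its cocharacter $\mu_j$ is dominant for $P_j$. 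Inspecting the root groups of $G_v$ against the explicit pairings (1)--(3) above, one finds $\langle\alpha,\mu_j\rangle\in\{1,2\}$ for every root $\alpha$ occurring in $N_j$, so that conjugation by $\varsigma_{v,j}$ contracts $N_j$, giving $\varsigma_{v,j}N_j(\mathcal{O}_{F,v})\varsigma_{v,j}^{-1}\subseteq N_j(\mathcal{O}_{F,v})$ with finite quotient isomorphic to $\prod_\alpha\mathcal{O}_{F,v}/\varpi_v^{\langle\alpha,\mu_j\rangle}\mathcal{O}_{F,v}$ (product over the roots of $N_j$), a set depending only on $j$; dually, $\varsigma_{v,j}^{-1}$ contracts $\overline N_j$.

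The crucial input is the identity
\[
U_0^{(n_0)}(v^m)\cap N_j(F_v)=U_1^{(n_0)}(v^m)\cap N_j(F_v)=N_j(\mathcal{O}_{F,v}),\qquad\text{for all }m\ge1.
\]
The inclusions $\subseteq$ are trivial, since both congruence subgroups lie in $G(\mathcal{O}_{F,v})$ and $G(\mathcal{O}_{F,v})\cap N_j(F_v)=N_j(\mathcal{O}_{F,v})$; for $\supseteq$ one writes an element $u\in N_j(\mathcal{O}_{F,v})$ in block form relative to the partition $(j,n-2j,j)$ and checks, treating the cases $j\le n_0$ and $j>n_0$ separately, that relative to the coarser partition $(n_0,n-2n_0,n_0)$ it is already block upper triangular with both diagonal $n_0\times n_0$ corners upper unitriangular, hence $u\in U_1^{(n_0)}(v^m)\subseteq U_0^{(n_0)}(v^m)$. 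In particular $N_j(\mathcal{O}_{F,v})$ is contained in every $U_0^{(n_0)}(v^m)$ and $U_1^{(n_0)}(v^m)$, independently of $m$.

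Granting in addition the Iwahori factorisation $K=(K\cap N_j)(K\cap M_j)(K\cap\overline N_j)$ for $K\in\{U_0^{(n_0)}(v^m),U_1^{(n_0)}(v^m)\}$ relative to $P_j$, the rest is the classical Iwahori--Hecke computation. Fix a set of representatives $(n_i)_{i\in I}$ for $N_j(\mathcal{O}_{F,v})/\varsigma_{v,j}N_j(\mathcal{O}_{F,v})\varsigma_{v,j}^{-1}$ — say with root coordinates running over fixed representatives of the groups $\mathcal{O}_{F,v}/\varpi_v^{\langle\alpha,\mu_j\rangle}\mathcal{O}_{F,v}$ — and set $b_i:=n_i\varsigma_{v,j}$; these depend only on $j$ and the chosen representatives, not on $m$ nor on the choice between $U_0$ and $U_1$. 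Disjointness of the cosets $b_iK$ is immediate: if $n_i\varsigma_{v,j}K=n_{i'}\varsigma_{v,j}K$ then $\varsigma_{v,j}^{-1}n_{i'}^{-1}n_i\varsigma_{v,j}\in K\cap N_j(F_v)=N_j(\mathcal{O}_{F,v})$, which by the choice of representatives forces $i=i'$. For exhaustiveness, given $k\in K$ factor $k=\tilde n\,m^0\,n^-$ with $\tilde n\in K\cap N_j=N_j(\mathcal{O}_{F,v})$, $m^0\in K\cap M_j$, $n^-\in K\cap\overline N_j$; then $\varsigma_{v,j}^{-1}\tilde n^{-1}k\,\varsigma_{v,j}=m^0\cdot(\varsigma_{v,j}^{-1}n^-\varsigma_{v,j})$, and since $\varsigma_{v,j}$ centralises $M_j$ and $\varsigma_{v,j}^{-1}$ contracts $\overline N_j$ (so conjugation by it carries $K\cap\overline N_j$ into itself), this element lies in $K$; hence $k\varsigma_{v,j}K=\tilde n\varsigma_{v,j}K$, which after replacing $\tilde n$ by its representative modulo $\varsigma_{v,j}N_j(\mathcal{O}_{F,v})\varsigma_{v,j}^{-1}$ equals some $b_iK$. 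Thus $U_0^{(n_0)}(v^m)\varsigma_{v,j}U_0^{(n_0)}(v^m)=\bigsqcup_{i\in I}b_iU_0^{(n_0)}(v^m)$, and the very same computation with $U_1^{(n_0)}(v^m)$ — which uses only the identity $K\cap N_j=N_j(\mathcal{O}_{F,v})$ — yields the second decomposition with the same $(b_i)_{i\in I}$.

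I expect the main obstacle to be the Iwahori factorisation for the congruence subgroups $U_0^{(n_0)}(v^m)$ and $U_1^{(n_0)}(v^m)$ relative to $P_j$: when $j\ne n_0$ these are congruence subgroups of a parahoric \emph{not} attached to $P_j$, so the factorisation, standard in spirit, must be verified against the explicit matrix realisations, including the non-split shape of $\mathbf{B}_v$ recorded above; the same explicit bookkeeping underlies the block computation establishing $K\cap N_j=N_j(\mathcal{O}_{F,v})$ uniformly in $m$, in $n_0$, and in the three cases (1)--(3) for the form $(-,-)$. Everything else is formal.
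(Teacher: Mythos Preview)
Your approach is cleaner in spirit than the paper's explicit matrix bookkeeping, and for $j\le n_0$ it works: the parabolic $Q$ underlying $U_?^{(n_0)}(v^m)$ has Levi $(\mathbb{G}_m)^{n_0}\times G'_{n-2n_0}$, so $Q\subseteq P_j$ exactly when $j\le n_0$, and in that range the Iwahori factorisation you invoke is standard.

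For $j>n_0$, however, the Iwahori factorisation of $K=U_?^{(n_0)}(v^m)$ relative to $P_j$ genuinely fails, and with it your exhaustion argument. A concrete obstruction in $\mathrm{SO}_5$ with $n_0=1$, $j=2$, $m=1$: take $k\in K$ lifting the Weyl element of the $\mathrm{SO}_3$-factor of the Levi of $P_1$, i.e.\ $\bar k$ fixing $e_1,e_5$ and sending $e_2\mapsto e_4$, $e_4\mapsto e_2$, $e_3\mapsto -e_3$. Then $k\in K$ but $\bar k\notin P_2\overline N_2$ (its lower-right $2\times2$ block is singular). More to the point, for \emph{any} $n\in N_2(\mathcal{O}_{F,v})$ one has $(n^{-1}k)_{2,4}=k_{2,4}=1$, since row~$2$ of $n^{-1}$ meets $k$ only in columns where $k_{c,4}=0$ for $c\ge3$; hence $(\varsigma^{-1}n^{-1}k\varsigma)_{2,4}=\varpi_v^{-2}$ is never integral and $k\varsigma K$ lies in no coset $n\varsigma K$. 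So your representatives $b_i=n_i\varsigma$ do not cover $K\varsigma K$ when $j>n_0$: the ``main obstacle'' you flag is not a bookkeeping nuisance but a real failure.

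The paper sidesteps this by never leaving the partition $(n_0,n-2n_0,n_0)$ that defines $K$: it factors $U_?=U_{?,1}\rtimes U_{?,0}$ with $U_{?,1}$ the unipotent radical of \emph{that} parabolic (for which $U_{0,1}=U_{1,1}$ is automatic), and then finds common representatives for $U_{?,0}/(U_{?,0}\cap\varsigma U_{?,0}\varsigma^{-1})$ by explicit matrix computation, splitting into the cases $j\ge n_0$ and $j<n_0$. The case division is unavoidable precisely because $\varsigma_{v,j}$ is block-scalar for $(j,n-2j,j)$, not for $(n_0,n-2n_0,n_0)$. Your argument can be salvaged by restricting to $j\le n_0$ and handling $j>n_0$ by a separate device --- but at that point you are essentially redoing the paper's case analysis.
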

	\begin{proof}
		To ease notations,
		we write
		$U_?$
		for $U_?^{(n_0)}(v^m)$
		($?=0,1$)
		and
		$\varsigma=\varsigma_{v,j}$.
		We have the following decomposition of
		$U_0$:
		write
		$U_{0,0}$
		for the subgroup of
		$U_0$
		consisting of those $g$ such that
		$g
		\equiv
		\mathrm{diag}
		(A,B,C)(\mathrm{mod}\,\varpi_v^m)$
		with $A,C\in
		B_{n_0}
		(\mathcal{O}_{F,v}/\varpi_v^m)$,
		$U_{0,1}$
		for the subgroup of
		$U_0$
		consisting of those 
		$g$ such that
		$g
		=
		\begin{pmatrix}
			1_{n_0} & \ast & \ast \\
			0 & 1_{n-2n_0} & \ast \\
			0 & 0 & 1_{n_0}
		\end{pmatrix}
		(\mathrm{mod}\,\varpi_v^m)$
		is unipotent.
		Then we have
		a semi-direct product
		\[
		U_0
		=
		U_{0,1}\rtimes U_{0,0}.
		\]
		Similarly, for the case of
		$U_1$,
		we write
		$U_{1,0}
		=
		U_{0,0}\cap
		U_1$
		and
		$U_{1,1}
		=
		U_{0,1}\cap
		U_1$.
		Note that $U_{1,1}=U_{0,1}$.
		Then we also have a semi-direct product
		\[
		U_1
		=
		U_{1,1}\rtimes U_{1,0}.
		\]
		To decompose the double coset
		$U_?
		\varsigma
		U_?$
		into right cosets,
		it suffices to choose representatives in
		$U_?$
		of the quotient
		$U_?/
		\big(
		U_?
		\cap
		\varsigma
		U_?\varsigma^{-1}
		\big)$.
		Therefore to prove the lemma,
		it suffices to show that
		there is a common set of representatives
		$(u_i)_{i\in I}$ inside
		$U_1$
		such that
		$U_?\varsigma U_?=
		\sqcup_iu_i\varsigma U_?$
		($?=0,1$).
		For $?=0,1$,
		we have
		$\varsigma
		U_?
		\varsigma^{-1}
		=
		\varsigma U_{?,1}\varsigma^{-1}
		\rtimes
		\varsigma U_{?,0}\varsigma^{-1}$,
		and then
		$U_?\cap
		\varsigma U_?\varsigma^{-1}
		=
		(
		U_{?,1}\cap
		\varsigma
		U_{?,1}\varsigma^{-1}
		)
		\rtimes
		(
		U_{?,0}\cap
		\varsigma
		U_{?,0}
		\varsigma^{-1}
		)$,
		from which one deduces
		\[
		U_?/
		(U_?\cap
		\varsigma U_?\varsigma^{-1})
		=
		U_{?,1}/
		(U_{?,1}\cap
		\varsigma
		U_{?,1}\varsigma^{-1})
		\cdot
		U_{?,0}/
		(U_{?,0}\cap
		\varsigma
		U_{?,0}
		\varsigma^{-1}).
		\]
		Since
		$U_{1,1}=U_{0,1}$,
		it suffices to show that there is a common set of
		representatives
		$(v_k)_{k\in J}$
		inside
		$U_{1,0}$
		such that
		$U_{?,0}\varsigma U_{?,0}
		=
		\sqcup_kv_k\varsigma U_{?,0}$
		($?=0,1$).
		Write
		$\varsigma=
		\mathrm{diag}(a_1,a_2,a_3)$
		with
		$a_1,a_3\in
		B_{n_0}
		(F_v)$.
		We also write an element
		$b\in
		U_{0,0}$
		in the form of block matrix
		$b=(b_{i,j})_{i,j=1}^3$
		with
		$b_{1,1},b_{3,3}$ of size
		$n_0\times n_0$
		and
		$b_{2,2}$ of size
		$(n-2n_0)\times(n-2n_0)$.
		There are two cases to consider
		\begin{enumerate}
			\item 
			If $j\geq n_0$
			(thus $a_1=1=a_3$),
			then for $?=0,1$,
			$U_{?,0}\cap
			\varsigma U_{?,}\varsigma^{-1}$
			consists of those
			$b=(b_{i,j})_{i,j=1}^3\in U_{?,0}$ with
			\[
			\begin{cases*}
				b_{k,l},\,
				a_kb_{k,l}a_l^{-1} 
				\equiv
				0
				(\mathrm{mod}\,\varpi_v^m),
				&
				$k\neq l$,
				\\
				b_{k,k},\,
				a_kb_{k,k}a_k^{-1}
				(\mathrm{mod}\,\varpi_v^m)
				\in
				\mathrm{GL}_{n-2n_0}(\mathcal{O}_{F,v}/\varpi_v^m),
				&
				$k=2$,
				\\
				b_{k,k}
				(\mathrm{mod}\,\varpi_v^m)
				\begin{cases*}
					\in
					B_{n_0}
					(\mathcal{O}_{F,v}/\varpi_v^m),
					&
					$?=0$,
					\\
					=1,
					&
					$?=1$,
				\end{cases*}
				&
				$k=1,3$.
			\end{cases*}
			\]
			Note that elements $b$ in
			$U_{1,0}$
			are determined by the blocks
			$b_{k,l}$
			with
			$k<l$.
			Set
			$r_1=n_0,r_2=n-2n_0,r_3=n_0$.
			For $1\leq k<l\leq3$,
			we choose a set of representatives
			$I_{k,l}$ in
			$\mathrm{M}_{r_k,r_l}
			(\mathcal{O}_{F,v})$
			of the quotient
			$\mathrm{M}_{r_k,r_l}
			(\mathcal{O}_{F,v})/
			(\mathrm{M}_{r_k,r_l}\cap
			a_k\mathrm{M}_{r_k,r_l}(\mathcal{O}_{F,v})a_l^{-1})$.
			Then the set
			\begin{equation}\label{set of representatives-1}
				I=
				\big\{
				b\in U_{1,0}\mid 
				b_{1,1}=1=b_{3,3},
				b_{k,l}\in I_{k,l},
				\forall k<l,
				\,
				\&
				b_{k,l}=0,
				\forall
				k>l
				\big\}
			\end{equation}
			is a set of representatives for both
			$U_{1,0}/(U_{1,0}\cap
			\varsigma U_{1,0}\varsigma^{-1})$	
			and
			$U_{0,0}/(U_{0,0}\cap
			\varsigma U_{0,0}\varsigma^{-1})$:
			indeed, for the first case,
			note that
			for any
			$b\in
			U_{1,0}$,
			the matrices
			$b_{k,l}
			(\mathrm{mod}\,\varpi_v^m)$
			($k<l$)
			is necessarily the image of an element
			$b_{k,l}'$ in
			$I_{k,l}$
			mod $\varpi_v^m$.
			Now take an element
			$b'\in I$
			with
			$b'_{k,l}=b_{k,l}$
			for $k<l$
			(the existence is guaranteed by the surjectivity of
			$G(\mathcal{O}_{F,v})
			\rightarrow
			G(\mathcal{O}_{F,v}/\varpi_v^m)$),
			then it is clear that
			$(b')^{-1}b\in
			U_{1,0}\cap
			\varsigma U_{1,0}\varsigma^{-1}$;
			for the second case,
			for any $b\in U_{0,0}$,
			one can find an element
			$c=\mathrm{diag}(c_{1,1},c_{2,2},c_{3,3})
			\in
			U_{0,0}$
			such that
			$c_{k,k}
			\equiv
			b_{k,k}
			(\mathrm{mod}\,\varpi_v^m)$
			for all $k$.
			We set
			$d=bc^{-1}
			\in
			U_{1,0}$.
			As above we take
			$b'\in I$
			such that
			$(b')_{k,l}\equiv
			d_{k,l}
			(\mathrm{mod}\,\varpi_v^m)$
			for $k<l$.
			Then
			$(b')^{-1}d\in
			U_{1,0}
			\cap
			\varsigma
			U_{1,0}\varsigma^{-1}$
			and therefore
			$(b')^{-1}b\in
			U_{0,0}\cap
			\varsigma U_{0,0}\varsigma^{-1}$.
			Note that
			$I$ is independent of $m$.

			\item 
			If
			$j<n_0$,
			the argument is similar.
			For $k=1,3$,
			write
			$a_k=\mathrm{diag}((a_k)_1,(a_k)_2)$ in block matrix
			with
			$(a_1)_1,(a_3)_2$ of size $j\times j$.
			Similarly, write
			$b_{k,k}=((b_{k,k})_{s,t})_{s,t=1,2}$
			with
			$(b_{1,1})_{1,1},
			(b_{3,3})_{2,2}$ of size
			$j\times j$.
			For
			$?=0,1$,
			$U_{?,0}\cap
			\varsigma
			U_{?,0}\varsigma^{-1}$
			consists of those $b\in U_{?,0}$ with
			\[
			\begin{cases*}
				b_{k,l},\,
				a_kb_{k,l}a_l^{-1}		
				(\mathrm{mod}\,\varpi_v^m)
				\equiv
				0,
				&
				$k\neq l$,
				\\
				b_{k,k},\,
				a_kb_{k,k}a_k^{-1}
				(\mathrm{mod}\,\varpi_v^m)
				\in
				\mathrm{GL}_{n-2n_0}(\mathcal{O}_{F,v}/\varpi_v^m),
				&
				$k=2$,
				\\
				(b_{1,1})_{1,1},\,
				(b_{3,3})_{2,2}
				(\mathrm{mod}\,\varpi_v^m)
				\begin{cases*}
					\in
					B_j
					(\mathcal{O}_{F,v}/\varpi_v^m),
					&
					$?=0$,
					\\
					\equiv1,
					&
					$?=1$,
				\end{cases*}
				&
				\\
				(b_{1,1})_{2,2},\,
				(b_{3,3})_{1,1}
				(\mathrm{mod}\,\varpi_v^m)
				\begin{cases*}
					\in
					B_{n_0-j}
					(\mathcal{O}_{F,v}/\varpi_v^m),
					&
					$?=0$,
					\\
					\equiv1,
					&
					$?=1$,
				\end{cases*}
				\\
				(b_{1,1})_{2,1},\,
				(b_{3,3})_{1,2}^\mathrm{t}
				(\mathrm{mod}\,\varpi_v^{m+1})
				\equiv
				0.
				&
			\end{cases*}
			\]
			Note that
			$b_{3,3}
			(\mathrm{mod}\,\varpi_v^m)$
			is determined by
			$b_{1,1}
			(\mathrm{mod}\,\varpi_v^m)$.
			We fix a set of representatives
			$(I_{1,1})_{1,2}$
			in
			$\mathrm{M}_{j,n_0-j}
			(\mathcal{O}_{F,v})$
			for the quotient
			$\mathrm{M}_{j,n_0-j}
			(\mathcal{O}_{F,v}/\varpi_v)$.
			Write
			$I_{1,1}$
			for the set of elements
			$b_{1,1}\in
			\mathrm{M}_{j,j}(\mathcal{O}_{F,v})$
			such that
			$(b_{1,1})_{1,1}=(b_{1,1})_{2,2}=1,
			(b_{1,1})_{1,2}\in (I_{1,1})_{1,2},
			(b_{1,1})_{2,1}=0$.
			Then we take
			\begin{equation}\label{set of representatives-2}
				I:=
				\{
				b\in
				U_{1,0}\mid 
				b_{1,1}\in I_{1,1},
				b_{k,l}\in I_{k,l},
				\forall
				k<l,
				\,
				\&
				b_{k,l}=0,
				\forall
				k>l
				\}.
			\end{equation}
			Now one can verify as in the preceding case that
			$I$ is a common set of representatives for the quotients
			$U_{?,0}/
			(U_{?,0}\cap
			\varsigma U_{?,0}\varsigma^{-1})$
			($?=0,1$).
			Again we see that $I$ is independent of
			$m$.
		\end{enumerate}
	\end{proof}

	From this we deduce easily that
	\begin{corollary}
		For any integers $m_1>m_2>0$ and a smooth representation $W$ of
		$G(F_v)$, the inclusions
		\[
		W_1:=W^{U_0^{(n_0)}(v^{m_2})}
		\xhookrightarrow{\alpha_1}
		W_2:=W^{U_1^{(n_0)}(v^{m_2})}
		\xhookrightarrow{\alpha_2}
		W^{U_1^{(n_0)}(v^{m_1})}
		\]
		are compatible with the Hecke operators $U_v^{(j)}$ for any $j$, i.e. for any
		element $w_i\in W_i$ ($i=1,2$), one has $\alpha_i(U_v^{(j)}w_i)=U_v^{(j)}\alpha_i(w_i)$.
	\end{corollary}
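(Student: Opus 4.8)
The plan is to reduce the statement to the coset decomposition established in the preceding proposition. That proposition provides a single finite set $(b_k)_{k\in I}\subset G(F_v)$, depending only on $j$ and $n_0$ and \emph{not} on the level exponent $m$, for which
\[
U_0^{(n_0)}(v^m)\varsigma_{v,j}U_0^{(n_0)}(v^m)=\bigsqcup_{k\in I}b_kU_0^{(n_0)}(v^m),\qquad U_1^{(n_0)}(v^m)\varsigma_{v,j}U_1^{(n_0)}(v^m)=\bigsqcup_{k\in I}b_kU_1^{(n_0)}(v^m)
\]
for every $m>0$. First I would record that for any one of the compact open subgroups $H$ among $U_0^{(n_0)}(v^{m_2})$, $U_1^{(n_0)}(v^{m_2})$ and $U_1^{(n_0)}(v^{m_1})$, and any $w\in W^H$, the Hecke operator $U_v^{(j)}=[H\varsigma_{v,j}H]$ of Definition \ref{Iwahori subgroups and the like} sends $w$ to $\sum_{k\in I}b_kw$ (again an element of $W^H$ by the usual argument), and that this sum is independent of the choice of representatives $b_k$ inside the cosets $b_kH$ precisely because $w$ is $H$-invariant. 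The crucial point is that, thanks to the proposition, the formula $w\mapsto\sum_{k\in I}b_kw$ is \emph{literally the same} on all three spaces $W_1$, $W_2$ and $W^{U_1^{(n_0)}(v^{m_1})}$, the same representatives $b_k$ being usable in each case.

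Next I would observe that $\alpha_1$ and $\alpha_2$ are genuine inclusions of subspaces of the one ambient space $W$: from $U_1^{(n_0)}(v^{m_2})\trianglelefteq U_0^{(n_0)}(v^{m_2})$ one gets $W_1\subseteq W_2$, and from $U_1^{(n_0)}(v^{m_1})\subseteq U_1^{(n_0)}(v^{m_2})$ (because $m_1>m_2$) one gets $W_2\subseteq W^{U_1^{(n_0)}(v^{m_1})}$. In particular an element of $W_1$ is a fortiori invariant under the two smaller subgroups, so that $\sum_{k\in I}b_kw_1$ is unambiguously defined whichever of the three spaces we regard $w_1$ as living in. Combining the two observations, for $w_i\in W_i$ ($i=1,2$) we obtain
\[
\alpha_i\bigl(U_v^{(j)}w_i\bigr)=\sum_{k\in I}b_kw_i=U_v^{(j)}\bigl(\alpha_i(w_i)\bigr),
\]
which is exactly the asserted compatibility, and the same reasoning applies verbatim to $\alpha_1$, to $\alpha_2$, and to the composite $\alpha_2\circ\alpha_1$.

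Since all the genuine work is contained in the preceding proposition, I do not expect any real obstacle here; the only point needing a little care is the well-definedness of $\sum_{k\in I}b_kw$ under a change of coset representatives, which is handled by the invariance just noted, everything else being bookkeeping with the uniform (level-independent, and independent of whether one uses $U_0$ or $U_1$) set $(b_k)_{k\in I}$ furnished by the proposition.
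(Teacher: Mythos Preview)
Your proposal is correct and matches the paper's approach exactly: the paper simply writes ``From this we deduce easily that'' before stating the corollary, and your argument is precisely the intended elaboration---the uniform coset representatives $(b_k)_{k\in I}$ from the preceding proposition give the same formula $w\mapsto\sum_{k\in I}b_kw$ for $U_v^{(j)}$ on all three invariant subspaces, so compatibility with the inclusions is immediate.
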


	We need some auxiliary Hecke operators:
	\begin{definition}
		For each diagonal matrix
		$t=\mathrm{diag}(t_1,\cdots,t_{n_0})\in\mathrm{GL}_{n_0}(F_v)$, we put
		\[
		\hat{t}=
		\mathrm{diag}
		(t_{n_0},t_{n_0-1},\cdots,t_1).
		\]
		We denote the following two Hecke operators by the same symbol $V_t$
		(it should be clear from the context which operator $V_t$ refers to)
		\[
		V_t=
		[
		U_0^{(n_0)}(v)
		\mathrm{diag}
		(t,1,\hat{t}^{-1})
		U_0^{(n_0)}(v)
		],
		\quad
		[
		U_1^{(n_0)}(v)
		\mathrm{diag}
		(t,1,\hat{t}^{-1})
		U_1^{(n_0)}(v)
		].
		\]
	\end{definition}
	Then as in the preceding corollary,
	one has
	\begin{proposition}
		For a smooth representation $W$ of $G(F_v)$, the inclusion
		\[
		W^{U_0^{(n_0)}(v)}
		\hookrightarrow
		W^{U_1^{(n_0)}(v)}
		\]
		is compatible with the actions of the operator $V_t$.
	\end{proposition}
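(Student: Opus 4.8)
The plan is to reduce the statement to a single combinatorial fact---that the two double cosets defining $V_t$ admit a common set of right coset representatives---and then to obtain that fact by transporting the argument already used for the operators $U_v^{(j)}$, that is, the Proposition above that decomposes $U_0^{(n_0)}(v^m)\varsigma_{v,j}U_0^{(n_0)}(v^m)$ into right cosets.

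Write $\sigma=\mathrm{diag}(t,1_{n-2n_0},\hat{t}^{-1})$ and, for $?=0,1$, abbreviate $U_?:=U_?^{(n_0)}(v)$, so that $U_1\subset U_0$ and hence $W^{U_0}\subset W^{U_1}$. Recall that for a compact open subgroup $K\subset G(F_v)$, an element $g\in G(F_v)$, a decomposition $KgK=\bigsqcup_{i\in I}b_iK$, and a smooth representation $W$, the Hecke operator $[KgK]$ sends $w\in W^K$ to $\sum_{i\in I}b_iw$, a sum that is independent of the chosen representatives $b_i$ and lies in $W^K$. It therefore suffices to produce a single finite family $(b_i)_{i\in I}$ in $G(F_v)$ with
\[
U_0\sigma U_0=\bigsqcup_{i\in I}b_iU_0
\qquad\text{and}\qquad
U_1\sigma U_1=\bigsqcup_{i\in I}b_iU_1 ,
\]
for then $[U_0\sigma U_0]w=\sum_{i\in I}b_iw=[U_1\sigma U_1]w$ for every $w\in W^{U_0}\subset W^{U_1}$, which is exactly the compatibility of the inclusion with $V_t$.

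To build such a family I would follow, essentially verbatim, the proof of the quoted Proposition. Taking $b_i=u_i\sigma$ with $u_i$ running over a set of representatives of $U_?/(U_?\cap\sigma U_?\sigma^{-1})$, the task is to find a transversal of $U_?/(U_?\cap\sigma U_?\sigma^{-1})$ common to $?=0,1$ and lying inside $U_1$. As there, I would use the semidirect product decomposition $U_?=U_{?,1}\rtimes U_{?,0}$ for the block sizes $(n_0,n-2n_0,n_0)$, with $U_{?,1}$ the block--upper--triangular unipotent part (so $U_{1,1}=U_{0,1}$) and $U_{?,0}$ the block--diagonal part; since $\sigma$ is block diagonal with the same block sizes, conjugation by $\sigma$ respects this decomposition, so $U_?\cap\sigma U_?\sigma^{-1}$ is the product of the corresponding intersections, and one reduces to a common transversal of $U_{?,0}/(U_{?,0}\cap\sigma U_{?,0}\sigma^{-1})$ inside $U_{1,0}$. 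Because $t$ is diagonal, conjugation by $\sigma$ preserves the Borel $B_{n_0}$ and its unipotent radical $U_{n_0}$ inside each of the two outer $\mathrm{GL}_{n_0}$--blocks and scales the off--diagonal blocks entrywise, so picking representatives of the relevant quotients of matrix modules as in (\ref{set of representatives-1}) yields the required explicit set $I\subset U_{1,0}\subset U_1$; this is the analogue of the first case of that proof, and in fact lighter, since here $m=1$ and the block sizes of $\sigma$ coincide with those of the $U_?$, so no separate ``$j<n_0$'' case occurs. The substance of the argument is this group-theoretic bookkeeping; the only mild point of caution---not a genuine obstacle---is that $t\in\mathrm{GL}_{n_0}(F_v)$ may have entries of negative valuation, so $\sigma U_?\sigma^{-1}$ need not lie in $G(\mathcal{O}_{F,v})$, which is harmless as only the open subgroup $U_?\cap\sigma U_?\sigma^{-1}$ enters the combinatorics. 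Granting the common family, the proposition follows just as in the corollary preceding it.
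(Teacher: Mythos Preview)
Your overall strategy---reducing to a common set of right coset representatives for $U_?\sigma U_?$, $?=0,1$---is exactly the paper's.  The execution, however, differs, and your outline contains a small inaccuracy.

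The paper does not rerun the explicit block-by-block construction from the earlier Proposition.  Instead it makes a single observation: since $t$ is diagonal, conjugation $a\mapsto tat^{-1}$ fixes the diagonal entries of $a$, so if $a\in\mathrm{GL}_{n_0}(\mathcal{O}_{F,v})$ has $a\pmod{\varpi_v}\in B_{n_0}$ and $tat^{-1}\pmod{\varpi_v}\in U_{n_0}$, then already $a\pmod{\varpi_v}\in U_{n_0}$.  This yields $U_1\cap XU_1X^{-1}=U_1\cap XU_0X^{-1}$ (with $X=\sigma$), so the natural map
\[
U_1/(U_1\cap XU_1X^{-1})\longrightarrow U_0/(U_0\cap XU_0X^{-1})
\]
is injective; a cardinality count makes it a bijection, and any transversal of the source, chosen inside $U_1$, serves both double cosets.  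No explicit representatives are written down.

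Your remark that the situation is ``lighter'' than the earlier Proposition, with ``no separate $j<n_0$ case,'' is not quite right.  In that Proposition's case $j\geq n_0$ the outer $n_0\times n_0$ blocks of $\varsigma_{v,j}$ are \emph{scalar}, so conjugation leaves $b_{1,1},b_{3,3}$ untouched and the representatives in (\ref{set of representatives-1}) can take $b_{1,1}=b_{3,3}=1$.  Here $t$ is only diagonal, so conjugation genuinely scales the strictly upper-triangular entries inside $b_{1,1}$ and $b_{3,3}$, and you must also pick representatives for those internal quotients---closer in spirit to the $j<n_0$ case.  This does not break your approach (the diagonal of $b_{1,1}$ is fixed, so the representatives can still be chosen in $U_{1,0}$), but the bookkeeping is heavier than you indicate, and the paper's one-line observation about diagonal conjugation sidesteps it entirely.
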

	\begin{proof}
		One can verify that for a matrix
		$a\in
		\mathrm{GL}_{n_0}(\mathcal{O}_{F,v})$
		with
		$a(\mathrm{mod}\,\varpi_v)
		\in
		B_{n_0}(\mathcal{O}_{F,v}/\varpi_v)$,
		if
		$tat^{-1}
		(\mathrm{mod}\,\varpi_v)
		\in
		U_{n_0}(\mathcal{O}_{F,v}/\varpi_v)$,
		then
		$a(\mathrm{mod}\,\varpi_v)
		\in
		U_{n_0}(\mathcal{O}_{F,v}/\varpi_v)$.
		From this one deduces
		\[
		U_1^{(n_0)}(v)
		\bigcap
		X
		U_1^{(n_0)}(v)
		X^{-1}
		=
		U_1^{(n_0)}(v)
		\bigcap
		X
		U_0^{(n_0)}(v)
		X^{-1},
		\]
		where $X=\mathrm{diag}(t,1,\hat{t}^{-1})$. Therefore the natural map between the quotients is injective
		\[
		U_1^{(n_0)}(v)/
		\big(
		U_1^{(n_0)}(v)\bigcap
		X
		U_1^{(n_0)}(v)
		X^{-1}
		\big)
		\rightarrow
		U_0^{(n_0)}(v)/
		\big(
		U_0^{(n_0)}(v)\bigcap
		X
		U_0^{(n_0)}(v)
		X^{-1}
		\big).
		\]
		On the other hand, it is easy to see that these two quotients have the same cardinality. Thus the above map is a bijection. Then one can choose a common set of representatives $(g_i\in U_1^{(n_0)}(v))_{i\in I}$ such that
		\[
		U_i^{(n_0)}(v)
		X
		U_i^{(n_0)}(v)
		=
		\bigsqcup_{i\in I}
		g_iXU_i^{(n_0)}(v),
		\quad
		i=0,1.
		\]
	\end{proof}
	
	For later computations, we need to make explicit the root datum for	$(G_v,\mathbf{B}_v,\mathbf{T}_v)$. For $i=1,\cdots,n_\mathrm{s}$,
	we write $e^\ast_i\colon\mathbf{T}_v\rightarrow\mathbb{G}_m$ for the character sending the element $\mathrm{diag}(t_1,\cdots,t_{n_\mathrm{s}},t,
	t_{n_\mathrm{s}}^{-1},\cdots,t_1^{-1})$ to $t_i$ (here $t$ is either an element in $\mathbb{G}_m$ or an empty set). Similarly, we write $e_{\ast,i}\colon
	\mathbb{G}_m\rightarrow\mathbf{T}_v$ for the cocharacter sending
	$t$ to the element $\mathrm{diag}(1_{i-1},t,1_{n-2i},t^{-1},1_{i-1})$.
	The maximal split sub-torus $\mathbf{T}_v^\mathrm{s}$ of $\mathbf{T}_v$
	consists of elements $\mathrm{diag}(t_1,\cdots,t_{n_\mathrm{s}},1_{n-2n_\mathrm{s}},
	t_{n_\mathrm{s}}^{-1},\cdots,t_1^{-1})$ and $\mathbf{T}_v$ is the centralizer of $\mathbf{T}_v^\mathrm{s}$ inside $G_v$. Therefore
	\[
	X^\ast
	(\mathbf{T}_v^\mathrm{s})
	=
	\mathbb{Z}\langle e_1^\ast,\cdots,e_{n_\mathrm{s}}^\ast\rangle,
	\quad
	X_\ast(\mathbf{T}_v^\mathrm{s})
	=
	\mathbb{Z}
	\langle
	e_{\ast,1},\cdots,e_{\ast,n_\mathrm{s}}\rangle.
	\]
	The Weyl group $W^\mathrm{s}$ of $\mathbf{T}_v^\mathrm{s}$ is then given by the semi-direct product $\{\pm1\}^{n_\mathrm{s}}\rtimes S_{n_\mathrm{s}}$, except in the case where $G_v$ is non-split (thus $G^\ast=\mathrm{SO}_n^\eta$ with $n$ even), where $W^\mathrm{s}$ consists of $(x_1,\cdots,x_{n_\mathrm{s}};\sigma)\in\{\pm1\}^{n_\mathrm{s}}\rtimes S_{n_\mathrm{s}}$ such that $\prod_ix_i=1$. Write the spherical Hecke operator (\emph{cf.} Definition \ref{spherical Hecke algebra})
	\begin{equation}\label{spherical Hecke operators}
		T_v^{(j)}
		=
		[
		G_v(\mathcal{O}_{F,v})
		\varsigma_{v,j}
		G_v(\mathcal{O}_{F,v})
		].\index{T@$T_v^{(j)}$}
	\end{equation}
	The Satake isomorphism
	(\ref{Satake isomorphism})
	becomes,
	up to a non-zero scalar
	\begin{align}\label{Satake isomorphism for quasi-split}
		\begin{split}
			\mathbb{T}_v
			&
			\simeq
			\mathbb{Z}[\frac{1}{q_v}]
			[Z_1^{\pm1},\cdots,Z^{\pm1}_{n_\mathrm{s}}]^{W^\mathrm{s}}
			\big(
			\simeq	  
			\mathbb{Z}[\frac{1}{q_v}]
			[X_\ast(\mathbf{T}_v^\mathrm{s})]^{W^\mathrm{s}}
			\big),  
			\\
			T_v^{(j)}
			&
			\mapsto
			\begin{cases*}
				q_v^{\frac{n_\mathrm{s}(n_\mathrm{s}-1)}{2}+(n-n_\mathrm{s}-j)j}
				\cdot
				\sum_{\sigma\in W^\mathrm{s}}
				s_j(\sigma(Z_1,Z_2,\cdots,Z_{n_\mathrm{s}})),
				&
				$G^\ast=\mathrm{SO}_n^\eta$;
				\\
				q_v^{\frac{n_\mathrm{s}(n_\mathrm{s}+1)}{2}+(n-n_\mathrm{s}-j)j}
				\cdot
				\sum_{\sigma\in W^\mathrm{s}}
				s_j(\sigma(Z_1,Z_2,\cdots,Z_{n_\mathrm{s}})),
				&
				$G^\ast=\mathrm{Sp}_n$.
			\end{cases*}
		\end{split}
	\end{align}
	Here  $v$ is the place corresponding to the prime $\mathfrak{q}$, $s_j(Z_1,\cdots,Z_{n_\mathrm{s}})$ is the elementary symmetric polynomial of degree $j$ on the variables $Z_1,\cdots,Z_{n_\mathrm{s}}$ and $\sigma=(\sigma_1,\cdots,\sigma_{n_\mathrm{s}})\in W^\mathrm{s}$
	takes $(Z_1,Z_2,\cdots,Z_{n_\mathrm{s}})$ to $(Z_1^{\sigma_1},\cdots,Z_{n_\mathrm{s}}^{\sigma_{n_\mathrm{s}}})$.
	The choice of the factor before the summation $\sum_{\sigma\in W^\mathrm{s}}$ will be justified in the following
	lemma:
	\begin{lemma}\label{Hecke polynomial for unramified rep}
		Let $\chi_1,\cdots,\chi_{n_\mathrm{s}}$	be unramified characters of
		$F_v^\times$. Write	$\mathbb{C}(\chi_1,\cdots,\chi_{n_\mathrm{s}})$
		for the complex representation space of	$\mathbf{B}_v(F_v)$
		induced by the characters $(\chi_1,\cdots,\chi_{n_\mathrm{s}})$
		if $n=2n_\mathrm{s}$, induced by the characters $(\chi_1,\cdots,\chi_{n_\mathrm{s}},\chi_0)$ if $n>2n_\mathrm{s}$
		(where $\chi_0$ is the trivial character on the non-split torus $\mathbf{T}_v/\mathbf{T}_v^\mathrm{s}$). Then the space of
		$G_v(\mathcal{O}_{F,v})$-invariants	$(\mathrm{n}\text{-}\mathrm{Ind}_{\mathbf{B}_v(F_v)}^{G_v(F_v)}
		\mathbb{C}
		(\chi_1,\cdots,\chi_{n_\mathrm{s}}))^{G_v(\mathcal{O}_{F,v})}$
		is of dimension $1$ and the spherical Hecke operators $T_v^{(j)}$
		acts on it by the scalar
		\[
		\begin{cases*}
			q_v^{\frac{n_\mathrm{s}(n_\mathrm{s}-1)}{2}
				+
				(n-n_\mathrm{s}-j)j}
			\sum_{I,J}
			\prod_{i\in J}
			(\chi_i(\varpi_v)q_v^{-1})
			\prod_{i\in I\backslash J}
			(\chi_i(\varpi_v)q_v^{-1})^{-1},
			&
			$G^\ast=\mathrm{SO}_n^\eta$;
			\\
			q_v^{\frac{n_\mathrm{s}(n_\mathrm{s}+1)}{2}
				+
				(n-n_\mathrm{s}-j)j}
			\sum_{I,J}
			\prod_{i\in J}
			(\chi_i(\varpi_v)q_v^{-1})
			\prod_{i\in I\backslash J}
			(\chi_i(\varpi_v)q_v^{-1})^{-1},
			&
			$G^\ast=\mathrm{Sp}_n$,
		\end{cases*}
		\]
		where $I$ runs through all subsets of $\{1,2,\cdots,n_\mathrm{s}\}$
		of cardinal $j$ and $J$ runs through all subsets of $I$. Now using the Satake isomorphism	from (\ref{Satake isomorphism}), if a Hecke operator $T$ corresponds to a rational polynomial $P(Z_1,\cdots,Z_{n_\mathrm{s}})$
		via (\ref{Satake isomorphism for quasi-split}),	then the eigenvalue of
		$T$ on the above space is given by $P(\chi_1(\varpi_v)q_v^{-1},\cdots,	\chi_{n_\mathrm{s}}(\varpi_v)q_v^{-1})$
	\end{lemma}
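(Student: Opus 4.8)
The plan is to reduce the computation of an arbitrary Hecke eigenvalue on the spherical line to that of the operators $T_v^{(j)}$, to compute the latter by an explicit coset decomposition together with the Iwasawa decomposition, and to compare the answer with (\ref{Satake isomorphism for quasi-split}); this comparison simultaneously pins down the normalizing factor chosen there and yields the general evaluation formula.

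First I would record that $(\mathrm{n}\text{-}\mathrm{Ind}_{\mathbf{B}_v(F_v)}^{G_v(F_v)}\mathbb{C}(\chi_1,\cdots,\chi_{n_\mathrm{s}}))^{G_v(\mathcal{O}_{F,v})}$ is one-dimensional. Indeed, by the Iwasawa decomposition $G_v(F_v)=\mathbf{B}_v(F_v)\cdot G_v(\mathcal{O}_{F,v})$ (valid since $G_v(\mathcal{O}_{F,v})$ is hyperspecial and $\mathbf{B}_v$ is a minimal parabolic), a right-$G_v(\mathcal{O}_{F,v})$-invariant $\phi$ in this induced space is determined by $\phi(1)$ via $\phi(bk)=\Delta_{\mathbf{B}_v}^{1/2}(b)\chi(b)\phi(1)$ for $b\in\mathbf{B}_v(F_v)$, $k\in G_v(\mathcal{O}_{F,v})$, and this prescription is consistent precisely because $\chi$ is unramified (together with the trivial character $\chi_0$ on $\mathbf{T}_v/\mathbf{T}_v^\mathrm{s}$ in the non-split case), so that $\Delta_{\mathbf{B}_v}^{1/2}\chi$ is trivial on $\mathbf{B}_v(F_v)\cap G_v(\mathcal{O}_{F,v})$. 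Hence the space is spanned by the normalized spherical vector $\phi_0$ with $\phi_0(1)=1$, and since $\mathbb{T}_v$ is commutative and preserves this line it acts through a $\mathbb{C}$-algebra character on $\mathbb{T}_v\otimes\mathbb{C}$. Since $T_v^{(1)},\cdots,T_v^{(n_\mathrm{s})}$ generate $\mathbb{T}_v\otimes\mathbb{Q}$ (in the non-split even orthogonal case one needs in addition one further generator of $\mathbb{T}_v\otimes\mathbb{Q}$, reflecting the index-two inclusion $W^\mathrm{s}\subset\{\pm1\}^{n_\mathrm{s}}\rtimes S_{n_\mathrm{s}}$; it is handled by the same method), the second assertion of the lemma follows once the first is proved: both the map sending $T$ to its eigenvalue on $\phi_0$ and the map sending $T$ to $\mathcal{S}(T)$ evaluated at $(\chi_i(\varpi_v)q_v^{-1})_i$ are $\mathbb{C}$-algebra homomorphisms on $\mathbb{T}_v\otimes\mathbb{C}$, so it suffices that they agree on the $T_v^{(j)}$.

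To compute the eigenvalue of $T_v^{(j)}=[G_v(\mathcal{O}_{F,v})\varsigma_{v,j}G_v(\mathcal{O}_{F,v})]=\bigsqcup_m g_m G_v(\mathcal{O}_{F,v})$, evaluate $(T_v^{(j)}\phi_0)(1)=\sum_m\phi_0(g_m)$; writing $g_m=b_mk_m$ by Iwasawa with $b_m\in\mathbf{B}_v(F_v)$, this equals $\sum_m\Delta_{\mathbf{B}_v}^{1/2}(b_m)\chi(b_m)$. I would enumerate the cosets using the realization of $G_v$ as the isometry group of $(V,(-,-))$ with the explicit bases fixed above: a coset corresponds to a choice of a $j$-element subset $I\subset\{1,\cdots,n_\mathrm{s}\}$ (the isotropic directions along which $\varsigma_{v,j}$ moves the standard lattice), a further subset $J\subseteq I$ recording, after Iwasawa normalization, the directions on which the torus part $b_m$ is $\varpi_v$ rather than $\varpi_v^{-1}$, and a unipotent contribution parametrized by an $\mathbb{F}_{q_v}$-point count of a partial-flag-type variety. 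The torus part then contributes $\prod_{i\in J}\chi_i(\varpi_v)\prod_{i\in I\setminus J}\chi_i(\varpi_v)^{-1}$, together with the factors $q_v^{-1}$ per index coming from $\Delta_{\mathbf{B}_v}^{1/2}$, while the point count gives the exponent $\tfrac{n_\mathrm{s}(n_\mathrm{s}-1)}{2}+(n-n_\mathrm{s}-j)j$ in the orthogonal case and $\tfrac{n_\mathrm{s}(n_\mathrm{s}+1)}{2}+(n-n_\mathrm{s}-j)j$ in the symplectic case. Summing over all $(I,J)$ produces exactly the displayed formula; comparing with (\ref{Satake isomorphism for quasi-split}), where the orbit sum expands (up to a nonzero scalar) as $\sum_{\sigma\in W^\mathrm{s}}s_j(\sigma(Z_1,\cdots,Z_{n_\mathrm{s}}))=\sum_{I,J}\prod_{i\in J}Z_i\prod_{i\in I\setminus J}Z_i^{-1}$, recovers the evaluation $Z_i\mapsto\chi_i(\varpi_v)q_v^{-1}$ and confirms the normalizing factor.

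I expect the main obstacle to be precisely the explicit left-coset decomposition of $G_v(\mathcal{O}_{F,v})\varsigma_{v,j}G_v(\mathcal{O}_{F,v})$ and the Iwasawa decomposition of its representatives, carried out uniformly over symplectic groups and over split and non-split special orthogonal groups; the non-split even orthogonal case is the most delicate, since there $W^\mathrm{s}$ is only the subgroup of $\{\pm1\}^{n_\mathrm{s}}\rtimes S_{n_\mathrm{s}}$ cut out by $\prod_i x_i=1$ and one must separately keep track of the anisotropic $\mathrm{SO}_2^\eta$-factor of $\mathbf{T}_v$, on which $\chi_0$ is trivial. Correctly tracking the powers of $q_v$ through both the modulus character and the point counts is the bookkeeping that the lemma is designed to settle, so that is where I would be most careful.
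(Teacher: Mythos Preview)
Your proposal is correct and follows essentially the same approach as the paper: both use the Iwasawa decomposition to establish one-dimensionality, then compute $(T_v^{(j)}\phi_0)(1)$ via an explicit left-coset decomposition of $G_v(\mathcal{O}_{F,v})\varsigma_{v,j}G_v(\mathcal{O}_{F,v})$ parametrized by pairs $(I,J)$ with $I\subset\{1,\dots,n_\mathrm{s}\}$ of size $j$ and $J\subseteq I$, together with unipotent data, and finally track the powers of $q_v$ through the modulus character and the point counts. The paper carries out precisely the detailed bookkeeping you flag as the main obstacle (writing down the sets $\mathfrak{J}(I,J)$ and computing $\log_{q_v}\#\mathfrak{J}(I,J)$ explicitly), and deduces the final evaluation statement directly from the normalization of the Satake isomorphism rather than via your generator argument, but this is a cosmetic difference.
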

	\begin{proof}
		We prove the case $G^\ast=\mathrm{SO}_n^\eta$, the other case $G^\ast=\mathrm{Sp}_n$ is proved in the same way.
		Recall the modular function
		$\Delta$
		on
		$\mathbf{B}_v(F_v)$:
		for any
		$b\in
		\mathbf{B}_v(F_v)$
		with diagonal entries
		$(b_{i,i})_{i=1}^n$,
		one has
		\[
		\Delta
		(b)
		=
		|
		b_{1,1}^{n_\mathrm{s}-1}b_{2,2}^{n_\mathrm{s}-3}
		\cdots
		b_{n_\mathrm{s},n_\mathrm{s}}^{-(n_\mathrm{s}-1)}
		(
		b_{1,1}\cdots
		b_{n_\mathrm{s},n_\mathrm{s}}
		)^{n_\mathrm{s}+1}
		|_v.
		\]
		Here
		$|-|_v$
		is the $v$-adic absolute value
		$F_v$
		with
		$|p|_v=1/p$.
		Since we have the Iwasawa decomposition
		\begin{equation}\label{Iwasawa decomposition}
			\tag{$\ast$}
			G_v(F_v)
			=
			\mathbf{B}_v(F_v)
			G_v(\mathcal{O}_{F,v})
			=
			G_v(\mathcal{O}_{F,v})
			\mathbf{B}_v(F_v),
		\end{equation}
		the space
		$V:=
		(\mathrm{n}\text{-}\mathrm{Ind}_{\mathbf{B}_v(F_v)}^{ G_v(F_v)}
		(\chi_1,\cdots,\chi_{n_\mathrm{s}}))^{ G_v(\mathcal{O}_{F,v})}$
		is thus of dimension $1$. We take the following basis for this space
		\[
		\phi
		\colon
		G_v(F_v)=
		\mathbf{B}_v(F_v)
		G_v(\mathcal{O}_{F,v})
		\rightarrow
		\mathbb{C}
		=
		\mathbb{C}(\chi_1,\cdots,\chi_{n_\mathrm{s}}),
		\quad
		bu
		\mapsto
		\Delta(b)^{1/2}
		\prod_{i=1}^{n_\mathrm{s}}
		\chi_i(b_{i,i}).
		\]
		
		To write down explicitly the action of the Hecke operator $T_v^{(j)}$ on
		$V$, we need to find a set of representatives in the decomposition
		\[
		G_v(\mathcal{O}_{F,v})
		\varsigma_{v,j}
		G_v(\mathcal{O}_{F,v})
		=
		\bigsqcup_ig_i G_v(\mathcal{O}_{f,v}).
		\]
		For any $g\in G_v(\mathcal{O}_{F,v})$, according to the Iwasawa decomposition (\ref{Iwasawa decomposition}), we can write
		$g\varsigma_{v,j}=bh$ with $b\in\mathbf{B}_v(F_v)$ and $h\in G_v(\mathcal{O}_{F,v})$. Therefore it suffices to find representatives $g_i$
		in the Borel subgroup $\mathbf{B}_v(F_v)$. Note that in the matrix
		$g\varsigma_{v,j}=(a_{k,l})$, for each row, the minimal $v$-adic valuation
		of all the entries in this row is in $\{-1,0,1\}$. Thus the diagonal entries of
		$b$ have $v$-adic valuations in	$\{-1,0,1\}$. Let $\mathfrak{I}$ be the power set of $\{1,2,\cdots,n_\mathrm{s}\}$ and for each $I\in\mathfrak{I}$ write
		$2^I$ for the power set of $I$. For a subset $I$ of $\{1,2,\cdots,n_\mathrm{s}\}$, we write $I^c$ for the set $\{n-i\mid i\in I\}$.
		Fix a set of representatives $X$, resp. $Y$ in $\mathcal{O}_{F,v}$ for the quotient $\mathcal{O}_{F,v}/\varpi_v$, resp. $\mathcal{O}_{F,v}/\varpi_v^2$
		such that $0\in X \subset Y$. For each pair $(I,J)$ with $I\in\mathfrak{I}$ and
		$J\subset I$, we consider the set $\mathfrak{J}_1(I,J)$ of elements $b$ in
		$\mathbf{B}_v(F_v)$ satisfying the following conditions:
		\begin{enumerate}
			\item 
			For $1\leq i\leq n/2$,
			\[
			b_{i,i}
			=
			\begin{cases*}
				\varpi_v,
				&
				if
				$i\in J$;
				\\
				\varpi_v^{-1},
				&
				if
				$i\in I\backslash J$;
				\\
				1,
				&
				otherwise.
			\end{cases*}
			\]

			\item 
			For $1\leq i<l\leq n_\mathrm{s}$,
			\[
			b_{i,l}
			\begin{cases*}
				\in
				Y,
				&
				if
				$i\in J$
				and $l\notin J$;
				\\
				\in
				X,
				&
				if
				$i\notin I$
				and $l\in I$;
				\\
				=0,
				&
				otherwise.
			\end{cases*}
			\]
			
			\item 
			For
			$l=n_\mathrm{s}+1$
			(thus $n$ is odd),
			\[
			b_{i,l}
			\begin{cases*}
				\in X,
				&
				if
				$i\in I$,
				\\
				=0,
				&
				otherwise
			\end{cases*}
			\]
		\end{enumerate}
		Note that these conditions uniquely determine an element $b$ in $\mathbf{B}_v(F_v)$. We consider also the set $\mathfrak{J}_2(I,J)$
		of elements $b\in\mathbf{B}_v(F_v)$ satisfying the following conditions
		\[
		b_{i,l}
		\begin{cases*}
			=1,
			&
			if
			$1\leq i=l\leq n$;
			\\
			\in
			Y,
			&
			if
			$i-(n-n_\mathrm{s}),l\in J$;
			\\
			\in
			X,
			&
			if
			$1\leq i\leq n_\mathrm{s}$
			and
			$l\in I^c$;
			\\
			=0,
			&
			$1\leq i<l\leq n-n_\mathrm{s}$.
		\end{cases*}
		\]
		Again these conditions determine a unique element in
		$\mathbf{B}_v(F_v)$.
		Then we set
		\[
		\mathfrak{J}(I,J)
		=
		\{
		g_1g_2\mid 
		g_1\in\mathfrak{J}_1(I,J),
		\,
		g_2\in\mathfrak{J}_2(I,J)
		\}.
		\]
		One verifies the following decomposition
		\[
		G_v(\mathcal{O}_{F,v})
		\varsigma_{v,j}
		G_v(\mathcal{O}_{F,v})
		=
		\bigsqcup_{I\in\mathfrak{I},J\subset I}
		\bigsqcup_{b\in \mathfrak{J}(I,J)}
		b G_v(\mathcal{O}_{F,v}).
		\]

		For each pair
		$(I,J)$,
		we write
		$s:=\# (I)$
		and
		$t:=
		\# (J)\leq s$.
		We list the elements
		as follows
		\begin{align*}
			J
			&
			=\{i_1<i_2<\cdots<i_t\},
			\\
			I\backslash J
			&
			=
			\{
			m_1<m_2<\cdots<m_{s-t}
			\},
			\\
			\{1,2,\cdots,n_\mathrm{s}\}\backslash I
			&
			=
			\{
			j_1<j_2<\cdots<j_{n_\mathrm{s}-s}
			\}.
		\end{align*}
		Then we have
		\begin{align*}
			\log_{q_v}(\# \mathfrak{J}(I,J))
			=
			&
			2\cdot\frac{t(t-1)}{2}
			+
			(n_\mathrm{s}-s)s
			+
			\frac{(n_\mathrm{s}-s)(n_\mathrm{s}-s-1)}{2}
			+
			(n-2n_\mathrm{s})s
			\\
			&
			+
			2\cdot
			\sum_{k=1}^t
			(n_\mathrm{s}-i_k-(t-k))
			+
			\sum_{k=1}^{n_\mathrm{s}-s}
			(n_\mathrm{s}-j_k-(n_\mathrm{s}-s)).
		\end{align*}
		We can rewrite the last two summations
		$\sum_{k=1}^t$ and $\sum_{k=1}^{n_\mathrm{s}-s}$  
		as follows
		\begin{align*}
			&
			2\cdot
			\sum_{k=1}^t
			(n_\mathrm{s}-i_k-(t-k))
			+
			\sum_{k=1}^{n_\mathrm{s}-s}
			(n_\mathrm{s}-j_k-(n_\mathrm{s}-s))
			\\
			=
			&
			\bigg(  
			\sum_{k=1}^t
			(n_\mathrm{s}-i_k-(t-k))
			+
			\sum_{k=1}^{n_\mathrm{s}-s}
			(n_\mathrm{s}-j_k-(n_\mathrm{s}-s))
			+
			\sum_{k=1}^{s-t}
			(n_\mathrm{s}-m_k-(s-t))
			\bigg)
			\\
			&
			+
			\bigg(   
			\sum_{k=1}^t
			(n_\mathrm{s}-i_k-(t-k))
			-
			\sum_{k=1}^{n_\mathrm{s}-s}
			(n_\mathrm{s}-j_k-(n_\mathrm{s}-s))
			\bigg)
			\\
			=
			&
			\bigg(
			n_\mathrm{s}^2
			-\frac{n_\mathrm{s}(n_\mathrm{s}+1)}{2}
			-\frac{t(t-1)}{2}
			-\frac{(n_\mathrm{s}-s)(n_\mathrm{s}-s-1)}{2}
			-\frac{(s-t)(s-t-1)}{2}
			\bigg)
			\\
			&
			+
			\bigg(
			\sum_{k=1}^t
			(n_\mathrm{s}-i_k-(t-k))
			-
			\sum_{k=1}^{n_\mathrm{s}-s}
			(n_\mathrm{s}-j_k-(n_\mathrm{s}-s))
			\bigg).
		\end{align*}
		On the other hand,
		if we set
		$X_i=\chi_i(\varpi_v)$
		for
		$i=1,2,\cdots,n_\mathrm{s}$,
		then for each
		$b\in\mathfrak{J}(I,J)$,
		we have
		\[
		\phi(b)
		=
		\prod_{i\in J}X_iq_v^{-\frac{1}{2}(n_\mathrm{s}-2(i-1)+n_\mathrm{s}+1)}
		\prod_{i\in I\backslash J}
		X_i^{-1}
		q_v^{\frac{1}{2}(n_\mathrm{s}-2(i-1)+n_\mathrm{s}+1)}.
		\]
		From this one deduces that in the expression
		$(T_v^{(j)}\phi)(1)
		=
		\sum_{I\in\mathfrak{I},J\in 2^I,b\in\mathfrak{J}(I,J)}
		\phi(b)$,
		the power of $q_v$ in each
		$\phi(b)$
		depends only on the set
		$\mathfrak{J}(I,J)$
		and therefore
		the power of
		$q_v$ in each term
		$\sum_{b\in\mathfrak{J}(I,J)}
		\phi(b)$
		is equal to
		\begin{align*}
			&
			\bigg(
			n_\mathrm{s}^2
			-\frac{n_\mathrm{s}(n_\mathrm{s}+1)}{2}
			-\frac{t(t-1)}{2}
			-\frac{(n_\mathrm{s}-s)(n_\mathrm{s}-s-1)}{2}
			-\frac{(s-t)(s-t-1)}{2}
			\bigg)
			\\
			+
			&
			\bigg(
			\sum_{k=1}^t
			(n_\mathrm{s}-i_k-(t-k))
			-
			\sum_{k=1}^{n_\mathrm{s}-s}
			(n_\mathrm{s}-j_k-(n_\mathrm{s}-s))
			\bigg)
			\\
			+
			&
			\bigg(
			-\frac{1}{2}\sum_{l\in J}(n_\mathrm{s}-(2l-1)+n_\mathrm{s}+1)
			+
			\frac{1}{2}\sum_{l\in I\backslash J}(n_\mathrm{s}-(2l-1)+n_\mathrm{s}+1)
			\bigg)
		\end{align*}
		which is the same as
		\[
		\frac{n_\mathrm{s}(n_\mathrm{s}-1)}{2}
		+
		(n-n_\mathrm{s}-s)s
		+
		(s-2t)
		=:
		f(s)+
		(-t+s-t).
		\]
		If we write
		$Y_i=X_iq_v^{-1}$,
		then we have
		\begin{align*}
			(T_v^{(j)}\phi)(1)
			&
			=
			\sum_{I,J,b}
			\phi(b)
			=
			\sum_{I,J}
			q_v^{f(\# (I))}
			\prod_{i\in J}Y_i
			\prod_{i\in I\backslash J}Y_i^{-1}
			\\
			&
			=
			q_v^{\frac{n_\mathrm{s}(n_\mathrm{s}-1)}{2}
				+
				(n-n_\mathrm{s}-j)j}
			\sum_{I,J}
			\prod_{i\in J}Y_i
			\prod_{i\in I\backslash J}Y_i^{-1},
		\end{align*}
		which is the expression in the proposition for
		$T_v^{(j)}$
		(note that $s=\#  I=j$).
		The rest follows from our normalization of
		the Satake isomorphism
		(\ref{Satake isomorphism for quasi-split}).
	\end{proof}

	Write $\mathbf{P}$ for the set of polynomials $P(X)\in\mathbb{C}[X]$ of the form $P(X)=\prod_{i=1}^r(X-x_i)(X-x_i^{-1})$ and $\widetilde{\mathbf{P}}$ the set $\widetilde{\mathbf{P}}$ of the polynomials $Q(X)\in\mathbb{C}[X]$
	of the form $Q(X)=\prod_{i=1}^r(X-(x_i+x_i^{-1}))$ (the coefficient field $\mathbb{C}$ can be replaced by any other algebraically closed field in both $\mathbf{P}$ and $\widetilde{\mathbf{P}}$). Then we have a natural bijection between $\mathbf{P}$ and $\widetilde{\mathbf{P}}$:
	\begin{equation}\label{degree 1 and 2 polynomials}
		\mathbf{P}
		\rightarrow
		\widetilde{\mathbf{P}},
		\quad
		P(X)=\prod_{i=1}^r(X-x_i)(X-x_i^{-1})
		\mapsto
		\widetilde{P}(X)=\prod_{i=1}^r(X-(x_i+x_i^{-1})).
	\end{equation}

	\begin{lemma}
		For an unramified irreducible representation $W$ of $ G(F_v)$, write
		$t^{(j)}$ for the eigenvalue of $T_v^{(j)}$	on $W^{ G(\mathcal{O}_{F,v})}$
		and set $\underline{t}=(t^{(1)},\cdots,t^{(n_\mathrm{s})})$. Then
		\[
		W^{U_1^{(n_0)}(v)}
		=
		W^{U_0^{(n_0)}(v)}.
		\]
		Moreover the characteristic polynomial of the operator $V_{1/\varpi_v}$
		on $W^{U_0^{(n_0)}(v)}$ divides the polynomial		$(P_{\underline{t}}(X))^{n_0\frac{(n_\mathrm{s}-1)!}{(n_\mathrm{s}-n_0)!}}$
		where $P_{\underline{t}}(X)$ is the polynomial in $\mathbf{P}$	corresponding to the following polynomial in $\widetilde{\mathbf{P}}$
		under the correspondence (\ref{degree 1 and 2 polynomials})
		\begin{equation*}
			\widetilde{P}_{\underline{t}}(X)
			=
			\begin{cases*}
				X^{n_\mathrm{s}}
				+
				\sum_{j=1}^{n_\mathrm{s}}
				(-1)^j
				q_v^{-\frac{n_\mathrm{s}(n_\mathrm{s}-1)}{2}+(j+n_\mathrm{s}-n)j}
				t^{(j)}
				X^{n_\mathrm{s}-j},
				&
				$G^\ast=\mathrm{SO}_n^\eta$;
				\\
				X^{n_\mathrm{s}}
				+
				\sum_{j=1}^{n_\mathrm{s}}
				(-1)^j
				q_v^{-\frac{n_\mathrm{s}(n_\mathrm{s}+1)}{2}+(j+n_\mathrm{s}-n)j}
				t^{(j)}
				X^{n_\mathrm{s}-j},
				&
				$G^\ast=\mathrm{Sp}_n$.
			\end{cases*}
		\end{equation*}
	\end{lemma}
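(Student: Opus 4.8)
The plan is to reduce the statement to the case of a full unramified principal series and then to recognise $V_{1/\varpi_v}$ as a $U_p$-type Hecke operator attached to a maximal parabolic with a corank-one Levi, whose action on parahoric invariants is computed on a Jacquet module. First I would use that an irreducible unramified representation $W$ of $G(F_v)$ embeds $G(F_v)$-equivariantly into an unramified principal series $I(\chi):=\mathrm{n}\text{-}\mathrm{Ind}_{\mathbf B_v(F_v)}^{G_v(F_v)}\mathbb C(\chi_1,\dots,\chi_{n_{\mathrm{s}}})$ for suitable unramified characters $\chi_i$ of $F_v^\times$. Setting $Y_i:=\chi_i(\varpi_v)q_v^{-1}$, Lemma \ref{Hecke polynomial for unramified rep} applied to the line $W^{G(\mathcal O_{F,v})}$ gives $t^{(j)}$ as $e_j(Y_1+Y_1^{-1},\dots,Y_{n_{\mathrm{s}}}+Y_{n_{\mathrm{s}}}^{-1})$ times exactly the $q_v$-power that is inverted in the definition of $\widetilde P_{\underline t}$, so $\widetilde P_{\underline t}(X)=\prod_{i=1}^{n_{\mathrm{s}}}\bigl(X-(Y_i+Y_i^{-1})\bigr)$ and hence, by the correspondence (\ref{degree 1 and 2 polynomials}), $P_{\underline t}(X)=\prod_{i=1}^{n_{\mathrm{s}}}(X-Y_i)(X-Y_i^{-1})$. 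Since taking invariants under the compact subgroups $U_0^{(n_0)}(v)$ and $U_1^{(n_0)}(v)$ is exact and double-coset Hecke operators commute with $G(F_v)$-equivariant maps, $W^{U_0^{(n_0)}(v)}$ is a $V_{1/\varpi_v}$-stable subspace of $I(\chi)^{U_0^{(n_0)}(v)}$, so it suffices to prove both assertions with $I(\chi)$ in place of $W$.

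For the first assertion I would show $I(\chi)^{U_1^{(n_0)}(v)}=I(\chi)^{U_0^{(n_0)}(v)}$. The finite torus $U_0^{(n_0)}(v)/U_1^{(n_0)}(v)\simeq(\kappa_v^\times)^{n_0}$ acts on $I(\chi)^{U_1^{(n_0)}(v)}$, and using the Bruhat decomposition of $\mathbf B_v(F_v)\backslash G_v(F_v)/U_0^{(n_0)}(v)$ together with Mackey's formula one sees that every character of $(\kappa_v^\times)^{n_0}$ occurring there is the restriction to a compact subtorus of a Weyl conjugate of $\chi$; as $\chi$ is unramified these restrictions are all trivial, so the torus acts trivially and the two invariant spaces coincide. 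Intersecting with $W$ (using that $W\hookrightarrow I(\chi)$ is injective and $U_0^{(n_0)}(v)$-equivariant) gives $W^{U_1^{(n_0)}(v)}=W^{U_0^{(n_0)}(v)}$.

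For the characteristic polynomial I would first identify $U_0^{(n_0)}(v)$ with the parahoric subgroup $\mathfrak p_{\mathbf P}$ attached to the parabolic $\mathbf P\subset G_v$ stabilising a complete isotropic flag of length $n_0$, whose Levi quotient is $\mathbb G_m^{n_0}\times G'$ with $G'$ a quasi-split classical group of the same type and of semisimple rank $n_{\mathrm{s}}-n_0$. The diagonal element $\mathrm{diag}(\varpi_v^{-1},1_{n-2},\varpi_v)$ defining $V_{1/\varpi_v}$ is $e_{\ast,1}(\varpi_v)^{-1}$, and the cocharacter $e_{\ast,1}$ is central in, and strictly $\mathbf Q$-(anti)dominant for, the maximal parabolic $\mathbf Q$ stabilising a single isotropic line, with Levi $\mathrm{GL}_1\times G''$ and $G''$ of rank $n_{\mathrm{s}}-1$. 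Using the Iwahori factorisation of $\mathfrak p_{\mathbf P}$ along $\mathbf Q$ and the standard theory of $U_p$-operators, $V_{1/\varpi_v}$ on $I(\chi)^{\mathfrak p_{\mathbf P}}$ is identified — up to the $q_v$-rescaling of the variable already incorporated in $\widetilde P_{\underline t}$, and up to a spurious power of $X$ which is excluded a posteriori because the $Y_i^{\pm1}$ are nonzero — with the action of $\mathrm{diag}(\varpi_v^{-1},1_{n-2},\varpi_v)$ on $r_{\mathbf Q}(I(\chi))^{\mathfrak p_{\mathbf P}\cap L_{\mathbf Q}(F_v)}$. By the geometric lemma $r_{\mathbf Q}(I(\chi))$ is built from the principal series $\psi\boxtimes\tau_\psi$ of $\mathrm{GL}_1\times G''$, with $\psi$ running through $\{\chi_1^{\pm1},\dots,\chi_{n_{\mathrm{s}}}^{\pm1}\}$; the central element acts on the $\psi$-summand by $\psi(\varpi_v^{-1})$, which after normalisation is a root $Y_i^{\pm1}$ of $P_{\underline t}$, while the corresponding invariants $\tau_\psi^{\mathfrak p_{\mathbf P}\cap G''(F_v)}$ have dimension $|W(G'')^{\mathrm{s}}/W(G')^{\mathrm{s}}|$, the same for every $\psi$. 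A direct count of these Weyl-group indices in the three families $\mathrm{SO}_n$ ($n$ odd), $\mathrm{SO}_n^\eta$ ($n$ even) and $\mathrm{Sp}_n$ then yields the exponent $n_0\frac{(n_{\mathrm{s}}-1)!}{(n_{\mathrm{s}}-n_0)!}$, so the characteristic polynomial of $V_{1/\varpi_v}$ on $I(\chi)^{U_0^{(n_0)}(v)}$, and hence on its subspace $W^{U_0^{(n_0)}(v)}$, divides $(P_{\underline t}(X))^{n_0\frac{(n_{\mathrm{s}}-1)!}{(n_{\mathrm{s}}-n_0)!}}$.

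The hard part will be this last step: setting up the Iwahori factorisation of $U_0^{(n_0)}(v)$ with respect to $\mathbf Q$, relating the unnormalised double-coset operator $V_{1/\varpi_v}$ to the action of the central torus element on the Jacquet module, and tracking the precise powers of $q_v$ so that they match the normalisation of $\widetilde P_{\underline t}$ coming from Lemma \ref{Hecke polynomial for unramified rep}, together with verifying that $V_{1/\varpi_v}$ has no nonzero nilpotent part on the relevant space. By contrast, the residual work — the explicit computation of the parahoric-invariant dimensions for the smaller groups $G''$ and $G'$ and the resulting exponent — is routine combinatorics carried out uniformly in the three types.
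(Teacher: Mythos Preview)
Your overall strategy---embed $W$ into an unramified principal series $I(\chi)$ and compute there---matches the paper's, but the execution diverges in the second half, and there is a concrete error in your identification of the element underlying $V_{1/\varpi_v}$.

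For the equality $W^{U_1^{(n_0)}(v)}=W^{U_0^{(n_0)}(v)}$ the paper gives a one-line argument: $U_0^{(n_0)}(v)=U_1^{(n_0)}(v)\cdot\mathbf T_v(\mathcal O_{F,v})$, and the compact torus acts trivially on an unramified $W$. Your Mackey argument on $I(\chi)$ is a legitimate (if more elaborate) way to justify that triviality.

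For the divisibility statement the paper does \emph{not} pass to a Jacquet module. Instead it writes down an explicit basis $\{\varphi_s\}_{s\in S}$ of $\widetilde W^{U_0^{(n_0)}(v)}$ indexed by Bruhat representatives for $\mathbf B_v(F_v)\backslash G(F_v)/U_0^{(n_0)}(v)$, shows that with respect to a suitable partial order on $S$ the matrix of $V_{1/\varpi_v}$ is upper triangular, and reads off the diagonal entries by a direct coset computation. This is a hands-on calculation; your route via the geometric lemma and parahoric-invariant dimensions is the more structural Casselman-style alternative.

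The gap is your sentence ``The diagonal element $\mathrm{diag}(\varpi_v^{-1},1_{n-2},\varpi_v)$ defining $V_{1/\varpi_v}$ is $e_{\ast,1}(\varpi_v)^{-1}$''. In the paper's conventions $V_t$ is indexed by a diagonal $t\in\mathrm{GL}_{n_0}(F_v)$, and $V_{1/\varpi_v}$ means $t=\varpi_v^{-1}1_{n_0}$: the proof states explicitly that the corresponding element of $G(F_v)$ is
\[
\mathrm{diag}(\varpi_v^{-1}1_{n_0},\,1_{n-2n_0},\,\varpi_v 1_{n_0})=\varsigma_{v,n_0}^{-1},
\]
not $e_{\ast,1}(\varpi_v)^{-1}$. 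This element is central in the Levi $\mathrm{GL}_{n_0}\times G'$, not in your rank-one Levi $\mathrm{GL}_1\times G''$, so your choice of parabolic $\mathbf Q$ is the wrong one for the Iwahori-factorisation argument. With the correct element, the eigenvalues one extracts (whether via the paper's explicit diagonal or via the Jacquet module along the $\mathrm{GL}_{n_0}\times G'$ parabolic) are $n_0$-fold products $\prod_{i=1}^{n_0}Y_{s(i)}$ rather than single $Y_i^{\pm1}$, and your exponent count through $|W(G'')^{\mathrm s}/W(G')^{\mathrm s}|$ is computing the multiplicity for the wrong operator. If you wish to keep the Jacquet-module approach you must redo it for $\varsigma_{v,n_0}^{-1}$ and the larger parabolic; otherwise you can follow the paper's explicit upper-triangularisation, which is independent of any such choice.
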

	\begin{proof}
		We prove the case $G^\ast=\mathrm{SO}_n^\eta$, the case $G^\ast=\mathrm{Sp}_n$ is similar.
		For the first part,
		since
		$U_1^{(n_0)}(v)
		\mathbf{T}_v(\mathcal{O}_{F,v})
		=
		U_0^{(n_0)}(v)$
		and
		$\mathbf{T}_v(\mathcal{O}_{F,v})$
		acts trivially on
		$W$,
		we get the equality.

		To ease notations,
		we write
		\begin{align*}
			\widetilde{t}^{(j)}
			&
			=
			q_v^{-\frac{n_\mathrm{s}(n_\mathrm{s}-1)}{2}+(j+n_\mathrm{s}-n)j}
			t^{(j)},
			\\
			V
			&
			=V_{1/\varpi_v},
			\\	  	
			t
			&
			=
			\mathrm{diag}
			(\varpi_v^{-1}1_{n_0},1_{n-2n_0},\varpi_v1_{n_0}).
		\end{align*}
		For the second part,
		choose unramified characters
		$\chi_i\colon
		F_v^\times
		\rightarrow
		\mathbb{C}^\times$
		($i=1,2,\cdots,n_\mathrm{s}$)
		such that
		$\chi_i(\varpi_v)
		q_v^{-1}
		+
		(\chi_i(\varpi_v)q_v^{-1})^{-1}$
		are the roots of the polynomial
		(with multiplicities)
		\[
		X^{n_\mathrm{s}}
		-
		\widetilde{t}^{(1)}
		X^{n_\mathrm{s}-1}
		+
		\cdots
		+
		(-1)^{j}
		\widetilde{t}^{(j)}
		X^{n_\mathrm{s}-j}
		+
		\cdots
		+
		(-1)^{n_s}
		\widetilde{t}^{(n_s)}.
		\]
		By Frobenius reciprocity,		
		$W$ is a subquotient of the representation
		\[
		\widetilde{W}
		:=
		\mathrm{n}\text{-}\mathrm{Ind}_{\mathbf{B}_v(F_v)}^{ G_v(F_v)}
		(\chi_1,\cdots,\chi_{n_\mathrm{s}}).
		\]

		We need to compute the eigenvalues of
		$V_t$ on
		$\widetilde{W}^{U_0^{(n_0)}(v)}$.
		Consider the double quotient
		\[
		\mathbf{B}_v(F_v)\backslash
		G(F_v)/U_0^{(n_0)}(v),
		\]
		which is in bijection with
		the right quotient
		$ G(\kappa_v)
		/U_0^{(n_0)}(\kappa_v)$
		where
		$U_0^{(n_0)}(\kappa_v)$
		denotes the image of
		$U_0^{(n_0)}(v)$
		under the reduction map
		$ G(\mathcal{O}_{F,v})
		\rightarrow
		G(\kappa_v)$.
		Note that
		$U_0^{(n_0)}(\kappa_v)$
		is a parabolic subgroup of
		$ G(\kappa_v)$.
		Write
		\begin{align*}
			B_r
			:=
			&
			\begin{cases*}
				\begin{pmatrix}
					0 & 0 & 1 \\
					0 & 1_{r-2} & 0 \\
					1 & 0 & 0
				\end{pmatrix},
				&
				$r\geq2$;
				\\
				1,
				&
				$r=1$
			\end{cases*}
			\\
			S_i'
			:=
			&
			\{
			\mathrm{diag}(1_{i-1},B_r,1_{n_\mathrm{s}-i+1-r})
			\mid r=1,2,\cdots,n_\mathrm{s}-i+1
			\},
			\quad
			i=1,2,\cdots,n_0,
			\\
			S_i
			:=
			&
			\{
			\mathrm{diag}(t,1,\hat{t}^{-1})\mid 
			t\in S_i'
			\},
			\\
			S
			:=
			&
			S_{n_0}\times S_{n_0-1}\times\cdots\times S_1
			\subset
			G(\kappa_v),
			G(F_v).
		\end{align*}
		Then it is easy to see that we have the following decompositions (the second follows from the first)
		\[
		G(\kappa_v)
		=
		\bigsqcup_{s\in S}
		sU_0^{(n_0)}(\kappa_v),
		\quad
		G(F_v)
		=
		\bigsqcup_{s\in S}
		\mathbf{B}_v(F_v)sU_0^{(n_0)}(v).
		\]
		We take the following basis for	$\widetilde{W}^{U_0^{(n_0)}(v)}$: for each $s\in S$, let $\phi_s\colon	G(F_v)\rightarrow\mathbb{C}$ be the characteristic function of	$\mathbf{B}_v(F_v)sU_0^{(n_0)}(v)$.	Thus
		$\widetilde{W}^{U_0^{(n_0)}(v)}=\mathbb{C}\langle\phi_s\rangle_{s\in S}$.
		Now for each $s\in S$, if $V\phi_s=\sum_{s'\in S}a_{s'}\phi_{s'}$
		for some $a_{s'}\in\mathbb{C}$,	then $(V\phi_s)(s'')=\sum_{s'\in S}a_{s'}\phi_{s'}(s'')=a_{s''}$. From this we get
		\[
		V\phi_s
		=
		\sum_{s'\in S}
		(V\phi_s)(s')\phi_{s'}.
		\]
		Let $Z_1$ be a set of representatives in $\mathrm{U}_{n_0}(\mathcal{O}_{F,v})$ for its quotient $\mathrm{U}_{n_0}(\kappa)$, $Z_2$ a set of representatives in
		$\mathrm{M}_{n-2n_0,n_0}(\mathcal{O}_{F,v})$ for its quotient
		$\mathrm{M}_{n-2n_0,n_0}(\kappa_v)$, $Z_3$
		a set of representatives in
		$\mathrm{M}_{n_0,n_0}^\mathrm{as}(\mathcal{O}_{F,v})$
		for its quotient
		$\mathrm{M}_{n_0,n_0}^\mathrm{as}(\kappa_v)$.
		Here
		the superscript
		$^\mathrm{as}$
		means the set of anti-symmetric matrices.
		We can take the following set of representatives for
		the double coset
		decomposition
		\[
		U_0^{(n_0)}(v)
		\mathrm{diag}(\varpi_v^{-1}1_{n_0},1_{n-2n_0},\varpi_v1_{n_0})
		U_0^{(n_0)}(v)
		=
		\bigsqcup_{x\in X}XU_0^{(n_0)}(v)
		\]
		with
		\[
		X=
		\left\{
		\begin{pmatrix}
			\varpi_v^{-1}a & 0 & \varpi_v^{-1}aA_{n_0}cA_{n_0}
			\\
			b & 1 & 0
			\\
			\varpi_v g & \varpi_v h & \varpi_v i 
		\end{pmatrix}\in G(F_v)
		\bigg|
		a\in Z_1,\,
		b\in Z_2,\,
		c\in Z_3
		\right\}.
		\]
		Note $h,i,g$ are determined by $a,b,c$.
		We next compute
		$(V\phi_s)(s')=
		\sum_{x\in X}
		\phi_s(s'x)$.
		Set
		\[
		X'=\{t^{-1}x\mid x\in X\}.
		\]
		Then for
		$x'\in X'$,
		\[
		\phi_s(s't^{-1}x')
		=
		\phi_s(s't^{-1}(s')^{-1}\cdot s'x')
		=
		\Delta(s't^{-1}(s')^{-1})
		(\prod_i\chi_i)(s't^{-1}(s')^{-1})
		\phi_s(s'x').
		\]

		We identify $S$ with a subset of the permutation group $P_{n_\mathrm{s}}$
		of $n_0$ elements $\{1,2,\cdots,n_\mathrm{s}\}$ as follows:	let
		$E:=(E_1,E_2,\cdots,E_{n_\mathrm{s}})$ be the standard basis for
		$\mathcal{O}_{F,v}^{n_\mathrm{s}}$ and $P_{n_\mathrm{s}}$ acts on $E$ by
		$\sigma E_i:=E_{\sigma^{-1}i}$ for any $\sigma\in P_{n_\mathrm{s}}$ and $1\leq i\leq n_0$, which gives an embedding $P_{n_\mathrm{s}}	\hookrightarrow\mathrm{GL}_{n_0}(\mathcal{O}_{F,v})$. Each set $S_i'$ can be identified with a subset of $P_{n_\mathrm{s}}$ in a natural way.	Since each element $s\in S$ is a product of matrices of the form $\mathrm{diag}(t,1,\hat{t}^{-1})$ with $t\in S_i'$, we get a natural inclusion of
		$S$ into $P_{n_\mathrm{s}}$. Note that if we identify $P_{n_\mathrm{s}-n_0}$ with a subgroup of $P_{n_\mathrm{s}}$
		by letting $P_{n_\mathrm{s}-n_0}$ act on the set $\{n_0+1,n_0+2,\cdots,n_\mathrm{s}\}$ in the obvious way,
		then $S$ can be seen as a set of representatives for the quotient
		$P_{n_\mathrm{s}}/P_{n_\mathrm{s}-n_0}$. One checks that an element
		$g\in G(\mathcal{O}_{F,v})$ lies in $\mathbf{B}_v(\mathcal{O}_{F,v})sU_0^{(n_0)}(v)$ (equivalently,
		the image $\overline{g}\in G(\kappa_v)$ lies in $\mathbf{B}_v(\kappa_v)sU_0^{(n_0)(\kappa_v)}$)	if and only if
		for each $1\leq i\leq n_0$, $s(i)$ (viewed as an element in $P_{n_\mathrm{s}}$ as above) is the smallest integer such that the row matrix $(\underbrace{0,\cdots,0}_\text{$(i-1)$ terms},1,\underbrace{0,\cdots,0}_\text{$(n-i)$ terms})$ lies in the	$\kappa_v$-linear span of the modulo $\varpi_v$ row vectors of the
		$(i+s(i)+1)$-th row to the $n_\mathrm{s}$-row of $g$ (\emph{cf.} \cite[p.85]{ClozelHarrisTaylor2008}).

		We define an order $\prec$ on $S$ as follows:
		\[
		s'\prec s
		\text{ if }
		s'(i)\leq s(i)\, \forall 
		1\leq i \leq n_0.
		\]
		Then for $x\in X$, we have $\phi_s(s'x)=0$ unless $s'\prec s$. Let the set
		$\mathbf{S}=\{1,2,\cdots,\# (S)\}$ be equipped with the total order
		$\leq$ from $\mathbb{Z}$. Choose a bijective map $\psi\colon	\mathbf{S}\rightarrow S$ such that $\psi(a)\prec\psi(a')$ implies
		$a\leq a'$ (any partial order can be extended to a total order). Write the basis of $\widetilde{W}$ in the following order
		\[
		(e_1:=\phi_{\psi(1)},
		e_2:=\phi_{\psi(2)},\cdots,e_{\# (S)}:=\phi_{\psi(\# (S))}).
		\]
		Then we deduce that the matrix of $V$ under this basis is upper triangular with diagonal entries $((V\phi_s)(s))_{s\in S}$. Since $\phi_s(st^{-1})=\Delta(st^{-1}s^{-1})
		(\prod_i\chi_i)(st^{-1}s^{-1})\phi_s(s)$, the eigenvalues of $V$ on
		$\widetilde{W}$ are
		\[
		\left\{
		\Delta(st^{-1}s^{-1})
		(\prod_i\chi_i)(st^{-1}s^{-1})
		\sum_{x'\in X'}\phi_s(sx')
		\right\}_{s\in S}.
		\]
		By definition of $t$, we have
		\begin{align*}
			\Delta(st^{-1}s^{-1})
			&
			=
			\prod_{i=1}^{n_0}
			q_v^{-\frac{n_\mathrm{s}+1-2s(i)+n_\mathrm{s}+1}{2}}
			=
			\prod_{i=1}^{n_0}
			q_v^{-(n_\mathrm{s}+1-s(i))},
			\\
			(\prod_i\chi_i)(st^{-1}s^{-1})
			&
			=
			\prod_{i=1}^{n_0}\chi_{s(i)}(\varpi_v).
		\end{align*}
		On the other hand, using the fact $\phi_s(s)=1$, we have
		\[
		\sum_{x'\in X'}\phi_s(sx')
		=
		\prod_{i=1}^{n_0}q_v^{n_\mathrm{s}-s(i)}
		\chi_{s(i)}(\varpi_v).
		\]
		Putting all these together, we get
		\[
		(V\phi_s)(s)
		=
		\prod_{i=1}^{n_0}
		\big(
		\chi_{s(i)}(\varpi_v)q_v^{-1}
		\big).
		\]
		Now counting the roots of characteristic polynomials with multiplicities,
		we see that the characteristic polynomial of $V$ on $\widetilde{W}^{U_0^{(n_0)}(v)}$ divides the polynomial
		$(P_{\underline{t}}(X))^{n_0\frac{(n_\mathrm{s}-1)!}{(n_\mathrm{s}-n_0)!}}$
		and so does the characteristic polynomial of $V$ on the subquotient
		$W^{U_0^{(n_0)}(v)}$ of $\widetilde{W}^{U_0^{(n_0)}(v)}$.	  	
	\end{proof}

	\section{Smooth representations of $ G(F_v)$}
	\label{smooth representations of G(F_v)}
	Let
	$v$ be a finite place of $F$
	where
	$G$ is split.
	In this section,
	we establish some more
	results on representations of
	$ G(F_v)$.
	These will be used in
	constructing the Taylor-Wiles systems.
	In the following
	we give details for the case
	$G(F_v)=\mathrm{SO}_n(F_v)$ with $n$ odd
	and put the necessary modifications for the cases $G(F_v)=\mathrm{SO}_n(F_v)$ with $n$ even or $G(F_v)=\mathrm{Sp}_n(F_v)$
	in remarks.

	Recall
	$W$
	is the Weyl group of the pair
	$( G_v,\mathbf{T}_v)$.
	For the triple
	$( G_v,\mathbf{B}_v,\mathbf{T}_v)$,
	we have the following set of simple positive roots
	\[
	\Delta=
	\Delta( G_v,\mathbf{B}_v,\mathbf{T}_v)
	=
	\{
	E^\ast_1,\cdots,E^\ast_{n_\mathrm{s}}
	\}
	\]
	where
	$E^\ast_i=e^\ast_i-e^\ast_{i+1}$
	for $i=1,\cdots,n_\mathrm{s}-1$
	and
	$E^\ast_{n_\mathrm{s}}=2e^\ast_{n_\mathrm{s}}$.
	Then
	$W$ can be identified with the group of all
	bijections $w$ of
	the set
	$\{\pm E_i^\ast\}_{i=1}^{n_\mathrm{s}}$
	onto itself
	such that
	$w(-E^\ast_i)=-w(E^\ast_i)$
	for all $i$.
	Thus $w$ is uniquely determined by
	its images on
	$E^\ast_1,\cdots,E^\ast_{n_\mathrm{s}}$
	and we write
	\[
	w=[0,a_1,\cdots,a_{n_\mathrm{s}}]
	\]
	where
	$w(E^\ast_i)=\mathrm{sgn}(a_i)E^\ast_{a_i}$
	(this is the \emph{window notation} of $w$ as in
	\cite[p.245]{BjonerBrenti2005}).
	For ease of notation,
	we will often identify
	$\pm E^\ast_i$
	with $\pm (n_\mathrm{s}+1-i)$
	and thus we can identify
	$W$ with the subgroup of
	bijections $w$ of
	$\{0,\pm1,\cdots,\pm n_\mathrm{s}\}$
	onto itself
	such that
	$w(-i)=-w(i)$
	for all $i$
	(therefore
	$[0,a_1,\cdots,a_{n_\mathrm{s}}]$
	is the bijection sending $i$ to $a_i$, $0$ to $0$).
	We fix the following set $\{s_0,\cdots,s_{n_\mathrm{s}}\}$ of generators for $W$ (\cite[p.246]{BjonerBrenti2005}):
	\[
	s_i=s_{E^\ast_i}=[1,2,\cdots,i-1,i+1,i,i+2,\cdots,n_\mathrm{s}]
	\]
	the reflection associated to $E^\ast_i$
	for $i=1,\cdots,n_\mathrm{s}-1$
	and
	\[
	s_0=[-1,2,3,\cdots,n_\mathrm{s}].
	\]
	\begin{remark}\rm
		\begin{enumerate}
			\item 
			For $G(F_v)=\mathrm{SO}_n(F_v)$ with $n$ even,
			$W$ can be identified with the group of bijections $w$ of
			the set
			$\{\pm E_i^\ast\}_{i=1}^{n_{\mathrm{s}}}$
			onto itself such that
			$w(-E_i^\ast)=-w(E_i^\ast)$
			for all $i$
			and the number of negative values in
			$(w(E_i^\ast))_{i=1}^{n_{\mathrm{s}}}$
			is \emph{even}.
			Moreover we put
			$s_0=[-2,-1,3,4,\cdots,n_{\mathrm{s}}]$ and the other $s_i$ are the same as above (\cite[p.253]{BjonerBrenti2005}).

			\item 
			For $G(F_v)=\mathrm{Sp}_n(F_v)$, $W$ is the group of all bijections $w$ of the set $\{\pm E_i^\ast\}_{i=1}^{n_\mathrm{s}}$ onto itself such that $w(-E_i^\ast)=w(E_i^\ast)$ for all $i$. Moreover, the generators $s_i$ of $W$ is the same as the case $G(F_v)=\mathrm{SO}_{n+1}(F_v)$.
		\end{enumerate}
		
	\end{remark}
	For any subset
	$\Theta
	=\{i_1,i_1+1,\cdots,i_1+t_1,i_2,i_2+1,\cdots,
	i_r,i_r+1,\cdots,i_r+t_r,\}
	\subset\{0,\cdots,n_\mathrm{s}-1\}
	$
	with
	$i_2>i_1+t_1+1,\cdots,i_r>i_{r-1}+t_{r-1}+1$,
	we write
	$W_\Theta$
	for the subgroup of $W$ generated by
	$s_i$ for all $i\in\Theta$
	and
	$P_\Theta
	\subset
	G_v$
	the parabolic subgroup
	associated to $\Theta$.
	We know that the coset
	$W_\Theta\backslash W$
	has a special set
	$W^\Theta$
	of representatives in
	$W$ given by the elements of minimal length in each coset
	(\emph{cf.}
	\cite[Lemma 1.1.2]{Casselman1995}).
	In our case,
	$W^\Theta$
	is explicitly given by the set of
	$w\in W$
	such that
	$w(i_r+t_r+1)>w(i_r+t_r)>w(i_r+t_r-1)>\cdots>w(i_r+1)>w(i_r)$,
	$w(i_{r-1}+t_{r-1}+1)>\cdots>w(i_{r-1}+1)>w(i_{r-1})$,
	...,
	$w(i_1+t_1+1)>\cdots>w(i_1)$.
	The quotient
	$W/W_\Theta$
	has also a special set
	$^\Theta W=(W^\Theta)^{-1}$
	of representatives in $W$
	consisting of elements of minimal length
	(\emph{cf.}
	\cite[Proposition 8.1.4\& (2.11)]{BjonerBrenti2005}).
	Now consider another subset
	$\Omega
	=\{j_1,\cdots,j_1+s_1,j_2,\cdots,j_2+s_2,\cdots,j_k,\cdots,j_k+s_k\}
	\subset
	\{0,1,\cdots,n_\mathrm{s}-1\}$,
	then
	the double coset
	$W_\Theta\backslash W/W_\Omega$
	has a set
	$^\Omega W^\Theta$
	of representatives consisting of those
	$w\in W$
	which is of minimal length in the double coset.
	For two integers
	$0\le
	q\le q'$,
	we write
	$I(q,q')=\{q,q+1,\cdots,q'\}$
	and put
	$I(q)=I(q,q)$.
	Then
	each $\Theta$
	corresponds to a unique partition of
	the set
	$\{0,1,\cdots,n_\mathrm{s}\}$
	given by
	\begin{align*}
		I(0,n_\mathrm{s})
		=
		&
		I(0)
		\sqcup
		\cdots
		\sqcup
		I(i_1-1)
		\sqcup
		I(i_1,i_1+t_1+1)
		\sqcup
		I(i_1+t_1+2)
		\sqcup
		\cdots
		\sqcup
		I(i_2-1)
		\sqcup
		\cdots
		\\
		&
		\sqcup
		I(i_2,i_2+t_2+1)
		\sqcup
		\cdots
		\sqcup
		I(i_r,i_r+t_r+1)
		\sqcup
		I(i_r+t_r+1)
		\sqcup
		\cdots
		\sqcup
		I(n_\mathrm{s}).
	\end{align*}

	Conversely, any such partition corresponds to a subset $\Theta$ of $I(0,n_\mathrm{s})$ (which is just the union of all these subsets with the maximal element removed in each subset). We label the above subsets as
	\[
	I(0,n_\mathrm{s})
	=
	I^\Theta_1
	\sqcup\cdots\sqcup
	I^\Theta_{q_\Theta}.
	\]
	where $q_\Theta=i_1+r+n_\mathrm{s}-(i_r+t_r+1)$. Similarly, we write
	$I(0,n_\mathrm{s})=I^\Omega_1\sqcup\cdots\sqcup	I^\Omega_{q_\Omega}$ and $^\Omega\widetilde{W}^\Theta$ for the set of matrices
	\begin{equation}\label{m}
		m=\begin{pmatrix}
			0 & a_{1,1} & \cdots & a_{1,q_\Theta} \\
			b_2 & a_{2,1} & \cdots & a_{2,q_\Theta} \\
			\vdots & \vdots & \ddots & \vdots \\
			b_{q_\Omega} & a_{q_\Omega,1} & \cdots & a_{q_\Omega,q_\Theta}
		\end{pmatrix}
	\end{equation}
	with non-negative integer entries such that
	$a_{i,1}+\cdots+a_{i,q_\Theta}=\# (I_i^\Omega)$,
	$a_{1,j}+\cdots+a_{q_\Omega,j}=\# (I_j^\Theta)$
	and
	$0\le b_j\le \# (I_j^\Omega)$
	for all $i,j$.
	For a subset
	$I\subset\{0,\pm1,\cdots,\pm n_\mathrm{s}\}$,
	we write
	\[
	I^+=\{x\in I\mid x>0\},
	\quad
	I^-=\{x\in I\mid x<0\},
	\quad
	|I|=
	\{|x|
	\big|
	x\in I\}.
	\]
	Then one can show
	\begin{lemma}
		There is a bijection between $^\Omega W^\Theta$ and	$^\Omega\widetilde{W}^\Theta$, which sends $w\in\,^\Omega W^\Theta$
		to the matrix $m\in\,^\Omega\widetilde{W}^\Theta$ whose entries are given by $a_{i,j}=\# (I_i^\Omega\cap|w(I_j^\Theta)|)$	and	$b_j=\# ((w^{-1}(I_j^\Omega))^-)$.
	\end{lemma}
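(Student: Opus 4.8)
The plan is to deduce the statement from the standard combinatorics of double cosets of parabolic subgroups of a Weyl group, carried out inside the signed-permutation model of $W=W(B_{n_\mathrm{s}})$ fixed above; the cases $G^\ast=\mathrm{SO}_n^\eta$ with $n$ even and $G^\ast=\mathrm{Sp}_n$ are then handled verbatim, replacing $W$ by the appropriate subgroup of the signed-permutation group recorded in the remark. First I would write down the two parabolic subgroups concretely: $W_\Theta$ is the subgroup of $w\in W$ acting as the full group of sign-commuting permutations of $\pm(I_1^\Theta\setminus\{0\})$ (the special reflection $s_0$ belongs to $W_\Theta$ exactly when $\#(I_1^\Theta)\ge2$) and, for each $j\ge2$, as a single permutation of $I_j^\Theta$ applied simultaneously to $I_j^\Theta$ and to $-I_j^\Theta$, while fixing $0$; symmetrically for $W_\Omega$. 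This identifies $W/W_\Theta$ with the $W$-orbit of the ``standard $\Theta$-configuration'', i.e. the decomposition of $\{\pm1,\cdots,\pm n_\mathrm{s}\}$ into the negation-stable set $\pm(I_1^\Theta\setminus\{0\})$, the sets $I_j^\Theta$ and the sets $-I_j^\Theta$ for $j\ge2$; comparing cardinalities confirms the identification. Then ${}^\Omega W^\Theta$ indexes the $W_\Omega$-orbits on this set, and the matrix $m$ assigned to $w$ is precisely the table recording how the translate by $w$ of the standard $\Theta$-configuration overlaps the standard $\Omega$-configuration, the column $(b_j)$ keeping track of the ``sign defect''.

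The second ingredient is the characterisation of the minimal-length representatives, from the same sources already invoked above (\cite{Casselman1995,BjonerBrenti2005}): $w\in{}^\Omega W^\Theta$ iff $w$ carries the positive roots of $W_\Theta$ to positive roots and $w^{-1}$ carries the positive roots of $W_\Omega$ to positive roots. Unwound in the model this says that $w$ is strictly increasing on each block $I_j^\Theta$ for the order $-n_\mathrm{s}<\cdots<-1<0<1<\cdots<n_\mathrm{s}$ and, in addition, sends $I_1^\Theta\setminus\{0\}$ into the positive elements (this is the $s_0$-condition), and dually for $w^{-1}$ on the blocks $I_i^\Omega$. Granting this, well-definedness of $w\mapsto m$ reduces to bookkeeping: the absolute-value sets $|w(I_1^\Theta)|,\cdots,|w(I_{q_\Theta}^\Theta)|$ partition $\{0,1,\cdots,n_\mathrm{s}\}$, which gives the row sums $\sum_j a_{i,j}=\#(I_i^\Omega)$; applying this to $w^{-1}$ gives the column sums; the bound $0\le b_j\le\#(I_j^\Omega)$ is immediate, $b_1=0$ because $w^{-1}$ keeps $I_1^\Omega\setminus\{0\}$ positive, and $a_{1,1}\ge1$ because $0=w(0)$ lies in $|w(I_1^\Theta)|\cap I_1^\Omega$.

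The remaining and, to my mind, genuinely substantial step is bijectivity, which I would prove by exhibiting the inverse directly: from an admissible $m$ one reconstructs $w_m$ block by block — the entries $a_{\bullet,j}$ prescribe how the $\#(I_j^\Theta)$ elements of $I_j^\Theta$ distribute among the spans of the $\Omega$-blocks, the value $b_i$ prescribes how many of the elements landing in $I_i^\Omega$ arrive sign-changed, and the monotonicity conditions then determine $w_m$ uniquely by filling each block in the forced increasing order. The crux is that the unsigned overlap $a_{i,j}=\#(I_i^\Omega\cap|w(I_j^\Theta)|)$ records only the sum $\#(I_i^\Omega\cap w(I_j^\Theta))+\#(I_i^\Omega\cap w(-I_j^\Theta))$, so one has to show that prescribing merely the aggregate sign defect $b_i$ already determines each individual signed overlap; this is where the monotonicity of $w$ on the $\Theta$-blocks and of $w^{-1}$ on the $\Omega$-blocks is used, forcing the sign-changed elements arriving in a given $\Omega$-block to exhaust the $\Theta$-blocks in order of index, and correspondingly coupling the admissible range of $b_j$ to the entries $a_{j,\bullet}$ (a sign-changed element in the $j$-th $\Omega$-block must originate from some $I_{j'}^\Theta$ with $j'\ge2$). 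Verifying these monotonicity-driven rigidity statements, and handling the additional care around the distinguished point $0$ sitting inside the first blocks $I_1^\Theta$ and $I_1^\Omega$, is where essentially all the elementary but intricate work goes; once that is in place, $w\mapsto m(w)$ and $m\mapsto w_m$ are visibly mutually inverse, which is the assertion.
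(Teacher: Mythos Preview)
Your strategy (construct the inverse map explicitly, using the monotonicity characterisation of minimal representatives) differs from the paper's (establish that each signed intersection $I_i^\Omega\cap w(I_j^\Theta)$ is an interval of consecutive integers and that these satisfy a no-gap condition, then deduce injectivity; surjectivity is asserted in one sentence). Neither approach can succeed, however, because the lemma as stated is false.

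Injectivity already fails. Take $n_\mathrm{s}=4$, $\Theta=\emptyset$, $\Omega=\{1,2,3\}$, so all $\Theta$-blocks are singletons and $I_2^\Omega=\{1,2,3,4\}$. Then $|{}^\Omega W^\Theta|=|W|/|W_\Omega|=2^4\cdot4!/4!=16$, but every $w$ produces the same $a$-entries (each $|w(I_j^\Theta)|$ for $j\ge2$ is a singleton inside $I_2^\Omega$, forcing $a_{1,1}=1$ and $a_{2,j}=1$ for $j\ge2$), and the only remaining datum is $b_2\in\{0,\ldots,4\}$; hence the image has at most five elements. Concretely, $w'=[2,-1,3,4]$ and $w''=[-1,2,3,4]$ are both minimal representatives (one checks $w'^{-1}$ and $w''^{-1}$ are increasing on $\{1,2,3,4\}$, with value sequences $-2,1,3,4$ and $-1,2,3,4$), yet both yield the same matrix with $b_2=1$. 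Your ``crux'' is exactly the right diagnosis but the wrong conclusion: the aggregate sign defect $b_i$ does \emph{not} determine the individual signed overlaps, and monotonicity cannot repair this once a single $\Omega$-block swallows several $\Theta$-blocks. The paper's interval and no-gap facts are correct, but the sentence ``this shows that $w$ is indeed determined by these entries $a_{i,j}$ and $b_j$'' does not follow from them.

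Surjectivity also fails, for reasons you essentially noticed: the constraints $a_{1,1}\ge1$ and $b_j\le\sum_{j'\ge2}a_{j,j'}$ that you record are genuine restrictions on the image not encoded in ${}^\Omega\widetilde{W}^\Theta$. A small instance is $n_\mathrm{s}=2$, $\Theta=\{0\}$, $\Omega=\emptyset$: the matrix with $a_{1,1}=a_{2,1}=a_{3,2}=1$, $b_2=1$, $b_3=0$ lies in ${}^\Omega\widetilde{W}^\Theta$ but forces $w=[-1,2]=s_0\in W_\Theta$, which is not the minimal representative of its coset.

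For the paper's applications this does not matter: the only subsequent use of the lemma is the forward map $w\mapsto m(w)$, to single out the set $\mathcal{S}\subset{}^\Omega W^\Theta$ of those $w$ whose matrix has all $a_{i,j}\le1$, so well-definedness suffices. If you want a true bijection you must enrich the target, e.g.\ by recording both signed counts $\#(I_i^\Omega\cap w(I_j^\Theta))$ and $\#((-I_i^\Omega)\cap w(I_j^\Theta))$; then the paper's interval and no-gap arguments, or equivalently your reconstruction, go through.
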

	\begin{proof}
		Clearly the matrix $m$ given as in the lemma is indeed an element in
		$^\Omega\widetilde{W}^\Theta$.

		For injectivity, note that each
		$w\in\,^\Omega W^\Theta$ is determined by the intersections	$I_i^\Omega\cap w(I_j^\Theta)$	and $(I_i^\Omega)^-\cap w(I_j^\Theta)$.
		Moreover each intersection $I_i^\Omega\cap w(I_j^\Theta)$ consists of a sequence of consecutive integers: otherwise there are $k,k+1\in I_j^\Theta$
		such that $w(k)+1<w(k+1)$ and $w(k)+1,w(k+1)\in I_i^\Omega$. Say
		$w(k')=w(k)+1$ for some $k'\notin I_j^\Theta$. If $k'$ is smaller than the elements in $I_j^\Theta$, then $w^{-1}(w(k))>w^{-1}(w(k'))<w^{-1}(w(k+1))$, contradicting the characterization of $w$ being the element of minimal length in the coset
		$wW_\Omega$. The same result holds if we assume $k'$ greater than the elements in $I_j^\Theta$ or replace $I_i^\Omega$ by $(I_i^\Omega)^-$.

		The same argument shows that if $I_i^\Omega\cap w(I_j^\Theta)=\emptyset
		=I_i^\Omega\cap w(I_{j+2}^\Theta)$, then one necessarily has	$I_i^\Omega\cap w(I_{j+1}^\Theta)=\emptyset$. Again the same result holds if we exchange $I_i^\Omega$ and $(I_i^\Omega)^-$. This shows that
		$w$ is indeed determined by these entries $a_{i,j}$ and $b_j$, which shows the injectivity.

		It is also clear that from any such $m$, we can construct an element $w$, which gives the surjectivity of the natural map.
	\end{proof}
	\begin{remark}\rm
		\begin{enumerate}
			\item
			For $G(F_v)=\mathrm{SO}_n(F_v)$ with $n$ even,
			$^\Omega W^\Theta$
			is a subset of
			the $^\Omega W^\Theta$
			for the case $G(F_v)=\mathrm{SO}_{n+1}(F_v)$,
			consisting of those $w$
			such that
			$(w(E_i^\ast))_{i=1}^{n_{\mathrm{s}}}$
			is even.
			Correspondingly the
			$^\Omega\widetilde{W}^\Theta$
			for $n$ even is a subset of
			$^\Omega\widetilde{W}^\Theta$
			consisting of those $m$
			such that
			$b_2+\cdots+b_{q_{\Omega}}$
			is even.

			\item 
			For $G(F_v)=\mathrm{Sp}_n$, $^\Omega W^\Theta$ is the same as the case $\mathrm{SO}_{n+1}(F_v)$.
		\end{enumerate}
		
	\end{remark}

	For $\Theta$
	as above,
	we write
	$\mathfrak{p}_\Theta$\index{p@$\mathfrak{p}_\Theta$}
	for the preimage of
	$P_\Theta(\kappa_v)$
	under the projection
	$ G(\mathcal{O}_{F,v})
	\rightarrow
	G(\kappa_v)$,
	the parahoric subgroup associated to
	$\Theta$.
	We have the Levi decomposition
	\[
	P_\Theta
	=
	L_\Theta U_\Theta
	\]
	where
	$L_\Theta$
	is the Levi factor of
	$P_\Theta$,
	$U_\Theta$
	is the unipotent radical of
	$P_\Theta$.
	For $j=1,2,\cdots,q_\Theta$,
	we write
	$L_{\Theta,j}$
	for the $j$-th factor in
	$L_\Theta$.
	For an element
	$w\in\,^\Omega W^\Theta$
	(corresponding to $m\in\,^\Omega \widetilde{W}^\Theta$),
	we write
	$\mathfrak{p}^w_j$\index{p@$\mathfrak{p}^w_j$}
	for the parahoric subgroup of
	$L_{\Theta,j}(\mathcal{O}_{F_v})$
	corresponding to the partition
	$(I_j^\Theta)^\pm=
	\sqcup_j(w^{-1}(I_i^\Omega)\cap(I_j^\Theta)^\pm)$.
	We have
	\begin{proposition}
		For any
		$j=1,\cdots,q_\Theta$
		and
		$w\in\,^\Omega W^\Theta$,
		one has
		\[
		L_{\Theta,j}(F_v)\cap
		\left(
		w\mathfrak{p}_\Omega w^{-1}
		\right)
		=
		\mathfrak{p}^w_j.
		\]
		Let
		$\pi_j$ be a smooth representation of
		$L_{\Theta,j}(F_v)$ ($j=1,\cdots,q_{\Theta}$),
		then we have
		the following decomposition
		\[
		\left(
		\mathrm{n}\text{-}\mathrm{Ind}_{P_\Theta(F_v)}^{ G(F_v)}
		\pi_1\otimes\cdots\otimes\pi_{q_\Theta}
		\right)^{\mathfrak{p}_\Omega}
		=
		\bigoplus_{w\in\,^\Omega W^\Theta}
		\left(
		\pi_1^{\mathfrak{p}_1^w}
		\otimes
		\cdots
		\otimes
		\pi_{q_\Theta}^{\mathfrak{p}_{q_\Theta}^w}
		\right).
		\]
	\end{proposition}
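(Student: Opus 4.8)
The plan is to compute the space of $\mathfrak{p}_\Omega$-invariants of the induced representation by analyzing the $\mathfrak{p}_\Omega$-orbits on $P_\Theta(F_v)\backslash G(F_v)$, which by the Iwasawa/Bruhat-type decompositions available for $G(F_v)$ are parametrized by the double cosets $W_\Theta\backslash W/W_\Omega$, i.e. by $^\Omega W^\Theta$. First I would recall the Mackey-style decomposition: for a smooth representation $\sigma=\pi_1\otimes\cdots\otimes\pi_{q_\Theta}$ of $P_\Theta(F_v)$, Frobenius reciprocity gives
\[
\left(\mathrm{n}\text{-}\mathrm{Ind}_{P_\Theta(F_v)}^{G(F_v)}\sigma\right)^{\mathfrak{p}_\Omega}
=
\mathrm{Hom}_{\mathfrak{p}_\Omega}\!\left(\mathbf{1},\,\mathrm{n}\text{-}\mathrm{Ind}_{P_\Theta(F_v)}^{G(F_v)}\sigma\right),
\]
and a function in the induced space is determined by its restriction to a set of $(P_\Theta(F_v),\mathfrak{p}_\Omega)$-double coset representatives, subject to the equivariance conditions on each double coset. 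The key geometric input is that these double cosets are exactly indexed by $^\Omega W^\Theta$; this follows from the Iwahori/parahoric Bruhat decomposition $G(\mathcal{O}_{F,v})=\bigsqcup_{w\in W_\Theta\backslash W/W_\Omega}\mathfrak{p}_\Theta\, \dot w\,\mathfrak{p}_\Omega$ together with the Iwasawa decomposition $G(F_v)=P_\Theta(F_v)\,G(\mathcal{O}_{F,v})$, so that $P_\Theta(F_v)\backslash G(F_v)/\mathfrak{p}_\Omega \leftrightarrow {}^\Omega W^\Theta$.

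Next I would identify the contribution of a single double coset indexed by $w\in{}^\Omega W^\Theta$. A function supported on $P_\Theta(F_v)\,\dot w\,\mathfrak{p}_\Omega$ is determined by its value $\xi\in\sigma$ at $\dot w$, and the constraints that it be well-defined and right-$\mathfrak{p}_\Omega$-invariant force $\xi$ to be invariant under $\sigma\!\left(P_\Theta(F_v)\cap \dot w\,\mathfrak{p}_\Omega\,\dot w^{-1}\right)$ (the normalization factor $\delta_{P_\Theta}^{1/2}$ is trivial on this compact group, so it plays no role). Hence the $w$-summand is precisely $\sigma^{P_\Theta(F_v)\cap w\mathfrak{p}_\Omega w^{-1}}$. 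Now I would invoke the first assertion of the proposition: since $P_\Theta = L_\Theta U_\Theta$ and $U_\Theta$ acts through the projection to $L_\Theta$, the compact group $P_\Theta(F_v)\cap w\mathfrak{p}_\Omega w^{-1}$ projects onto $\prod_{j=1}^{q_\Theta}\bigl(L_{\Theta,j}(F_v)\cap w\mathfrak{p}_\Omega w^{-1}\bigr)=\prod_j \mathfrak{p}_j^w$, and the $U_\Theta$-part acts trivially on $\sigma$; therefore
\[
\sigma^{P_\Theta(F_v)\cap w\mathfrak{p}_\Omega w^{-1}}
=
\pi_1^{\mathfrak{p}_1^w}\otimes\cdots\otimes\pi_{q_\Theta}^{\mathfrak{p}_{q_\Theta}^w}.
\]
Summing over all $w\in{}^\Omega W^\Theta$ yields the claimed decomposition.

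The first assertion — that $L_{\Theta,j}(F_v)\cap w\mathfrak{p}_\Omega w^{-1}=\mathfrak{p}_j^w$ — is really the combinatorial heart, and I expect it to be the main obstacle. Its proof reduces, after passing to residue fields via the reduction map $G(\mathcal{O}_{F,v})\to G(\kappa_v)$ (note $w$ can be taken in $G(\mathcal{O}_{F,v})$ since $W$ is realized by permutation-type matrices), to an identity for parabolic subgroups of the finite reductive group $G(\kappa_v)$: for $w\in{}^\Omega W^\Theta$ one has $L_{\Theta,j}\cap wP_\Omega w^{-1}=$ the standard parabolic of $L_{\Theta,j}$ attached to the partition $(I_j^\Theta)^\pm=\bigsqcup_i\bigl(w^{-1}(I_i^\Omega)\cap(I_j^\Theta)^\pm\bigr)$. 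This is where the explicit window-notation description of $^\Omega W^\Theta$ and the bijection with $^\Omega\widetilde W^\Theta$ from the preceding lemma are used: the minimal-length property of $w$ in $W_\Theta\backslash W/W_\Omega$ guarantees that the intersections $I_i^\Omega\cap w(I_j^\Theta)$ (and their negative counterparts) consist of consecutive blocks, which is exactly what is needed to recognize the intersection of Levi and conjugated parabolic as a standard parabolic of $L_{\Theta,j}$ of the stated type; the sign bookkeeping $(I_j^\Theta)^\pm$ reflects the fact that the roots of $L_{\Theta,j}$ include the long/short roots $2e_k^\ast$, so one must track how $w$ permutes $\{0,\pm 1,\ldots,\pm n_{\mathrm{s}}\}$, not merely $\{1,\ldots,n_{\mathrm{s}}\}$. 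I would carry this out by a direct matrix computation in the standard realization, treating the ${\rm SO}_n$ ($n$ odd) case in detail and noting that the even-orthogonal and symplectic cases differ only in the parity constraint on $^\Omega W^\Theta$ already recorded in the remarks.
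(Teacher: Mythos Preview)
Your proposal is correct and, for the second assertion, follows essentially the same route as the paper: the Bruhat decomposition $G(F_v)=\bigsqcup_{w\in{}^\Omega W^\Theta}P_\Theta(F_v)\,w\,\mathfrak{p}_\Omega$ yields the Mackey-type decomposition of the $\mathfrak{p}_\Omega$-invariants, and each summand is identified with $\sigma^{P_\Theta(F_v)\cap w\mathfrak{p}_\Omega w^{-1}}$, which then reduces to the tensor product of the $\pi_j^{\mathfrak{p}_j^w}$ via the first assertion.

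Where you diverge from the paper is in the first assertion. You propose to prove $L_{\Theta,j}(F_v)\cap w\mathfrak{p}_\Omega w^{-1}=\mathfrak{p}_j^w$ by reducing modulo $\varpi_v$ to the finite reductive group $G(\kappa_v)$ and carrying out an explicit matrix computation using the window notation and the consecutive-block property of minimal-length representatives established in the preceding lemma. The paper instead simply invokes the general structural result \cite[Proposition~1.3.3]{Casselman1995}, which describes intersections of the form $P\cap wQw^{-1}$ for standard parabolics $P,Q$ and minimal-length double coset representatives $w$; the parahoric statement here is a direct translation of that result. Your approach is more self-contained and makes the combinatorics of the earlier lemma do visible work, at the cost of a case-by-case verification; the paper's approach is a one-line citation that hides the combinatorics inside Casselman's general theory. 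Either is fine, but if you are writing this up you should be aware that the direct computation you outline is exactly what Casselman's proposition packages, so it would be more efficient to cite it.
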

	\begin{proof}
		The first part follows from the general result in
		\cite[Proposition 1.3.3]{Casselman1995}.
		The second part follows from the
		Bruhat decomposition
		$ G(F_v)
		=
		\sqcup_{w\in\,^\Omega W^\Theta}
		P_\Theta(F_v)
		w
		\mathfrak{p}_\Omega$,
		which gives
		\[
		\left(
		\mathrm{n}\text{-}\mathrm{Ind}_{P_\Theta(F_v)}^{ G(F_v)}
		\pi_1\otimes\cdots\otimes\pi_{q_\Theta}
		\right)^{\mathfrak{p}_\Omega}
		=
		\bigoplus_{w\in\,^\Omega W^\Theta}
		\left(
		\pi_1\otimes\cdots\otimes\pi_{q_\Theta}
		\right)^{P_\Theta(F_v)\cap(w\mathfrak{p}_\Omega w^{-1})}.
		\]
	\end{proof}
	For an unramified character
	$\chi$ of $F_v^\times$,
	we write
	$\mathrm{St}_m(\chi)$ for the Steinberg representation associated to $\chi$, which is
	the unique generic subquotient of
	the induced representation
	\[
	\mathrm{n}\text{-}\mathrm{Ind}_{\mathrm{B}_m(F_v)}^{\mathrm{GL}_m(F_v)}
	\left(
	\chi\otimes\chi|\cdot|\otimes\cdots\otimes\chi|\cdot|^{m-1}
	\right)
	\]
	where
	$\mathrm{B}_m\subset\mathrm{GL}_m$
	is the subgroup of upper triangular matrices.
	We write
	$\mathfrak{S}$\index{S@$\mathfrak{S}$}
	for the set of induced representations
	of
	$ G(F_v)$
	of the form
	$\mathrm{n}\text{-}
	\mathrm{Ind}_{P_\Gamma(F_v)}^{ G(F_v)}
	(\pi_1\otimes\pi_2\otimes\cdots\otimes\pi_r)$
	where the components of
	the Levi subgroup of $P_\Gamma(F_v)$
	is either $\mathrm{GL}_1(F_v)$,
	$\mathrm{GL}_2(F_v)$
	or
	the trivial group
	and moreover
	\begin{enumerate}
		\item
		each $\pi_i$
		($i=1,\cdots,r-1$)
		is either an unramified character
		or a Steinberg representation
		$\mathrm{St}_2(\chi)$
		with $\chi$ an unramified character and

		\item 
		$\pi_r$
		is either an unramified character,
		a Steinberg representation
		$\mathrm{St}_2(\chi)$
		with $\chi$ an unramified character, or the trivial representation of the trivial group.
	\end{enumerate}

	We will assume in the following that $\Omega$ is given by $\{2,3,\cdots,n_{\mathrm{s}}-1\}$. In particular, the Levi subgroup of $P_\Omega$ is of the form $\mathrm{GL}_{n_{\mathrm{s}}-1}\times\mathrm{GL}_1$.
	\begin{remark}\rm
		\begin{enumerate}
			\item
			For $G(F_v)=\mathrm{SO}_n(F_v)$ with $n$ even,
			$\mathfrak{S}$ is the set of induced representations
			$\mathrm{n}\text{-}\mathrm{Ind}_{P_\Gamma(F_v)}^{ G(F_v)}
			(\pi_1\otimes\cdots\otimes\pi_r)$ where the factors of the Levi subgroup of
			$P_\Gamma(F_v)$	is either $\mathrm{GL}_1(F_v)$,	$\mathrm{GL}_2(F_v)$
			and moreover each $\pi_i$ ($i=1,\cdots,r$) is either an unramified character or a Steinberg representation $\mathrm{St}_2(\chi)$ with $\chi$
			unramified character. We assume that $\Omega$ is given by $\{2,3,\cdots,n_{\mathrm{s}}-1\}$, so the Levi subgroup of $P_\Omega$ is of the form	$\mathrm{GL}_{n_{\mathrm{s}}-1}\times\mathrm{GL}_1$.

			\item 
			For $G(F_v)=\mathrm{Sp}_n(F_v)$, $\mathfrak{S}$ and $\Omega$ are the same as the case $\mathrm{SO}_n(F_v)$.
		\end{enumerate}
	\end{remark}

	%	Now we recall some results on the classification of generic representations of $ G(F_v)$ (\emph{cf.} \cite{Muic1998,JiangSoudry2004}): for an integer $k\ge0$, we write $G_k$ to be the split group $\mathrm{SO}_{2k}$ or $\mathrm{SO}_{2k+1}$ over $F_v$. For an irreducible unitary supercuspidal representation $\tau$ of $\mathrm{GL}_k(F_v)$ (for some $k$), we write the set
	%	($a-b\in\mathbb{N}$)
	%	\[
	%	[\mathrm{det}^a\tau,\mathrm{det}^b\tau]
	%	=
	%	\{
	%	\mathrm{det}^a\tau,
	%	\mathrm{det}^{a+1}\tau,
	%	\cdots,
	%	\mathrm{det}^b\tau
	%	\}
	%	\]
	%	and define $\delta([\mathrm{det}^a\tau,\mathrm{det}^b\tau])$ to be the unique irreducible quotient of $\mathrm{det}^a\tau\times\cdots\mathrm{det}^b\tau$ (here $\times$ means parabolic induction). Then it is well-known that this is an essentially square-integrable generic representation of $\mathrm{GL}_{(b-a)k}(F_v)$ (\cite{Zelevinsky1980}). Then we have the following characterization of generic representations of $G_n(F_v)$
	%	(\cite[Theorem A]{JiangSoudry2004} and \cite[theorem 4.1]{Muic1998})
	%	\begin{theorem}
		%		Let $\pi$ be an irreducible admissible generic representation of $G_n(F_v)$. Then we have an embedding of representations of $G_n(F_v)$:
		%		\[
		%		\pi
		%		\hookrightarrow
		%		\delta([\mathrm{det}^{a_1}\tau_1,\mathrm{det}^{b_1}\tau_1])
		%		\times
		%		\cdots
		%		\delta([\mathrm{det}^{a_k}\tau_k,\mathrm{det}^{b_k}\tau_k])
		%		\rtimes
		%		\sigma
		%		\]
		%		for some $a_i,b_i,\tau_i$ as above and
		%		$\sigma$ an irreducible supercuspidal representation of $G_{n'}(F_v)$. Here $\rtimes$ denotes the normalized parabolic induction.
		%	\end{theorem}

	\begin{lemma}\label{Iwahori spherical representations}
		Let $\pi$ be an irreducible representation of $ G(F_v)$. Suppose that
		$\pi^{\mathfrak{p}_\Omega}\ne0$, then we have an embedding
		\[
		\pi
		\hookrightarrow
		\Pi=\mathrm{n}\text{-}\mathrm{Ind}_{P_\Theta(F_v)}^{ G(F_v)}
		(\pi_1\otimes\cdots\otimes\pi_r)
		\]
		for some $\Pi\in\mathfrak{S}$. If moreover $\pi$ is ramified, we can choose $\Pi$ such that one of $\pi_i$ is $\mathrm{St}_2(\chi)$ with $\chi$
		unramified and all other $\pi_j$ are unramified characters.

	\end{lemma}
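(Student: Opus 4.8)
The plan is to reduce the statement to a fact about Iwahori--spherical representations of $G(F_v)$ and then to read off the precise shape of the inducing data from the computation of $\mathfrak{p}_\Omega$-invariants in the preceding proposition.

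First, since $\mathbf{B}_v\subseteq P_\Omega$ over $\kappa_v$, the Iwahori subgroup $\mathrm{Iw}(v)$ is contained in $\mathfrak{p}_\Omega$, so $\pi^{\mathrm{Iw}(v)}\supseteq\pi^{\mathfrak{p}_\Omega}\ne0$; thus $\pi$ lies in the Iwahori block. By the Borel--Casselman theory of Iwahori--spherical representations together with Casselman's subrepresentation theorem in its Langlands form (\cite{Casselman1995}), $\pi$ embeds as a subrepresentation $\pi\hookrightarrow\Pi:=\mathrm{n}\text{-}\mathrm{Ind}_{P_\Theta(F_v)}^{G(F_v)}(\delta_1\otimes\cdots\otimes\delta_{q_\Theta})$ of a representation induced from a standard parabolic $P_\Theta$, in which every $\delta_j$ is an essentially square-integrable representation of $L_{\Theta,j}(F_v)$; moreover each $\delta_j$ is itself Iwahori--spherical, for otherwise $\Pi$ would have no Iwahori-fixed vector. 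I classify these building blocks: on a $\mathrm{GL}$-factor $L_{\Theta,j}\cong\mathrm{GL}_{m_j}$ the essentially square-integrable Iwahori--spherical representations are exactly the twisted Steinberg representations $\mathrm{St}_{m_j}(\chi_j)$ with $\chi_j$ unramified (these are the unramified characters when $m_j=1$), while on the unique classical-group factor $L_{\Theta,j_0}\cong\mathrm{SO}_{2\ell+1}$ (with $\ell\geq0$) the only such representation is the Steinberg representation when $\ell\geq1$ and the trivial representation when $\ell=0$. I will also use the standard fact that for a twisted Steinberg representation of $\mathrm{GL}_m(F_v)$ with $m\geq2$, or of $\mathrm{SO}_{2\ell+1}(F_v)$ with $\ell\geq1$, the only parahoric subgroup admitting a nonzero invariant vector is the Iwahori subgroup.

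Now I apply the preceding proposition to $\Pi$. From
\[
0\ne\pi^{\mathfrak{p}_\Omega}\hookrightarrow\Pi^{\mathfrak{p}_\Omega}=\bigoplus_{w\in\,^\Omega W^\Theta}\bigl(\delta_1^{\mathfrak{p}_1^w}\otimes\cdots\otimes\delta_{q_\Theta}^{\mathfrak{p}_{q_\Theta}^w}\bigr)
\]
I obtain a single $w\in\,^\Omega W^\Theta$ with $\delta_j^{\mathfrak{p}_j^w}\ne0$ for every $j$. For each $j$ for which $\delta_j$ is a Steinberg representation of positive semisimple rank, $\mathfrak{p}_j^w$ must then be the Iwahori subgroup of $L_{\Theta,j}$; by the description of $\mathfrak{p}_j^w$ as the parahoric attached to the partition $(I_j^\Theta)^{\pm}=\bigsqcup_i\bigl(w^{-1}(I_i^\Omega)\cap(I_j^\Theta)^{\pm}\bigr)$, this forces that partition to consist of singletons. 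Since $\Omega=\{2,3,\dots,n_\mathrm{s}-1\}$, the ordered partition $I(0,n_\mathrm{s})=I_1^\Omega\sqcup I_2^\Omega\sqcup I_3^\Omega$ has exactly three blocks, of sizes $1$, $1$ and $n_\mathrm{s}-1$; hence a singleton refinement along these blocks forces $\#(I_j^\Theta)^{\pm}\leq3$ (so $m_j\leq2$ for a $\mathrm{GL}$-factor and $\ell\leq1$ for the classical factor), and it can occur for at most one index $j$, because realizing a three-element set as three singletons uses up both size-one blocks $I_1^\Omega$ and $I_2^\Omega$. A finer bookkeeping --- using that $w$ fixes the central coordinate $0$, that the $I_i^\Omega$ consist of non-negative integers, and that $w^{-1}(I_i^\Omega)$ therefore cannot place a positive coordinate of $(I_{j_0}^\Theta)^{\pm}$ and its negative into two distinct singleton blocks --- then rules out $\ell=1$, so $\ell=0$ always.

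It remains to assemble the conclusion. If $\pi$ is unramified, every $\delta_j$ is an unramified character and $\ell=0$; taking $\Theta=\emptyset$ gives $\Pi=\mathrm{n}\text{-}\mathrm{Ind}_{\mathbf{B}_v(F_v)}^{G(F_v)}(\chi)\in\mathfrak{S}$ and the desired embedding. If $\pi$ is ramified, at least one $\delta_j$ is a Steinberg of positive rank, and by the previous paragraph it is necessarily a single $\mathrm{St}_2(\chi)$ on a $\mathrm{GL}_2$-factor, with every other factor an unramified character (and the classical factor the trivial representation of the trivial group), so $\Pi\in\mathfrak{S}$ has exactly the asserted shape. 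The main obstacle is the combinatorial step in the third paragraph: extracting from $\Pi^{\mathfrak{p}_\Omega}$ the precise constraints on the $\delta_j$ --- in particular excluding a classical Steinberg factor and showing that at most one $\mathrm{St}_2$ can occur --- which requires a careful analysis of $^\Omega W^\Theta$ and of the parahorics $\mathfrak{p}_j^w$ for this particular $\Omega$, together with the vanishing of the higher-parahoric invariants of Steinberg representations; the remaining steps are essentially formal once this is in place.
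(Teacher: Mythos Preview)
Your route is genuinely different from the paper's, and it has one real gap.

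\textbf{What the paper does.} The paper never invokes the Langlands classification or a catalogue of Iwahori--spherical discrete series. For the first assertion it simply observes that $\mathrm{Iw}\subset\mathfrak{p}_\Omega$ forces $\pi$ to embed in an unramified principal series, which already lies in $\mathfrak{S}$. For the ramified assertion it works on the Jacquet-module side: from the diagram
\[
\begin{tikzcd}
\pi^{\mathfrak{p}_\Omega}\arrow[r,twoheadrightarrow]\arrow[d,hookrightarrow] & \pi_{U_\Omega}^{L_\Omega(\mathcal{O}_{F,v})}\arrow[r,hookrightarrow] & \pi_{U_\Omega}\arrow[d,twoheadrightarrow]\\
\pi^{\mathrm{Iw}}\arrow[r,"\sim"] & \pi_{\mathbf{U}_v}^{\mathbf{T}_v(\mathcal{O}_{F,v})}\arrow[r,hookrightarrow] & \pi_{\mathbf{U}_v}
\end{tikzcd}
\]
one sees that $\pi_{U_\Omega}$ is unramified as an $L_\Omega(F_v)$-module, hence by Frobenius reciprocity there is a nonzero map $\pi\to\mathrm{n}\text{-}\mathrm{Ind}_{\mathbf{B}_v}^{G}(\chi_1\otimes\cdots\otimes\chi_{n_\mathrm{s}})$ with unramified $\chi_i$. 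Introducing the intermediate parabolic $P_{\Omega'}$ with Levi $\mathrm{GL}_1^{n_\mathrm{s}-2}\times\mathrm{GL}_2$, ramifiedness of $\pi$ forces the induced $\mathrm{GL}_2$-factor $\mathrm{n}\text{-}\mathrm{Ind}_{B_2}^{\mathrm{GL}_2}(\chi_{n_\mathrm{s}-1}\otimes\chi_{n_\mathrm{s}})$ to be reducible and the map to land in the Steinberg piece $\mathrm{St}_2(\chi_{n_\mathrm{s}-1})$. This is a two-line Frobenius reciprocity argument; no combinatorics of $^\Omega W^\Theta$ and no classification of discrete series is needed.

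\textbf{The gap in your argument.} Your key input is that on the classical factor $L_{\Theta,j_0}\cong\mathrm{SO}_{2\ell+1}$ the only Iwahori--spherical essentially square-integrable representation is the Steinberg representation. This is true for $\ell=1$ (where $\mathrm{SO}_3\cong\mathrm{PGL}_2$) but false for $\ell\ge 2$: already for $\mathrm{SO}_5\cong\mathrm{PGSp}_4$ there are Iwahori--spherical discrete series attached to subregular unipotent parameters which are not the Steinberg, and for these the ``only the Iwahori parahoric has nonzero invariants'' property you use need not hold. Since your combinatorial bound $\ell\le 1$ is \emph{derived from} that Iwahori-only property, the argument as written does not exclude a non-Steinberg discrete series sitting on a classical factor of rank $\ge 2$. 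You would either have to prove an analogous parahoric-vanishing statement for \emph{every} Iwahori--spherical discrete series of $\mathrm{SO}_{2\ell+1}$, or bypass the Langlands embedding altogether. The paper's Jacquet-module argument sidesteps this entirely, and also makes the first assertion a one-liner rather than a consequence of the full standard-module analysis.
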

	\begin{proof}
		For the first part, note that $\mathrm{Iw}\subset\mathfrak{p}_{\Omega}$
		and thus Frobenius reciprocity implies that $\pi$ is a subrepresentation of unramified principal series of $ G(F_v)$.

		For the second part, assume $\pi$ ramified, then we have the following commutative diagram:
		\[
		\begin{tikzcd}
			\pi^{\mathfrak{p}_{\Omega}}
			\arrow[r,twoheadrightarrow]
			\arrow[d,hookrightarrow]
			&
			\pi_{U_{\Omega}(F_v)}^{L_{\Omega}(\mathcal{O}_{F,v})}
			\arrow[r,hookrightarrow]
			&
			\pi_{U_{\Omega}(F_v)}
			\arrow[d,twoheadrightarrow]
			\\
			\pi^{\mathrm{Iw}}
			\arrow[r,"\sim"]
			&
			\pi_{\mathbf{U}_v(F_v)}^{\mathbf{T}_v(\mathcal{O}_{F,v})}
			\arrow[r,hookrightarrow]
			&
			\pi_{\mathbf{U}_v(F_v)}
		\end{tikzcd}
		\]
		the isomorphism comes from \cite[Lemma 4.7]{Borel1976} and the horizontal surjection comes from \cite[Theorem 3.3.3]{Casselman1995}. This implies in particular that $\pi_{U_{\Omega}(F_v)}$ is an unramified admissible representation of $L_{\Omega}(F_v)$ (\cite[Theorem 3.3.1]{Casselman1995})
		and thus by Frobenius reciprocity, we can find an unramified admissible irreducible representation $\pi'\otimes\pi''$ of $L_{\Omega}(F_v)$ such that there is a quotient $\mathrm{n}\text{-}\mathrm{Ind}_{P_{\Omega}(F_v)}^{ G(F_v)}(\pi'\otimes\pi'')\twoheadrightarrow\pi$. Moreover we know that
		$\pi'$ and $\pi''$ are quotients of unramified principal series of the
		corresponding direct factors of $L_{\Omega}(F_v)$. So we can write
		\[
		\mathrm{n}\text{-}\mathrm{Ind}_{\mathbf{B}_v(F_v)}^{ G(F_v)}
		(\chi_1\otimes\cdots\otimes\chi_{n_{\mathrm{s}}})
		\twoheadrightarrow
		\pi
		\]
		with each $\chi_i$ an unramified character.

		We next show that the projection must factor through (up to permuting the order of these characters $\chi_1,\cdots,\chi_{n_{\mathrm{s}}-1}$):
		\[
		\mathrm{n}\text{-}\mathrm{Ind}_{\mathbf{B}_v(F_v)}^{ G(F_v)}
		(\chi_1\otimes\cdots\otimes\chi_{n_{\mathrm{s}}})
		\twoheadrightarrow
		\mathrm{n}\text{-}\mathrm{Ind}_{P_{\Omega}(F_v)}^{ G(F_v)}
		(\chi_1\otimes\cdots\otimes\chi_{n_{\mathrm{s}}-2}
		\otimes
		\mathrm{St}_2(\chi_{n_{\mathrm{s}}-1}))
		\]
		Take $P_{\Omega'}(F_v)$ to be the parabolic subgroup of $ G(F_v)$
		whose Levi factor is given by $\mathrm{GL}_1\times\cdots\mathrm{GL}_1\times\mathrm{GL}_2$. Then
		$P_{\Omega'}$ is contained in $P_{\Omega}$ and since
		$P_{\Omega}$ is a maximal parabolic, $\pi$ being ramified implies
		$\pi^{L_{\Omega'}(\mathcal{O}_{F,v})}=0$. Now again by Frobenius reciprocity, we have
		\begin{align*}
			&
			\mathrm{Hom}_{\mathrm{G}(F_v)}
			(
			\pi,
			\mathrm{n}\text{-}\mathrm{Ind}_{\mathbf{B}_v(F_v)}^{ G(F_v)}
			(\chi_1\otimes\cdots\otimes\chi_{n_{\mathrm{s}}})
			)
			\\
			=
			&
			\mathrm{Hom}_{P_{\Omega'}(F_v)}
			(
			\pi,
			\chi_1\otimes\cdots\otimes\chi_{n_{\mathrm{s}}-2}
			\otimes
			\mathrm{n}\text{-}\mathrm{Ind}_{B_2(F_v)}^{\mathrm{GL}_2(F_v)}
			(\chi_{n_{\mathrm{s}}-1}\otimes\chi_{n_{\mathrm{s}}})
			)
			\\
			=
			&
			\mathrm{Hom}_{L_{\Omega'}(F_v)}
			(\pi_{U_{\Omega'}(F_v)},
			\chi_1\times\cdots\times\chi_{n_{\mathrm{s}}-2}
			\times
			\mathrm{n}\text{-}\mathrm{Ind}_{B_2(F_v)}^{\mathrm{GL}_2(F_v)}
			(\chi_{n_{\mathrm{s}}-1}\otimes\chi_{n_{\mathrm{s}}})
			)
		\end{align*}	
		The last term is $0$
		if
		$\mathrm{n}\text{-}\mathrm{Ind}_{B_2(F_v)}^{\mathrm{GL}_2(F_v)}
		(\chi_{n_{\mathrm{s}}-1}\otimes\chi_{n_{\mathrm{s}}})$
		is irreducible
		(recall it is unramified).
		However the first term is by assumption
		non-zero.
		Thus
		the induction
		representation
		$\mathrm{n}\text{-}\mathrm{Ind}_{B_2(F_v)}^{\mathrm{GL}_2(F_v)}
		(\chi_{n_{\mathrm{s}}-1}\otimes\chi_{n_{\mathrm{s}}})$
		must be reducible and
		any $P_{\Omega'}(F_v)$-morphism
		\[
		\pi
		\rightarrow
		\chi_1\otimes\cdots\otimes\chi_{n_{\mathrm{s}}-2}
		\otimes
		\mathrm{n}\text{-}\mathrm{Ind}_{B_2(F_v)}^{\mathrm{GL}_2(F_v)}
		(\chi_{n_{\mathrm{s}}-1}\otimes\chi_{n_{\mathrm{s}}})
		\]
		must factor through
		\[
		\chi_1\otimes\cdots\otimes\chi_{n_{\mathrm{s}}-2}
		\otimes
		\mathrm{St}_2(\chi_{n_{\mathrm{s}}-1})
		\hookrightarrow
		\chi_1\otimes\cdots\otimes\chi_{n_{\mathrm{s}}-2}
		\otimes
		\mathrm{n}\text{-}\mathrm{Ind}_{B_2(F_v)}^{\mathrm{GL}_2(F_v)}
		(\chi_{n_{\mathrm{s}}-1}\otimes\chi_{n_{\mathrm{s}}}),
		\]
		again by the fact
		$\pi^{L_{\Omega'}(\mathcal{O}_{F_v})}=0$.		
	\end{proof}

	This implies in particular that
	we can choose the partition corresponding to
	$\Theta$
	to be given by
	$I(0,n_\mathrm{s})
	=
	I_1^\Theta\sqcup\cdots\sqcup I_{q_\Theta}^\Theta$
	with
	\[
	\# (I_1^\Theta)=
	\# (I_2^\Theta)
	=
	\cdots
	=
	\# (I_s^\Theta)
	=
	1,
	\quad
	\# (I_{s+1}^\Theta)
	=
	\cdots
	=
	\# (I_{q_\Theta}^\Theta)
	=
	2,
	\]
	for some
	$s<n_{\mathrm{s}}$
	in case $\pi$ ramified.

	For a smooth representation $\pi$ of
	$ G(F_v)$
	and a parabolic subgroup
	$P$ of $ G(F_v)$
	with Levi decomposition
	$P=LU$,
	we write
	$\pi_U$
	for the Jacquet module of $\pi$
	with respect to $U$.
	Moreover,
	we write
	$\pi^\mathrm{ss}$
	for the semisimplification of $\pi$.
	We have
%	\begin{lemma}
%		Let $\pi$ be an irreducible admissible representation of $ G(F_v)$ with an embedding
%		$\pi
%		\hookrightarrow
%		\mathrm{n}\text{-}\mathrm{Ind}_{P_\Theta(F_v)}^{ G(F_v)}
%		(\pi_1\otimes\cdots\pi_{q_\Theta})$, then the following induced map is also injective:
%		\[
%		\pi_{U_\Omega(F_v)}^\mathrm{ss}
%		\hookrightarrow
%		\bigoplus_{w\in\,^\Omega W^\Theta}
%		\mathrm{n}
%		\text{-}
%		\mathrm{Ind}_{w^{-1}P_\Theta(F_v)w\cap L_\Omega(F_v)}^{L_\Omega(F_v)}
%		w^{-1}
%		(\pi_1\otimes\cdots\otimes\pi_{q_\Theta})_{
%			L_\Theta(F_v)\cap wU_\Omega(F_v)w^{-1}}
%		\]
%	\end{lemma}
%	\begin{proof}
%		Note that by \cite[Proposition 1.3.3(c)]{Casselman1995}, $w^{-1}P_\Theta w\cap L_\Omega$ is a parabolic subgroup in $M_\Omega$ whose radical is
%		$w^{-1}U_\Theta w\cap L_\Omega$ and whose Levi component is $w^{-1}L_\Theta w\cap L_\Omega$. Now the injectivity follows from
%		\cite[Lemma 2.12]{BernsteinZelevinsky1977}.
%	\end{proof}

	\begin{corollary}\label{semi-simplification of pi}
		Let
		$\pi
		\hookrightarrow
		\mathrm{n}
		\text{-}
		\mathrm{Ind}_{P_\Theta(F_v)}^{ G(F_v)}
		(
		\chi_1
		\otimes
		\cdots
		\chi_{n_{\mathrm{s}}-2}
		\otimes
		\mathrm{St}_2(\chi_{n_{\mathrm{s}}-1})
		)$
		be the embedding as in Lemma \ref{Iwahori spherical representations}.
		Then we have
		\[
		\pi_{U_\Omega(F_v)}^\mathrm{ss}
		\hookrightarrow
		\sigma
		\oplus
		\bigoplus_{w\in\mathcal{S}}
		\mathrm{n}
		\text{-}
		\mathrm{Ind}_{\mathbf{B}(F_v)\cap L_\Omega(F_v)}^{L_\Omega(F_v)}
		\left(
		w^{-1}
		(
		\chi_1
		\otimes
		\cdots
		\otimes
		\mathrm{St}_2(\chi_{n_{\mathrm{s}}-1})
		)_{L_\Theta(F_v)\cap wU_\Omega(F_v)w^{-1}}
		\right)
		\]
		where
		$\sigma$ is a representation of
		$L_\Omega(F_v)$
		such that
		$\sigma^{L_\Omega(\mathcal{O}_{F,v})}=0$,
		and
		$\mathcal{S}$
		is the set of
		$w\in\,^\Omega W^\Theta$
		such that in the corresponding matrix
		$m\in\,^\Omega W^\Theta$,
		we have
		$0\le a_{j,i}\le1$
		for all possible $i,j$
		(in the notation of (\ref{m})).
		Moreover
		at least one of the induced representations is contained in
		$\pi_{U_{\Omega}(F_v)}^{\mathrm{ss}}$.
	\end{corollary}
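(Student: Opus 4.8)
The plan is to pull the embedding of Lemma \ref{Iwahori spherical representations} through the Jacquet functor and then read off both the decomposition and the $L_\Omega(\mathcal{O}_{F,v})$-fixed part from the geometric lemma. Concretely, set $\Pi=\mathrm{n}\text{-}\mathrm{Ind}_{P_\Theta(F_v)}^{G(F_v)}(\chi_1\otimes\cdots\otimes\chi_{n_{\mathrm s}-2}\otimes\mathrm{St}_2(\chi_{n_{\mathrm s}-1}))$, so that $\pi\hookrightarrow\Pi$. Since $\pi$ and $\Pi$ are admissible of finite length and the Jacquet functor $(-)_{U_\Omega(F_v)}$ is exact, I would first obtain $\pi_{U_\Omega(F_v)}\hookrightarrow\Pi_{U_\Omega(F_v)}$ and hence, passing to semisimplifications (which is compatible with exact functors), $\pi_{U_\Omega(F_v)}^{\mathrm{ss}}\hookrightarrow\Pi_{U_\Omega(F_v)}^{\mathrm{ss}}$. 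Thus everything reduces to decomposing $\Pi_{U_\Omega(F_v)}^{\mathrm{ss}}$.

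Next I would invoke the standard computation of the Jacquet module of a parabolically induced representation (the geometric lemma, see \cite{Casselman1995}): $\Pi_{U_\Omega(F_v)}$ carries a filtration whose graded pieces are indexed by the double cosets $W_\Theta\backslash W/W_\Omega$, i.e.\ by $^\Omega W^\Theta$, and hence by the matrices $m\in\,^\Omega\widetilde W^\Theta$ through the bijection of the preceding lemma; in normalized conventions the piece attached to $w$ is exactly the representation $\mathrm{n}\text{-}\mathrm{Ind}_{\mathbf B(F_v)\cap L_\Omega(F_v)}^{L_\Omega(F_v)}\bigl(w^{-1}(\chi_1\otimes\cdots\otimes\mathrm{St}_2(\chi_{n_{\mathrm s}-1}))_{L_\Theta(F_v)\cap wU_\Omega(F_v)w^{-1}}\bigr)$ displayed in the statement. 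Taking semisimplifications turns the filtration into a direct sum over $w\in\,^\Omega W^\Theta$, so it remains to sort the summands by whether they carry $L_\Omega(\mathcal{O}_{F,v})$-invariants.

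For this sorting I would argue block by block. Writing $\tau=\chi_1\otimes\cdots\otimes\mathrm{St}_2(\chi_{n_{\mathrm s}-1})$ as a tensor product over the blocks of $\Theta$, the unipotent subgroup $L_\Theta(F_v)\cap wU_\Omega(F_v)w^{-1}$ decomposes accordingly: on each $\mathrm{GL}_1$-block it is trivial and leaves the unramified character there untouched, while on the unique $\mathrm{GL}_2$-block carrying $\mathrm{St}_2(\chi_{n_{\mathrm s}-1})$ it is either trivial or the full unipotent radical of $\mathrm{GL}_2$, according as the two basis vectors of that block are sent by $w$ into the same $I_i^\Omega$ or into two distinct ones — equivalently, according as the corresponding entry $a_{j,i}$ of $m$ equals $2$ or is $\le 1$. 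In the first case the block still contributes $\mathrm{St}_2(\chi_{n_{\mathrm s}-1})$, which has no $\mathrm{GL}_2(\mathcal{O}_{F,v})$-fixed vector, whence by the Iwasawa decomposition the whole induced representation has no $L_\Omega(\mathcal{O}_{F,v})$-fixed vector; in the second case the Jacquet module of $\mathrm{St}_2(\chi_{n_{\mathrm s}-1})$ along $\mathrm{GL}_2$ is an unramified character of the torus, so the full inducing datum (after the $w^{-1}$-twist) is a product of unramified characters and the induced representation has a one-dimensional space of $L_\Omega(\mathcal{O}_{F,v})$-fixed vectors. Since the only entries of $m$ that can reach $2$ are those of the $\mathrm{GL}_2$-row, the pieces with $L_\Omega(\mathcal{O}_{F,v})$-invariants are precisely those with $w\in\mathcal S$. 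Collecting all pieces with $w\notin\mathcal S$ into $\sigma$, and using that invariants under a compact open subgroup commute with finite direct sums, one gets $\sigma^{L_\Omega(\mathcal{O}_{F,v})}=0$ and the asserted decomposition.

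Finally, for the last assertion I would use the commutative diagram from the proof of Lemma \ref{Iwahori spherical representations} (an instance of Casselman's surjectivity of $\pi^{\mathfrak p_\Omega}\twoheadrightarrow\pi_{U_\Omega(F_v)}^{L_\Omega(\mathcal{O}_{F,v})}$): the hypothesis $\pi^{\mathfrak p_\Omega}\ne 0$ gives $\pi_{U_\Omega(F_v)}^{L_\Omega(\mathcal{O}_{F,v})}\ne 0$, hence $(\pi_{U_\Omega(F_v)}^{\mathrm{ss}})^{L_\Omega(\mathcal{O}_{F,v})}\ne 0$ because taking $L_\Omega(\mathcal{O}_{F,v})$-invariants is exact; since $\sigma^{L_\Omega(\mathcal{O}_{F,v})}=0$, this nonzero invariant vector must come from the summand over $\mathcal S$, so $\pi_{U_\Omega(F_v)}^{\mathrm{ss}}$ meets at least one of those induced representations nontrivially (indeed shares an unramified constituent with it), which is the claim. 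I expect the main obstacle to be the geometric-lemma bookkeeping — matching the graded pieces of $\Pi_{U_\Omega(F_v)}$ to the matrices $m$ and the set $^\Omega W^\Theta$ set up earlier, while keeping the normalized-induction conventions consistent — together with the dichotomy at the Steinberg block between $a_{j,i}=2$ and $a_{j,i}\le 1$; the rest is formal exactness and the elementary fact that $\mathrm{St}_2(\chi)$ has no spherical vector.
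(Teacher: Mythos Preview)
Your proposal is correct and follows essentially the same approach as the paper, which gives only a two-line proof citing the Jacquet module of $\mathrm{St}_2(\psi)$ being $\psi\otimes\psi|\cdot|$ and the assumption $\pi^{\mathfrak p_\Omega}\neq0$. Your expansion via the geometric lemma and the dichotomy at the Steinberg block is exactly what the paper leaves implicit; the only minor slip is calling the $\mathrm{GL}_2$-index a ``row'' of $m$ when in the paper's convention it is a column (the $\Theta$-blocks index columns), but this does not affect the argument.
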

	\begin{proof}
		This follows from the fact that the Jacquet module of
		$\mathrm{St}_2(\psi)$
		is isomorphic to
		$\psi\otimes\psi|\cdot|$
		and the assumption that
		$\pi^{\mathfrak{p}_{\Omega}}
		\ne0$.
	\end{proof}

	We now recall some results from
	\cite{Vigneras1998}.
	Write
	\begin{align*}
		H_{\mathfrak{p}_\Omega}
		&
		=
		\mathcal{H}_\mathbb{Z}( G(F_v),\mathfrak{p}_\Omega)
		=
		\mathbb{Z}
		\langle
		\mathfrak{p}_\Omega\backslash
		G(F_v)/\mathfrak{p}_\Omega
		\rangle,
		\\
		H_{L_\Omega(\mathcal{O}_{F,v})}
		&
		=
		\mathcal{H}_\mathbb{Z}
		(L_\Omega(F_v),L_\Omega(\mathcal{O}_{F,v}))
		=
		\mathbb{Z}
		\langle
		L_\Omega(\mathcal{O}_{F,v})\backslash
		L_\Omega(F_v)/L_\Omega(\mathcal{O}_{F,v})
		\rangle
	\end{align*}
	for the Hecke algebras.
	Write
	$L_\Omega^+=L_\Omega\cap\mathbf{B}_v$
	for the standard Borel subgroup of
	$L_\Omega$ and
	$L_\Omega^-$
	for the opposite of
	$L_\Omega^+$.
	An element
	$m\in L_\Omega(\mathcal{O}_{F,v})$
	is called
	\emph{positive} if
	$mL_\Omega^+(\mathcal{O}_{F,v})m^{-1}\subset
	L_\Omega^+(\mathcal{O}_{F,v})$
	and
	$m^{-1}L_\Omega^-(\mathcal{O}_{F,v})m
	\subset
	L_\Omega^-(\mathcal{O}_{F,v})$.
	Then we put
	\[
	H_{L_\Omega(\mathcal{O}_{F,v})}^-
	=
	\mathbb{Z}
	\langle
	[
	L_\Omega(\mathcal{O}_{F,v})
	m
	L_\Omega(\mathcal{O}_{F,v})
	]
	|m^{-1}
	\text{ positive}
	\rangle
	\subset
	H_{L_\Omega(\mathcal{O}_{F,v})}.
	\]
	We define the following map
	\begin{align*}
		t_H
		\colon
		&
		H_{L_\Omega(\mathcal{O}_{F,v})}^-
		\otimes_\mathbb{Z}\mathbb{C}
		\rightarrow
		H_{\mathfrak{p}_\Omega}
		\otimes_\mathbb{Z}\mathbb{C},
		\\
		&
		[L_\Omega(\mathcal{O}_{F,v})g
		L_\Omega(\mathcal{O}_{F,v})]
		\mapsto
		\delta_{P_\Omega^-}
		(g)^{1/2}
		\cdot
		[\mathfrak{p}_\Omega
		g
		\mathfrak{p}_\Omega],
	\end{align*}
	where
	$P_\Omega^-$
	is the opposite of
	$P_\Omega$
	and
	$\delta_{P_\Omega^-}$
	is the modular character on
	$P_\Omega^-$.
	Write
	\[
	T_\Omega
	:=
	\text{connected component of }
	1
	\text{ in }
	\bigcap_{\alpha\in\Omega}
	\mathrm{Ker}(\alpha)
	\]
	for the torus in $P_\Omega$
	and
	\[
	T_\Omega^-
	:=
	\left\{
	a\in T_\Omega
	|
	|\alpha(a)|_p\le1,
	\forall
	\alpha\in\Delta\backslash\Omega
	\right\}.
	\]
	Fix the Iwahori decomposition
	\[
	\mathfrak{p}_\Omega
	=
	\mathfrak{u}_\Omega^-\mathfrak{l}_\Omega\mathfrak{u}_\Omega
	\]
	with
	$\mathfrak{u}_\Omega
	=
	\mathfrak{p}_\Omega
	\cap
	U_\Omega(F_v)$,
	$\mathfrak{l}_\Omega
	=
	\mathfrak{p}_\Omega\cap
	L_\Omega(F_v)
	=
	L_\Omega(\mathcal{O}_{F,v})$
	and
	$\mathfrak{u}_\Omega^-
	=
	\mathfrak{p}_\Omega\cap
	U_\Omega^-(F_v)$.
	Fix also an admissible representation
	$(\pi,V)$ of $G(F_v)$.
	For any
	$a\in A_\Omega^-$
	such that
	$a\mathfrak{u}_\Omega a^{-1}
	\subset
	\mathfrak{u}_\Omega$,
	we put
	\[
	V^{\mathfrak{p}_\Omega}_a
	:=
	\left\{
	\frac{\int_{\mathfrak{p}_\Omega}\pi(k)v\ d\mu(k)}{\mu(\mathfrak{p}_\Omega)}
	\Big|
	v\in V^{\mathfrak{p}_\Omega}
	\right\}.
	\]
	Here $d\mu$ is the Haar measure on $\mathfrak{p}_{\Omega}$.
	Then by
	\cite[Proposition 4.1.4]{Casselman1995},
	for any
	$a\in T_\Omega^-$
	with
	$a\mathfrak{u}_\Omega a^{-1}\subset
	\mathfrak{u}_\Omega$,
	the natural projection map
	$V\rightarrow
	V_{U_\Omega(F_v)}$
	induces an isomorphism	  
	\[
	\mathrm{pr}
	\colon
	V^{\mathfrak{p}_\Omega}_a
	\xrightarrow{\sim}
	V_{U_\Omega(F_v)}^{\mathfrak{l}_\Omega}.
	\]
	By
	\cite[Proposition 4.1.6]{Casselman1995},
	these
	$V^{\mathfrak{p}_\Omega}_a$
	are independent of $a$
	and we denote this subspace by
	$V^{\mathfrak{p}_\Omega}_{T_\Omega^-}$.
	In summary, we have
	\begin{lemma}\label{projection of parahoric invariant gives iso}
		Let
		$(\pi,V)$ be an admissible irreducible representation of
		$ G(F_v)$
		and
		\[
		\mathrm{pr}
		\colon
		V^{\mathfrak{p}_\Omega}_{T_\Omega^-}
		\rightarrow
		V_{U_\Omega(F_v)}^{\mathfrak{l}_\Omega}
		\]
		be the natural projection.
		Then
		$\mathrm{pr}$
		is an isomorphism
		and moreover
		\[
		\mathrm{pr}
		(t_H(h)\cdot v)
		=
		h\cdot\mathrm{pr}(v),
		\quad
		\forall
		v\in V^{\mathfrak{p}_\Omega}_{T_\Omega^-},
		\,
		h\in H_{L_\Omega(\mathcal{O}_{F,v})}^-.
		\]
	\end{lemma}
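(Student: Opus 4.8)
The plan: the first assertion is immediate from the two statements of Casselman recalled just above, and the second (the Hecke-equivariance) is the compatibility of the transfer map $t_H$ with the Jacquet functor, which I would establish --- following \cite{Vigneras1998} --- by an explicit right-coset computation.

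For the isomorphism there is nothing new to do. By \cite[Proposition 4.1.4]{Casselman1995} the projection $V\to V_{U_\Omega(F_v)}$ restricts to an isomorphism $V_a^{\mathfrak{p}_\Omega}\xrightarrow{\sim}V_{U_\Omega(F_v)}^{\mathfrak{l}_\Omega}$ for each $a\in T_\Omega^-$ with $a\mathfrak{u}_\Omega a^{-1}\subset\mathfrak{u}_\Omega$, and by \cite[Proposition 4.1.6]{Casselman1995} all these subspaces coincide with $V_{T_\Omega^-}^{\mathfrak{p}_\Omega}$; combining the two gives that $\mathrm{pr}$ is an isomorphism. Only admissibility of $\pi$ is used here, irreducibility playing no role for this part.

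The substance is the equivariance. Unwinding the definition of $t_H$, it amounts to showing that under $\mathrm{pr}$ the operator $[\mathfrak{p}_\Omega g\mathfrak{p}_\Omega]$ on $V_{T_\Omega^-}^{\mathfrak{p}_\Omega}$ corresponds, up to the scalar $\delta_{P_\Omega^-}(g)^{1/2}$, to the double-coset operator $[L_\Omega(\mathcal{O}_{F,v})gL_\Omega(\mathcal{O}_{F,v})]$ on $V_{U_\Omega(F_v)}^{\mathfrak{l}_\Omega}$, for $g\in L_\Omega(F_v)$ with $g^{-1}$ positive. The mechanism I would use is a decomposition of $\mathfrak{p}_\Omega g\mathfrak{p}_\Omega$ into right $\mathfrak{p}_\Omega$-cosets obtained from the fixed Iwahori factorization $\mathfrak{p}_\Omega=\mathfrak{u}_\Omega^-\mathfrak{l}_\Omega\mathfrak{u}_\Omega$: the positivity of $g^{-1}$ is exactly what lets one absorb the relevant $g^{-1}$-conjugates of $\mathfrak{u}_\Omega$ and $\mathfrak{u}_\Omega^-$ back into $\mathfrak{p}_\Omega$, so that the coset representatives split into a finite unipotent part lying in $\mathfrak{u}_\Omega\subset U_\Omega(F_v)$ (whose cardinality is a value of the modular character of $P_\Omega$) and a Levi part in $L_\Omega(\mathcal{O}_{F,v})$ representing precisely the cosets that compute $[L_\Omega(\mathcal{O}_{F,v})gL_\Omega(\mathcal{O}_{F,v})]$. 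Applying $\mathrm{pr}$, the unipotent representatives act trivially since $\mathrm{pr}$ is the projection onto $U_\Omega(F_v)$-coinvariants, the Levi representatives intertwine with their action on $V_{U_\Omega(F_v)}$, and $\mathrm{pr}\circ\pi(g)$ becomes the natural action of $g$ on $V_{U_\Omega(F_v)}$ via the first part; the residual powers of $\delta_{P_\Omega}$ and $\delta_{P_\Omega^-}$ then cancel against the normalization built into $t_H$.

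The step I expect to be fiddly is not conceptual but the exact bookkeeping of the half modular characters $\delta_{P_\Omega}^{\pm1/2}$, $\delta_{P_\Omega^-}^{\pm1/2}$ and of the Jacquet-functor normalization, together with making the right-coset decomposition of $\mathfrak{p}_\Omega g\mathfrak{p}_\Omega$ fully precise (in particular pinning down where positivity of $g^{-1}$ forces the unipotent part to be finite). All of this is carried out in \cite{Vigneras1998}, to which --- together with \cite[\S4.1]{Casselman1995} --- I would refer for the complete details.
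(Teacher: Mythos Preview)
Your proposal is correct and takes essentially the same approach as the paper: the paper's proof is simply a citation of \cite[Lemma II.9]{Vigneras1998}, and you likewise reduce the isomorphism to the Casselman results already recalled and defer the Hecke-equivariance to \cite{Vigneras1998}. Your sketch of the coset computation behind the equivariance is an accurate unpacking of what that reference does, so there is nothing to correct.
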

	\begin{proof}
		This is
		\cite[Lemma II.9]{Vigneras1998}.	
	\end{proof}

	Now we define some elements in
	the Hecke algebras
	$H_{L_\Omega(\mathcal{O}_{F,v})}$
	and
	$H_{\mathfrak{p}_\Omega}$.
	Recall the partition
	$I(0,n_\mathrm{s})
	=
	I_1^\Omega\sqcup\cdots
	\sqcup I_{q_\Omega}^\Omega$
	corresponding to the subset
	$\Omega\subset\Delta$.
	Set
	$i_j^\Omega=\# (I_j^\Omega)$.
	For $j=1,\cdots,q_\Omega$
	and
	$k=1,\cdots,i_j^\Omega$,
	we write the matrix
	$\alpha^j_k
	:=\mathrm{diag}
	(1_{i_1^\Omega},
	\cdots,
	1_{i_{j-1}^\Omega},
	\varpi_v1_k,
	1_{i_j^\Omega-k},
	1_{i_{j+1}^\Omega},
	\cdots,
	1_{i_{q_\Omega}^\Omega})$
	and set
	\[
	\widetilde{\alpha}_k^j
	=
	\mathrm{diag}
	(\alpha_k^j,1,
	A_{n_\mathrm{s}}
	(\alpha_k^j)^{-1}
	A_{n_\mathrm{s}})
	\in
	T_\Omega^-.
	\]
	Here
	$A_r$
	is the anti-diagonal matrix of size $r\times r$
	with $1$ on the anti-diagonal.
	Now we put
	\begin{align*}
		V_k^j
		&
		=
		[\mathfrak{p}_\Omega\widetilde{\alpha}_k^j\mathfrak{p}_\Omega]
		\in
		H_{\mathfrak{p}_\Omega},
		\\
		T_k^j
		&
		=
		[L_\Omega(\mathcal{O}_{F,v})\widetilde{\alpha}_k^j
		L_\Omega(\mathcal{O}_{F,v})]
		\in
		H^-_{L_\Omega(\mathcal{O}_{F,v})}.
	\end{align*}
	Thus $V_k^j=\delta_{P_\Omega^-}(\widetilde{\alpha}_k^j)^{1/2}t(T_k^j)$.
	We next look at the action of $V_k^j$ on $\pi^{\mathfrak{p}_\Omega}$.
	For a character $\phi=\phi_1\otimes\cdots\otimes\phi_{n_\mathrm{s}}
	\otimes1\otimes\phi_{n_\mathrm{s}}^{-1}\otimes\cdots\otimes\phi_1^{-1}$
	of $\mathbf{T}(F_v)$ (each $\phi_i$ is a character of $F_v^\times$)
	and an element $w\in W$, we write $w\phi$ for the conjugation of $\phi$ by $w$. We enumerate the blocks in the Levi component $L_\Omega$ of $P_\Omega$ as
	\[
	L_{\Omega}=
	L_{\Omega,1}\times L_{\Omega,2}\times\cdots\times
	L_{\Omega,q_\Omega},
	\]
	with each $L_{\Omega,i}$ corresponding to $I_i^\Omega$ in the partition of $I(0,n_\mathrm{s})$. We then write $\phi_1^\Omega=\phi_1\otimes\cdots\otimes\phi_{i_1^\Omega}$
	for the character of $L_{\Omega,1}$, and similarly
	$\phi_j^\Omega
	=
	\phi_{i_1^\Omega+i_2^\Omega+\cdots+i_{j-1}^\Omega+1}
	\otimes
	\cdots
	\otimes
	\phi_{i_1^\Omega+\cdots+i_j^\Omega}$
	for the character of $L_{\Omega,j}$.
	\begin{proposition}
		Let $\pi$ and $w\in\mathcal{S}$ be as in Corollary \ref{semi-simplification of pi}, then $T_k^j$ acts on the component
		\[
		\mathrm{n}
		\text{-}
		\mathrm{Ind}_{\mathbf{B}(F_v)\cap L_\Omega(F_v)}^{L_\Omega(F_v)}
		\left(
		w^{-1}
		(\chi_1\otimes\cdots\otimes\mathrm{St}_2(\chi_{n_{\mathrm{s}}-1}))_{
			L_\Theta(F_v)\cap wU_\Omega(F_v)w^{-1}}
		\right)
		\]
		inside the semi-simplification $(
		\pi^{\mathfrak{p}_\Omega}_{T_\Omega^-}
		)^\mathrm{ss}
		\simeq
		(\pi_{U_\Omega(F_v)}^{\mathfrak{l}_\Omega})^\mathrm{ss}$
		by the scalar
		\[
		\sum_{J\subset I_j^\Omega,\# (J)=k}
		\prod_{l\in J}
		(w\phi)_l(\varpi_v)
		\]
		where
		$\phi=\chi_1\otimes\cdots\otimes\chi_{\chi_{n_{\mathrm{s}}-2}}
		\otimes\chi_{n_{\mathrm{s}}-1}\otimes(\chi_{n_{\mathrm{s}}-1}|\cdot|)$.
	\end{proposition}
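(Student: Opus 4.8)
The plan is to transport the statement, via Lemma~\ref{projection of parahoric invariant gives iso}, from the parahoric-invariant space to the Jacquet module, where the assertion becomes the classical computation of a $\mathrm{GL}$-type Hecke operator on the spherical line of an unramified principal series of a general linear group.

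First I would reduce to the Levi. Recall that $V_k^j=\delta_{P_\Omega^-}(\widetilde\alpha_k^j)^{1/2}\,t_H(T_k^j)$ and that, by Lemma~\ref{projection of parahoric invariant gives iso}, the natural projection $\mathrm{pr}\colon\pi^{\mathfrak p_\Omega}_{T_\Omega^-}\xrightarrow{\ \sim\ }\pi_{U_\Omega(F_v)}^{\mathfrak l_\Omega}$ intertwines the action of $t_H(h)$ on the source with that of $h\in H^-_{L_\Omega(\mathcal O_{F,v})}$ on the target. Passing to semisimplifications, it therefore suffices to compute how $T_k^j$ acts on $(\pi_{U_\Omega(F_v)}^{\mathfrak l_\Omega})^{\mathrm{ss}}$. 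By Corollary~\ref{semi-simplification of pi} this space embeds into $\sigma^{\mathfrak l_\Omega}\oplus\bigoplus_{w\in\mathcal S}\big(\mathrm{n}\text{-}\mathrm{Ind}_{\mathbf B(F_v)\cap L_\Omega(F_v)}^{L_\Omega(F_v)}\psi_w\big)^{\mathfrak l_\Omega}$, where $\psi_w=w^{-1}\big(\chi_1\otimes\cdots\otimes\mathrm{St}_2(\chi_{n_{\mathrm s}-1})\big)_{L_\Theta(F_v)\cap wU_\Omega(F_v)w^{-1}}$ and $\sigma^{\mathfrak l_\Omega}=0$; since each $\big(\mathrm{n}\text{-}\mathrm{Ind}\,\psi_w\big)^{\mathfrak l_\Omega}$ is one-dimensional by the Iwasawa decomposition for $L_\Omega(F_v)$, the problem reduces to finding the scalar by which $T_k^j$ acts on the spherical line of each constituent.

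Next I would identify $\psi_w$ and reinterpret $T_k^j$ block by block. The (normalized) Jacquet module of $\mathrm{St}_2(\chi)$ along the Borel of $\mathrm{GL}_2$ is $\chi\otimes\chi|\cdot|$, and the hypothesis $w\in\mathcal S$ — that the matrix $m\in\,^\Omega\widetilde W^\Theta$ attached to $w$ has all entries in $\{0,1\}$ — is precisely what forces the two legs $\chi_{n_{\mathrm s}-1}$ and $\chi_{n_{\mathrm s}-1}|\cdot|$ of the Steinberg factor into distinct coordinates, so that no Steinberg survives in the Levi and $\psi_w$ is a genuine unramified character of $\mathbf T_v(F_v)\cap L_\Omega(F_v)$, obtained from $\phi=\chi_1\otimes\cdots\otimes\chi_{n_{\mathrm s}-2}\otimes\chi_{n_{\mathrm s}-1}\otimes(\chi_{n_{\mathrm s}-1}|\cdot|)$ by letting $w$ permute its coordinates; concretely, its restriction to the $\mathrm{GL}$-block $L_{\Omega,j}\cong\mathrm{GL}_{i_j^\Omega}$ attached to $I_j^\Omega$ has Satake parameters $\{(w\phi)_l(\varpi_v):l\in I_j^\Omega\}$. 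Under this identification $\widetilde\alpha_k^j$ is, on $L_{\Omega,j}$, the matrix $\mathrm{diag}(\varpi_v1_k,1_{i_j^\Omega-k})$ and the identity on every other block, so $T_k^j$ is the standard $\mathrm{GL}_{i_j^\Omega}$-Hecke operator $[\mathrm{GL}_{i_j^\Omega}(\mathcal O_{F,v})\,\mathrm{diag}(\varpi_v1_k,1_{i_j^\Omega-k})\,\mathrm{GL}_{i_j^\Omega}(\mathcal O_{F,v})]$ tensored with the identity (the $\mathrm{SO}_1$-block, when present, being trivial and contributing the scalar $1$).

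Finally I would run the classical $\mathrm{GL}_m$-computation ($m=i_j^\Omega$): choosing coset representatives for $\mathrm{GL}_m(\mathcal O_{F,v})\,\mathrm{diag}(\varpi_v1_k,1_{m-k})\,\mathrm{GL}_m(\mathcal O_{F,v})=\bigsqcup_i g_i\,\mathrm{GL}_m(\mathcal O_{F,v})$ and evaluating the spherical function of $\mathrm{n}\text{-}\mathrm{Ind}_{\mathrm B_m(F_v)}^{\mathrm{GL}_m(F_v)}(\mu_1\otimes\cdots\otimes\mu_m)$ on them — the same bookkeeping as in the proof of Lemma~\ref{Hecke polynomial for unramified rep} and in \cite[p.~85]{ClozelHarrisTaylor2008} — produces the eigenvalue $\sum_{J\subset\{1,\dots,m\},\,\#(J)=k}\prod_{l\in J}\mu_l(\varpi_v)$, the powers of $q_v$ cancelling against the modular characters $\delta_{P_\Omega^-}$, $\delta_{\mathbf B}$ and the $|\cdot|$-twist already absorbed into $\phi$. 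Substituting $\mu_l=(w\phi)_l$ for $l\in I_j^\Omega$ then yields the asserted scalar $\sum_{J\subset I_j^\Omega,\,\#(J)=k}\prod_{l\in J}(w\phi)_l(\varpi_v)$. I expect the main obstacle to be the normalization bookkeeping rather than any individual step: one must track precisely the Casselman $\delta^{1/2}$-normalizations, the twist built into $\phi$, and the dictionary $^\Omega W^\Theta\simeq\,^\Omega\widetilde W^\Theta$ relating $w$ to the matrix $m$, so that the final eigenvalue emerges with exactly the stated shape and with no residual power of $q_v$.
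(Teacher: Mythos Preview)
Your proposal is correct and is precisely the ``direct computation'' the paper alludes to: reduce via Lemma~\ref{projection of parahoric invariant gives iso} to the action of $T_k^j$ on the spherical line of an unramified principal series of the Levi block $L_{\Omega,j}\cong\mathrm{GL}_{i_j^\Omega}$, then invoke the standard Satake computation. The paper's own proof is the single sentence ``This is a direct computation,'' so you have simply unpacked what that computation is.
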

	\begin{proof}
		This is a direct computation.
	\end{proof}
	
	Let $\kappa'$ be an algebraically closed field.
	As in
	(\ref{degree 1 and 2 polynomials}),
	for each $j$ and $k$ as above,
	to each degree
	$2n_\mathrm{s}$ polynomial
	$P(X)
	\in\kappa'[X]$
	of the form
	$P(X)
	=\prod_{i=1}^{n_\mathrm{s}}
	(X-x_i)(X-x_{-i})$
	with $x_{-i}=x_i^{-1}\in(\kappa')^\times$,
	we can associate a monic polynomial
	$\widehat{P}_k^j(X)
	\in
	\kappa'[X]$
	whose roots
	(with multiplicities)
	are
	$\sum_{J\subset \widetilde{I}^\Omega_j,\# (J)=k}
	\prod_{l\in J}x_{w^{-1}(l)}$
	where
	$\widetilde{I}_j^\Omega$
	runs through subsets of
	$I(0,n_\mathrm{s})$
	of cardinal $i_j^\Omega=\# (I_j^\Omega)$.

	In the following, we assume that the cardinality $q_v$ of
	the residual field $\kappa_v$
	satisfies
	\[
	q_v\equiv1(\mathrm{mod}\,p).
	\]
	Using the preceding proposition, one can show
	\begin{proposition}\label{projector induces an isomorphism}
		Let $R$ be a local complete $\mathcal{O}$-algebra with residue field equal to the residue field $\kappa$ of $\mathcal{O}$.	For a smooth $R[ G(F_v)]$-module $\Pi$ such that	$\Pi^U$ is finite free over $\mathcal{O}$
		for any compact open subgroup $U$ of $ G(F_v)$, we write $R_U$ for the image of $R$ in $\mathrm{End}_\mathcal{O}(\Pi^U)$. We assume that
		$\Pi\otimes_\mathcal{O}\overline{\mathbb{Q}}_p$ is a semisimple
		$\overline{\mathbb{Q}}_p[ G(F_v)]$-module,	$R_U\otimes_\mathcal{O}\overline{\mathbb{Q}}_p$ is a semisimple algebra.
		Suppose furthermore that there is a Galois representation
		\[
		\rho
		\colon
		\Gamma_{F_v}
		\rightarrow
		\,^CG\left(R_{\mathfrak{p}_\Omega}\right)
		\]
		such that for any homomorphism $\phi\colon R_{\mathfrak{p}_\Omega}
		\rightarrow\overline{\mathbb{Q}}_p$	and any	irreducible component
		$\pi$ of $\Pi\otimes\overline{\mathbb{Q}}_p$ generated by
		$\Pi^{\mathfrak{p}_\Omega}\otimes_{R_{\mathfrak{p}_\Omega},\phi}
		\overline{\mathbb{Q}}_p$ which can be realized as the local component of
		an irreducible automorphic representation $\pi'$ of $G(\mathbb{A}_F)$
		satisfying the conditions of Theorem \ref{Galois representations attached to auto rep}, then one has an isomorphism
		\[
		r_{p,\iota,\pi'}|_{\Gamma_{F_v}}^\mathrm{ss}
		\simeq
		(
		\rho
		\otimes_{R_{\mathfrak{p}_\Omega},\phi}
		\overline{\mathbb{Q}}_p
		)^\mathrm{ss},
		\]
		where $r_{p,\iota,\pi'}$ is the $p$-adic Galois representation associated to
		$\pi'$ as in Theorem \ref{Galois representations attached to auto rep}.

		We fix a Frobenius lift $\sigma_v$ in $\Gamma_{F_v}$ corresponding to the uniformiser $\varpi_v$ in $F_v$ under the local Artin map. Write
		$P(X)$, resp. $P(X)(X-1)$ for the characteristic polynomial for	$\rho(\sigma_v)$ if $G^\ast=\mathrm{SO}_n^\eta$, resp. $G^\ast=\mathrm{Sp}_n$. Suppose that	$\overline{\rho}:=\rho(\mathrm{mod}\,\mathfrak{m}_R)$ is unramified
		and $\overline{\rho}(\sigma_v)$ satisfies one of the following two conditions:
		\begin{enumerate}
			\item 
			$\overline{\rho}(\sigma_v)$ has eigenvalues $\overline{\alpha}$ and $\overline{\alpha}^{-1}$ such that $\overline{\alpha}\ne\overline{\alpha}^{-1}$ and both of them have the same multiplicity $i_{j_0}^\Omega$ for some	$j_0=1,\cdots,q_\Omega$;

			\item 
			$\overline{\rho}(\sigma_v)$ has eigenvalue $\overline{\alpha}$ such that
			$\overline{\alpha}=\overline{\alpha}^{-1}$ and it has multiplicity	$2i_{j_0}^\Omega$ for some $j_0=1,\cdots,q_\Omega$. In case $G^\ast=\mathrm{Sp}_n$ and $\overline{\alpha}=1$, the multiplicity is $2i^\Omega_{j_0}+1$ for some $j_0=1,\cdots,q_\Omega$.
		\end{enumerate}		
		Then for each pair $(j,k)$ as above, we have a factorization into coprime factors $\widehat{P}^j_k(X)=R_k^j(X)Q_k^j(X)$ such that
		\[
		R_k^j(X)
		\equiv
		\left(
		\left(
		X-\binom{i_j^\Omega}{k}\overline{\alpha}^k
		\right)
		\left(
		X-\binom{i_j^\Omega}{k}\overline{\alpha}^{-k}
		\right)
		\right)^{r(j,k)}
		(\mathrm{mod}\,\mathfrak{m}_R),
		\quad
		\]
		\[
		Q_k^j
		\left(
		\binom{i_j^\Omega}{k}
		\overline{\alpha}^k
		\right),
		\,
		Q_k^j
		\left(
		\binom{i_j^\Omega}{k}
		\overline{\alpha}^{-k}
		\right)
		\not\equiv
		0
		(\mathrm{mod}\,\mathfrak{m}_R)
		\]
		for some non-negative integer $r(j,k)$
		(which is $0$ if $i^\Omega_j<i^\Omega_{j_0}$).

		We write
		\[
		\mathrm{pr}^j
		:=
		\prod_{k=1}^{i_j^\Omega}
		Q_k^j(V_k^j)
		\in
		\mathrm{End}_R(\Pi^{\mathfrak{p}_\Omega}),\index{p@$\mathrm{pr}^j$}
		\quad
		\mathrm{pr}_{(j_1)}:=
		\prod_{j=1,j\ne j_1}^{q_\Omega}
		\mathrm{pr}^j
		\,
		(j_1\ne j_0).\index{p@$\mathrm{pr}_{(j_1)}$}
		\]
		Then
		the operator
		$\mathrm{pr}_{(j_1)}$
		induces an isomorphism
		of $R$-modules
		\begin{equation}\label{G(O_v)-invariant isomorphic to pr_{(j_1)}}
			\Pi^{ G(\mathcal{O}_{F,v})}
			\xrightarrow{\sim}
			\mathrm{pr}_{(j_1)}(\Pi^{\mathfrak{p}_\Omega})
			(\subset
			\Pi^{\mathfrak{p}_\Omega}).
		\end{equation}
	\end{proposition}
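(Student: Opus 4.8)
The plan is to reduce to the $\overline{\mathbb{Q}}_p$-points of $R_{\mathfrak{p}_\Omega}$, where the hypotheses make the automorphic input applicable, and then to run the Taylor--Wiles argument of \cite{ClozelHarrisTaylor2008,Thorne2012} in the parahoric form provided by Lemma \ref{projection of parahoric invariant gives iso} and Corollary \ref{semi-simplification of pi}. Since $R_{\mathfrak{p}_\Omega}$ is a finite local $\mathcal{O}$-algebra and $R_{\mathfrak{p}_\Omega}\otimes_\mathcal{O}\overline{\mathbb{Q}}_p$ is semisimple, hence reduced, the map $R_{\mathfrak{p}_\Omega}\hookrightarrow\prod_\phi\overline{\mathbb{Q}}_p$ over the finitely many $\mathcal{O}$-algebra homomorphisms $\phi\colon R_{\mathfrak{p}_\Omega}\to\overline{\mathbb{Q}}_p$ is injective; so it suffices to prove the factorization $\widehat{P}_k^j=R_k^jQ_k^j$ and the isomorphism (\ref{G(O_v)-invariant isomorphic to pr_{(j_1)}}) after base change along each $\phi$, keeping track of the $\mathcal{O}$-lattices $\Pi^{\mathfrak{p}_\Omega}$ and $\Pi^{G(\mathcal{O}_{F,v})}$. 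After such a base change the irreducible constituents $\pi$ of $\Pi\otimes_{R_{\mathfrak{p}_\Omega},\phi}\overline{\mathbb{Q}}_p$ generated by $\Pi^{\mathfrak{p}_\Omega}$ are, by hypothesis, local components of automorphic $\pi'$ satisfying Theorem \ref{Galois representations attached to auto rep}, with $r_{p,\iota,\pi'}|_{\Gamma_{F_v}}^{\mathrm{ss}}\simeq(\rho\otimes_{R_{\mathfrak{p}_\Omega},\phi}\overline{\mathbb{Q}}_p)^{\mathrm{ss}}$.

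The next step is to compute the eigenvalues of $V_k^j$ on $\Pi^{\mathfrak{p}_\Omega}\otimes\overline{\mathbb{Q}}_p$. As $\Pi\otimes\overline{\mathbb{Q}}_p$ is a semisimple $G(F_v)$-module it decomposes into irreducibles $\pi$, and for each such $\pi$ Lemma \ref{projection of parahoric invariant gives iso} together with \cite[Proposition 4.1.6]{Casselman1995} gives $\pi^{\mathfrak{p}_\Omega}=\pi^{\mathfrak{p}_\Omega}_{T_\Omega^-}\oplus N_\pi$, with $\mathrm{pr}$ identifying $\pi^{\mathfrak{p}_\Omega}_{T_\Omega^-}$ with $\pi_{U_\Omega(F_v)}^{\mathfrak{l}_\Omega}$ compatibly with $V_k^j$ and $\delta_{P_\Omega^-}(\widetilde{\alpha}_k^j)^{1/2}t_H(T_k^j)$, and each $V_k^j$ topologically nilpotent on $N_\pi$ (hence with $N_\pi$-eigenvalues of positive $p$-adic valuation). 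By Lemma \ref{Iwahori spherical representations}, Corollary \ref{semi-simplification of pi} and the Proposition preceding the statement (computing the $T_k^j$-action on the components of the Jacquet module), the eigenvalues of $V_k^j$ on $\pi^{\mathfrak{p}_\Omega}_{T_\Omega^-}$ are the scalars $\delta_{P_\Omega^-}(\widetilde{\alpha}_k^j)^{1/2}\sum_{J\subset I_j^\Omega,\,\#J=k}\prod_{l\in J}(w\phi)_l(\varpi_v)$ for $w$ in the set $\mathcal{S}$ of Corollary \ref{semi-simplification of pi}, and by local--global compatibility together with the explicit unramified $C$-parameter in Theorem \ref{Galois representations attached to auto rep}(1) the values $\chi_i(\varpi_v)$ occurring here are, after the normalization built into (\ref{degree 1 and 2 polynomials}), exactly the eigenvalues $x_i,x_{-i}$ of $\rho(\sigma_v)$. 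Hence the characteristic polynomial of $V_k^j$ on $\Pi^{\mathfrak{p}_\Omega}$ divides a power of $\widehat{P}_k^j(X)$, where $P(X)$ (resp.\ $P(X)(X-1)$) is the characteristic polynomial of $\rho(\sigma_v)$.

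For the factorization I would reduce $\widehat{P}_k^j$ modulo $\mathfrak{m}_R$: the reduction of $P(X)$ is the characteristic polynomial of $\overline{\rho}(\sigma_v)$, so under hypothesis (1) (resp.\ (2)) the eigenvalues $\overline\alpha,\overline\alpha^{-1}$ have multiplicity $i_{j_0}^\Omega$ (resp.\ $\overline\alpha=\overline\alpha^{-1}$ has multiplicity $2i_{j_0}^\Omega$, or $2i_{j_0}^\Omega+1$ when $G^\ast=\mathrm{Sp}_n$ and $\overline\alpha=1$), and a direct count of the roots $\sum_{J\subset\widetilde I_j^\Omega,\,\#J=k}\prod_{l\in J}\overline x_{w^{-1}(l)}$ of $\overline{\widehat{P}_k^j}$ shows that those equal to $\binom{i_j^\Omega}{k}\overline\alpha^{\pm k}$ occur precisely when $i_j^\Omega\ge i_{j_0}^\Omega$, with total multiplicity $2r(j,k)$, while all other roots are incongruent to $\binom{i_j^\Omega}{k}\overline\alpha^{\pm k}$ modulo $\mathfrak{m}_R$; here $q_v\equiv1\pmod p$, $\overline\alpha\ne0$ and (in case (1)) $\overline\alpha\ne\overline\alpha^{-1}$ are what force the separation and exclude degenerate coincidences. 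This gives a coprime factorization $\overline{\widehat{P}_k^j}=\overline R_k^j\,\overline Q_k^j$ of the required shape, which lifts by Hensel's lemma over the complete local ring $R_{\mathfrak{p}_\Omega}$ to $\widehat{P}_k^j=R_k^jQ_k^j$ with the stated reductions; this is the first assertion.

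For the isomorphism, note first that the $V_k^j$ pairwise commute (they are Bernstein-type elements attached to the abelian torus $T_\Omega$), so by Cayley--Hamilton $\big(\widehat{P}_k^j(V_k^j)\big)^N=0$ on $\Pi^{\mathfrak{p}_\Omega}$ for $N\gg0$, and coprimality of $R_k^j$ and $Q_k^j$ yields an $\mathcal{O}$-integral direct-sum decomposition $\Pi^{\mathfrak{p}_\Omega}=\Pi^{\mathfrak{p}_\Omega}_{\mathrm{sph}}\oplus\Pi^{\mathfrak{p}_\Omega}_{\mathrm{bad}}$, where $\Pi^{\mathfrak{p}_\Omega}_{\mathrm{sph}}=\bigcap_{j\ne j_1,\,k}\ker\big(R_k^j(V_k^j)^N\big)$; on $\Pi^{\mathfrak{p}_\Omega}_{\mathrm{sph}}$ each $Q_k^j(V_k^j)$ with $j\ne j_1$ is invertible, so $\mathrm{pr}_{(j_1)}$ is invertible there, while on $\Pi^{\mathfrak{p}_\Omega}_{\mathrm{bad}}$ it is topologically nilpotent, and in fact zero since (by the previous step, using \cite{Vigneras1998} and the semisimplicity of $\Pi\otimes\overline{\mathbb{Q}}_p$) the $V_k^j$ act semisimply on $\Pi^{\mathfrak{p}_\Omega}_{T_\Omega^-}$ with $N_\pi$-eigenvalues separated from $\binom{i_j^\Omega}{k}\overline\alpha^{\pm k}$, so that $Q_k^j(V_k^j)$ annihilates each summand of $\Pi^{\mathfrak{p}_\Omega}_{\mathrm{bad}}$. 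Thus $\mathrm{pr}_{(j_1)}(\Pi^{\mathfrak{p}_\Omega})=\Pi^{\mathfrak{p}_\Omega}_{\mathrm{sph}}$, and it remains to identify $\Pi^{\mathfrak{p}_\Omega}_{\mathrm{sph}}$ with $\Pi^{G(\mathcal{O}_{F,v})}$ inside $\Pi^{\mathfrak{p}_\Omega}$. This is done by comparing the eigenvalue data above with the image of the spherical vector in the Jacquet module (via Lemma \ref{projection of parahoric invariant gives iso} and \cite[Lemma 4.7]{Borel1976}): conditions (1)/(2) say exactly that among the $w$-components of the Jacquet module only the spherical one can lie in the generalized $\binom{i_j^\Omega}{k}\overline\alpha^{\pm k}$-eigenspaces of the $V_k^j$ with $j\ne j_1$. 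I expect this last identification — carried out uniformly for $\mathrm{SO}_n$ with $n$ odd and even and for $\mathrm{Sp}_n$, with only the semisimplification of $\pi_{U_\Omega(F_v)}$ under control (Corollary \ref{semi-simplification of pi}) rather than $\pi$ being a full induced representation as in the $\mathrm{GL}_n$ setting of \cite{ClozelHarrisTaylor2008,Thorne2012} — to be the main obstacle.
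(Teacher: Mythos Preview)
Your proposal has the right setup (Hensel factorization, Jacquet-module eigenvalue computation via Lemma~\ref{projection of parahoric invariant gives iso}), but the final two steps contain genuine gaps that the paper resolves by a different route.

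First, the claim that $\mathrm{pr}_{(j_1)}$ is \emph{zero} on the ``bad'' summand does not follow: coprimality only gives that some $Q_k^j(V_k^j)$ is \emph{nilpotent} there, and over $\mathcal{O}$ nilpotent is not zero. Semisimplicity of the $V_k^j$-action is only available after $\otimes\,\overline{\mathbb{Q}}_p$, so $\mathrm{pr}_{(j_1)}(\Pi^{\mathfrak{p}_\Omega})$ need not equal your $\Pi^{\mathfrak{p}_\Omega}_{\mathrm{sph}}$ integrally. Second, and more seriously, the identification $\Pi^{\mathfrak{p}_\Omega}_{\mathrm{sph}}=\Pi^{G(\mathcal{O}_{F,v})}$ that you flag as the ``main obstacle'' is not true as stated: the operators $V_k^j$ do \emph{not} act by scalars on the spherical vector (cf.\ the computation in the proof of Lemma~\ref{Lemma 1.30}, where $\varphi=\sum_w\varphi_w$ and each $\varphi_w$ is a $V_k^j$-eigenvector with a different eigenvalue), so ``only the spherical $w$-component lies in the $\binom{i_j^\Omega}{k}\overline{\alpha}^{\pm k}$-eigenspace'' is simply false.

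The paper avoids both problems by splitting the argument across characteristics. In characteristic~$0$ it never tries to identify the image of $\mathrm{pr}_{(j_1)}$; instead it proves the \emph{rank inequality} $\mathrm{rk}_{\mathcal{O}}\,\mathrm{pr}_{(j_1)}(\Pi^{\mathfrak{p}_\Omega})\le\mathrm{rk}_{\mathcal{O}}\,\Pi^{G(\mathcal{O}_{F,v})}$ by showing that every \emph{ramified} irreducible constituent $\pi$ of $\Pi\otimes\overline{\mathbb{Q}}_p$ satisfies $\mathrm{pr}_{(j_1)}(\pi^{\mathfrak{p}_\Omega})=0$. This is a case analysis using Lemma~\ref{Iwahori spherical representations}: the unique Steinberg factor contributes two Frobenius eigenvalues $\chi_{n_{\mathrm{s}}-1}(\omega_v)$ and $\chi_{n_{\mathrm{s}}-1}(\omega_v)|\omega_v|$ that are congruent mod~$\mathfrak{m}_R$ (because $q_v\equiv 1\pmod p$), but by Corollary~\ref{semi-simplification of pi} at most one of them appears in each block $\widetilde{I}_j^\Omega$, forcing the projector to kill $\pi^{\mathfrak{p}_\Omega}$. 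Then, to upgrade the rank inequality to an isomorphism, the paper passes to characteristic~$p$: it proves (Lemma~\ref{Lemma 1.30}) that for an irreducible unramified $\overline{\kappa}[G(F_v)]$-module $\pi$ the map $\overline{\mathrm{pr}}_{(j_1)}\colon\pi^{G(\mathcal{O}_{F,v})}\to\pi^{\mathfrak{p}_\Omega}$ is injective, by an explicit Iwahori-basis computation showing $\overline{\mathrm{pr}}_{(j_1)}(\varphi)=\sum_{w\in W'}\varphi_w$ with $W'\ne\emptyset$. Nakayama then finishes. Both ingredients---the ramified-kill argument in characteristic~$0$ and the mod~$p$ Lemma~\ref{Lemma 1.30}---are missing from your proposal, and your attempted characteristic-$0$ shortcut does not substitute for them.
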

	\begin{proof}
		Note that we have assumed $q_v\equiv1(\mathrm{mod}\,p)$, so $\overline{\rho}$ has image in the dual group $\widehat{G}(R_{\mathfrak{p}_\Omega}/\mathfrak{m}_{R_{\mathfrak{p}_\Omega}})$.
		We treat the case where $G^\ast=\mathrm{SO}_n^\eta$ and
		$\overline{\rho}(\sigma_v)$
		has eigenvalues
		$\overline{\alpha}\ne\overline{\alpha}^{-1}$
		(each of multiplicity
		$i_j^\Omega$),
		the other cases being similar.

		For ease of notation, for an element $x \in R$, we write $\overline{x}$
		for $x(\mathrm{mod}\,\mathfrak{m}_R)$. We claim that for any irreducible component $\pi$ of $\Pi\otimes\overline{\mathbb{Q}}_p$ as in the proposition such that $\mathrm{pr}^j\pi^{\mathfrak{p}_\Omega}\ne0$,
		$\pi$ is an unramified representation of $ G(F_v)$.	We argue by contradiction: assume that $\pi$ is ramified. Then by Lemma
		\ref{Iwahori spherical representations}, we have an embedding
		$\pi\hookrightarrow\mathrm{n}\text{-}\mathrm{Ind}_{P_\Theta(F_v)}^{ G(F_v)}(\chi_1\otimes\cdots\otimes\chi_{\chi_{n_{\mathrm{s}}-2}}
		\otimes\mathrm{St}_2(\chi_{n_{\mathrm{s}}-1}))$ with $\chi_i$ unramified characters. The eigenvalues $\overline{\alpha}$ of the Frobenius
		$(\rho\otimes_{R_{\mathfrak{p}_\Omega},\phi}\overline{\mathbb{Q}}_p)(\sigma_v)$ are then the following elements in $R_{\mathfrak{p}_\Omega}$:
		\[
		\overline{\alpha}
		\in
		\left\{
		\chi_1(\omega_v)^{\pm1},\cdots,
		\chi_{n_{\mathrm{s}}-2}(\omega_v)^{\pm1},
		\chi_{n_{\mathrm{s}}-1}(\omega_v)^{\pm1},
		(\chi_{n_{\mathrm{s}}-1}(\omega_v)|\omega_v|)^{\pm1},1
		\right\}.
		\]
		\begin{enumerate}
			\item
			First assume that
			$\overline{\alpha}=\chi_{n_{\mathrm{s}}-1}(\omega_v)^{\pm1}$.
			Since
			$q_v\equiv1
			(\mathrm{mod}\,p)$,
			we have
			\[
			\chi_{n_{\mathrm{s}}-1}(\omega_v)
			\equiv
			\chi_{n_{\mathrm{s}}-1}(\omega_v)|\omega_v|
			(\mathrm{mod}\,\mathfrak{m}_R).
			\]
			Therefore
			$\mathrm{pr}^j$
			projects $\Pi^{\mathfrak{p}_\Omega}$
			onto a subspace where
			the Hecke operators
			$V_k^j$
			($k=1,\cdots,i_j^\Omega$)
			act as the roots of
			$\widehat{P}_k^j(X)$
			corresponding to the subset
			$\widetilde{I}_j^\Omega$
			of
			$I(0,n_\mathrm{s})$
			whose corresponding eigenvalues include
			both
			$\chi_{n_{\mathrm{s}}-1}(\omega_v)$ and
			$\chi_{n_{\mathrm{s}}-1}(\omega_v)|\omega_v|$
			or both
			$\chi_{n_{\mathrm{s}}-1}(\omega_v)^{-1}$ and
			$(\chi_{n_{\mathrm{s}}-1}(\omega_v)|\omega_v|)^{-1}$.
			However we have seen in
			Corollary \ref{semi-simplification of pi}
			only one of
			$\chi_{n_{\mathrm{s}}-1}(\omega_v)$
			and
			$\chi_{n_{\mathrm{s}}-1}(\omega_v)|\omega_v|$
			can occur in such
			$\widetilde{I}_j^\Omega$
			(similarly for
			$\chi_{n_{\mathrm{s}}-1}(\omega_v)^{-1}$ and
			$(\chi_{n_{\mathrm{s}}-1}(\omega_v)|\omega_v|)^{-1}$).
			So this case is not possible.

			\item 
			Next we assume that
			$\overline{\alpha}\neq\chi_{n_{\mathrm{s}}-1}(\omega_v)^{\pm1}$,
			then again we know that
			$\mathrm{pr}^j$
			projects
			$\Pi^{\mathfrak{p}_\Omega}$
			onto the subspace
			where
			$V_k^j$
			act as roots of
			$\widehat{P}_k^j(X)$
			corresponding to the subset
			$\widetilde{I}_j^\Omega$ of $I(0,n_\mathrm{s})$
			whose corresponding eigenvalues do not contain
			any of these
			$\chi_{n_{\mathrm{s}}-1}(\omega_v)$,
			$\chi_{n_{\mathrm{s}}-1}(\omega_v)|\omega_v|$.
			This again contradicts
			Corollary
			\ref{semi-simplification of pi}
			as either
			$\chi_{n_{\mathrm{s}}-1}(\omega_v)$
			or
			$\chi_{n_{\mathrm{s}}-1}(\omega_v)|\omega_v|$
			appears in such a subset $\widetilde{I}_j^\Omega$
			for some $j=2,\cdots,q_\Omega$.
			Thus this case is not possible, either.
		\end{enumerate}

		From this we deduce that
		\[
		\mathrm{rk}_\mathcal{O}
		\Pi^{ G(\mathcal{O}_{F,v})}
		\ge
		\mathrm{rk}_\mathcal{O}
		\mathrm{pr}_{(j_1)}
		(\Pi^{\mathfrak{p}_\Omega}).
		\]
		Thus by Nakayama's lemma,
		it remains to show that
		the induced residual map
		\[
		\overline{\mathrm{pr}}_{(j_1)}
		\colon
		\Pi^{ G(\mathcal{O}_{F,v})}
		\otimes_{\mathcal{O}}\overline{\kappa}
		\rightarrow
		\Pi^{\mathfrak{p}_\Omega}
		\otimes_{\mathcal{O}}\overline{\kappa}
		\]
		is in fact injective.
		Here
		$\overline{\kappa}$
		is an algebraic closure of $\kappa$.

		Again we argue by contradiction:
		we suppose that there is some vector
		$0\ne u\in
		\Pi^{ G(\mathcal{O}_{F,v})}
		\otimes\overline{\kappa}$
		such that
		$\overline{\mathrm{pr}}_{(j_1)}(u)=0$.
		Write
		$N$
		for an irreducible quotient of
		the submodule
		$\overline{\kappa}[ G(F_v)]u$
		of $\Pi^{ G(\mathcal{O}_{F,v})}
		\otimes\overline{\kappa}$
		generated by $u$.
		By Lemma
		\ref{Lemma 1.30},
		we know that
		$\mathrm{dim}_{\overline{\kappa}}
		(N^{ G(\mathcal{O}_{F,v})})=1$
		and
		$\overline{\mathrm{pr}}_{(j_1)}
		(N^{ G(\mathcal{O}_{F,v})})\ne0$.
		This contradiction finishes the proof of 
		the proposition.
		
	\end{proof}

	\begin{lemma}\label{Lemma 1.30}
		Retain the notations from the Proposition \ref{projector induces an isomorphism}. Let $\pi$ be an unramified irreducible admissible module of
		$\overline{\kappa}[ G(F_v)]$. Then one has
		\begin{enumerate}
			\item 
			There is
			a subset
			$\Theta\subset
			\{0,1,\cdots,n_\mathrm{s}-1\}$
			as above
			and
			distinct unramified characters
			$\chi_i$
			such that we have an embedding
			\[
			\pi
			\hookrightarrow
			\mathrm{n}\text{-}\mathrm{Ind}_{P_\Theta(F_v)}^{ G(F_v)}
			\left(
			\chi_1\circ\mathrm{det}\otimes
			\cdots
			\otimes
			\chi_{q_\Theta}\circ\mathrm{det}
			\right),
			\]
			and
			both sides have $ G(\mathcal{O}_{F,v})$-invariants of
			$\overline{\kappa}$-dimension
			equal to $1$.

			\item
			Let $\Omega\subset
			\{0,\cdots,n_\mathrm{s}-1\}$
			be another subset with
			$i_{j_0}^\Omega=i_{l_0}^\Theta$
			for some $j_0=2,\cdots,q_\Omega$
			and $l_0=1,\cdots,q_\Theta$.
			Write
			$P(X)
			=
			\prod_{j=1}^{q_\Theta}
			(X-\chi_j(\omega_v))^{i^\Theta_j}
			(X-\chi_j(\omega_v)^{-1})^{i^\Theta_j}$
			and for each pair
			$(j,k)$,
			we have a factorization
			$\widehat{P}_k^j
			(X)
			=
			R_k^j(X)
			Q_k^j(X)$
			with
			\[
			R_k^j(X)
			=
			\left(
			X-\binom{i_j^\Omega}{k}\chi_{l_0}(\omega_v)^k
			\right)^{r(j,k)}
			\left(
			X-\binom{i_j^\Omega}{k}\chi_{l_0}(\omega_v)^{-k}
			\right)^{r(j,k)}
			\]
			for some non-negative integer
			$r(j,k)\ge0$
			and
			$R_k^j(X)$,
			$Q_k^j(X)$
			coprime.
			Then
			for any
			$j_1\ne j_0$,
			the operator
			$\overline{\mathrm{pr}}_{(j_1)}$
			induces an injection
			\[
			\pi^{ G(\mathcal{O}_{F,v})}
			\hookrightarrow
			\overline{\mathrm{pr}}_{(j_1)}(\pi^{\mathfrak{p}_\Omega}).
			\]
		\end{enumerate}
		
	\end{lemma}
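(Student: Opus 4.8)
The plan is to read off both statements from the structure of unramified representations over $\overline{\kappa}$, using the explicit Hecke actions recorded just before the statement together with the running hypothesis $q_v\equiv1\pmod p$.

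For part (1): from $\pi^{G(\mathcal{O}_{F,v})}\ne0$ we get $\pi^{\mathrm{Iw}}\ne0$, hence $\pi_{\mathbf{U}_v(F_v)}^{\mathbf{T}_v(\mathcal{O}_{F,v})}\ne0$ (Borel, as invoked in the proof of Lemma \ref{Iwahori spherical representations}); as a nonzero module over the group algebra of $\mathbf{T}_v(F_v)/\mathbf{T}_v(\mathcal{O}_{F,v})$ over the algebraically closed field $\overline{\kappa}$ it admits an unramified character quotient, so Frobenius reciprocity embeds $\pi$ into some unramified principal series $\mathrm{n}\text{-}\mathrm{Ind}_{\mathbf{B}_v(F_v)}^{G(F_v)}(\chi_1\otimes\cdots\otimes\chi_{n_\mathrm{s}})$ for an ordering of the $\chi_i$ of our choosing. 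Grouping the $\chi_i$ into the maximal runs of mutually equal characters defines $\Theta$, with $L_\Theta=\mathrm{GL}_{i_1^\Theta}\times\cdots\times\mathrm{GL}_{i_{q_\Theta}^\Theta}$ and $j$-th factor carrying the constant character $\chi_j$. Since $q_v\equiv1\pmod p$, over $\overline{\kappa}$ all modulus characters and $|\cdot|$ are trivial, so for each block $\chi_j\circ\det$ embeds into $\mathrm{n}\text{-}\mathrm{Ind}_{\mathrm{B}_{i_j^\Theta}(F_v)}^{\mathrm{GL}_{i_j^\Theta}(F_v)}(\chi_j\otimes\cdots\otimes\chi_j)$ as constant functions (after untwisting by $\chi_j\circ\det$, this is the inclusion of the trivial representation into the unnormalised, $=$ normalised, induction of the trivial character); inducing in stages gives an embedding $\mathrm{n}\text{-}\mathrm{Ind}_{P_\Theta(F_v)}^{G(F_v)}(\chi_1\circ\det\otimes\cdots\otimes\chi_{q_\Theta}\circ\det)\hookrightarrow\mathrm{n}\text{-}\mathrm{Ind}_{\mathbf{B}_v(F_v)}^{G(F_v)}(\chi_1\otimes\cdots\otimes\chi_{n_\mathrm{s}})$. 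By the Iwasawa decomposition (as in the proof of Lemma \ref{Hecke polynomial for unramified rep}) both sides have one-dimensional $G(\mathcal{O}_{F,v})$-invariants; the common spherical line lies inside the smaller module, and since $\pi$ is irreducible with this spherical line it is contained in the $P_\Theta$-induction. In particular $\dim_{\overline{\kappa}}\pi^{G(\mathcal{O}_{F,v})}=1$, so (2) reduces to showing that $\overline{\mathrm{pr}}_{(j_1)}$ does not kill the spherical vector $v_0$.

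For part (2): using the ordering freedom in (1), arrange that the run of $i_{l_0}^\Theta$ copies of $\chi_{l_0}$ is placed so that it is read off by the $j_0$-th block of $\Omega$; this is possible since the two have equal size $i_{l_0}^\Theta=i_{j_0}^\Omega$ (and, for $\Omega=\{2,\dots,n_\mathrm{s}-1\}$, $j_0$ is the only block distinct from $j_1$). Because $\pi$ is unramified the spherical Hecke operators act on $v_0$ by units of $\overline{\kappa}$, so $v_0$ lies in the stable subspace $\pi^{\mathfrak{p}_\Omega}_{T_\Omega^-}$, and through the embedding $\pi\hookrightarrow\Pi:=\mathrm{n}\text{-}\mathrm{Ind}_{P_\Theta(F_v)}^{G(F_v)}(\chi_1\circ\det\otimes\cdots)$ it also spans $\Pi^{G(\mathcal{O}_{F,v})}$. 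Via Casselman's isomorphism $\pi^{\mathfrak{p}_\Omega}_{T_\Omega^-}\xrightarrow{\sim}\pi_{U_\Omega(F_v)}^{\mathfrak{l}_\Omega}$ (Lemma \ref{projection of parahoric invariant gives iso}), the $w$-component decomposition of Corollary \ref{semi-simplification of pi}, and the action of $V_k^j$ on the $w$-component by $\sum_{J\subset I_j^\Omega,\#J=k}\prod_{l\in J}(w\phi)_l(\varpi_v)$ (with the modulus factor absorbed into $\widehat{P}_k^j$), exactness of the Jacquet functor and Casselman's description of the constant term of $\Pi$ show that the image of $v_0$ has nonzero component in the $w=e$ summand, namely the spherical vector of $\mathrm{n}\text{-}\mathrm{Ind}_{\mathbf{B}_v\cap L_\Omega(F_v)}^{L_\Omega(F_v)}(\phi|_{\mathbf{T}_v})$ where $\phi$ is the restriction of $\chi_1\circ\det\otimes\cdots$. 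On that summand $V_k^{j_0}$ acts by $\binom{i_{j_0}^\Omega}{k}\chi_{l_0}(\varpi_v)^{\pm k}$, a root of $R_k^{j_0}$; hence by coprimality of $R_k^{j_0}$ and $Q_k^{j_0}$ the operator $\overline{\mathrm{pr}}_{(j_1)}=\prod_{j\ne j_1}\prod_kQ_k^j(V_k^j)$ acts there as multiplication by the unit $\prod_kQ_k^{j_0}(\binom{i_{j_0}^\Omega}{k}\chi_{l_0}(\varpi_v)^{\pm k})$. Therefore $\overline{\mathrm{pr}}_{(j_1)}(v_0)\ne0$, and since $\pi^{G(\mathcal{O}_{F,v})}$ is one-dimensional the induced map $\pi^{G(\mathcal{O}_{F,v})}\to\overline{\mathrm{pr}}_{(j_1)}(\pi^{\mathfrak{p}_\Omega})$ is injective.

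The main obstacle is the step labelled "Casselman's description of the constant term of $\Pi$": one must genuinely follow the spherical vector of $\Pi$ through the Bruhat filtration of $\Pi_{U_\Omega(F_v)}$, through the isomorphism of Lemma \ref{projection of parahoric invariant gives iso}, and through the compatibility of the Satake and constant-term maps, so as to be sure that its image is not entirely contained in the $w$-summands annihilated by some $Q_k^j(V_k^j)$. This is exactly where the choice of ordering in (1) and the shape $\Omega=\{2,\dots,n_\mathrm{s}-1\}$ (so that only the single block of size $i_{j_0}^\Omega$ distinct from $j_1$ must be controlled) are used; the remaining manipulations are the elementary-symmetric-function bookkeeping already set up before the statement.
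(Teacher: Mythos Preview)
Your argument for part (1) is essentially correct and reproduces what the cited result from Vign\'eras gives: embed into a principal series, group equal characters, and use that over $\overline{\kappa}$ with $q_v\equiv1\pmod p$ the trivial representation sits inside each $\mathrm{GL}$-block principal series as the constant function, so the spherical line (hence all of the irreducible $\pi$) lands in the $P_\Theta$-induction.

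For part (2), the paper takes a more explicit route than yours. Rather than passing to the Jacquet module $\Pi_{U_\Omega(F_v)}$ and tracking the spherical vector through its Bruhat filtration, the paper descends all the way to the Iwahori level. It writes down the basis $\{\varphi_w : w\in{}^{\Omega_0}W^{\Theta}\}$ of $\Pi^{\mathrm{Iw}}$ given by characteristic functions of Bruhat cells, introduces the Bernstein-type Iwahori Hecke operators $X_l=[\mathrm{Iw}\,\delta_l\,\mathrm{Iw}]\cdot[\mathrm{Iw}\,\delta_{l-1}\,\mathrm{Iw}]^{-1}$, and invokes a result of Lansky to get $X_l\varphi_w=\psi_{w(l)}(\omega_v)\varphi_w$. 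A diagram chase through the parahoric and Iwahori Jacquet maps then yields $V_k^j=\sum_{\sigma\in W_k^j}X_\sigma$ on $\Pi^{\mathrm{Iw}}$, so every $\varphi_w$ is a \emph{simultaneous eigenvector} for all the $V_k^j$. Since the spherical vector is $\varphi=\sum_w\varphi_w$, applying $\overline{\mathrm{pr}}_{(j_1)}$ simply annihilates those $\varphi_w$ whose eigenvalues are roots of some $Q_k^j$ and keeps the others; one reads off directly that the surviving index set $W'\subset{}^{\Omega_0}W^{\Theta}$ is nonempty.

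This Iwahori computation is exactly what fills your self-acknowledged gap, and that gap is genuine. The $w$-decomposition you invoke is only on the semisimplification of $\Pi_{U_\Omega(F_v)}$, so the $V_k^j$'s a priori only respect a filtration, not a splitting; showing that the spherical vector projects nontrivially to the correct graded piece, and that none of the factors $Q_k^j(V_k^j)$ for the remaining $j\ne j_0,j_1$ kill it there, already amounts to the eigenvector calculation the paper performs. Your outline can be completed, but closing the gap essentially forces you into the Iwahori-level computation; the Jacquet-module reformulation via Lemma~\ref{projection of parahoric invariant gives iso} does not shortcut it.
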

	\begin{proof}
		The first part follows from
		\cite[VI.1]{Vigneras1998}.

		For the second part,
		we follow the strategy as in the proof of
		\cite[Lemma 5.10]{Thorne2012}.
		Recall we have the Iwahori subgroup
		$\mathrm{Iw}=\mathrm{Iw}(v)$
		of $ G(\mathcal{O}_{F,v})$.
		Write $\Omega_0$
		for the subset of
		$I(0,n_\mathrm{s})$
		whose associated parabolic subgroup is
		the standard Borel subgroup
		$\mathbf{B}_v(F_v)$
		of $ G_v(F_v)= G(F_v)$.	
		Write
		\[
		\psi
		=
		\psi_1\otimes\cdots\otimes\psi_{n_\mathrm{s}}
		=
		\underbrace{\chi_1\otimes\cdots\otimes\chi_1}_{i_1^\Theta}
		\otimes
		\cdots
		\otimes
		\underbrace{\chi_j\otimes\cdots\otimes\chi_j}_{i_j^\Theta}
		\otimes
		\cdots
		\otimes
		\chi_{i_{q_\Theta}^\Theta}.
		\]	
		Then we put
		(note that $P_{\Omega_0}=\mathbf{B}_v$)
		\[
		\Pi=
		\mathrm{n}\text{-}\mathrm{Ind}_{P_\Theta(F_v)}^{ G(F_v)}
		\left(
		\chi_1\circ\mathrm{det}\otimes
		\cdots
		\otimes
		\chi_{q_\Theta}\circ\mathrm{det}
		\right),
		\quad
		\widetilde{\Pi}
		=
		\mathrm{n}\text{-}\mathrm{Ind}_{\mathbf{B}_v(F_v)}^{ G(F_v)}
		(\psi).
		\]
		We have an embedding $\pi\hookrightarrow\Pi$. It is easy to see that
		$\Pi^\mathrm{Iw}$ has a basis $\{\varphi_w|w\in\,^{\Omega_0}W^\Theta\}$
		with $\varphi_w$ the characteristic function of $P_\Theta(F_v)w\mathrm{Iw}$. Similarly, $\widetilde{\Pi}^\mathrm{Iw}$
		has a basis $\{\phi_w|w\in\,^{\Omega_0}W^{\Omega_0}\}$ with $\phi_w$ the characteristic function of $\mathbf{B}_v(F_v)w\mathrm{Iw}$.

		Now let's look at the action of certain Iwahori Hecke operators on these functions.
		We write $\delta_l=\mathrm{diag}(\omega_v\cdot1_l,1,\omega_v^{-1}\cdot1_l)
		\in	G(F_v)$ and put	
		\[
		X_l
		=
		[\mathrm{Iw}\delta_l\mathrm{Iw}]
		\cdot
		[\mathrm{Iw}\delta_{l-1}\mathrm{Iw}]^{-1}
		\in
		H_\mathrm{Iw}\otimes_\mathbb{Z}\kappa_v.
		\]
		(\emph{cf.}
		\cite[II.7]{Vigneras1998} and recall that
		$q_v\equiv1(\mathrm{mod}\,p)$).
		By
		\cite[Corollary 3.2]{Lansky2002},
		we know that
		$X_l\phi_w
		=
		\psi_{w(l)}(\omega_v)\phi_w$.
		Since
		$\varphi_w=\sum_{w'\in W_\Theta w}\phi_{w'}$,
		we see
		\[
		X_l\varphi_w
		=
		\sum_{w'}X_l\phi_{w'}
		=
		\sum_{w'}\psi_{w'(l)}(\omega_v)
		\phi_{w'}
		=
		\psi_{w(l)}(\omega_v)
		\varphi_w.
		\]

		On the other hand,
		we have a commutative diagram
		\[
		\begin{tikzcd}
			\Pi^{\mathfrak{p}_\Omega}
			\arrow[r,hook]
			\arrow[d,"q_1"]
			&
			\Pi^{\mathrm{Iw}}
			\arrow[d,"q_2"]
			\\
			(\Pi_{U_\Omega(F_v)})^{\mathfrak{l}_\Omega}
			\arrow[r,"q_3"]
			&
			(\Pi_{\mathbf{U}_v(F_v)})^{\mathfrak{l}_{\Omega_0}}
		\end{tikzcd}
		\]
		where
		$\mathfrak{l}_{\Omega_0}$,
		the Iwahori subgroup,
		is just the maximal compact open subgroup
		$L_{\Omega_0}(\mathcal{O}_{F,v})$
		of
		the torus
		$L_{\Omega_0}(F_v)$.
		Here
		$q_2$ is an isomorphism
		since
		$T_{\Omega_0}^-=T_{\Omega_0}$
		and
		$\Pi_{T_{\Omega_0}}^{\mathfrak{p}_{\Omega_0}}
		=
		\Pi^\mathrm{Iw}$
		(\emph{cf.} Lemma
		\ref{projection of parahoric invariant gives iso}).
		We then write
		$Y_l=
		[\mathfrak{l}_{\Omega_0}
		(\delta_l\cdot\delta_{l-1})
		\mathfrak{I}_{\Omega_0}]$.
		Note that we have the following coset decomposition  	
		\[
		L_\Omega(\mathcal{O}_{F,v})
		\widetilde{\alpha}_k^j
		L_\Omega(\mathcal{O}_{F,v})
		=
		\bigsqcup_{\sigma}L_\Omega(\mathcal{O}_{F,v})
		\sigma(\widetilde{\alpha}^j_k)
		\]
		where
		$\sigma$
		runs through the quotient set
		$W_k^j:=
		S_k\backslash
		S_{i^\Omega_j}\rtimes(\mathbb{Z}/2)^{i^\Omega_j}$
		(we view $S_{i^\Omega_j}\rtimes(\mathbb{Z}/2)^{i^\Omega_j}$
		as a subgroup of
		the Weyl group $W$).	
		Now for any
		$v\in\Pi^{\mathfrak{p}_\Omega}$,
		\begin{align*}
			q_2V_k^j(v)
			&
			=
			q_3q_1V_k^j(v)
			=
			q_3T_k^jq_1(v)
			\\
			&
			=
			\sum_{\sigma\in W_k^j}
			Y_\sigma q_3q_1(v)
			=
			\sum_{\sigma\in W_k^j}
			Y_\sigma q_2(v)
			\\
			&
			=\sum_{\sigma\in W_k^j}
			q_2X_\sigma(v)
		\end{align*}
		where
		$X_\sigma
		=
		\prod_{
			r=i^\Omega_1+\cdots+i^\Omega_{j-1}+1}^{
			i^\Omega_1+\cdots+i^\Omega_{j-1}+k}
		X_{|\sigma(r)|}^{\mathrm{sgn}(\sigma(r))}$
		and
		$Y_\sigma
		=
		\prod_{
			r=i^\Omega_1+\cdots+i^\Omega_{j-1}+1}^{
			i^\Omega_1+\cdots+i^\Omega_{j-1}+k}
		Y_{|\sigma(r)|}^{\mathrm{sgn}(\sigma(r))}$.
		Since
		$q_2$ is an isomorphism,
		$V^j_k=\sum_{\sigma\in W_k^j}
		X_\sigma$
		acting on the subspace
		$\Pi^\mathrm{Iw}$.
		This shows that
		for any
		$w\in\,^{\Omega_0}W^\Theta$,
		$\varphi_w$ is an eigenvector for
		$V_k^j$
		with eigenvalue
		$\sum_{\sigma\in W_k^j}
		\psi_{w,\sigma}(\omega_v)$
		where
		$\psi_{w,\sigma}(\omega_v)=\prod_{
			r=i^\Omega_1+\cdots+i^\Omega_{j-1}+1}^{
			i^\Omega_1+\cdots+i^\Omega_{j-1}+k}
		\psi_{w(|\sigma(r)|)}(\omega_v)^{\mathrm{sgn}(\sigma(r))}
		$.
		It is easy to see that
		$\varphi=
		\sum_{w\in\,^{\Omega_0}W^\Theta}
		\varphi_w$
		generates the $1$-dimensional subspace
		$\Pi^{ G(\mathcal{O}_{F,v})}$
		and
		moreover
		\[
		\overline{\mathrm{pr}}_{(j_1)}(\varphi)
		=
		\sum_{w\in W'}\varphi_w
		\]
		where
		$W'$ is the subset of
		$^{\Omega_0}W^\Theta$
		consisting of
		$w$ such that
		in the corresponding matrix	
		$m\in\,^{\Omega_0}\widetilde{W}^\Theta$,
		there is some
		$j'\ne j_1$ with
		$a_{j',l_0}=I^\Theta_{l_0}$
		(\emph{cf.}
		(\ref{m})).
		Clearly $W'$ is non-empty
		and thus
		$\overline{\mathrm{pr}}_{(j_1)}(\varphi)\ne0$.
		Since
		$\Pi^{ G(\mathcal{O}_{F,v})}
		\hookrightarrow
		\Pi^{\mathfrak{p}_\Omega}$,
		we see that
		$\overline{\kappa}\varphi
		=\Pi^{ G(\mathcal{O}_{F,v})}
		\rightarrow
		\overline{\mathrm{pr}}_{(j_1)}
		(\Pi^{\mathfrak{p}_\Omega})$
		is injective.	
		Since
		$\pi^{ G(\mathcal{O}_{F,v})}
		=
		\Pi^{ G(\mathcal{O}_{F,v})}$
		is $1$-dimensional,
		we deduce that
		$\pi^{ G(\mathcal{O}_{F,v})}
		\rightarrow
		\overline{\mathrm{pr}}_{(j_1)}
		(\pi^{\mathfrak{p}_\Omega})$
		is also injective.

	\end{proof}

	\begin{remark}\rm 
		The proof of
		Proposition
		\ref{projector induces an isomorphism}
		shows that
		the injection in
		Lemma
		\ref{Lemma 1.30}
		is in fact an isomorphism.
	\end{remark}

	Write
	$\kappa_v^\times(p)$
	for the maximal
	$p$-power quotient of $\kappa_v^\times$.
	For each
	$j=1,\cdots,q_\Omega$,
	we consider the following subgroup
	$\mathfrak{p}_{\Omega,j}$
	of
	$\mathfrak{p}_\Omega$
	\[
	\mathfrak{p}_{\Omega,j}
	:=
	\mathrm{Ker}
	\left(
	\mathfrak{p}_\Omega
	\rightarrow
	P_\Omega(\kappa_v)
	\rightarrow
	\mathrm{GL}_{i_j^\Omega}(\kappa_v)
	\xrightarrow{\mathrm{det}}
	\kappa_v^\times
	\rightarrow
	\kappa_v^\times(p)
	\right).
	\]
	Therefore
	$\mathfrak{p}_\Omega/\mathfrak{p}_{\Omega,j}
	\simeq
	\kappa_v^\times(p)$.
	Similar to
	$V_k^j$,
	we put
	$\widetilde{V}_k^j
	=
	[\mathfrak{p}_{\Omega,j}
	\widetilde{\alpha}_k^j
	\mathfrak{p}_{\Omega,j}]$.
	Then
	the inclusion
	$\pi^{\mathfrak{p}_\Omega}
	\hookrightarrow
	\pi^{\mathfrak{p}_{\Omega,j}}$
	takes
	$V_k^j$ to
	$\widetilde{V}_k^j$.
	We have:
	
	\begin{proposition}\label{parahoric invariant subspace is one-dimensional}
		Retain the notations from
		Proposition
		\ref{projector induces an isomorphism}.
		Fix some
		$j_0=1,\cdots,q_\Omega$.
		Let
		$\pi$ be an irreducible admissible representation of
		$ G(F_v)$.
		Suppose that
		$\pi^{\mathfrak{p}_{\Omega,j_0}}\ne0$
		and
		$\pi^{\mathfrak{p}_\Omega}=0$, then
		\[
		\mathrm{dim}(\pi^{\mathfrak{p}_{\Omega,j_0}})=1.
		\]
		Moreover,
		$r_{p,\iota,\pi'}^\mathrm{ss}$
		is a direct sum of $1$-dimensional representations
		and
		$\pi$ is a subquotient of a principal series
		$\mathrm{n}\text{-}\mathrm{Ind}_{\mathbf{B}_v(F_v)}^{ G(F_v)}
		(\chi_1\otimes\cdots\otimes\chi_{n_\mathrm{s}})$
		where
		for
		$i=i^\Omega_1+\cdots+i^\Omega_{j_0-1}+1,
		\cdots,
		i^\Omega_1+\cdots+i^\Omega_{j_0}$,
		the characters $\chi_i$
		are tamely ramified 
		and are equal to each other when restricted to
		$\mathcal{O}_{F_v}^\times$,
		while for $i$ not in the above range,
		$\chi_i$ is unramified.
	\end{proposition}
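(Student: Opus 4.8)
The plan is to establish the three assertions in turn: the structural claim that $\pi$ is a subquotient of a principal series of the stated shape, then $\dim\pi^{\mathfrak{p}_{\Omega,j_0}}=1$, and finally the description of $r_{p,\iota,\pi'}|_{\Gamma_{F_v}}$. First I would reduce to a tamely ramified principal series. The image of $\mathrm{Iw}_1(v)$ under $\mathfrak{p}_\Omega\to P_\Omega(\kappa_v)\to\mathrm{GL}_{i_{j_0}^\Omega}(\kappa_v)$ lies in the unipotent upper-triangular subgroup, so it has trivial determinant; hence $\mathrm{Iw}_1(v)\subset\mathfrak{p}_{\Omega,j_0}$, and therefore $\pi^{\mathrm{Iw}_1(v)}\supseteq\pi^{\mathfrak{p}_{\Omega,j_0}}\ne0$. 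By Frobenius reciprocity together with the theory of Jacquet modules and the pro-$p$ Iwahori (\cite[Lemma 4.7]{Borel1976}, \cite[§3.3]{Casselman1995}), the irreducible $\pi$ is then a subquotient of some $\mathrm{n}\text{-}\mathrm{Ind}_{\mathbf{B}_v(F_v)}^{G(F_v)}(\chi_1\otimes\cdots\otimes\chi_{n_\mathrm{s}})$ with each $\chi_i$ tamely ramified.

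Next I would pin down which $\chi_i$ are genuinely ramified. The quotient $\mathfrak{p}_\Omega/\mathfrak{p}_{\Omega,j_0}\simeq\kappa_v^\times(p)$ is a cyclic $p$-group $C$ acting on $\pi^{\mathfrak{p}_{\Omega,j_0}}$, and by hypothesis its invariants, namely $\pi^{\mathfrak{p}_\Omega}$, vanish; so there is a nontrivial character $\nu\colon C\to\overline{\mathbb{Q}}_p^\times$ with $\mathrm{Hom}_{\mathfrak{p}_\Omega}(\nu,\pi)\ne0$, where $\nu$ is inflated to $\mathfrak{p}_\Omega$ through $\mathfrak{p}_\Omega\to\mathrm{GL}_{i_{j_0}^\Omega}(\kappa_v)\xrightarrow{\det}\kappa_v^\times\to C$. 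Thus $\pi$ is a quotient of the compact induction $\mathrm{ind}_{\mathfrak{p}_\Omega}^{G(F_v)}\nu$, i.e. $(\mathfrak{p}_\Omega,\nu)$ is a depth-zero type for $\pi$ whose cuspidal datum on $L_\Omega(\kappa_v)=\prod_l\mathrm{GL}_{i_l^\Omega}(\kappa_v)$ is the character that is trivial on each factor $l\ne j_0$ and equals $\tilde\mu\circ\det$ on the $j_0$-th, for $\tilde\mu$ the nontrivial $p$-power-order character $\kappa_v^\times\to C\xrightarrow{\nu}\overline{\mathbb{Q}}_p^\times$. Equivalently, applying the projection isomorphism of Lemma \ref{projection of parahoric invariant gives iso} — which holds verbatim with $\mathfrak{p}_{\Omega,j_0}$ in place of $\mathfrak{p}_\Omega$, since $\mathfrak{p}_{\Omega,j_0}$ admits the same Iwahori decomposition with Levi part $\mathfrak{l}_{\Omega,j_0}:=L_\Omega(\mathcal{O}_{F,v})\cap\mathfrak{p}_{\Omega,j_0}$ — one gets $\pi^{\mathfrak{p}_{\Omega,j_0}}_{T_\Omega^-}\xrightarrow{\sim}\pi_{U_\Omega(F_v)}^{\mathfrak{l}_{\Omega,j_0}}\ne0$, and the external-tensor constituents $\tau_1\boxtimes\cdots\boxtimes\tau_{q_\Omega}$ of $\pi_{U_\Omega(F_v)}$ carrying such fixed vectors must have $\tau_l$ unramified for $l\ne j_0$ and $\tau_{j_0}=\tau_{j_0}'\otimes(\tilde\mu\circ\det)$ with $\tau_{j_0}'$ unramified. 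Tracing this back through the (unramified) principal-series models of the factors of $\pi_{U_\Omega(F_v)}$ and of $\pi$ itself (as in Lemma \ref{Iwahori spherical representations} and Corollary \ref{semi-simplification of pi}), and using the transitivity of induction, yields the cuspidal support $\chi=\chi_1\otimes\cdots\otimes\chi_{n_\mathrm{s}}$ with $\chi_i|_{\mathcal{O}_{F,v}^\times}$ equal to (the tame character attached to) $\tilde\mu$ for $i\in\{i_1^\Omega+\cdots+i_{j_0-1}^\Omega+1,\dots,i_1^\Omega+\cdots+i_{j_0}^\Omega\}$ and $\chi_i$ unramified otherwise; in particular $\pi\hookrightarrow\mathrm{n}\text{-}\mathrm{Ind}_{\mathbf{B}_v(F_v)}^{G(F_v)}(\chi)$ after a Weyl permutation, which is the stated structural claim.

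For the dimension count I would again use the isomorphism $\pi^{\mathfrak{p}_{\Omega,j_0}}_{T_\Omega^-}\xrightarrow{\sim}\pi_{U_\Omega(F_v)}^{\mathfrak{l}_{\Omega,j_0}}$. Left-exactness of $\mathfrak{l}_{\Omega,j_0}$-invariants bounds $\dim\pi_{U_\Omega(F_v)}^{\mathfrak{l}_{\Omega,j_0}}$ by $\sum_\tau m_\tau\dim\tau^{\mathfrak{l}_{\Omega,j_0}}$ over the irreducible constituents $\tau=\tau_1\boxtimes\cdots\boxtimes\tau_{q_\Omega}$; such a $\tau$ contributes only in the shape described above, and then $\dim\tau^{\mathfrak{l}_{\Omega,j_0}}=\prod_{l\ne j_0}\dim\tau_l^{\mathrm{GL}_{i_l^\Omega}(\mathcal{O}_{F,v})}\cdot\dim\tau_{j_0}^{\mathfrak{l}_{j_0}}=1$, since each $\tau_l$ ($l\ne j_0$) is unramified hence spherical with one-dimensional spherical line, while $\tilde\mu\circ\det$ is trivial on $\mathfrak{l}_{j_0}=\ker(\mathrm{GL}_{i_{j_0}^\Omega}(\mathcal{O}_{F,v})\xrightarrow{\det}\kappa_v^\times(p))$ and a nontrivial determinant character never occurs in the $\mathrm{GL}_m(\mathcal{O}_{F,v})$-restriction of an unramified representation, so $\dim\tau_{j_0}^{\mathfrak{l}_{j_0}}=\dim(\tau_{j_0}')^{\mathrm{GL}_{i_{j_0}^\Omega}(\mathcal{O}_{F,v})}=1$. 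The heart of the matter is then to check that exactly one such constituent occurs in $\pi_{U_\Omega(F_v)}$, with multiplicity one, and that the Jacquet map does not shrink $\pi^{\mathfrak{p}_{\Omega,j_0}}$ (i.e. $\pi^{\mathfrak{p}_{\Omega,j_0}}_{T_\Omega^-}=\pi^{\mathfrak{p}_{\Omega,j_0}}$); I would do this by comparing $\pi$ with the ambient $\mathrm{n}\text{-}\mathrm{Ind}_{\mathbf{B}_v(F_v)}^{G(F_v)}(\chi)$ and separating the constituents of its Jacquet module by the eigenvalues of the Hecke operators $\widetilde V_k^{j_0}$, exactly as in the proof of Lemma \ref{Lemma 1.30} (following \cite[Lemma 5.10]{Thorne2012}), tracking which double cosets $w\in\,^{\Omega_0}W^\Theta$ survive. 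This bookkeeping is the step I expect to be the main obstacle, because the Jacquet module $\pi_{U_\Omega(F_v)}$ is in general not semisimple, so one cannot read off the invariant dimension from the semisimplification alone and must argue directly in the induced-representation model with the Hecke eigenvalues.

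Finally, for the Galois side: by the structural claim just proved, $\pi=\pi'_v$ is a subquotient of a principal series of $G(F_v)$ induced from characters of $F_v^\times$, so its semisimplified local $C$-parameter $\phi_{\pi'_v}^{C}$ — hence, by the local–global compatibility of Theorem \ref{Galois representations attached to auto rep}(1) (retained from Proposition \ref{projector induces an isomorphism}), the semisimplification of $r_{p,\iota,\pi'}|_{\Gamma_{F_v}}$ — is a direct sum of one-dimensional representations of $W_{F_v}$, namely the characters $\iota\circ\chi_i\circ\mathrm{Art}_{F_v}^{-1}$, their inverses, and possibly a trivial summand; the summands arising from the $j_0$-block are tamely ramified with inertial restriction governed by $\tilde\mu$, and the remaining ones are unramified.
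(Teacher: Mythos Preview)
Your overall strategy is sound, but it diverges substantially from the paper's proof, which is much shorter because it outsources the hard work to two external inputs you do not invoke.

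The paper argues as follows. First, from $\pi^{\mathfrak{p}_{\Omega,j_0}}\ne0$ one embeds $\pi$ into an induction from $P_\Omega$ of a representation in the class $\mathfrak{S}$ (characters and $\mathrm{St}_2(\psi)$ factors, as in Lemma~\ref{Iwahori spherical representations}). If any Steinberg factor $\mathrm{St}_2(\psi_j)$ appeared, then $\mathrm{St}_2(\psi_j)^{U_{2,1}}\ne0$, and by \cite[p.~90]{ClozelHarrisTaylor2008} this forces $\psi_j$ unramified, hence $\mathrm{St}_2(\psi_j)^{U_{2,0}}=\mathrm{St}_2(\psi_j)^{U_{2,1}}\ne0$, which would give $\pi^{\mathfrak{p}_\Omega}\ne0$, a contradiction. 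So there are no Steinberg factors. Then the paper simply applies \cite[Theorem~7.7]{Roche1998} (tame types for split groups in odd residue characteristic) to conclude that $\pi$ is a subquotient of a principal series of exactly the stated shape; Roche's Hecke-algebra isomorphism also makes the dimension statement and the remaining claims immediate, which is why the paper ends with ``the rest is clear''.

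Your route --- reducing to a tame principal series via $\mathrm{Iw}_1(v)\subset\mathfrak{p}_{\Omega,j_0}$, then isolating the ramified block by analysing the $\kappa_v^\times(p)$-action and depth-zero types, then computing $\dim\pi^{\mathfrak{p}_{\Omega,j_0}}$ via Jacquet-module constituents --- is in effect a hands-on reproof of the relevant case of Roche's theorem. It can be made to work, but the step you flag as the main obstacle (controlling multiplicities in the non-semisimple Jacquet module and showing $\pi^{\mathfrak{p}_{\Omega,j_0}}_{T_\Omega^-}=\pi^{\mathfrak{p}_{\Omega,j_0}}$) is genuinely nontrivial and is precisely what Roche's result packages for you. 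If you want to avoid Roche, you would still be better served by the paper's intermediate observation ruling out Steinberg factors, which cuts down the ambient induced representation dramatically before any Jacquet-module bookkeeping; your proposal does not contain this step, and without it the combinatorics you anticipate becomes considerably harder.
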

	\begin{proof}
		Write
		$U_{n,1}\subset U_{n,0}
		\subset
		\mathrm{GL}_n(\mathcal{O}_{F,v})$
		where
		$U_{n,1}$
		consists of
		$g\equiv
		\begin{pmatrix}
			A & B \\
			0 & 1
		\end{pmatrix}
		(\mathrm{mod}\,\omega_v)$,	
		$U_{n,0}$
		consists of
		$g\equiv
		\begin{pmatrix}
			A & B \\
			0 & D
		\end{pmatrix}
		(\mathrm{mod}\,\omega_v)$
		with
		$A\in\mathrm{GL}_{n-1}(\kappa_v)$.
		Since
		$\pi^{\mathfrak{p}_{\Omega,j_0}}\ne0$,
		$\pi$
		can be embedded into an induced representation
		$\mathrm{n}\text{-}\mathrm{Ind}_{P_\Omega(F_v)}^{ G(F_v)}
		(
		\chi_1\otimes\cdots\otimes\chi_s\otimes
		\mathrm{St}_2(\psi_1)
		\otimes
		\cdots
		\otimes
		\mathrm{St}_2(\psi_t)
		)$
		such that
		$\mathrm{St}_2(\psi_j)^{U_{2,1}}\ne0$
		for all
		$j=1,\cdots,t$
		($t$ may be $0$).

		If $t\neq0$, then by
		\cite[p.90]{ClozelHarrisTaylor2008},
		$\psi_j$ is unramified and
		thus
		$\mathrm{St}_2(\psi_j)^{U_{2,0}}=
		\mathrm{St}_2(\psi_j)^{U_{2,1}}\ne0$.
		We deduce that
		$\pi^{\mathfrak{p}_\Omega}\ne0$,
		contradicting the assumption on
		$\pi$.
		This shows that
		$t=0$.
		Besides,
		$\mathfrak{p}_{\Omega,j}$
		contains the subgroup
		$\mathrm{Iw}_j\subset\mathrm{Iw}$
		consisting of
		$g=(g_{i,k})$
		such that
		$\prod_{
			r=i^\Omega_1+\cdots+i^\Omega_{j-1}+1
		}^{i^\Omega_1+\cdots+i^\Omega_{j}}g_{r,r}
		\equiv
		1(\mathrm{mod}\,\omega_v)$.

		By applying
		\cite[Theorem 7.7]{Roche1998}
		(noting that $F_v$ has odd residual characteristic,
		taking into account of \cite[Remark 4.14]{Roche1998}),
		we see that
		$\pi$
		is indeed a subquotient of a principal series of the given form.

		The rest of the proposition
		is clear.
	\end{proof}

	We need some auxiliary Hecke operators
	in the following:
	for each
	$j=1,\cdots,q_\Omega$
	and
	$\alpha\in
	\mathcal{O}_{F,v}^\times$,
	we write
	\[
	\widetilde{\alpha}^j=(a_{i,k})\in
	\mathbf{T}_v(\mathcal{O}_{F,v})
	\]
	for the diagonal matrix
	with $a_{i,i}=\alpha$
	if $i=i^\Omega_1+\cdots+i^\Omega_{j-1}+1$
	and
	$=1$ otherwise.
	Then we put
	\[
	V^j_\alpha=
	[\mathfrak{p}_{\Omega,j}\widetilde{\alpha}^j
	\mathfrak{p}_{\Omega,j}].
	\]
	We have the following (see also Theorem \ref{Theorem 4}):

	\begin{theorem}\label{decomposition of the Galois representation}
		Retain the notations from
		Proposition
		\ref{projector induces an isomorphism}.
		Write
		$R_{j_0,j_1}$ for the image of
		$R$ in
		$\mathrm{End}_{\mathcal{O}}(\mathrm{pr}_{(j_1)}
		(\Pi^{\mathfrak{p}_{\Omega,j_0}}))$.
		Suppose that
		$\overline{\rho}(\sigma_v)$
		has eigenvalues $\overline{\alpha}$ of one
		of the two types listed in 
		Proposition
		\ref{projector induces an isomorphism}. Write
		$\rho_{j_0,j_1}
		:=
		\rho\otimes_{R_{\mathfrak{p}_{\Omega,j_0}}}
		R_{j_0,j_1}$, equipped with a non-degenerate symmetric/symplectic bilinear form $(-,-)$.
		Then we have a decomposition
		into non-degenerate quadratic, resp. symplectic subspaces:
		\[
		(\rho_{j_0,j_1},(-,-))
		=
		(s,(-,-)|_s)\bigoplus(\psi,(-,-)|_{\psi})
		\]
		where
		$s$ is unramified,
		$\psi$ is tamely ramified and its
		restriction to the inertial subgroup $I_v$ is given by a pair of scalar characters
		$(\psi',(\psi')^{-1})$.
		Moreover,
		for each
		$\alpha\in
		\mathcal{O}_{F,v}^\times$,
		\[
		V^{j_0}_\alpha
		=
		\psi'(\mathrm{Art}_{F_v}(\alpha))
		\in
		R_{j_0,j_1}.
		\]
	\end{theorem}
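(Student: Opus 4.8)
The plan is to adapt the strategy of \cite[Lemma 5.10, Proposition 5.12]{Thorne2012} to the group $G$, combining a Hensel-type splitting of the Frobenius action on the Galois side with a matching computation of the diamond operators $V^{j_0}_\alpha$, and then promoting pointwise statements over $\overline{\mathbb{Q}}_p$-specializations to statements over $R_{j_0,j_1}$. First I would pin down which irreducible constituents contribute to $M:=\mathrm{pr}_{(j_1)}(\Pi^{\mathfrak{p}_{\Omega,j_0}})$. Arguing exactly as in the proof of Proposition \ref{projector induces an isomorphism}, any irreducible component $\pi$ of $\Pi\otimes\overline{\mathbb{Q}}_p$ admitting a nonzero vector in $M$ satisfies $\pi^{\mathfrak{p}_{\Omega,j_0}}\neq 0$ but fails the conditions ruled out there; Proposition \ref{parahoric invariant subspace is one-dimensional} then gives $\dim_{\overline{\mathbb{Q}}_p}\pi^{\mathfrak{p}_{\Omega,j_0}}=1$, shows $r_{p,\iota,\pi'}|_{\Gamma_{F_v}}^{\mathrm{ss}}$ is a sum of characters, and exhibits $\pi$ as a subquotient of a principal series $\mathrm{n}\text{-}\mathrm{Ind}_{\mathbf{B}_v(F_v)}^{G(F_v)}(\chi_1\otimes\cdots\otimes\chi_{n_\mathrm{s}})$ with the $\chi_i$ in the $j_0$-th block tamely ramified and agreeing on $\mathcal{O}_{F,v}^\times$, and all other $\chi_i$ unramified. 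Since $\pi^{\mathfrak{p}_{\Omega,j_0}}$ is a line on which the diamond operator $V^{j_0}_\alpha=[\mathfrak{p}_{\Omega,j_0}\widetilde{\alpha}^{j_0}\mathfrak{p}_{\Omega,j_0}]$ (a single coset, $\widetilde{\alpha}^{j_0}\in\mathbf{T}_v(\mathcal{O}_{F,v})$) necessarily acts by a scalar, one reads off from the principal-series description — the diamond analogue of the Hecke computation preceding Proposition \ref{projector induces an isomorphism} and of Lemma \ref{Lemma 1.30} — that this scalar is $\psi'_{\pi'}(\mathrm{Art}_{F_v}(\alpha))$, where $\psi'_{\pi'}$ is the common restriction to $\mathcal{O}_{F,v}^\times$ of the $\chi_i$ in the $j_0$-th block, viewed as a character of $I_v$ via the local Artin map. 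Matching $\pi$ with its local Langlands data through the local--global compatibility packaged in Theorem \ref{Galois representations attached to auto rep} and Proposition \ref{parahoric invariant subspace is one-dimensional}, the specialization $\rho\otimes_\phi\overline{\mathbb{Q}}_p$ is $s_{\pi'}\oplus\psi_{\pi'}$ with $s_{\pi'}$ unramified and $\psi_{\pi'}|_{I_v}$ scalar equal to the pair $(\psi'_{\pi'},(\psi'_{\pi'})^{-1})$; the exact multiplicity hypothesis on $\overline{\rho}(\sigma_v)$ forces the generalized $\overline{\alpha}^{\pm1}$-eigenspace of $(\rho\otimes_\phi\overline{\mathbb{Q}}_p)(\sigma_v)$ to be precisely $\psi_{\pi'}$, of rank $2i_{j_0}^\Omega$ (or $2i_{j_0}^\Omega+1$ in the exceptional symplectic subcase).

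On the Galois side, since $\overline{\rho}$ is unramified and $\overline{\rho}(\sigma_v)$ has an eigenvalue $\overline{\alpha}$ of one of the two types of Proposition \ref{projector induces an isomorphism}, the characteristic polynomial of $\rho(\sigma_v)$ reduces modulo the maximal ideal of the complete local ring $R_{j_0,j_1}$ (residue field $\kappa$) to a product of $\bigl((X-\overline{\alpha})(X-\overline{\alpha}^{-1})\bigr)^{i_{j_0}^\Omega}$ — together with an extra factor $(X-1)$ when $G^\ast=\mathrm{Sp}_n$, and with $\overline{\alpha}=\overline{\alpha}^{-1}$ in case (2) — and a coprime factor. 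Hensel's lemma lifts this to a factorization over $R_{j_0,j_1}$, hence to a $\rho(\sigma_v)$-stable splitting of the underlying free $R_{j_0,j_1}$-module $V=V_\psi\oplus V_s$ into free direct summands, the generalized eigenspaces of the two factors. To upgrade this to a $\Gamma_{F_v}$-stable splitting I would use that $R_{j_0,j_1}\otimes_\mathcal{O}\overline{\mathbb{Q}}_p$ is a quotient of the semisimple algebra $R_{\mathfrak{p}_{\Omega,j_0}}\otimes_\mathcal{O}\overline{\mathbb{Q}}_p$, hence reduced, so that $R_{j_0,j_1}$ embeds into the product of its $\overline{\mathbb{Q}}_p$-points. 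The conditions ``$\rho(\gamma)V_\psi\subseteq V_\psi$ for all $\gamma\in\Gamma_{F_v}$'', ``$\rho$ acts on $V_s$ through an unramified character datum'', ``$\rho$ acts on $V_\psi$ tamely with $\rho|_{I_v}$ the scalar pair $(\psi',(\psi')^{-1})$ for a character $\psi'$'', and ``$V^{j_0}_\alpha=\psi'(\mathrm{Art}_{F_v}(\alpha))$'' are all closed conditions on the free summand $V_\psi$ that hold at every $\overline{\mathbb{Q}}_p$-point by the previous paragraph, hence hold over $R_{j_0,j_1}$. This yields the decomposition $\rho_{j_0,j_1}=s\oplus\psi$ and the identity for $V^{j_0}_\alpha$.

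It remains to check that $(-,-)$ restricts non-degenerately to each of $s$ and $\psi$. The similitude invariance $(\rho(g)x,\rho(g)y)=\mu(g)(x,y)$ implies the generalized $\lambda$-eigenspace of $\rho(\sigma_v)$ pairs perfectly with the generalized $\overline{\mu(\sigma_v)}\lambda^{-1}$-eigenspace; since $\mu$ is a power of the cyclotomic character up to a finite-order unramified twist and $q_v\equiv 1\pmod p$ (which also forces $\overline{\rho}$ into $\widehat{G}$), we have $\overline{\mu(\sigma_v)}=1$, so $\overline{\mu(\sigma_v)}\overline{\alpha}^{\pm1}\in\{\overline{\alpha},\overline{\alpha}^{-1}\}$. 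Hence $V_\psi$ is its own orthogonal partner, the form is non-degenerate on $V_\psi$, and $V_s=V_\psi^{\perp}$ inherits a non-degenerate form, both of the correct (symmetric or symplectic) type.

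The step I expect to be the main obstacle is precisely the passage from the pointwise statements over the $\overline{\mathbb{Q}}_p$-specializations to the statement over $R_{j_0,j_1}$ — that the $\Gamma_{F_v}$-action genuinely respects the Frobenius-eigenvalue splitting, and that the inertia action on $V_\psi$ is scalar integrally rather than only after every specialization. This rests on the reducedness of $R_{j_0,j_1}\otimes\overline{\mathbb{Q}}_p$ and on the \emph{exact} multiplicity condition on $\overline{\rho}(\sigma_v)$ in Proposition \ref{projector induces an isomorphism}, which prevents the unramified part $s$ from absorbing any of the $\overline{\alpha}^{\pm1}$-eigenvalue and so guarantees that $V_\psi$ obtained from Hensel's lemma has exactly the rank matching $\psi_{\pi'}$ at every specialization.
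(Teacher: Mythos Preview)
Your approach is correct and shares the same ingredients as the paper, but you are working harder than necessary because you miss one organizing observation the paper makes at the outset: $\rho_{j_0,j_1}$ has \emph{abelian image}. Indeed, every irreducible constituent $\pi$ contributing to $M$ has $\pi^{\mathfrak{p}_{\Omega,j_0}}\neq 0$, hence is a subquotient of a principal series (either because $\pi$ is unramified, or by Proposition \ref{parahoric invariant subspace is one-dimensional} when $\pi^{\mathfrak{p}_\Omega}=0$), so the specialized Galois representation is a direct sum of characters; combined with the reducedness you already use, this forces the image of $\rho_{j_0,j_1}$ itself to be abelian. With this in hand, the Hensel splitting $V=V_\psi\oplus V_s$ by the characteristic polynomial of $\rho_{j_0,j_1}(\sigma_v)$ is automatically $\Gamma_{F_v}$-stable (every $\rho(\gamma)$ commutes with $\rho(\sigma_v)$), and the inertial action is scalar on each generalized eigenspace, so the ``closed conditions'' passage you flag as the main obstacle disappears. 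You should also make the unramified/ramified case split explicit, since Proposition \ref{parahoric invariant subspace is one-dimensional} only applies when $\pi^{\mathfrak{p}_\Omega}=0$; the unramified case is trivial.

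For the orthogonality of $s$ and $\psi$, the paper argues directly with the factorization $P(X)=A(X)B(X)$ over $R_{j_0,j_1}$ rather than via eigenspaces. Since the roots of $B$ come in inverse pairs one has $B(X^{-1})=(-X)^{-\deg B}B(X)$, and then the form invariance gives
\[
(B(\rho(\sigma_v))v,\,A(\rho(\sigma_v))w)
=v^{\mathrm t}\Lambda\,B(\rho(\sigma_v)^{-1})A(\rho(\sigma_v))\,w
=(-1)^{\deg B}\,v^{\mathrm t}\Lambda\,\rho(\sigma_v)^{-\deg B}P(\rho(\sigma_v))\,w=0,
\]
which is the integral statement over $R_{j_0,j_1}$. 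Your eigenspace-pairing heuristic via $\overline{\mu(\sigma_v)}=1$ is the right intuition (and is exactly what is needed to see that $B$ is stable under $\beta\mapsto\mu(\sigma_v)\beta^{-1}$ in the $\mathrm{GSp}_N$ case), but as written it only yields orthogonality modulo $\mathfrak{m}$; the polynomial computation gives it on the nose.
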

	\begin{proof}
		We treat the case $G^\ast=\mathrm{SO}_n^\eta$.
		Recall that
		$\rho$
		acts on a symplectic module over $R_{\mathfrak{p}_\Omega}$ free of rank
		$N=2n_{\mathrm{s}}=2\lfloor n/2\rfloor$
		over
		$R_{\mathfrak{p}_{\Omega,j_0}}$.
		Since
		$\pi^{\mathfrak{p}_{\Omega,j_0}}
		\ne0$,
		we know that
		$\rho_{j_0,j_1}$
		has abelian image.
		Write
		$P(X)$
		for the characteristic polynomial of
		$\rho_{j_0,j_1}(\sigma_v)$
		such that there is a factorization
		$P(X)=A(X)B(X)$
		with
		$B(\overline{\alpha})=0=B(\overline{\alpha}^{-1})$
		and
		$A(X),B(X)$
		coprime.
		Set
		\[
		s=
		B(\rho_{j_0,j_1}(\sigma_v))
		R_{j_0,j_1}^N,
		\quad
		\psi
		=
		A(\rho_{j_0,j_1}(\sigma_v))R_{j_0,j_1}^N.
		\]
		So
		$R_{j_0,j_1}^N
		=
		s\oplus\psi$,
		the decomposition
		is
		invariant under
		$\rho_{j_0,j_1}$.
		Suppose that
		$B(X)$
		is of degree
		$d_B$.
		Since the roots of
		$B(X)$
		appear in pairs
		$\beta,\beta^{-1}$,
		we have
		$B(X^{-1})=(-X)^{-d_B}B(X)$.
		Suppose that under the standard basis of
		$R_{j_0,j_1}^N$,
		the quadratic form
		$q$
		(or the symmetric bilinear form
		$(-,-)$)
		corresponds to the matrix
		$\Lambda$.
		Thus for any
		$v,w\in R_{j_0,j_1}^N$,
		one has
		$B(\rho_{j_0,j_1}(\sigma_v))v\in s$
		and
		$A(\rho_{j_0,j_1}(\sigma_v))w\in\psi$
		and therefore
		\begin{align*}
			(B(\sigma)v,A(\sigma)w)
			&
			=
			v^\mathrm{t}
			B(\rho_{j_0,j_1}(\sigma_v))^\mathrm{t}
			\Lambda
			A(\rho_{j_0,j_1}(\sigma_v))
			w
			\\
			&
			=
			v^\mathrm{t}
			\Lambda
			B(\rho_{j_0,j_1}(\sigma_v)^{-1})
			A(\rho_{j_0,j_1}(\sigma_v))
			w
			\\
			&
			=
			v^\mathrm{t}
			\Lambda
			(\rho_{j_0,j_1}(\sigma_v)^\mathrm{t})^{-d_B}
			B(\rho_{j_0,j_1}(\sigma_v))
			A(\rho_{j_0,j_1}(\sigma_v))
			w
			\\
			&
			=
			0.
		\end{align*}
		This shows that
		$(\rho_{j_0,j_1},(-,-))
		=
		(s,(-,-)|_s)\oplus(\psi,(-,-)|_\psi)$.
		Next we verify the properties of
		$s$ and $\psi$.

		If $\pi$ is unramified,
		then
		$s$ and $\psi$ are unramified
		and therefore
		$V^{j_0}_\alpha$
		acts trivially on
		$\mathrm{pr}_{(j_1)}
		(\pi^{\mathfrak{p}_{\Omega,j_0}})$.

		If $\pi$ is ramified,
		then by the isomorphism (\ref{G(O_v)-invariant isomorphic to pr_{(j_1)}}) in Proposition \ref{projector induces an isomorphism} and Proposition \ref{parahoric invariant subspace is one-dimensional}, we have
		$\pi^{\mathfrak{p}_\Omega}=0$.
		Thus by Proposition \ref{parahoric invariant subspace is one-dimensional},
		$\pi$ is a subquotient of a principal series
		$\mathrm{n}
		\text{-}
		\mathrm{Ind}_{\mathbf{B}_v(F_v)}^{ G(F_v)}
		(\chi_1\otimes\cdots\otimes\chi_{n_\mathrm{s}})$
		with
		(in the notations of the Proposition \ref{parahoric invariant subspace is one-dimensional})
		\begin{enumerate}
			\item 
			$\chi_i$
			tamely ramified for
			$i=i^\Omega_1+\cdots+i^\Omega_{j_0-1}+1,\cdots,
			i^\Omega_1+\cdots+i^\Omega_{j_0}$
			and being equal when restricted to the subgroup
			$\mathcal{O}_{F,v}^\times$ (we denote this common character by $\psi'$),

			\item 
			the remaining characters being unramified.
		\end{enumerate}		
		We have
		$\mathrm{pr}_{(j_1)}(\pi^{\mathfrak{p}_{\Omega,j_0}})
		=
		\pi^{\mathfrak{p}_{\Omega,j_0}}$ again by Proposition \ref{parahoric invariant subspace is one-dimensional}.
		Note that
		$V_k^{j_0}$
		acts on
		$\pi^{\mathfrak{p}_{\Omega,j_0}}$
		by the scalar
		$\sum_{\sigma\in W_k^{j_0}}\chi_\sigma(\omega_v)$
		where
		(as in the proof of Lemma \ref{Lemma 1.30})
		\[
		\chi_\sigma
		=
		\prod_{
			r=i^\Omega_1+\cdots+i^\Omega_{j_0-1}+1
		}^{
			i^\Omega_1+\cdots+i^\Omega_{j_0}}
		\chi_{|\sigma(r)|}^{\mathrm{sng}(\sigma(r))},
		\]
		and thus
		$V^{j_0}_k$
		acts
		on
		$\mathrm{pr}_{(j_0)}
		(\pi^{\mathfrak{p}_{\Omega,j_0}})$
		by the same scalar.
		Write
		$\widehat{i}^\Omega_j=i^\Omega_1+\cdots+i^\Omega_j$.
		This gives
		\begin{align*}
			s^\mathrm{ss}
			&
			\simeq
			\left(
			\bigoplus_{r=1}^{\widehat{i}^\Omega_{j_0-1}}(\chi_r\oplus\chi_r^{-1})
			\oplus
			\bigoplus_{r=\widehat{i}^\Omega_{j_0}+1}^{n_\mathrm{s}}
			(\chi_r\oplus\chi_r^{-1})
			\right)
			|\cdot|^{(1-n)/2}
			\circ
			\mathrm{Art}_{F_v}^{-1},
			\\
			\psi^\mathrm{ss}
			&
			\simeq
			\left(
			\bigoplus_{r=\widehat{i}^\Omega_{j_0-1}+1}^{\widehat{i}^\Omega_{j_0}}
			(\chi_r\oplus\chi_r^{-1})
			\right)
			|\cdot|^{(1-n)/2}
			\circ
			\mathrm{Art}_{F_v}^{-1}.
		\end{align*}

		It follows from the definition of
		$V^j_\alpha$
		that
		$V^{j_0}_\alpha$
		acts as
		the scalar
		$\psi'(\mathrm{Art}_{F_v}(\alpha))$.

		For the case $G^\ast=\mathrm{Sp}_n$, the proof is the same as above, except that if $\overline{\alpha}=1$, $\psi^\mathrm{ss}$ has an extra direct summand $|\cdot|^{(1-n)/2}\circ\mathrm{Art}_{F_v}^{-1}$.
	\end{proof}

	\section{Deformation theory}\label{deformation theory}
	In this section, we study the deformations of residual Galois representations.
	
	We write $\widehat{\mathfrak{g}}$ for the Lie algebra of the dual group $\widehat{G}$ of $G$. We view $\widehat{\mathfrak{g}}$ as a Lie algebra over $\Z$, more explicitly, for any ring $A$,
	\[
	\widehat{\mathfrak{g}}(A)
	=
	\begin{cases*}
		\left\{
		M\in\mathrm{M}_{N,N}(A)|M^\mathrm{t}A_N'+A_N'M=0
		\right\},
		&
		$G^\ast=\mathrm{SO}_n$ with $n$ odd;
		\\
		\left\{
		M\in\mathrm{M}_{N,N}(A)|M^\mathrm{t}A_N+A_NM=0
		\right\},
		&
		$G^\ast=\mathrm{Sp}_n$ or $G^\ast=\mathrm{SO}_n^\eta$ with $n$ even.
	\end{cases*}
	\]

	For any local ring $A$ (denote $\mathfrak{m}_A$ for its maximal ideal),	write
	\[
	\widehat{G}^1(A)
	:=
	\mathrm{Ker}
	\big(
	\widehat{G}(A)
	\rightarrow
	\widehat{G}(A/\mathfrak{m}_A)
	\big).
	\]

	\begin{remark}\label{dual group is smooth}\rm
		We see easily that $^CG$ and its center $Z(\,^CG)\simeq\mathbb{G}_m$
		are smooth over $\mathbb{Z}$. For a local ring $A$ and an ideal $I$ of $A$ such that $\mathfrak{m}_AI=0$ (thus $I$ is a vector space over
		$\kappa_A=A/\mathfrak{m}_A$), we have an injective exponential map
		\begin{align*}
			\exp\colon
			\,
			\widehat{\mathfrak{g}}(\kappa_A)\otimes_{\kappa_A}I
			&
			\rightarrow
			\widehat{G}^1(A),
			\\
			Y
			&
			\mapsto
			1+Y.
		\end{align*}
		Here the sum
		$1+Y$ is taken in the embedding
		$\widehat{G}^1(A)
		\hookrightarrow
		\mathrm{M}_{N,N}(A)$.
		If $I=\mathfrak{m}_A$,
		then
		$\exp$ is a bijection.
		
	\end{remark}

	We fix the following notations in
	this section:
	\begin{enumerate}
		\item 
		We fix a finite field extension $\kappa$ of $\mathbb{F}_p$, $\kappa[\epsilon]$, for the dual numbers of $\kappa$
		(\emph{i.e.} $\epsilon^2=0$), $W(\kappa)$ for the ring of Witt vectors over
		$\kappa$ and $L(\kappa)$ the fraction field of $W(\kappa)$. We choose $\kappa$ such that $L(\kappa)$ embeds into $E$ and $E$ is totally ramified over $L(\kappa)$.

		\item 
		Write $\mathrm{Art}_\mathcal{O}$ for the category of Artinian local $\mathcal{O}$-algebras $\mathcal{O}\rightarrow R$ which induces the identity map on the residual fields $\mathcal{O}/\mathfrak{p}
		=R/\mathfrak{m}_R$.	Morphisms in $\mathrm{Art}_\mathcal{O}$
		are continuous morphisms $R\rightarrow R'$ of $\mathcal{O}$-algebras
		inducing the identity map on residual fields $R/\mathfrak{m}_R=R'/\mathfrak{m}_{R'}$. Similarly, write $\mathrm{CNL}_\mathcal{O}$ to be the category of complete noetherian local $\mathcal{O}$-algebras $\mathcal{O}\rightarrow R$	inducing the identity map on residual fields and morphisms between these objects are continuous $\mathcal{O}$-algebra morphisms inducing the identity map on residual fields. Thus each object $R$ in $\mathrm{CNL}_\mathcal{O}$
		is a projective limit of objects $R/\mathfrak{m}_R^n$ in $\mathrm{Art}_\mathcal{O}$ and we can (and will) view $\mathrm{Art}_\mathcal{O}$ as a full subcategory of $\mathrm{CNL}_\mathcal{O}$ in a natural way.

		\item 
		All morphisms of topological groups
		considered in this section
		will be assumed to be continuous.
		For $E=F$ or $F_v$ and an
		$\mathcal{O}[\Gamma_E]$-module
		$M$,
		we write
		$M[n]$
		for the twist of $M$ by the
		$n$-th power of the
		$p$-adic cyclotomic character of
		$\Gamma_E$
		(whenever it is defined).

		\item 
		If $G^\ast=\mathrm{SO}_n$ with $n$ odd, for any $g\in\,^CG$, we write $\mu(g)$ for the similitude factor of $g$. If $G^\ast=\mathrm{Sp}_n$ or $G^\ast=\mathrm{SO}_n^\eta$ with $n$ even, for any $g\in\,^CG$, we write $\mu(g)$ for the image of $g$ under the projection map $^CG=^LG\times\G_m\to\G_m$. Then to any group homomorphism $r\colon\Gamma\to\,^CG(\kappa)$, we associate a character
		\begin{equation}\label{mu_r}
			\mu_r\colon\Gamma\to\G_m(\kappa)=\kappa^\times,
			\quad
			\gamma\mapsto\mu(r(\gamma)).
		\end{equation}
	\end{enumerate}

	\subsection{Deformation problem}

	Fix a profinite group
	$\Gamma$,
	which eventually will be the global Galois group
	$\Gamma_F$ of $F$ or
	the local Galois group
	$\Gamma_{F_v}$
	of $F_v$.
	We consider a continuous group homomorphism:
	\[
	\overline{r}
	\colon
	\Gamma
	\rightarrow
	\,^CG(\kappa)
	\]
	and a continuous character $\nu\colon\Gamma\to\,^CG(\mathcal{O})$ such that
	\[
	\overline{\nu}=\mu_{\overline{r}},
	\]
	where $\overline{\nu}$ is the reduction mod $\mathfrak{p}$ of $\nu$. We will put more conditions on $\overline{r}$ later on.
	
	\begin{definition}
		A \emph{lifting} of
		$\overline{r}$ to an object
		$R$ in $\mathrm{CNL}_\mathcal{O}$
		is a group morphism
		$r\colon
		\Gamma
		\rightarrow\,
		^CG(R)$
		such that
		\[
		r(\mathrm{mod}\, \mathfrak{m}_R)
		=
		\overline{r}
		\text{ and }
		\mu_r=\nu.
		\]
		Two such liftings
		$r_1,r_2
		\colon
		\Gamma
		\rightarrow
		\,
		^CG(R)$
		are equivalent if there is an element
		$g\in
		\widehat{G}^1(R)$
		such that
		$r_1=gr_2g^{-1}$.
		A \emph{deformation}
		of $\overline{r}$ is an equivalence class of
		liftings of
		$\overline{r}$.

		Consider the functor
		\[
		\mathcal{F}
		\colon
		\mathrm{Art}_\mathcal{O}
		\rightarrow
		\mathrm{Sets},
		\]
		which takes an object
		$R$
		to the set of
		liftings,
		resp.
		deformations
		of $\overline{r}$.
		We say that $\mathcal{F}$
		is
		\emph{pro-representable}
		if there are an object
		$R^\mathrm{univ}$
		in
		$\mathrm{CNL}_\mathcal{O}$
		and a lifting,
		resp.
		deformation,
		$r^\mathrm{univ}
		\colon
		\Gamma
		\rightarrow
		\,^CG(R^\mathrm{univ})$
		such that 
		any
		lifting,
		resp.
		deformation,
		$r\colon
		\Gamma\rightarrow
		\,^CG(R)$
		to an object
		$R$ in $\mathrm{Art}_\mathcal{O}$
		corresponds bijectively to
		a unique morphism
		$R^\mathrm{univ}
		\rightarrow
		R$
		in
		$\mathrm{CNL}_\mathcal{O}$
		such that its composition with
		$r^\mathrm{univ}$
		gives rise to $r$.
		In this case,
		we will say that
		$\mathcal{F}$
		is pro-representable by the pair
		\[
		(R^\mathrm{univ},r^\mathrm{univ})
		\]
		or simply
		pro-representable by
		$R^\mathrm{univ}$
		whenever the context is clear.
		$R^\mathrm{univ}$
		is called the universal
		lifting,
		resp.
		deformation ring and
		$r^\mathrm{univ}$
		the universal lifting,
		resp.
		deformation
		of $\overline{r}$.
	\end{definition}

	Now let $\Gamma=\Gamma_F$.
	Since the local Galois group
	$\Gamma_{F_v}$
	is topologically finitely generated,
	for each $v$
	there is a universal lifting
	\[
	(R^\mathrm{univ}_v,r^\mathrm{univ}_v)
	\]
	of
	$\overline{r}|_{\Gamma_{F_v}}$
	with
	$R^\mathrm{univ}_v\in
	\mathrm{CNL}_\mathcal{O}$.

	\begin{definition}\label{local deformation problem}
		For a finite place $v$ of $F$, a \emph{local deformation problem at $v$}
		is a functor
		\[
		\mathcal{D}_v
		\colon
		\mathrm{CNL}_\mathcal{O}
		\rightarrow
		\mathrm{Sets},
		\]
		which takes $R$ to a set of
		liftings
		of $\overline{r}$ to
		$R$ and
		satisfies the following conditions
		\begin{enumerate}
			\item
			$\overline{r}
			\in
			\mathcal{D}_v(\kappa)$;
			
			\item 
			If
			$r\in
			\mathcal{D}_v(R)$
			and
			$\phi\colon
			R\rightarrow
			R'$
			is a morphism in
			$\mathrm{CNL}_\mathcal{O}$,
			then
			$\phi\circ r
			\in
			\mathcal{D}_v(R')$;
			
			\item 
			For any two morphisms
			$\phi_1\colon
			R_1\rightarrow
			R_0$ and
			$\phi_2\colon
			R_2
			\rightarrow
			R_0$ in
			$\mathrm{CNL}_\mathcal{O}$,
			write the fiber product
			$R_3=R_1\times_{\phi_1,R_0,\phi_2}R_2$
			(that is,
			$R_3=\{(x_1,x_2)\in R_1\times R_2
			|\phi_1(x_1)=\phi_2(x_2)\}$).
			If
			$r_i\in
			\mathcal{D}_v(R_i)$
			($i=1,2$)
			and
			$\phi_1\circ r_1=\phi_2\circ r_2$,
			then
			$r_1\oplus r_2
			\in
			\mathcal{D}_v(R_3)$;
			
			\item 
			For a projective system
			$\{r_i\in\mathcal{D}_v(R_i)\}_{i\in I}$,
			one has
			$\lim\limits_{\overleftarrow{i}}
			r_i
			\in
			\mathcal{D}_v
			(\lim\limits_{\overleftarrow{i}}R_i)$;
			
			\item 
			If $r'$ is equivalent to $r$ and
			$r\in\mathcal{D}_v(R)$,
			one has
			$r'\in\mathcal{D}_v(R)$;
			
			\item 
			For an injective morphism
			$\phi
			\colon
			R
			\rightarrow
			R'$
			in
			$\mathrm{CNL}_\mathcal{O}$
			and
			$r$ a lifting of
			$\overline{r}$ to $R$
			such that
			$\phi\circ r\in
			\mathcal{D}_v(R')$,
			then
			$r\in\mathcal{D}_v(R)$.
		\end{enumerate}
	\end{definition}

	\begin{remark}\label{group action on universal ring}\rm
		We have a natural
		$\mathcal{O}$-linear
		action of
		the group
		$\widehat{G}^1(R^\mathrm{univ}_v)$
		on $R^\mathrm{univ}_v$:
		for each
		$g\in
		\widehat{G}^1(R^\mathrm{univ}_v)$,
		we have another lifting
		$gr^\mathrm{univ}_vg^{-1}$
		of $\overline{r}$
		and this induces a morphism
		$R^\mathrm{univ}_v
		\rightarrow
		R^\mathrm{univ}_v$
		(depending on $g$)
		of
		$\mathcal{O}$-algebras.
		By the universality of
		$(R^\mathrm{univ}_v,r^\mathrm{univ}_v)$,
		one verifies that this is indeed a group action.
		Similarly,
		for each object
		$R$ in
		$\mathrm{CNL}_\mathcal{O}$,
		there is a natural
		action of
		$\widehat{G}^1(R)$
		on the set
		$\mathrm{Hom}_{\mathrm{CNL}_\mathcal{O}}
		(R^\mathrm{univ}_v,R)$.
	\end{remark}

	\begin{lemma}\label{isomorphisms of tangent spaces}
		We have natural isomorphisms of
		$\kappa$-vector spaces:
		\[
		\mathrm{Hom}_\kappa
		(\mathfrak{m}_{R^\mathrm{univ}_v}/
		(\mathfrak{m}_{R^\mathrm{univ}_v}^2,\mathfrak{p}),
		\kappa
		)
		\simeq
		\mathrm{Hom}_{\mathrm{CNL}_{\mathcal{O}}}
		(R^\mathrm{univ}_v,
		\kappa[\epsilon])
		\simeq
		Z^1(\Gamma_{F_v},
		\widehat{\mathfrak{g}}(\kappa)).
		\]
	\end{lemma}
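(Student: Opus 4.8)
The plan is to split the claimed chain into its two constituent isomorphisms. The first, $\mathrm{Hom}_\kappa(\mathfrak{m}_{R^\mathrm{univ}_v}/(\mathfrak{m}_{R^\mathrm{univ}_v}^2,\mathfrak{p}),\kappa)\simeq\mathrm{Hom}_{\mathrm{CNL}_\mathcal{O}}(R^\mathrm{univ}_v,\kappa[\epsilon])$, is a formal computation of the Zariski tangent space of an object $R$ of $\mathrm{CNL}_\mathcal{O}$ and uses nothing about Galois groups. First I would note that $\kappa[\epsilon]$, being a $\kappa$-algebra on which $\mathcal{O}$ acts through its residue field $\kappa$, has the property that any uniformiser of $\mathcal{O}$ maps to $0$; hence a morphism $\phi\colon R\to\kappa[\epsilon]$ in $\mathrm{CNL}_\mathcal{O}$ is determined by its (fixed) structure map on the image of $\mathcal{O}$ together with its restriction to $\mathfrak{m}_R$, which carries $\mathfrak{m}_R$ into $(\epsilon)$, kills $\mathfrak{m}_R^2$ and kills $\mathfrak{p}R$, hence factors through a $\kappa$-linear map $\mathfrak{m}_R/(\mathfrak{m}_R^2,\mathfrak{p})\to(\epsilon)\simeq\kappa$. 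Conversely any such $\kappa$-linear functional produces a morphism by this recipe, and the two assignments are mutually inverse and $\kappa$-linear.

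For the second isomorphism I would invoke the universal property of $(R^\mathrm{univ}_v,r^\mathrm{univ}_v)$: an element of $\mathrm{Hom}_{\mathrm{CNL}_\mathcal{O}}(R^\mathrm{univ}_v,\kappa[\epsilon])$ is precisely a lifting $r\colon\Gamma_{F_v}\to\,^CG(\kappa[\epsilon])$ of $\overline{r}$ with $\mu_r=\nu$. Since $\,^CG$ is smooth over $\mathbb{Z}$ and, by the constraint $\mu_r=\nu$ together with $\overline{\nu}=\mu_{\overline r}$, the element $r(\gamma)\,\overline{r}(\gamma)^{-1}$ lies in $\widehat{G}^1(\kappa[\epsilon])$, the exponential $Y\mapsto 1+\epsilon Y$ of Remark \ref{dual group is smooth} gives a unique expression $r(\gamma)=(1+\epsilon\,c(\gamma))\,\overline{r}(\gamma)$ with $c(\gamma)\in\widehat{\mathfrak{g}}(\kappa)$. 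Expanding the identity $r(\gamma\delta)=r(\gamma)r(\delta)$ modulo $\epsilon^2$ yields exactly $c(\gamma\delta)=c(\gamma)+\mathrm{Ad}(\overline{r}(\gamma))\,c(\delta)$, i.e.\ $c$ is a continuous $1$-cocycle of $\Gamma_{F_v}$ valued in $\widehat{\mathfrak{g}}(\kappa)$ with the $\Gamma_{F_v}$-action $\mathrm{Ad}\circ\overline{r}$. Conversely, any such cocycle defines a lifting by the same formula; the two constructions are mutually inverse and $\kappa$-linear, whence $\mathrm{Hom}_{\mathrm{CNL}_\mathcal{O}}(R^\mathrm{univ}_v,\kappa[\epsilon])\simeq Z^1(\Gamma_{F_v},\widehat{\mathfrak{g}}(\kappa))$. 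Note that it is the universal \emph{lifting} ring, not a deformation ring, that occurs, which is why we land in $Z^1$ and not $H^1$.

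The genuinely delicate points, which I would spell out rather than compute, are: that $\widehat{G}$ is normal in $\,^CG$ — immediate from the explicit descriptions in~(\ref{^CG}) — so that $\mathrm{Ad}(\overline{r}(\gamma))$ preserves $\widehat{\mathfrak{g}}(\kappa)$ and the asserted Galois-module structure is well defined; that fixing the multiplier really confines the deformation to the $\widehat{G}^1$-direction with no residual similitude parameter, which is exactly where the hypothesis $\overline{\nu}=\mu_{\overline r}$ enters; and the continuity bookkeeping, namely that $r$ is continuous if and only if $c$ is (using that $\Gamma_{F_v}$ is profinite and $\widehat{\mathfrak{g}}(\kappa)$ is finite), so that the cocycles produced and consumed are precisely the continuous ones. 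Each of these requires only a short verification, and the lemma then follows formally from the two identifications above.
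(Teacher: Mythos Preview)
Your proof is correct and follows essentially the same approach as the paper's: the paper dispatches the first isomorphism as a general fact about any $R\in\mathrm{CNL}_\mathcal{O}$ and cites \cite[Lemma 3.1 \& Proposition 3.2]{Tilouine2002} for the second, while you unpack both isomorphisms directly. Your explicit argument for the second isomorphism (writing $r(\gamma)=(1+\epsilon c(\gamma))\overline{r}(\gamma)$ and deriving the cocycle condition) is exactly the content underlying the cited reference, so there is no substantive difference in strategy.
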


	\begin{proof}
		The first isomorphism is valid for any
		object
		$R$ in
		$\mathrm{CNL}_\mathcal{O}$
		instead of $R^\mathrm{univ}_v$.	
		The second follows from
		\cite[Lemma 3.1 \& Proposition 3.2]{Tilouine2002}
	\end{proof}
	\begin{remark}\label{isomorphism of general tangent spaces}\rm 
		One can generalize the above isomorphisms
		as follows:
		for any $R\in\mathrm{CNL}_\mathcal{O}$ and
		any ideal $I$ of $R$ with
		$\mathfrak{m}_RI=0$
		(thus $I$ is a $\kappa$-vector space),
		consider two liftings
		$r_1,r_2$ of
		$\overline{r}$ to $R$
		such that
		$r_1=r_2
		(\mathrm{mod}\,I)$.
		Then one can write
		$r_2(\gamma)
		=
		(1+r(\gamma))
		r_1(\gamma)$
		with
		\[
		r(\gamma)
		=
		r_2(\gamma)r_1(\gamma)^{-1}-1
		\in
		\,
		\widehat{\mathfrak{g}}(\kappa)\otimes_\kappa I
		\]
		for all
		$\gamma\in\Gamma_{F_v}$.
		One verifies immediately that
		$r(\gamma_1\gamma_2)r_1(\gamma_1)
		=
		r(\gamma_1)r_1(\gamma_1)
		+
		r_1(\gamma_1)r(\gamma_2)$.
		Let
		$\Gamma_{F_v}$
		act on
		$\widehat{\mathfrak{g}}(\kappa)\otimes I$
		by the conjugate action
		$r_1(\gamma)(\ast)r_1(\gamma)^{-1}$.
		Then we get an element
		\[
		r\in
		Z^1
		(
		\Gamma_{F_v},
		\widehat{\mathfrak{g}}
		(\kappa)\otimes I
		).
		\]
		Moreover,
		if $r\in B^1(\Gamma_{F_v},
		\widehat{\mathfrak{g}}(\kappa)\otimes I)$,
		that is,
		there exists an element
		$Y\in\widehat{\mathfrak{g}}(\kappa)\otimes I$
		such that
		$r(\gamma)
		=r_1(\gamma)Yr_1(\gamma)^{-1}-Y$,
		then one has
		$r_2(\gamma)=r_1(\gamma)+
		r_1(\gamma)Y-Yr_1(\gamma)$.
		Since
		$\mathfrak{m}_RI=0$,
		by
		Remark \ref{dual group is smooth},
		we have
		$1+Y=\exp(Y)
		\in
		\widehat{G}^1(R)$
		and
		$(1+Y)^{-1}=1-Y$.
		Therefore
		\[
		r_2(\gamma)=(1+Y)r_1(\gamma)(1-Y).
		\]
		The converse is also true:
		if $r_2$ is conjugate to $r_1$ by an element in
		$\exp(\widehat{\mathfrak{g}}(\kappa)\otimes I)$,
		then
		$r\in
		B^1(\Gamma_{F_v},
		\widehat{\mathfrak{g}}(\kappa)\otimes I)$.

		Suppose that $r_1$ corresponds to a morphism
		$\phi_1
		\colon
		R^\mathrm{univ}_v
		\rightarrow
		R$.
		Thus one has bijections
		\begin{align*}
			H^1(\Gamma_{F_v},\,
			\widehat{\mathfrak{g}}(\kappa)\otimes I)
			\leftrightarrow
			&
			\{
			r'\colon
			\Gamma_{F_v}
			\rightarrow
			\,
			^CG(R)\mid 
			r'=r_1(\mathrm{mod}\,I)
			\}/
			\exp(\widehat{\mathfrak{g}}(\kappa)\otimes I)
			\\
			\leftrightarrow
			&
			\{
			\phi'
			\colon
			R^\mathrm{univ}_v
			\rightarrow
			R\mid 
			\phi'=\phi_1(\mathrm{mod}\,I)
			\}/
			\exp(\widehat{\mathfrak{g}}(\kappa)\otimes I).
		\end{align*}
		By the definition of the group cohomology
		$H^\bullet(\Gamma_{F_v},
		\widehat{\mathfrak{g}}(\kappa))$,
		we have the following long exact sequence
		\begin{equation}\label{isomorphism of general tangent spaces-exact sequence}
			0
			\rightarrow
			H^0(\Gamma_{F_v},\widehat{\mathfrak{g}}(\kappa))
			\rightarrow
			\widehat{\mathfrak{g}}(\kappa)
			\rightarrow
			Z^1(\Gamma_{F_v},\widehat{\mathfrak{g}}(\kappa))
			\rightarrow
			H^1(\Gamma_{F_v},\widehat{\mathfrak{g}}(\kappa))
			\rightarrow
			0.
		\end{equation}
	\end{remark}

	Next we give some characterizations of
	local deformation problems.
	\begin{proposition}\label{characterization of local deformation problem}
		Let $I$ be an ideal of
		$R^\mathrm{univ}_v$
		which is invariant under
		$\widehat{G}^1(R^\mathrm{univ}_v)$
		(group action given in
		Remark \ref{group action on universal ring}).
		Consider the functor
		$\mathcal{D}_v
		\colon
		\mathrm{CNL}_\mathcal{O}
		\rightarrow
		\mathrm{Sets}$:
		for each object
		$R$,
		$\mathcal{D}_v(R)$
		is the set of liftings
		$r$ to $R$
		such that the kernel of the induced map
		$R^\mathrm{univ}_v
		\rightarrow
		R$
		contains $I$.
		Then $\mathcal{D}_v$
		is a local deformation problem.
		Conversely,
		any local deformation problem
		is of the above form for some
		ideal $I$.
	\end{proposition}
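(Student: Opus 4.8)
\emph{Proof strategy.} The plan is to treat the two implications separately; for a lifting $r$ of $\overline r$ to $R$ write $\phi_r\colon R^\mathrm{univ}_v\to R$ for the classifying morphism, so that in the functor $\mathcal D_v$ of the statement one has $r\in\mathcal D_v(R)$ exactly when $\ker\phi_r\supseteq I$.

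\emph{From an invariant ideal to a deformation problem.} Assume $I\subsetneq R^\mathrm{univ}_v$ is $\widehat G^1(R^\mathrm{univ}_v)$-invariant. Axiom (1) of Definition \ref{local deformation problem} holds because the proper ideal $I$ lies in $\mathfrak m_{R^\mathrm{univ}_v}=\ker(R^\mathrm{univ}_v\to\kappa)$. Axioms (2), (3), (4), (6) are purely formal: for a composite $R^\mathrm{univ}_v\to R\to R'$ the kernel can only grow; the kernel of $R^\mathrm{univ}_v\to R_1\times_{R_0}R_2$ is the intersection of the kernels of $R^\mathrm{univ}_v\to R_i$; the kernel of $R^\mathrm{univ}_v\to\lim\limits_{\overleftarrow{i}}R_i$ is the intersection of the kernels of $R^\mathrm{univ}_v\to R_i$; and for an injection $R\hookrightarrow R'$ the maps $R^\mathrm{univ}_v\to R$ and $R^\mathrm{univ}_v\to R'$ have equal kernel. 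In each case the condition ``$\ker\phi_r\supseteq I$'' is propagated, or reflected, exactly as required. The only substantial point is axiom (5): that $\mathcal D_v(R)$ is stable under conjugation by $\widehat G^1(R)$ for \emph{every} $R$, whereas the hypothesis only concerns $\widehat G^1(R^\mathrm{univ}_v)$.

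\emph{Axiom (5) — the crux.} Using the already-verified axiom (4) I would reduce to $R\in\mathrm{Art}_\mathcal{O}$ and induct on the length of $R$, the case $R=\kappa$ being trivial. For the inductive step pick a small surjection $R\twoheadrightarrow R_0$ with square-zero kernel $J$, $\dim_\kappa J=1$. Given $r\in\mathcal D_v(R)$ and $g\in\widehat G^1(R)$, axiom (2) and induction give $(grg^{-1})\bmod J\in\mathcal D_v(R_0)$, hence $\phi_{grg^{-1}}(I)\subseteq J$, and one must improve this to $\phi_{grg^{-1}}(I)=0$. Since $\widehat G$, hence $\widehat G^1$, is smooth over $\Z$ (Remark \ref{dual group is smooth}), the reduction $\widehat G^1(R)\to\widehat G^1(R_0)$ is surjective; lifting $g\bmod J$ write $g=\widetilde g\,(1+Y)$ with $\widetilde g\in\widehat G^1(R)$ and $Y\in\widehat{\mathfrak g}(\kappa)\otimes_\kappa J$. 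The conjugation by $1+Y$ changes $r$ by a coboundary $\partial Y\in B^1(\Gamma_{F_v},\widehat{\mathfrak g}(\kappa)\otimes J)$ in the sense of Remark \ref{isomorphism of general tangent spaces}, and the further conjugation by $\widetilde g$ only alters $\phi_r$ by composition with an automorphism of $R^\mathrm{univ}_v$ of the form described in Remark \ref{group action on universal ring} — after lifting $\widetilde g$ along $R^\mathrm{univ}_v\twoheadrightarrow\phi_r(R^\mathrm{univ}_v)\hookrightarrow R$, again by smoothness — which preserves $I$ by hypothesis. One is left to check that the coboundary modification by $\partial Y$ sends a morphism killing $I$ to a morphism killing $I$; via the identification of the infinitesimal $\widehat G^1(R^\mathrm{univ}_v)$-action on $\mathfrak m_{R^\mathrm{univ}_v}/(\mathfrak m_{R^\mathrm{univ}_v}^2,\mathfrak p)$ with the coboundary action (Lemma \ref{isomorphisms of tangent spaces} and Remark \ref{isomorphism of general tangent spaces}) this is precisely what the invariance of $I$ gives. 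Equivalently, one introduces $\widehat R_G\in\mathrm{CNL}_\mathcal{O}$ pro-representing $R\mapsto\widehat G^1(R)$ (formally smooth over $\mathcal{O}$ by smoothness of $\widehat G^1$) with universal element $g_\mathrm{univ}$, forms the ``universal conjugate'' lifting $g_\mathrm{univ}\,r^\mathrm{univ}_v\,g_\mathrm{univ}^{-1}$ over $R^\mathrm{univ}_v\,\widehat\otimes_\mathcal{O}\,\widehat R_G$ with classifying morphism $\Theta$, and checks that $\widehat G^1(R^\mathrm{univ}_v)$-invariance of $I$ is equivalent to $\Theta(I)$ lying in the ideal generated by $I$; granting this, axiom (5) for arbitrary $R$ follows by specialising $\widehat R_G\to R$ and using $\phi_{grg^{-1}}=(\phi_r\,\widehat\otimes\,g^\sharp)\circ\Theta$. \textbf{Promoting invariance under the single group $\widehat G^1(R^\mathrm{univ}_v)$ to invariance under the whole family $\{\widehat G^1(R)\}_R$ is the main obstacle}, and is the only place where formal smoothness of $\widehat G$ is genuinely needed.

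\emph{From a deformation problem to an invariant ideal.} Conversely, given a local deformation problem $\mathcal D_v$, set $I:=\bigcap\{\ker\phi_r\mid r\in\mathcal D_v(R),\ R\in\mathrm{CNL}_\mathcal{O}\}$. For $h\in\widehat G^1(R^\mathrm{univ}_v)$ and $r\in\mathcal D_v(R)$ the morphism $\phi_r\circ\beta_h^{-1}$ (with $\beta_h$ as in Remark \ref{group action on universal ring}) classifies the lifting $\phi_r(h)^{-1}\,r\,\phi_r(h)$, which is $\widehat G^1(R)$-conjugate to $r$ and so lies in $\mathcal D_v(R)$ by axiom (5); intersecting over all such $r$ gives $\beta_h(I)\supseteq I$, and applying this to $h^{-1}$ shows $\beta_h(I)=I$, so $I$ is $\widehat G^1(R^\mathrm{univ}_v)$-invariant. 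It remains to identify $\mathcal D_v(R)$ with $\{r\mid\ker\phi_r\supseteq I\}$; ``$\subseteq$'' is the definition of $I$. For ``$\supseteq$'', if $\ker\phi_r\supseteq I$ then $\phi_r$ factors through $R^\mathrm{univ}_v/I$, so by axiom (2) it suffices to prove $r^\mathrm{univ}_v\bmod I\in\mathcal D_v(R^\mathrm{univ}_v/I)$. Writing $R^\mathrm{univ}_v/I$ as an inverse limit of Artinian quotients and invoking axiom (4), this reduces to a statement over an Artinian ring, where the descending chain condition expresses the relevant ideal as a \emph{finite} intersection of kernels $\ker\phi_{r_j}$ with $r_j\in\mathcal D_v(R_j)$; assembling the $r_j$ along the iterated fibre product of the $R_j$ over $\kappa$ and using axiom (3) produces a lifting in $\mathcal D_v$ whose classifying morphism has precisely that kernel, hence factors through an injection out of the appropriate Artinian quotient of $R^\mathrm{univ}_v$, and axiom (6) descends it to the desired statement. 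Passing to the limit with axiom (4) finishes the proof.
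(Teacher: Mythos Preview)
Your strategy matches the paper's in both directions. For the converse, your $I=\bigcap_{r}\ker\phi_r$ is exactly the paper's $I_{\min}$, the minimum of $\mathcal I=\{J:r^\mathrm{univ}_v\bmod J\in\mathcal D_v(R^\mathrm{univ}_v/J)\}$; both proofs use axiom (3) for finite intersections, (4) for limits, (5) for invariance, and (2) together with (6) to identify $\mathcal D_v$ with the functor attached to $I$.

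For axiom (5) the paper's argument is organised differently from your induction on length but rests on the same idea: set $R'=\mathrm{Im}(\phi_r)$, realise $R$ as a quotient of a \emph{free} truncated $R'$-algebra $R'_k=R'[[X_1,\dots,X_m]]/(X)^k$, lift $g_k\in\widehat G^1(R_k)$ to $g_k'\in\widehat G^1(R'_k)$ by smoothness, and split $g_k'=(1+\epsilon_k)\,g'$ where $g'\in\widehat G^1(R')$ is the image under the \emph{canonical retraction} $R'_k\to R'$ sending $X_i\mapsto 0$. The piece $g'$ then lifts along $R^\mathrm{univ}_v\twoheadrightarrow R'$ and invariance of $I$ handles it; the residual $(1+\epsilon_k)$-conjugation is treated by iterating in a strictly smaller subring. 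Your decomposition $g=\widetilde g\,(1+Y)$ aims at the same splitting, but as written $\widetilde g$ is only required to lie in $\widehat G^1(R)$ and to reduce to $g\bmod J$ --- a condition $g$ itself already satisfies, so the decomposition is vacuous. What you actually need (and what the paper's passage to the free model $R'_k$ buys) is that $\widetilde g$ lie in $\widehat G^1(\phi_r(R^\mathrm{univ}_v))$, so that it can be lifted along $R^\mathrm{univ}_v\twoheadrightarrow\phi_r(R^\mathrm{univ}_v)$; over $R$ itself there is no retraction $R\to R'$ that singles out such a $\widetilde g$. Your $\widehat R_G$ reformulation is appealing, but the asserted equivalence ``$\widehat G^1(R^\mathrm{univ}_v)$-invariance of $I\Leftrightarrow\Theta(I)\subseteq(I)$'' has the same difficulty in the $\Rightarrow$ direction: it amounts to asking whether a power series in $(R^\mathrm{univ}_v/I)[[T]]$ vanishing under every specialisation $T\mapsto t\in\mathfrak m$ must vanish identically, which is not automatic over a general complete local ring and again needs the free-model manoeuvre to justify.
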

	\begin{proof}
		For the first part, (1)-(4) and (6) in 
		Definition \ref{local deformation problem}
		are clearly satisfied. It remains to verify (5).
		Suppose that there is an element
		$g\in
		\widehat{G}^1(R)$
		such that
		$r'=grg^{-1}$.
		Write
		the morphism
		$\phi
		\colon
		R^\mathrm{univ}_v
		\rightarrow
		R$
		corresponding to $r$
		and
		$R'=\mathrm{Im}(\phi)$.
		So $r$ takes values in
		$^CG(R')$.
		By (6),
		$r\in\mathcal{D}_v(R')$.
		Since $R$ is noetherian,
		we can write
		$R=R'[[x_1,x_2,\cdots,x_m]]$
		for some
		$x_1,\cdots,x_m\in\mathfrak{m}_R$
		(using the fact that
		$R'/\mathfrak{m}_{R'}=R/\mathfrak{m}_R$).
		For any integer
		$k>0$,
		set
		\begin{align*}
			R'_k&
			:=R'[[X_1,\cdots,X_m]]/(X_1,\cdots,X_m)^k,
			\\
			R_k&
			:=R'[[x_1,\cdots,x_m]]/(x_1,\cdots,x_m)^k,
		\end{align*}
		where
		$X_1,\cdots,X_m$
		are variables
		and we have a map
		$R_k'\rightarrow
		R_k$
		sending $X_i$ to 
		$x_i$.
		Thus
		$R_k',R_k$ are objects in
		$\mathrm{Art}_\mathcal{O}$
		and
		$R=\lim\limits_{\overleftarrow{k}}R_k$.
		Using the projection
		$R\rightarrow R_k$,
		we write
		$g_k\in
		\widehat{G}^1(R_r)$
		for the image of $g$
		and then using the smoothness of
		$^CG$ over $\mathbb{Z}$,
		one can find a preimage
		$g_k'\in
		\widehat{G}^1(R_k')$
		of $g_k$ under the reduction map
		$\widehat{G}^1(R_k')\to\widehat{G}^1(\kappa)$.
		We then set
		$r_k'=g'_kr(g_k')^{-1}$
		where
		we view $r\in\mathcal{D}_v(R'_k)$
		by the natural map
		$R'\rightarrow R'_k$.
		Clearly these
		$\{r'_k\}_k$
		form a projective system.
		Next we show that
		$r'_k\in\mathcal{D}_v(R'_k)$
		for each $k$.
		Write
		\[
		g'_k=(1+\sum_i\epsilon_{k,i}\gamma_{k,i})g'
		\]
		for
		$g'\in\widehat{G}^1(R')$,
		$\epsilon_{k,i}\in
		\mathrm{M}_{N,N}
		(R'\langle X_1,\cdots,X_m\rangle)$
		each entry being monomials in
		$X_1,\cdots,X_m$
		and
		$\gamma_{k,i}
		\in
		\mathrm{M}_{N,N}
		(R')$.
		Let
		$\phi\colon
		R^\mathrm{univ}_v
		\rightarrow
		R'$
		correspond to the lifting
		$r$.
		Now
		\[
		r_k'
		=
		(1+\sum_i\epsilon_{k,i}\gamma_{k,i})
		g'\gamma'r(\gamma')^{-1}
		(1+\sum_i\epsilon_{k,i}\gamma_{k,i})^{-1}
		=
		(1+\epsilon_k)r'(1+\epsilon_k'),
		\]
		where
		$1+\epsilon_k
		=\sum_i\epsilon_{k,i}\gamma_{k,i}$,
		$1+\epsilon_k'
		=(1+\sum_i\epsilon_{k,i}\gamma_{k,i})^{-1}$
		and
		$r'=g'r(g')^{-1}$.
		We can find a preimage
		$\widetilde{g}\in
		\widehat{G}^1(R^\mathrm{univ}_v)$
		of $g'$.
		Thus the morphism
		$\phi'\colon
		R^\mathrm{univ}_v
		\rightarrow
		R'$
		corresponding to
		$r'$
		is the same as the
		composition
		$R^\mathrm{univ}_v
		\rightarrow
		R^\mathrm{univ}_v
		\xrightarrow{\phi}
		R'$
		where the first morphism corresponds to
		$\widetilde{g}r^\mathrm{univ}_v\widetilde{g}^{-1}$.
		One then verifies easily that
		the kernel of $\phi'$
		is the result of the action of
		$\widetilde{g}$ on
		$\mathrm{Ker}(\phi)$,
		thus contains also the ideal $I$
		and thus
		$r'\in
		\mathcal{D}_v(R')$.
		Suppose
		$r_k'=r'+(\epsilon_kr'+r'\epsilon_k'+
		\epsilon_kr'\epsilon_k')$
		corresponds to a morphism
		$\phi_k'
		\colon
		R^\mathrm{univ}_v
		\rightarrow
		R_k'$.
		We can write
		$\phi_k'=\alpha_k+\beta_k$
		with
		$\alpha_k
		\colon
		R^\mathrm{univ}_v
		\rightarrow
		R'$
		a morphism in $\mathrm{CNL}_\mathcal{O}$
		and
		$\beta_k
		\colon
		R^\mathrm{univ}_v
		\rightarrow
		R'\langle X_1,\cdots,X_m\rangle$
		a map valued in the ideal generated by
		$X_1,\cdots,X_m$.
		Since we have a natural morphism
		$R_k'\rightarrow R'$
		sending all $X_i$ to $0$,
		one sees that $\alpha_k=\phi'$.
		Now let
		$\widetilde{R}_k'$
		be the $R'$-subalgebra of
		$R_k'$ generated by the entries of
		$\epsilon_k$ and $\epsilon_k'$.
		Thus
		$r', r_k'$ are liftings of $\overline{r}$ to
		$\widetilde{R}_k'$ and moreover
		$r'\in\mathcal{D}_v(\widetilde{R}_k')$.
		So by the same argument as above in the case of
		$R'$,
		we see that
		$r_k'\in\mathcal{D}_v(\widetilde{R}_k')$
		and therefore
		$r_k'\in\mathcal{D}_v(R_k')$
		by (2) in Definition \ref{local deformation problem}.
		Again by (2) and (4),
		we have
		\[
		r_k'\in\mathcal{D}_v(R_k),
		\quad
		r'=\lim\limits_{\overleftarrow{k}}r_k'
		\in
		\mathcal{D}_v(R).
		\]
		This gives the first part of the proposition.

		For the second part,
		consider the set
		$\mathcal{I}$
		of ideals $I$ of
		$R^\mathrm{univ}_v$
		such that
		the reduction
		$r^\mathrm{univ}_v(\mathrm{mod}\,I)
		\in
		\mathcal{D}_v(R^\mathrm{univ}/I)$.
		Clearly
		$\mathfrak{m}_{R^\mathrm{univ}_v}
		\in
		\mathcal{I}$ by (1) in Definition \ref{local deformation problem},
		thus
		$\mathcal{I}$ is non-empty.
		For any nested system
		$(I_i)_i$ in $\mathcal{I}$ under inclusion,
		by (4),
		we see that
		$\cap_iI_i\in\mathcal{I}$.
		Note that
		\[
		R_v^{\mathrm{univ}}/(I_1\cap I_2)
		\simeq
		R_v^{\mathrm{univ}}/I_1
		\times_{R_v^{\mathrm{univ}}/(I_1+I_2)}
		R_v^{\mathrm{univ}}/I_2.
		\]
		By (3), for any
		$I_1,I_2\in\mathcal{I}$,
		one has $I_1\cap I_2\in\mathcal{I}$.
		This shows that $\mathcal{I}$
		is in fact a nested system under inclusion.
		Therefore $\mathcal{I}$
		has a minimal element
		$I_\mathrm{min}$.
		By (5), this minimal element
		$I_\mathrm{min}$
		is in fact invariant under
		$\widehat{G}^1(R^\mathrm{univ}_v)$.
		Moreover by (6),
		for any
		$r\in\mathcal{D}_v(R)$,
		the corresponding morphism
		$R^\mathrm{univ}_v
		\rightarrow
		R$ has kernel belonging to
		$\mathcal{I}$
		and thus contains $I_\mathrm{min}$.
		This proves the second part of the proposition.
	\end{proof}

	\begin{definition}\label{L_v}
		Let $\mathcal{D}_v$ be a local deformation problem
		with the corresponding ideal
		$I_v$ of $R^\mathrm{univ}_v$ as in
		Proposition
		\ref{characterization of local deformation problem}.
		Write
		$L_v^1$
		for the subspace of
		$\mathrm{Hom}_\kappa
		(\mathfrak{m}_{R^\mathrm{univ}_v}
		/(\mathfrak{m}_{R^\mathrm{univ}_v}^2,\mathfrak{p}),\kappa)$
		consisting of those morphisms annihilating
		the image of $I_v$ in $\mathfrak{m}_{R^\mathrm{univ}_v}
		/(\mathfrak{m}_{R^\mathrm{univ}_v}^2,\mathfrak{p})$.
		Then under the isomorphisms in
		Lemma \ref{isomorphisms of tangent spaces},
		one writes
		\[
		L_v=L_v(\mathcal{D}_v)
		\]
		for the subspace of
		$H^1(\Gamma_{F_v},\,
		\widehat{\mathfrak{g}}(\kappa))$
		corresponding to $L_v^1$.
	\end{definition}
	
	\begin{remark}\label{dimension formula for L_v1}\rm 
		It follows from the definition that
		there is an isomorphism
		\[
		\mathrm{Hom}_\kappa
		(\mathfrak{m}_{R^\mathrm{univ}_v}/
		(\mathfrak{m}_{R^\mathrm{univ}_v}^2,I_v,\mathfrak{p}),\kappa
		)
		\simeq
		L_v^1.
		\]
		Moreover from (\ref{isomorphism of general tangent spaces-exact sequence}),
		we obtain
		\[
		\mathrm{dim}_\kappa L_v^1
		=
		\mathrm{dim}_\kappa
		\widehat{\mathfrak{g}}(\kappa)
		+
		\mathrm{dim}_\kappa L_v
		-
		\mathrm{dim}_\kappa
		H^0
		(\Gamma_{F_v},\widehat{\mathfrak{g}}(\kappa)).
		\]
		
	\end{remark}

	\begin{lemma}
		Let $\mathcal{D}_v$
		be a local deformation problem
		corresponding to an ideal
		$I_v$ of
		$R^\mathrm{univ}_v$.
		Fix an object
		$R$ in 
		$\mathrm{CNL}_\mathcal{O}$
		and an ideal
		$I$ of $R$
		such that
		$\mathfrak{m}_RI=0$.
		Let
		$r_1,r_2$ be two liftings to $R$ such that
		$r_1\cong r_2(\mathrm{mod}\,I)$
		and
		$r_1\in\mathcal{D}_v(R)$.
		Then
		$r_2\in\mathcal{D}_v(R)$
		if and only if the cocycle
		$(r_2r_1^{-1}-1)
		\in
		L_v\otimes_\kappa I$.
	\end{lemma}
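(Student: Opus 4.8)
The plan is to match both sides of the asserted equivalence with one and the same condition on the mod-$\mathfrak{p}$ cotangent space of $R^\mathrm{univ}_v$ at the point classifying $r_1$, using the identifications already set up in Lemma~\ref{isomorphisms of tangent spaces} and Remark~\ref{isomorphism of general tangent spaces}. Write $M=\widehat{\mathfrak g}(\kappa)$. Since $\mathfrak{m}_R I=0$, the ${}^CG(R)$-conjugation action of $r_1$ on $M\otimes_\kappa I$ factors through $\overline{r}$, so $I$ is merely a coefficient $\kappa$-vector space and $Z^\bullet(\Gamma_{F_v},M\otimes_\kappa I)=Z^\bullet(\Gamma_{F_v},M)\otimes_\kappa I$, and similarly for $B^\bullet$ and $H^\bullet$. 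The hypothesis $r_1\equiv r_2\ (\mathrm{mod}\,I)$ (which forces $r_2(\gamma)r_1(\gamma)^{-1}\equiv 1\ (\mathrm{mod}\,I)$), together with $\mu_{r_1}=\mu_{r_2}$, makes $c(\gamma):=r_2(\gamma)r_1(\gamma)^{-1}-1$ a cocycle in $Z^1(\Gamma_{F_v},M\otimes_\kappa I)$, exactly as in Remark~\ref{isomorphism of general tangent spaces}.

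First I would unwind the Galois-deformation side. Let $\phi_1,\phi_2\colon R^\mathrm{univ}_v\to R$ be the $\mathcal{O}$-algebra maps classifying $r_1,r_2$. By Proposition~\ref{characterization of local deformation problem}, $r_1\in\mathcal{D}_v(R)$ gives $I_v\subseteq\ker\phi_1$, and $r_2\in\mathcal{D}_v(R)$ holds if and only if $I_v\subseteq\ker\phi_2$. Since $\phi_1\equiv\phi_2\ (\mathrm{mod}\,I)$ and $\mathfrak{m}_R I=0$ (hence $I^2=0$), the difference $\delta:=\phi_2-\phi_1\colon R^\mathrm{univ}_v\to I$ is $\mathcal{O}$-linear, kills the image of $\mathcal{O}$ and $\mathfrak{p}R^\mathrm{univ}_v$, and, because $\delta(xy)=\phi_1(x)\delta(y)+\delta(x)\phi_1(y)+\delta(x)\delta(y)$ lies in $\mathfrak{m}_R I+I^2=0$ for $x,y\in\mathfrak{m}_{R^\mathrm{univ}_v}$, also kills $\mathfrak{m}_{R^\mathrm{univ}_v}^2$. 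Hence $\delta$ descends to $\overline{\delta}\in\mathrm{Hom}_\kappa(\mathfrak{m}_{R^\mathrm{univ}_v}/(\mathfrak{m}_{R^\mathrm{univ}_v}^2,\mathfrak{p}),\kappa)\otimes_\kappa I$. As $I_v\subseteq\mathfrak{m}_{R^\mathrm{univ}_v}$ and $\phi_1(I_v)=0$, we have $\phi_2(I_v)=\delta(I_v)$, so $I_v\subseteq\ker\phi_2$ iff $\overline{\delta}$ annihilates the image of $I_v$ in $\mathfrak{m}_{R^\mathrm{univ}_v}/(\mathfrak{m}_{R^\mathrm{univ}_v}^2,\mathfrak{p})$, i.e.\ iff $\overline{\delta}\in L_v^1\otimes_\kappa I$ by the very definition of $L_v^1$ (Definition~\ref{L_v}).

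The next step is to transport this to the cocycle $c$. Combining Lemma~\ref{isomorphisms of tangent spaces} with the constructions in Remark~\ref{isomorphism of general tangent spaces} (applied with base lift $r_1$ and coefficients $M\otimes_\kappa I$), the bijection $r'\mapsto r'r_1^{-1}-1$ between liftings $r'$ with $r'\equiv r_1\ (\mathrm{mod}\,I)$ and $Z^1(\Gamma_{F_v},M\otimes_\kappa I)$ is compatible, via $\phi'\mapsto\phi'\circ r^\mathrm{univ}_v$, with the bijection $\phi'\mapsto\phi'-\phi_1$ between $\mathcal{O}$-algebra maps $\phi'\colon R^\mathrm{univ}_v\to R$ with $\phi'\equiv\phi_1\ (\mathrm{mod}\,I)$ and $\mathrm{Hom}_\kappa(\mathfrak{m}_{R^\mathrm{univ}_v}/(\mathfrak{m}_{R^\mathrm{univ}_v}^2,\mathfrak{p}),\kappa)\otimes_\kappa I$; concretely $c(\gamma)=\delta(r^\mathrm{univ}_v(\gamma))\,r_1(\gamma)^{-1}$. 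Thus $\overline{\delta}$ corresponds to $c$, and $\overline{\delta}\in L_v^1\otimes_\kappa I\iff c\in L_v^1\otimes_\kappa I$. Finally, to pass from $L_v^1\subseteq Z^1$ to $L_v\subseteq H^1$, note that $B^1(\Gamma_{F_v},M)\subseteq L_v^1$: the trivial lift of $\overline{r}$ to $\kappa[\epsilon]$ is classified by $R^\mathrm{univ}_v\to\kappa\hookrightarrow\kappa[\epsilon]$, whose kernel $\mathfrak{m}_{R^\mathrm{univ}_v}$ contains $I_v$, so it lies in $\mathcal{D}_v(\kappa[\epsilon])$; then Definition~\ref{local deformation problem}(5) puts all of its $\widehat{G}^1(\kappa[\epsilon])$-conjugates --- that is, all coboundaries --- into $L_v^1$ (alternatively this follows from the dimension identities of Remark~\ref{dimension formula for L_v1}). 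Consequently $c\in L_v^1\otimes_\kappa I$ if and only if the class of $c$ lies in $L_v\otimes_\kappa I$, and chaining the equivalences proves the lemma. The point needing real care is the identification $\overline{\delta}\leftrightarrow c$: one must check that the cocycle attached to the pair $(r_1,r_2)$ is the $\phi_1$-pullback of the universal cocycle attached to $\overline{\delta}$, which amounts to tracing through the explicit construction underlying Lemma~\ref{isomorphisms of tangent spaces} and invoking $\mathfrak{m}_R I=0$ to drop the quadratic term $\delta(x)\delta(y)$ and to replace $r_1(\gamma)^{-1}$ by $\overline{r}(\gamma)^{-1}$ where convenient.
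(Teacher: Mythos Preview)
Your proof is correct and follows essentially the same approach as the paper's: both pass to the classifying maps $\phi_1,\phi_2\colon R^\mathrm{univ}_v\to R$, show that $\delta=\phi_2-\phi_1$ descends to a $\kappa$-linear map on $\mathfrak{m}_{R^\mathrm{univ}_v}/(\mathfrak{m}_{R^\mathrm{univ}_v}^2,\mathfrak{p})$, and then identify the condition $I_v\subseteq\ker\phi_2$ with $\overline{\delta}\in L_v^1\otimes_\kappa I$ before translating back to the cocycle via the tangent-space isomorphisms. If anything, you are more careful than the paper on two points it leaves implicit: the explicit identification $c(\gamma)=\delta(r^\mathrm{univ}_v(\gamma))\,r_1(\gamma)^{-1}$, and the passage from $L_v^1\subseteq Z^1$ to $L_v\subseteq H^1$ via $B^1\subseteq L_v^1$.
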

	\begin{proof}
		Suppose that
		$\phi_i
		\colon
		R^\mathrm{univ}_v
		\rightarrow
		R$
		corresponds to the lifting
		$r_i$
		($i=1,2$).
		Write
		\[
		\phi=\phi_2-\phi_1
		\colon
		R^\mathrm{univ}_v
		\rightarrow
		I.
		\]
		One checks that
		$\phi(x+y)=\phi(x)+\phi(y)$,
		$\phi(xy)=\phi(x)\phi_1(y)
		+\phi(y)\phi_1(x)
		+
		\phi(x)\phi(y)$
		for all
		$x,y\in I$
		and
		$\phi(z)=0$
		for all
		$z\in\mathcal{O}$.
		Note that any element
		$x\in
		(R^\mathrm{univ}_v)^\times$
		can be written in the form
		$x=x'(1+y)$
		with
		$x'\in\mathcal{O}^\times$ and
		$y\in
		\mathfrak{m}_{R^\mathrm{univ}_v}$.
		Thus we see,
		by the above conditions,
		that
		for a fixed $\phi_1$,
		$\phi$ depends only on
		$\phi|_{\mathfrak{m}_{R^\mathrm{univ}_v}}$.
		Moreover one verifies easily that
		$\phi$ is trivial on
		$(\mathfrak{m}_{R^\mathrm{univ}_v}^2,\mathfrak{p})$
		and that
		$\phi$ is
		$\mathcal{O}$-linear.
		Thus each
		$r_2$
		corresponds to a $\kappa$-linear map
		\[
		\phi
		\colon
		\mathfrak{m}_{R^\mathrm{univ}_v}/
		(\mathfrak{m}_{R^\mathrm{univ}_v}^2,\mathfrak{p})
		\rightarrow
		I.
		\]
		By Remark
		\ref{isomorphism of general tangent spaces},
		the set of
		liftings
		$r_2\colon
		\Gamma_{F_v}
		\rightarrow
		\,
		^CG(R)$
		such that
		$r_2=r_1(\mathrm{mod}\,I)$
		is in bijection with
		the set of $\mathcal{O}$-linear morphisms
		$\phi_2
		\colon
		R^\mathrm{univ}_v
		\rightarrow
		R$
		such that
		$\phi_2=\phi_1
		(\mathrm{mod}\,I)$,
		thus also in bijection with the sets
		$\mathrm{Hom}_\kappa
		(\mathfrak{m}_{R^\mathrm{univ}_v}/
		(\mathfrak{m}_{R^\mathrm{univ}_v}^2,\mathfrak{p}),I)
		\simeq
		H^1(\Gamma_{F_v},
		\,
		\widehat{\mathfrak{g}}(\kappa))\otimes I$. One checks that
		$r_2\in
		\mathcal{D}_v(R)$
		if and only if the kernel of $\phi_2$ contains $I_v$,
		if and only if
		$\phi$
		factors through
		the quotient
		$\mathfrak{m}_{R^\mathrm{univ}_v}/
		(\mathfrak{m}_{R^\mathrm{univ}_v}^2,\mathfrak{q},I_v)$,
		which is true exactly when
		$\phi\in
		L_v^1\otimes I$,
		\emph{i.e.}
		$(r_2r_1^{-1}-1)
		\in
		L_v\otimes I$.
	\end{proof}

	\begin{definition}\label{T-deformation or lifting of type S}
		We fix a finite set $S$ of finite places of $F$.
		For each $v\in S$,
		we fix also a local deformation problem
		$\mathcal{D}_v$ and write
		\[
		\mathcal{S}=(\mathcal{D}_v)_{v\in S}.
		\]
		A lifting
		$r$ to
		an object $R$ in
		$\mathrm{Art}_\mathcal{O}$
		is said
		\emph{of type $\mathcal{S}$}
		if
		for each $v\in S$,
		$r|_{\Gamma_{F_v}}
		\in
		\mathcal{D}_v(R)$.
		A deformation to $R$ is said
		\emph{of type $\mathcal{S}$}
		if one (hence every) lifting in the equivalence class
		is of type $\mathcal{S}$.		
		We define a functor
		\[
		\mathrm{Def}_\mathcal{S}
		\colon
		\mathrm{Art}_\mathcal{O}
		\rightarrow
		\mathrm{Sets},\quad
		R
		\mapsto
		\{
		\text{deformation to }R \text{ of type }\mathcal{S}
		\}.
		\]

		Write
		$C^i(\Gamma_F,\widehat{\mathfrak{g}}(\kappa))$
		for the space of maps
		$\Gamma_F^i
		\rightarrow
		\widehat{\mathfrak{g}}(\kappa)$
		(which form a cochain complex and the cohomology
		groups are
		$H^i(\Gamma_F,\widehat{\mathfrak{g}}(\kappa))$)
		and similarly for
		$C^i(\Gamma_{F_v},\widehat{\mathfrak{g}}(\kappa))$.
		For each $v\in S$,
		define a subspace
		$M_v^i$ of
		$C^i(\Gamma_{F_v},\widehat{\mathfrak{g}}(\kappa))$ as follows:
		\[
		M_v^i
		:=
		\begin{cases*}
			C^0(\Gamma_{F_v},\widehat{\mathfrak{g}}(\kappa)),
			&
			$v\in S$ and $i=0$;
			\\
			\text{preimage of }L_v
			\text{ in }
			C^1(\Gamma_{F_v}\widehat{\mathfrak{g}}(\kappa)),
			&
			$v\in S$ and $i=1$;
			\\
			0,
			&
			otherwise.
		\end{cases*}
		\]
		Define a new cochain complex $C_{\mathcal{S}}^\bullet(\Gamma_{F},\widehat{\mathfrak{g}}(\kappa))$
		as follows:
		each term is given as
		\[
		C_{\mathcal{S}}^i(\Gamma_F,\widehat{\mathfrak{g}}(\kappa))
		:=
		C^i(\Gamma_F,\widehat{\mathfrak{g}}(\kappa))
		\oplus
		\big(
		\bigoplus_{v\in S}
		C^{i-1}(\Gamma_{F_v},\widehat{\mathfrak{g}}(\kappa))
		/M_v^{i-1}
		\big).
		\]
		The differential map is given  by
		\[
		C^i_{\mathcal{S}}
		(\Gamma_F,\widehat{\mathfrak{g}}(\kappa))
		\rightarrow
		C^{i+1}_{\mathcal{S}}
		(\Gamma_F,\widehat{\mathfrak{g}}(\kappa)),
		\quad
		(\phi;(\phi_v)_{v\in S})
		\mapsto
		(\partial\phi;\phi|_{\Gamma_{F_v}}-\partial\phi_v)_{v\in S}.
		\]
		
		Write
		$H^i_{\mathcal{S}}(\Gamma_F,
		\widehat{\mathfrak{g}}(\kappa))$
		for the cohomology group of this cochain complex
		$C_{\mathcal{S}}^\bullet(\Gamma_{F},\widehat{\mathfrak{g}}(\kappa))$.
	\end{definition}

	It is easy to see
	\begin{lemma}\label{long exact sequence}
		$(C^i_{\mathcal{S}}(\Gamma_F,
		\widehat{\mathfrak{g}}(\kappa)))_i$
		is indeed a cochain complex and
		we have the following long exact sequence
		(write
		$H^i_?$
		for
		$H^i_?(\Gamma_F,
		\widehat{\mathfrak{g}}(\kappa))$
		($?=\emptyset$ or $\mathcal{S}$) and
		similarly
		$H^i_v$ for
		$H^i(\Gamma_{F_v},
		\widehat{\mathfrak{g}}(\kappa))$):
		\begin{equation}\label{long exact sequence for cohomology groups}
			\begin{tikzcd}
				0
				\arrow[r]
				&
				H^0_{\mathcal{S}}
				\arrow[r]
				&
				H^0
				\arrow[r]
				&
				\bigoplus_{v\in S}
				H^0_v
				\arrow[r]
				\arrow[d, phantom, ""{coordinate, name=Z}]
				&
				H^1_{\mathcal{S}}
				\arrow[r]
				&
				H^1
				\arrow[llllld, rounded corners, to path={ -- ([xshift=2ex]\tikztostart.east) |- (Z) [near end]\tikztonodes			
					-| ([xshift=-2ex]\tikztotarget.west)			
					-- (\tikztotarget)}]
				\\
				(\bigoplus_{v\in S}
				H^1_v/L_v)
				\arrow[r]
				&
				H^2_{\mathcal{S}}
				\arrow[r]
				&
				H^2
				\arrow[r]
				&
				\bigoplus_{v\in S}H^2_v
				\arrow[r]
				&
				H_{\mathcal{S}}^3
				\arrow[r]
				&
				H^3=0
			\end{tikzcd}
		\end{equation}

		Suppose moreover that the dimensions of
		$H^i$ and
		$H^i_v$
		($v\in S$)
		are all finite and vanish for $i\gg0$.
		Set
		\begin{equation}\label{Euler characteristic}
			\chi(?)
			=\sum_i(-1)^i\mathrm{dim}_\kappa
			(?^i),
			\quad
			?=H^\bullet,H^\bullet_{\mathcal{S}},H_v^\bullet.
		\end{equation}
		Then one has
		\[
		\chi(H^\bullet_{\mathcal{S}})
		=
		\chi(H^\bullet)
		-
		\sum_{v\in S}
		\chi(H_v^\bullet)
		-
		\sum_{v\in S}
		(
		\mathrm{dim}_\kappa
		L_v
		-
		\mathrm{dim}_\kappa
		H^0_v).
		\]

	\end{lemma}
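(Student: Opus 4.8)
The plan is to realize $C^\bullet_{\mathcal{S}}(\Gamma_F,\widehat{\mathfrak{g}}(\kappa))$ as a term in a short exact sequence of cochain complexes and then feed everything into the associated long exact cohomology sequence, with one elementary local computation as the only non-formal input.

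First I would check that $M_v^\bullet$ is a subcomplex of $C^\bullet(\Gamma_{F_v},\widehat{\mathfrak{g}}(\kappa))$: one has $\partial M_v^0=B^1(\Gamma_{F_v},\widehat{\mathfrak{g}}(\kappa))\subseteq M_v^1$ since the preimage of $L_v$ contains every coboundary, $\partial M_v^1=0$ since $M_v^1\subseteq Z^1(\Gamma_{F_v},\widehat{\mathfrak{g}}(\kappa))$, and $M_v^i=0$ for $i\ge 2$. Hence $\partial$ descends to the quotient complexes $C^\bullet(\Gamma_{F_v},\widehat{\mathfrak{g}}(\kappa))/M_v^\bullet$, and because restriction of cochains is a morphism of complexes, the displayed formula for the differential of $C^\bullet_{\mathcal{S}}$ squares to zero, so $C^\bullet_{\mathcal{S}}$ is a genuine cochain complex. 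Moreover the maps $(\phi_v)_v\mapsto(0;(\phi_v)_v)$ and $(\phi;(\phi_v)_v)\mapsto\phi$ assemble into a short exact sequence of complexes
\[
0\longrightarrow\bigoplus_{v\in S}\big(C^\bullet(\Gamma_{F_v},\widehat{\mathfrak{g}}(\kappa))/M_v^\bullet\big)[-1]\longrightarrow C^\bullet_{\mathcal{S}}(\Gamma_F,\widehat{\mathfrak{g}}(\kappa))\longrightarrow C^\bullet(\Gamma_F,\widehat{\mathfrak{g}}(\kappa))\longrightarrow 0,
\]
where the shift $[-1]$ exactly accounts for the minus sign appearing in the $v$-component of the differential of $C^\bullet_{\mathcal{S}}$.

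Next I would compute the cohomology of the local quotient complexes from the tautological short exact sequence $0\to M_v^\bullet\to C^\bullet(\Gamma_{F_v},\widehat{\mathfrak{g}}(\kappa))\to C^\bullet(\Gamma_{F_v},\widehat{\mathfrak{g}}(\kappa))/M_v^\bullet\to 0$. Here $H^0(M_v^\bullet)=\ker\big(\widehat{\mathfrak{g}}(\kappa)\xrightarrow{\partial}M_v^1\big)=H^0_v$, $H^1(M_v^\bullet)=M_v^1/B^1(\Gamma_{F_v},\widehat{\mathfrak{g}}(\kappa))=L_v$ (the last equality being the very definition of $L_v$, see Definition \ref{L_v} and Lemma \ref{isomorphisms of tangent spaces}, since $M_v^1$ is by construction the preimage of $L_v$ in $C^1(\Gamma_{F_v},\widehat{\mathfrak{g}}(\kappa))$ and therefore contains $B^1(\Gamma_{F_v},\widehat{\mathfrak{g}}(\kappa))$), and $H^i(M_v^\bullet)=0$ for $i\ge 2$. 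The associated long exact sequence then gives $H^0\big(C^\bullet(\Gamma_{F_v},\widehat{\mathfrak{g}}(\kappa))/M_v^\bullet\big)=0$, $H^1=H^1_v/L_v$, and $H^i=H^i_v$ for $i\ge 2$. Substituting these values into the long exact cohomology sequence attached to the short exact sequence of complexes above, and using that $H^i$ and all $H^i_v$ vanish for $i\ge 3$ (whence $H^i_{\mathcal{S}}$ vanishes too), produces the long exact sequence \eqref{long exact sequence for cohomology groups}.

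Finally, for the Euler characteristic identity I would invoke additivity of $\chi$ along the short exact sequence of complexes: under the stated finiteness and vanishing hypotheses all three complexes have well-defined Euler characteristics, so $\chi(H^\bullet_{\mathcal{S}})=\chi(H^\bullet)+\chi\big(\bigoplus_{v\in S}(C^\bullet(\Gamma_{F_v},\widehat{\mathfrak{g}}(\kappa))/M_v^\bullet)[-1]\big)$; the shift $[-1]$ contributes an overall sign, and a second application of additivity (to $0\to M_v^\bullet\to C^\bullet(\Gamma_{F_v},\widehat{\mathfrak{g}}(\kappa))\to C^\bullet(\Gamma_{F_v},\widehat{\mathfrak{g}}(\kappa))/M_v^\bullet\to 0$) gives $\chi\big(C^\bullet(\Gamma_{F_v},\widehat{\mathfrak{g}}(\kappa))/M_v^\bullet\big)=\chi(H^\bullet_v)-\chi(M_v^\bullet)=\chi(H^\bullet_v)-\big(\dim_\kappa H^0_v-\dim_\kappa L_v\big)$. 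Combining these yields
\[
\chi(H^\bullet_{\mathcal{S}})=\chi(H^\bullet)-\sum_{v\in S}\chi(H^\bullet_v)-\sum_{v\in S}\big(\dim_\kappa L_v-\dim_\kappa H^0_v\big),
\]
as claimed; alternatively one may simply take the alternating sum of dimensions along \eqref{long exact sequence for cohomology groups}. I do not anticipate a real obstacle here: the argument is entirely homological bookkeeping, the only genuinely non-formal input being the identification $H^1(M_v^\bullet)=L_v$, which is immediate from the construction of $L_v$; the point that needs a little care is the sign and shift conventions in the short exact sequence of complexes.
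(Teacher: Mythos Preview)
Your proposal is correct and essentially identical to the paper's argument: the paper simply observes that $C^\bullet_{\mathcal{S}}$ is the mapping cone of the restriction map $C^\bullet(\Gamma_F,\widehat{\mathfrak{g}}(\kappa))\to\bigoplus_{v\in S}C^\bullet(\Gamma_{F_v},\widehat{\mathfrak{g}}(\kappa))/M_v^\bullet$, which is exactly your short exact sequence of complexes repackaged, and deduces the long exact sequence and Euler characteristic formula from that. You supply considerably more detail (the verification that $M_v^\bullet$ is a subcomplex, the computation of $H^\bullet(C^\bullet_v/M_v^\bullet)$, and the two-step additivity argument for $\chi$), and the only point the paper adds that you leave implicit is the justification of $H^3=0$ via the $p$-cohomological dimension of a number field being $2$.
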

	\begin{proof}
		We have a cochain complex map
		\[
		C^\bullet
		(\Gamma_F,
		\widehat{\mathfrak{g}}(\kappa))
		\rightarrow
		\oplus_{v\in S}
		C^\bullet
		(\Gamma_{F_v},
		\widehat{\mathfrak{g}}(\kappa))
		/M^\bullet_v,
		\]
		sending
		$\alpha$ to
		$(-\alpha|_{\Gamma_{F_v}})_{v\in S}$.
		Then
		$E^\bullet=C^\bullet_{\mathcal{S}}
		(\Gamma_F,
		\widehat{\mathfrak{g}}(\kappa))$
		is the mapping cone of this cochain map.
		The above long exact sequence
		is then the long exact sequence associated to
		this mapping cone.
		We have $H^3=0$
		because the $p$-cohomological dimension of
		a number field is $2$.
	\end{proof}
	
	\begin{remark}\rm 
		Since each
		$H^i(\Gamma_{F_v},
		\widehat{\mathfrak{g}}(\kappa))$
		is of finite dimension,
		the cohomology
		group
		$H^i(\Gamma_F,
		\widehat{\mathfrak{g}}(\kappa))$
		is of finite dimension if and only if so is
		$H^i_{\mathcal{S}}(\Gamma_F,
		\widehat{\mathfrak{g}}(\kappa))$.
		In the following,
		when we put finite-dimensionality condition
		on
		$H^i(\Gamma_F,
		\widehat{\mathfrak{g}}(\kappa))$,
		this should also be understood as
		put on the space
		$H^i_{\mathcal{S}}
		(\Gamma_F,
		\widehat{\mathfrak{g}}(\kappa))$.
	\end{remark}

	Concerning the representability of the functor
	$\mathrm{Def}_\mathcal{S}$,
	one has
	\begin{proposition}
		Assume that
		$\overline{r}
		\colon
		\Gamma_F
		\rightarrow
		\,^CG(\kappa)$
		is absolutely irreducible and
		$H^1(\Gamma_F,
		\widehat{\mathfrak{g}}(\kappa))$
		is of finite dimension.
		Then
		$\mathrm{Def}_\mathcal{S}$
		is representable
		(by an object
		$R_\mathcal{S}\in
		\mathrm{CNL}_\mathcal{O}$
		and
		$r_\mathcal{S}
		\colon
		\Gamma_F
		\rightarrow
		\,^CG(R_\mathcal{S})$).
		Moreover,
		we have a natural isomorphism
		of finite dimensional $\kappa$-vector spaces
		\[
		\mathrm{Hom}_\kappa
		(
		\mathfrak{m}_{R_\mathcal{S}}/
		(\mathfrak{m}_{R_\mathcal{S}}^2,\mathfrak{p}),\kappa
		)
		\simeq
		H^1_{\mathcal{S}}(\Gamma_F,
		\widehat{\mathfrak{g}}(\kappa)).
		\]
	\end{proposition}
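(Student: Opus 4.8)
The plan is to verify Schlessinger's criterion: a functor $F\colon\mathrm{Art}_\mathcal{O}\to\mathrm{Sets}$ with $F(\kappa)$ a one-element set is pro-representable by an object of $\mathrm{CNL}_\mathcal{O}$ once it satisfies $(H_1)$ (surjectivity of $F(R'\times_R R'')\to F(R')\times_{F(R)}F(R'')$ whenever $R''\to R$ is surjective), $(H_2)$ (bijectivity of that map when $R=\kappa$ and $R''=\kappa[\epsilon]$), $(H_3)$ (finite-dimensionality of $F(\kappa[\epsilon])$), and $(H_4)$ (bijectivity of that map when $R'=R''\to R$ is a small extension). I would apply this to $F=\mathrm{Def}_\mathcal{S}$; note $\mathrm{Def}_\mathcal{S}(\kappa)=\{\overline r\}$ is a one-element set, since the only lifting of $\overline r$ to $\kappa$ is $\overline r$ itself and $\widehat G^1(\kappa)=\{1\}$.

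For $(H_1)$ and $(H_2)$ I would first record the analogous statements for the functor of \emph{liftings} of type $\mathcal{S}$: a lifting $r'$ to $R'$ and a lifting $r''$ to $R''$ agreeing over $R$ glue to a unique lifting to $R'\times_R R''$, bijectively when $R=\kappa$ and $R''=\kappa[\epsilon]$, and the glued lifting lies in each $\mathcal{D}_v$ because these are stable under fibre products by Definition \ref{local deformation problem}(3). To pass to the quotient functor $\mathrm{Def}_\mathcal{S}$ one uses that $\widehat G$, hence $\widehat G^1$, is smooth over $\mathbb{Z}$ (Remark \ref{dual group is smooth}), so that $\widehat G^1(R'\times_R R'')\xrightarrow{\sim}\widehat G^1(R')\times_{\widehat G^1(R)}\widehat G^1(R'')$ and conjugation is compatible with gluing; a standard orbit argument then gives $(H_1)$ and $(H_2)$ for $\mathrm{Def}_\mathcal{S}$. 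Condition $(H_3)$ holds because, by Remark \ref{isomorphism of general tangent spaces} together with the lemma immediately preceding Definition \ref{T-deformation or lifting of type S}, a deformation to $\kappa[\epsilon]$ of type $\mathcal{S}$ is a class in $H^1(\Gamma_F,\widehat{\mathfrak g}(\kappa))$ whose restrictions to the $\Gamma_{F_v}$, $v\in S$, lie in the subspaces $L_v$; hence $\mathrm{Def}_\mathcal{S}(\kappa[\epsilon])$ injects into $H^1(\Gamma_F,\widehat{\mathfrak g}(\kappa))$, which is finite-dimensional by hypothesis.

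The main point — and where I expect the real work to be — is $(H_4)$, the rigidity of $\mathrm{Def}_\mathcal{S}$, and this is exactly where absolute irreducibility of $\overline r$ enters. Following Mazur, $(H_4)$ holds once the automorphism group of each lifting is trivial; concretely, for every $R\in\mathrm{Art}_\mathcal{O}$ and every lifting $r$ of $\overline r$ to $R$ of type $\mathcal{S}$, the centralizer of $r(\Gamma_F)$ in $\widehat G^1(R)$ should be $\{1\}$. Since the composite of $\overline r$ with $\,^CG\hookrightarrow\mathrm{GL}_N$ is absolutely irreducible, Schur's lemma gives that the centralizer of $\overline r(\Gamma_F)$ in $\mathrm{GL}_N(\overline\kappa)$ consists of scalars, so the centralizer of $\overline r$ in $\widehat G(\overline\kappa)$ lies in $Z(\widehat G)$, which for $\widehat G=\mathrm{SO}_N$ or $\mathrm{Sp}_N$ is a subgroup scheme of $\mu_2$. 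As $p$ is odd, $\mu_2$ is étale, so the only element of $\mu_2(R)$ reducing to $1$ modulo $\mathfrak m_R$ is $1$; the same computation applied to a lift shows the centralizer of $r$ in $\widehat G^1(R)$ is trivial. This rigidity yields $(H_4)$, and Schlessinger's criterion produces the pair $(R_\mathcal{S},r_\mathcal{S})$ with $R_\mathcal{S}\in\mathrm{CNL}_\mathcal{O}$. (Alternatively, one could deduce pro-representability of $\mathrm{Def}_\mathcal{S}$ from that of the unrestricted global deformation functor, cutting out $R_\mathcal{S}$ by the images of the local ideals $I_v$ of Proposition \ref{characterization of local deformation problem}.)

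It remains to identify the tangent space. From Definition \ref{T-deformation or lifting of type S}, since $M_v^0=C^0(\Gamma_{F_v},\widehat{\mathfrak g}(\kappa))$ one has $C^1_\mathcal{S}(\Gamma_F,\widehat{\mathfrak g}(\kappa))=C^1(\Gamma_F,\widehat{\mathfrak g}(\kappa))$, so a class in $H^1_\mathcal{S}(\Gamma_F,\widehat{\mathfrak g}(\kappa))$ is the class of a cocycle $\phi\in Z^1(\Gamma_F,\widehat{\mathfrak g}(\kappa))$ with $\phi|_{\Gamma_{F_v}}\in M_v^1$ — i.e.\ with image in $L_v$ — for all $v\in S$, modulo $B^1(\Gamma_F,\widehat{\mathfrak g}(\kappa))$. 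Comparing with the description of $\mathrm{Def}_\mathcal{S}(\kappa[\epsilon])$ recalled above gives a natural isomorphism $\mathrm{Def}_\mathcal{S}(\kappa[\epsilon])\cong H^1_\mathcal{S}(\Gamma_F,\widehat{\mathfrak g}(\kappa))$, and composing with the canonical isomorphism $\mathrm{Def}_\mathcal{S}(\kappa[\epsilon])\cong\mathrm{Hom}_\kappa(\mathfrak m_{R_\mathcal{S}}/(\mathfrak m_{R_\mathcal{S}}^2,\mathfrak p),\kappa)$ coming from pro-representability yields the asserted identification; finite-dimensionality is as in the discussion of $(H_3)$.
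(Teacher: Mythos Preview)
Your proof is correct and follows essentially the same approach as the paper. The paper simply cites Schlessinger's criterion \cite{Schlessinger1968} for representability without elaboration, whereas you spell out the verification of $(H_1)$--$(H_4)$ (in particular the rigidity step $(H_4)$ via the centralizer argument using $Z(\widehat G)\subset\mu_2$ and $p$ odd); for the tangent space both you and the paper unwind the definition of $H^1_\mathcal{S}$ to identify it with deformations of type $\mathcal{S}$ to $\kappa[\epsilon]$ and then invoke the standard isomorphism with $\mathrm{Hom}_\kappa(\mathfrak m_{R_\mathcal{S}}/(\mathfrak m_{R_\mathcal{S}}^2,\mathfrak p),\kappa)$.
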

	\begin{proof}
		For the representability of
		$\mathrm{Def}_\mathcal{S}$,
		we apply
		Schlessinger's criterion
		as given in
		\cite[Theorem 2.11]{Schlessinger1968}.
		For the isomorphism of
		$\kappa$-vector spaces,
		we argue as follows:
		we have a natural isomorphism
		\[
		V:=
		\mathrm{Hom}_\kappa
		(\mathfrak{m}_{R_\mathcal{S}}/
		(\mathfrak{m}_{R_\mathcal{S}}^2,\mathfrak{p}),
		\kappa)
		\simeq
		\mathrm{Hom}_{\mathrm{CNL}_\mathcal{O}}
		(R_\mathcal{S},
		\kappa[\epsilon]).
		\]
		By the argument in
		Lemma
		\ref{isomorphisms of tangent spaces},
		$V$ consists of equivalence classes of liftings
		$r$ of type $\mathcal{S}$
		to
		$\kappa[\epsilon]$.
		Any such
		$r$
		gives rise to an element
		$(r\overline{r}^{-1}-1)
		\in
		C^1(\Gamma_F,
		\widehat{\mathfrak{g}}(\kappa))$.
		By definition of the differentials on
		$C^\bullet_{\mathcal{S}}
		(\Gamma_F,
		\widehat{\mathfrak{g}}(\kappa))$,
		the above elements in
		$C^1_{\mathcal{S}}
		(\Gamma_F,
		\widehat{\mathfrak{g}}(\kappa))$
		are in fact cocycles.
		Moreover coboundaries in
		$C^1_{\mathcal{S}}
		(\Gamma_F,
		\widehat{\mathfrak{g}}(\kappa))$
		are of the form
		$\partial g$
		with
		$g\in
		\widehat{\mathfrak{g}}(\kappa)$.
		Thus two elements
		$r$ and
		$r'$ in $V$
		differ by a coboundary
		$\partial g$
		if and only if
		$r'$ is conjugate to $r$ by
		the element
		$\exp(g)
		=1+g$,
		if and only if they lie in the same
		equivalence class in
		$\mathrm{Def}_\mathcal{S}(\kappa[\epsilon])$.
		Therefore we get the isomorphism
		\[
		\mathrm{Hom}_\kappa
		(\mathfrak{m}_{R_\mathcal{S}}/
		(\mathfrak{m}_{R_\mathcal{S}}^2,\mathfrak{p}),
		\kappa)
		\simeq
		H^1_{\mathcal{S}}
		(\Gamma_F,
		\widehat{\mathfrak{g}}(\kappa)).
		\]
	\end{proof}

	\begin{corollary}\label{relative dimension of deformation rings}
		Retain the assumption as in the preceding
		proposition,
		then
		the relative dimension of
		$R_\mathcal{S}$
		over
		$\mathcal{O}$
		is
		$\mathrm{dim}_\kappa
		H^1_{\mathcal{S}}
		(\Gamma_{F},
		\widehat{\mathfrak{g}}(\kappa))$.
	\end{corollary}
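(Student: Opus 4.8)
The plan is to deduce this directly from the preceding proposition together with the complete (topological) version of Nakayama's lemma. Recall that the relative dimension of $R_{\mathcal{S}}$ over $\mathcal{O}$ is, by definition, the minimal number of topological generators of $R_{\mathcal{S}}$ as an $\mathcal{O}$-algebra, equivalently $\dim_\kappa\big(\mathfrak{m}_{R_{\mathcal{S}}}/(\mathfrak{m}_{R_{\mathcal{S}}}^2,\mathfrak{p})\big)$. Indeed, any continuous surjection $\mathcal{O}[[x_1,\dots,x_m]]\twoheadrightarrow R_{\mathcal{S}}$ in $\mathrm{CNL}_\mathcal{O}$ induces a surjection on the reduced cotangent spaces, so $m$ is at least this dimension; conversely, lifting a $\kappa$-basis of $\mathfrak{m}_{R_{\mathcal{S}}}/(\mathfrak{m}_{R_{\mathcal{S}}}^2,\mathfrak{p})$ to elements of $\mathfrak{m}_{R_{\mathcal{S}}}$ produces a continuous $\mathcal{O}$-algebra surjection from the power series ring in exactly that many variables. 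This last step is where completeness and noetherianity of $R_{\mathcal{S}}\in\mathrm{CNL}_{\mathcal{O}}$ enter, via the topological Nakayama lemma.

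Thus the only thing left to identify is $\dim_\kappa\big(\mathfrak{m}_{R_{\mathcal{S}}}/(\mathfrak{m}_{R_{\mathcal{S}}}^2,\mathfrak{p})\big)$. But the preceding proposition provides a natural isomorphism of finite-dimensional $\kappa$-vector spaces
\[
\mathrm{Hom}_\kappa\big(\mathfrak{m}_{R_{\mathcal{S}}}/(\mathfrak{m}_{R_{\mathcal{S}}}^2,\mathfrak{p}),\kappa\big)
\simeq
H^1_{\mathcal{S}}(\Gamma_F,\widehat{\mathfrak{g}}(\kappa)),
\]
and since a finite-dimensional vector space has the same dimension as its dual, we get $\dim_\kappa\big(\mathfrak{m}_{R_{\mathcal{S}}}/(\mathfrak{m}_{R_{\mathcal{S}}}^2,\mathfrak{p})\big)=\dim_\kappa H^1_{\mathcal{S}}(\Gamma_F,\widehat{\mathfrak{g}}(\kappa))$. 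Combining this with the previous paragraph yields the corollary. The finite-dimensionality of $H^1_{\mathcal{S}}$ that makes this argument legitimate follows from that of $H^1(\Gamma_F,\widehat{\mathfrak{g}}(\kappa))$ and of the local groups $H^i(\Gamma_{F_v},\widehat{\mathfrak{g}}(\kappa))$, $v\in S$, through the long exact sequence of Lemma \ref{long exact sequence}, as recorded in the remark following it.

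There is no genuine obstacle here: the statement is formal once the preceding proposition is available. The only point requiring a little care is the convention for \emph{relative dimension}: throughout, it denotes the number of variables in a minimal power series presentation of $R_{\mathcal{S}}$ over $\mathcal{O}$ (the relative embedding dimension), not the Krull dimension minus one. The latter would, from a presentation argument alone, only satisfy the two-sided bound $\dim_\kappa H^1_{\mathcal{S}}-\dim_\kappa H^2_{\mathcal{S}}\le \dim R_{\mathcal{S}}-\dim\mathcal{O}\le \dim_\kappa H^1_{\mathcal{S}}$, and equality there would require an additional unobstructedness-type input that is neither claimed nor needed for the applications in the sequel.
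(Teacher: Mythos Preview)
Your proof is correct and matches the paper's intent: the corollary is stated in the paper without proof, as an immediate consequence of the isomorphism $\mathrm{Hom}_\kappa(\mathfrak{m}_{R_{\mathcal{S}}}/(\mathfrak{m}_{R_{\mathcal{S}}}^2,\mathfrak{p}),\kappa)\simeq H^1_{\mathcal{S}}(\Gamma_F,\widehat{\mathfrak{g}}(\kappa))$ in the preceding proposition together with the standard topological Nakayama argument you spell out. Your clarification of the convention that ``relative dimension'' here means the relative embedding dimension (dimension of the reduced cotangent space), rather than Krull dimension, is apt and consistent with how the quantity is used later in the proof of Theorem~\ref{R=T}.
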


	We have a natural perfect pairing of
	$\kappa[\Gamma_{F_v}]$-modules:
	$\widehat{\mathfrak{g}}(\kappa)
	\times
	\widehat{\mathfrak{g}}(\kappa)[1]
	\rightarrow
	\kappa[1]$,
	given by
	sending
	$(x,y)$
	to
	$\mathrm{Tr}(xy)$.
	This induces a perfect pairing on
	cohomology groups
	\begin{equation}\label{pairing on cohomology groups}
		H^1
		\big(
		\Gamma_{F_v},\widehat{\mathfrak{g}}(\kappa)
		\big)
		\times
		H^1
		\big(
		\Gamma_{F_v},\widehat{\mathfrak{g}}(\kappa)[1]
		\big)
		\rightarrow
		H^2(\Gamma_{F_v},\kappa[1])
		\simeq
		\kappa.
	\end{equation}

	\begin{definition}
		Let
		$\Gamma_{F,S}$
		be the Galois group of the maximal extension of
		$F$ unramified outside $S\cup\{\infty\}$.
		Using the pairing (\ref{pairing on cohomology groups}), we write
		\[
		L_v^\perp
		\subset
		H^1(\Gamma_{F_v},
		\widehat{\mathfrak{g}}(\kappa)[1])
		\]
		for the the subspace which annihilates
		$L_v$.
		Then we set
		\[
		H^1_{\mathcal{S}^\perp}
		(\Gamma_{F,S},
		\widehat{\mathfrak{g}}(\kappa)[1])
		:=
		\mathrm{Ker}
		\left(
		H^1(\Gamma_{F,S},
		\widehat{\mathfrak{g}}(\kappa)[1])
		\rightarrow
		\bigoplus_{v\in S}
		H^1
		(\Gamma_{F_v},
		\widehat{\mathfrak{g}}(\kappa)[1])/L_v^\perp
		\right).
		\]
	\end{definition}

	From now on we assume that the residual Galois representation $\overline{r}$ is \emph{unramified outside $S$}.	
	Concerning the dimensions of the
	Galois cohomology groups,
	we have
	\begin{proposition}
		Retain the above notations.
		Moreover, we write
		$H_?^i$
		for the space
		$H_?^i(\Gamma_{F,S},
		\widehat{\mathfrak{g}}(\kappa))$,
		$H_?^i[1]$
		for
		$H_?^i(\Gamma_{F,S},
		\widehat{\mathfrak{g}}(\kappa)[1])$
		and
		$H^i_v$ for
		$H^i(\Gamma_{F_v},
		\widehat{\mathfrak{g}}(\kappa))$,
		and write
		$h^i_?$,
		$h^i_?[1]$,
		$h^i_v$, $l_v$ and
		$l_v^\perp$
		for the dimensions of
		$H^i_?$,
		$H^i_?[1]$,
		$H^i_v$, $L_v$
		and
		$L_v^\perp$
		over $\kappa$. Then we have
		\begin{enumerate}
			\item 
			$H^0_{\mathcal{S}}
			=
			H^0$,
			thus
			$h^0_\mathcal{S}=h^0$.

			\item
			$h^2_{\mathcal{S}}
			=
			h^1_{\mathcal{S}^\perp}[1]$.

			\item 
			$
			h^3_{\mathcal{S}}
			=
			h^0[1]$.
			
			\item 
			For $i>3$,
			$h^i_{\mathcal{S}}=0$.
			
			\item 
			$\chi
			(H_{\mathcal{S}}^\bullet)
			=
			\sum_{v |\infty}h_v^0
			+
			\sum_{v\in S}
			(h_v^0-l_v)$.
			(\emph{cf.}
			(\ref{Euler characteristic})).
		\end{enumerate}
	\end{proposition}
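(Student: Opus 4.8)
The plan is to read off (1) and (4) directly from the shape of the complex $C^\bullet_{\mathcal{S}}(\Gamma_{F,S},\widehat{\mathfrak{g}}(\kappa))$, and to obtain (2), (3) and (5) by feeding the Poitou--Tate machinery and the Euler characteristic identity of Lemma \ref{long exact sequence} into the long exact sequence (\ref{long exact sequence for cohomology groups}). Throughout I would use the standing hypotheses that $p$ is odd (so that $\Gamma_{F,S}$ has $p$-cohomological dimension $2$ and the archimedean cohomology vanishes above degree $0$), that $F$ is totally real, that $\overline{r}$ is unramified outside $S$, and --- for (3) and (5) --- that $S$ contains all places above $p$ (which is the case in all our applications, since $\mathcal{S}$ carries Fontaine--Laffaille conditions at $v\mid p$). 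I would also use that the perfect trace pairing of (\ref{pairing on cohomology groups}) provides an isomorphism of $\Gamma_{F,S}$-modules $\widehat{\mathfrak{g}}(\kappa)^{*}(1)\simeq\widehat{\mathfrak{g}}(\kappa)[1]$, so that Poitou--Tate duality for $\widehat{\mathfrak{g}}(\kappa)$ involves exactly $\widehat{\mathfrak{g}}(\kappa)[1]$.

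For (1): since $M_v^{-1}=0$ and $M_v^{0}=C^{0}(\Gamma_{F_v},\widehat{\mathfrak{g}}(\kappa))$, the complex $C^\bullet_{\mathcal{S}}$ agrees with $C^\bullet(\Gamma_{F,S},\widehat{\mathfrak{g}}(\kappa))$ in degrees $\le 1$ (the local summands in degrees $0$ and $1$ being $0$), with the ordinary differential; hence $H^0_{\mathcal{S}}=\ker\big(C^0(\Gamma_{F,S},\widehat{\mathfrak{g}}(\kappa))\to C^1(\Gamma_{F,S},\widehat{\mathfrak{g}}(\kappa))\big)=H^0$, and $h^0_{\mathcal{S}}=h^0$. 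Equivalently, in (\ref{long exact sequence for cohomology groups}) the connecting map out of $H^0$ into the local terms vanishes, so $H^0_{\mathcal{S}}\xrightarrow{\ \sim\ }H^0$. For (4): if $i\ge 4$ then $H^i(\Gamma_{F,S},\widehat{\mathfrak{g}}(\kappa))=0$ because $p$ is odd and $F$ is a number field, and $H^{i-1}(\Gamma_{F_v},\widehat{\mathfrak{g}}(\kappa))=0$ for each finite $v\in S$ because local Galois groups have $p$-cohomological dimension $\le 2$; the long exact sequence then squeezes $H^i_{\mathcal{S}}$ between two vanishing terms, so $h^i_{\mathcal{S}}=0$.

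For (3): the tail of (\ref{long exact sequence for cohomology groups}) is $H^2\xrightarrow{\ \mathrm{res}\ }\bigoplus_{v\in S}H^2_v\to H^3_{\mathcal{S}}\to H^3=0$, so $H^3_{\mathcal{S}}\simeq\operatorname{coker}\big(\mathrm{res}\colon H^2(\Gamma_{F,S},\widehat{\mathfrak{g}}(\kappa))\to\bigoplus_{v\in S}H^2(\Gamma_{F_v},\widehat{\mathfrak{g}}(\kappa))\big)$. By the last three terms of the nine-term Poitou--Tate exact sequence for $\Gamma_{F,S}$ and $\widehat{\mathfrak{g}}(\kappa)$ (valid since $S$ contains the archimedean places and $S_p$), this cokernel is $H^0(\Gamma_{F,S},\widehat{\mathfrak{g}}(\kappa)^{*}(1))^{\vee}\simeq H^0(\Gamma_{F,S},\widehat{\mathfrak{g}}(\kappa)[1])^{\vee}$, whence $h^3_{\mathcal{S}}=h^0[1]$. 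For (2): the long exact sequence exhibits $H^2_{\mathcal{S}}$ as an extension of $\ker\big(\mathrm{res}\colon H^2\to\bigoplus_{v\in S}H^2_v\big)$ by $\operatorname{coker}\big(H^1\to\bigoplus_{v\in S}H^1_v/L_v\big)$. I would then run the Poitou--Tate sequence once more: local Tate duality turns $\bigoplus_{v\in S}H^1(\Gamma_{F_v},\widehat{\mathfrak{g}}(\kappa))$ and the subspaces $L_v$ into $\bigoplus_{v\in S}H^1(\Gamma_{F_v},\widehat{\mathfrak{g}}(\kappa)[1])$ and the annihilators $L_v^{\perp}$, and a diagram chase matches the two graded pieces above against the defining exact sequence of $H^1_{\mathcal{S}^{\perp}}(\Gamma_{F,S},\widehat{\mathfrak{g}}(\kappa)[1])$, producing a perfect pairing $H^2_{\mathcal{S}}\times H^1_{\mathcal{S}^{\perp}}[1]\to\kappa$ and so $h^2_{\mathcal{S}}=h^1_{\mathcal{S}^{\perp}}[1]$. (Compactly, (1)--(4) are all cases of the Poitou--Tate duality $H^i_{\mathcal{S}}\simeq\big(H^{3-i}_{\mathcal{S}^{\perp}}[1]\big)^{\vee}$ for Selmer complexes, together with the fact --- the analogue of (1) for $\mathcal{S}^{\perp}$ --- that $H^0_{\mathcal{S}^{\perp}}[1]=H^0(\Gamma_{F,S},\widehat{\mathfrak{g}}(\kappa)[1])$.)

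Finally, for (5): by the Euler characteristic identity in Lemma \ref{long exact sequence}, $\chi(H^\bullet_{\mathcal{S}})=\chi(H^\bullet)-\sum_{v\in S}\chi(H^\bullet_v)-\sum_{v\in S}(l_v-h^0_v)$. Tate's global Euler characteristic formula for the totally real field $F$ gives $\chi(H^\bullet)=\sum_{v\mid\infty}h^0_v-[F:\mathbb{Q}]\dim_\kappa\widehat{\mathfrak{g}}(\kappa)$ (only the degree-$0$ archimedean terms survive, as $p$ is odd), and Tate's local Euler characteristic formula gives $\chi(H^\bullet_v)=0$ for $v\nmid p$ and $\chi(H^\bullet_v)=-[F_v:\mathbb{Q}_p]\dim_\kappa\widehat{\mathfrak{g}}(\kappa)$ for $v\mid p$; since $S\supseteq S_p$, summing yields $\sum_{v\in S}\chi(H^\bullet_v)=-[F:\mathbb{Q}]\dim_\kappa\widehat{\mathfrak{g}}(\kappa)$. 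Substituting, the $[F:\mathbb{Q}]\dim_\kappa\widehat{\mathfrak{g}}(\kappa)$ terms cancel and one is left with $\chi(H^\bullet_{\mathcal{S}})=\sum_{v\mid\infty}h^0_v+\sum_{v\in S}(h^0_v-l_v)$, which is (5). I expect part (2) to be the main obstacle: the delicate point is keeping precise track, under local Tate duality, of which subspaces $L_v$ are paired with which $L_v^{\perp}$, and reassembling the Poitou--Tate pieces into the stated identification of $H^2_{\mathcal{S}}$ with the linear dual of the dual Selmer group; parts (1), (3), (4) and (5) are then routine once the Poitou--Tate sequence and Tate's Euler characteristic formulas are in hand.
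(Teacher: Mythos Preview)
Your proposal is correct and follows essentially the same approach as the paper: both read (1) and (4) off the long exact sequence (\ref{long exact sequence for cohomology groups}) (you additionally note the direct argument from $C^0_{\mathcal{S}}=C^0$, $C^1_{\mathcal{S}}=C^1$), identify $H^3_{\mathcal{S}}$ with the cokernel of the global-to-local restriction in degree $2$ and invoke the tail of Poitou--Tate for (3), combine the long exact sequence with Poitou--Tate and local Tate duality for (2), and plug Tate's local and global Euler characteristic formulas into the identity of Lemma \ref{long exact sequence} for (5). The only cosmetic difference is that the paper presents (2) via an explicit commutative diagram comparing the Poitou--Tate sequence with a quotient-by-$L_v$ version (as in \cite[Lemma 2.3.4]{ClozelHarrisTaylor2008}), whereas you phrase the same diagram chase as matching the two graded pieces of $H^2_{\mathcal{S}}$ against the defining sequence of $H^1_{\mathcal{S}^{\perp}}[1]$; your parenthetical framing via Selmer-complex duality $H^i_{\mathcal{S}}\simeq(H^{3-i}_{\mathcal{S}^{\perp}}[1])^{\vee}$ is a useful conceptual wrapper but not a different argument.
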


	\begin{proof}
		Note that the long exact sequence
		(\ref{long exact sequence for cohomology groups})
		is still valid if we replace the group
		$\Gamma_F$
		by
		$\Gamma_{F,S}$
		by our assumption on
		$\overline{r}$.
		\begin{enumerate}
			\item 
			Use the first three terms in
			(\ref{long exact sequence for cohomology groups}).

			\item 
			We have the Poitou-Tate exact sequence
			\[
			\begin{tikzcd}
				0
				\arrow[r]
				&
				H^0
				\arrow[r]
				&
				\oplus_{v\in S}
				H^0_v
				\arrow[r]
				\arrow[d, phantom, ""{coordinate, name=Z}]
				&
				H^2[1]^\vee
				\arrow[lld, rounded corners, to path={ -- ([xshift=2ex]\tikztostart.east) |- (Z) [near end]\tikztonodes			
					-| ([xshift=-2ex]\tikztotarget.west)			
					-- (\tikztotarget)}]
				&
				\\
				&
				H^1
				\arrow[r]
				&
				\oplus_{v\in S}
				H^1_v
				\arrow[r]
				\arrow[d, phantom, ""{coordinate, name=Z}]
				&
				H^1[1]^\vee
				\arrow[lld, rounded corners, to path={ -- ([xshift=2ex]\tikztostart.east) |- (Z) [near end]\tikztonodes			
					-| ([xshift=-2ex]\tikztotarget.west)			
					-- (\tikztotarget)}]
				&
				\\
				&
				H^2
				\arrow[r]
				&
				\oplus_{v\in S}
				H^2_v
				\arrow[r]
				&
				H^0[1]^\vee
				\arrow[r]
				&
				0
			\end{tikzcd}
			\]
			where $M^\vee$ is the $\kappa$-dual of $M$.
			Write
			$H$ for
			$\oplus_{v\in S} H^1/L_v$.
			By
			(\ref{long exact sequence for cohomology groups}),
			we have
			\begin{align*}
				h^2_{\mathcal{S}}
				&
				=
				\mathrm{dim}_\kappa
				\mathrm{Ker}
				(H^2_{\mathcal{S}}
				\rightarrow
				H^2)
				+
				\mathrm{dim}_\kappa
				\mathrm{Im}
				(H^2_{\mathcal{S}}
				\rightarrow
				H^2)
				\\
				&
				=
				\mathrm{dim}_\kappa
				\mathrm{Im}
				(
				H
				\rightarrow
				H^2_{\mathcal{S}}
				)
				+
				\mathrm{dim}_\kappa
				\mathrm{Im}
				(H^2_{\mathcal{S}}
				\rightarrow
				H^2)
				\\
				&
				=
				\mathrm{dim}_\kappa
				H
				-
				\mathrm{dim}_\kappa
				\mathrm{Ker}
				(
				H
				\rightarrow
				H^2_{\mathcal{S}}
				)
				+
				\mathrm{dim}_\kappa
				\mathrm{Im}(H^2_{\mathcal{S}}
				\rightarrow
				H^2)
				\\
				&
				=
				\mathrm{dim}_\kappa
				H
				-
				\mathrm{dim}_\kappa
				\mathrm{Im}
				(
				H^1
				\rightarrow
				H
				)
				+
				\mathrm{dim}_\kappa
				\mathrm{Ker}
				(
				H^2
				\rightarrow
				\oplus_{v\in S}H^2_v
				).
			\end{align*}
			On the other hand,
			\begin{align*}
				h^1_{\mathcal{S}^\perp}[1]
				&
				=
				\mathrm{dim}_\kappa
				\mathrm{Ker}
				\left(
				H^1[1]
				\rightarrow
				\oplus_{v\in S}
				H^1_v[1]/L_v^\perp
				\right)
				\\
				&
				=
				\mathrm{dim}_\kappa
				\mathrm{Ker}
				\left(
				H^1[1]
				\rightarrow
				\oplus_{v\in S}
				H^1_v[1]
				\right)
				+
				\sum_{v\in S}
				l_v^\perp
				\\
				&
				=
				\mathrm{dim}_\kappa H
			\end{align*}

			Another way to prove this is
			as in the proof of
			\cite[Lemma 2.3.4]{ClozelHarrisTaylor2008}.
			We have the following natural commutative diagram
			with exact rows
			\[
			\begin{tikzcd}
				H^1
				\arrow[r]
				\arrow[d,"="]
				&
				\oplus_{v\in S}
				H^1_v
				\arrow[r]
				\arrow[d,"\mathrm{pr}"]
				&
				H^1[1]^\vee
				\arrow[r]
				\arrow[d,"\mathrm{Ker}^\vee"]
				&
				H^2
				\arrow[d,"="]
				\\
				H^1
				\arrow[r]
				&
				(\oplus_{v\in S}
				H^1_v/L_v)
				\arrow[r]
				&
				H^1_{\mathcal{S}^\perp}[1]^\vee
				\arrow[r]
				&
				H^2
			\end{tikzcd}
			\]
			Therefore one obtains
			another exact sequence
			from the Poitou-Tate long exact sequence:
			\[
			H^1
			\rightarrow
			(\oplus_{v\in S}
			H^1_v/L_v)
			\rightarrow
			H^1_{\mathcal{S}}[1]^\vee
			\rightarrow
			H^2
			\rightarrow
			\oplus_{v\in S}
			H^2
			\rightarrow
			H^0[1]^\vee
			\rightarrow
			0
			\]
			Compare the first three terms of
			this sequence with the second row of
			(\ref{long exact sequence for cohomology groups})
			and use (3),
			then we get that
			$h_{\mathcal{S}}^2
			=
			h_{\mathcal{S}^\perp}^1$.

			\item 
			Compare the last four terms of the above exact
			sequence with the
			last four terms of
			(\ref{long exact sequence for cohomology groups}).
			One verifies that
			the maps
			$H^2
			\rightarrow
			\oplus_{v\in S}
			H^2_v$
			in these two long exact sequences
			coincide
			(both are restriction maps).
			Therefore we have
			$h^3_{\mathcal{S}}
			=
			h^0[1]$.

			\item 
			This follows from
			the last terms of
			(\ref{long exact sequence for cohomology groups}).
			
			\item
			By the local and global Euler-Poincar\'{e}
			characteristic formula
			(\cite[Chapter I, Theorems 2.8 \& 5.1]{Milne2006}),
			we know that
			\begin{align*}
				\sum_{v\in S}\chi(H_v^\bullet)
				&
				=
				\sum_{v\in S}
				(h_v^0-h_v^1+h_v^2)
				=
				\sum_{v\in S}
				\# 
				(
				\mathcal{O}_{F,v}/
				\# (\widehat{\mathfrak{g}}(\kappa))
				\mathcal{O}_{F,v}
				)
				\\
				&
				=
				\sum_{v |p}
				\# 
				(
				\mathcal{O}_{F,v}/
				\# (\widehat{\mathfrak{g}}(\kappa))
				\mathcal{O}_{F,v}
				)
				=
				\# 
				(
				\mathcal{O}_{F,p}/
				\# (\widehat{\mathfrak{g}}(\kappa))
				\mathcal{O}_{F,p}
				)
				\\
				&
				=
				[F:\mathbb{Q}]
				\# 
				(
				\widehat{\mathfrak{g}}(\kappa)
				),
				\\
				\chi(H^\bullet)
				&
				=
				h^0-h^1+h^2
				=
				\sum_{v |\infty}
				(
				h_v^0-
				|\# 
				(
				\widehat{\mathfrak{g}}(\kappa)
				)|_v
				)
				\\
				&
				=
				\sum_{v |\infty}h_v^0
				-
				[F:\mathbb{Q}]
				\# 
				(
				\widehat{\mathfrak{g}}(\kappa)
				).
			\end{align*}
			This gives us the expression for
			$\chi(H_{\mathcal{S}}^\bullet)$
			as in the proposition
			using the formula given in
			Lemma
			\ref{long exact sequence}.
			
		\end{enumerate}
		
	\end{proof}
	
	\begin{remark}\label{dimension for infinite H_v^0}\rm 
		By Theorem \ref{Galois representations attached to auto rep}, for each place $v|\infty$, we have
		
		\[
		h_v^0
		=
		\begin{cases*}
			N(N+1)/2,
			&
			$G^\ast=\mathrm{SO}_n$ with $n$ odd;
			\\
			N(N-1)/2,
			&
			$G^\ast=\mathrm{Sp}_n$ or $G^\ast=\mathrm{SO}_n^\eta$ with $n$ even.
		\end{cases*}
		\]
	\end{remark}
	
	Putting together the results in the preceding
	proposition,
	\begin{corollary}\label{dimension of H^1}
		We have
		\[
		h^1_{\mathcal{S}}
		=
		h^1_{\mathcal{S}^\perp}[1]
		-
		h^0[1]
		-
		\sum_{v |\infty}h_v^0
		+
		\sum_{v\in S}
		(l_v
		-
		h_v^0
		)
		+
		h^0.
		\]
		Moreover,
		the universal deformation ring
		$R_\mathcal{S}$
		is of relative dimension
		$\leq h_{\mathcal{S}}^1$
		over
		$\mathcal{O}$.
	\end{corollary}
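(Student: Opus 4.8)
The plan is to obtain both assertions as formal consequences of the preceding proposition together with Corollary~\ref{relative dimension of deformation rings}; no new input is needed, and the argument is pure linear algebra on the long exact sequences already in place. First I would record that by part~(4) of the preceding proposition the groups $H^i_{\mathcal{S}}(\Gamma_{F,S},\widehat{\mathfrak{g}}(\kappa))$ vanish for $i>3$, so that the Euler characteristic introduced in~(\ref{Euler characteristic}) is the finite alternating sum $\chi(H^\bullet_{\mathcal{S}})=h^0_{\mathcal{S}}-h^1_{\mathcal{S}}+h^2_{\mathcal{S}}-h^3_{\mathcal{S}}$. All terms here are finite under the standing hypotheses (in particular $\dim_\kappa H^1(\Gamma_F,\widehat{\mathfrak{g}}(\kappa))<\infty$, $\overline{r}$ unramified outside $S$, and local finiteness of $H^i(\Gamma_{F_v},\widehat{\mathfrak{g}}(\kappa))$), so the telescoping is legitimate.

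Next I would substitute the identifications supplied by parts~(1)--(3): $h^0_{\mathcal{S}}=h^0$, $h^2_{\mathcal{S}}=h^1_{\mathcal{S}^\perp}[1]$, and $h^3_{\mathcal{S}}=h^0[1]$, so that $\chi(H^\bullet_{\mathcal{S}})=h^0-h^1_{\mathcal{S}}+h^1_{\mathcal{S}^\perp}[1]-h^0[1]$. Equating this with the closed expression for $\chi(H^\bullet_{\mathcal{S}})$ from part~(5), namely $\sum_{v\mid\infty}h^0_v+\sum_{v\in S}(h^0_v-l_v)$, and solving for $h^1_{\mathcal{S}}$ gives $h^1_{\mathcal{S}}=h^1_{\mathcal{S}^\perp}[1]-h^0[1]-\sum_{v\mid\infty}h^0_v-\sum_{v\in S}(h^0_v-l_v)+h^0$; rewriting $-\sum_{v\in S}(h^0_v-l_v)=\sum_{v\in S}(l_v-h^0_v)$ yields exactly the stated identity. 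Some care is needed only to keep the cyclotomic twists $[1]$ attached to the correct cohomology groups so that the self-dual pairing~(\ref{pairing on cohomology groups}) is invoked precisely where part~(2) uses $h^2_{\mathcal{S}}=h^1_{\mathcal{S}^\perp}[1]$.

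For the dimension bound, I would invoke the tangent-space isomorphism in the proposition preceding Corollary~\ref{relative dimension of deformation rings}, which identifies $\mathrm{Hom}_\kappa(\mathfrak{m}_{R_{\mathcal{S}}}/(\mathfrak{m}_{R_{\mathcal{S}}}^2,\mathfrak{p}),\kappa)$ with $H^1_{\mathcal{S}}(\Gamma_F,\widehat{\mathfrak{g}}(\kappa))$. Thus the cotangent space $\mathfrak{m}_{R_{\mathcal{S}}}/(\mathfrak{m}_{R_{\mathcal{S}}}^2,\mathfrak{p})$ is $h^1_{\mathcal{S}}$-dimensional, so by the topological Nakayama lemma $R_{\mathcal{S}}$ is topologically generated over $\mathcal{O}$ by $h^1_{\mathcal{S}}$ elements, hence a quotient of $\mathcal{O}[[x_1,\dots,x_{h^1_{\mathcal{S}}}]]$, and in particular has relative dimension $\le h^1_{\mathcal{S}}$ over $\mathcal{O}$.

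There is essentially no obstacle in this corollary: it is bookkeeping built entirely on the preceding proposition and on Corollary~\ref{relative dimension of deformation rings}. The only points deserving attention are the finite-dimensionality of every group entering the Euler characteristics (so that the alternating sums make sense and the vanishing for $i>3$ can be applied) and the correct tracking of the Tate twists; both are already ensured by the hypotheses carried over from the representability of $\mathrm{Def}_{\mathcal{S}}$.
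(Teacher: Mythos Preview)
Your proof is correct and follows exactly the paper's approach: the paper's entire proof is the phrase ``Putting together the results in the preceding proposition,'' and you have spelled out precisely that bookkeeping—expanding $\chi(H^\bullet_{\mathcal{S}})$ via parts (1)--(4), equating with the formula in part (5), and solving for $h^1_{\mathcal{S}}$. Your treatment of the dimension bound via the tangent-space identification and Nakayama is likewise the standard unpacking of Corollary~\ref{relative dimension of deformation rings}.
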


	\subsection{Local deformation problems}
	In this subsection,
	we study some local deformation problems.

	\subsubsection{Fontaine-Lafaille deformations}\label{Fontaine-Lafaille deformations}

	We consider crystalline deformations in the Fontaine-Lafaille range (also called \emph{Fontaine-Lafaille deformations}) as in \cite[Definition 5.1]{Booher2019} (for $n$ even).\footnote{Note that in \cite{Booher2019}, the Galois representations take values in $H=\mathrm{GSp}_N$ or $H=\mathrm{GO}_N$, instead of our groups $^CG$. However, in \emph{loc.cit}, a lifting of the similitude factor of $H$ is fixed while in our case, a lifting of the character $\mu_{\overline{r}}$ is fixed. Thus it is easy to see that the deformation theory of \emph{loc.cit} and our case are equivalent.} Recall our convention is that the $p$-adic cyclotomic character has Hodge-Tate weight $-1$.

	We briefly recall the set-up in \cite{Booher2019}: we fix a place $v$ of $F$ unramified over $p$ and write $[F]_v$ for the set of embeddings $\tau\colon F_v\to\overline{\Q}_p$. We set $\sigma_v\colon
	\mathcal{O}_v
	\rightarrow
	\mathcal{O}_v$
	to be the Frobenius automorphism. We assume that
	
	\begin{equation}\label{(C3)}
		\makebox[0pt][l]{\hspace{-2cm}(C3)}
		\begin{cases*}
			\text{(1) the character $\nu_v\colon\Gamma_{F_v}\to\Ocal^\times$ is crystalline with Hodge-Tate weights $(w_\tau)_{\tau\in[F]_v}$}
			\\
			\text{in an interval of length $p-2$; (2)
				$\overline{r}$ is torsion-crystalline with Hodge-Tate weights}
			\\
			\text{in an interval $[a_v,b_v]$ of length $(p-2)/2$}
			\index{C@(C3)}
		\end{cases*}
		\notag
	\end{equation}
	The last condition means that there is a crystalline representation $V$ of $\Gamma_{F_v}$ with Hodge-Tate weights in $[a_v,b_v]$ and $\Gamma_{F_v}$-stable lattices $\Lambda\subset\Lambda'$ in $V$ such that $\Lambda'/\Lambda$ is isomorphic to the representation $\overline{r}$ (\emph{cf.} \cite[§4.1]{Booher2019}).

	\begin{remark}\label{remark on (C3) and (C6)}\rm
		Let $r=r_\pi$ be the Galois representation as in Theorem \ref{Galois representations attached to auto rep}, then for $v$ unramified over $p$, the second part of (C3) is satisfied for $\overline{r}$ if and only if $2a_{v,1}\leq(p-2)/2$ (here $a_{v,1}$ is part of the infinitesimal character of $\pi$ as in (\ref{a_1>a_2...})); the first part of (C3) is always satisfied for $\mu_{\overline{r}}$ (whose Hodge-Tate weight is $-1$).
	\end{remark}

	\begin{definition}\label{crystalline local deformation problem}
		We define the \emph{Fontaine-Laffaille deformation} functor
		\[
		\mathcal{D}_v^\mathrm{FL}
		=
		\mathcal{D}_{v,\overline{r}_v}^\mathrm{FL}
		\colon\mathrm{CNL}_\Ocal\to\mathrm{Sets}
		\]
		as follows: for each object $R$ in $\mathrm{CNL}_\Ocal$, $\mathcal{D}_v^\mathrm{FL}(R)$ consists of liftings $r\colon\Gamma_{F_v}\to\,^CG(R)$ of $\overline{r}_v$ such that $\mu_r=\nu$ and
		the Hodge-Tate weights of $r$ lie in the interval $[a_v,b_v]$ (\emph{cf.}\cite[Definition 5.1]{Booher2019}).
	\end{definition}

	Then we have
	\begin{theorem}\label{crystalline deformation-dimension defaut}
		The Fontaine-Laffaille deformation functor $\mathcal{D}_v^\mathrm{FL}$ is a local deformation problem. Suppose for each $\tau\in[F]_v$, the Hodge-Tate weights of $\overline{r}_v$ at $\tau$ is \emph{multiplicity-free}, then
		\[
		\mathrm{dim}_{\kappa}(L_v)
		-
		\mathrm{dim}_{\kappa}H^0(\Gamma_{F_v},\widehat{\mathfrak{g}}(\kappa))
		=
		[F_v:\Q_p]
		(\mathrm{dim}(\widehat{G}_{\kappa_v})-\mathrm{dim}(\widehat{B}_{\kappa_v})).
		\]
		Here $\widehat{B}_{\kappa_v}$ is a Borel subgroup of $\widehat{G}_{\kappa_v}$.
	\end{theorem}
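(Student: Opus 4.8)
The plan is to prove the two assertions separately: that $\mathcal{D}_v^{\mathrm{FL}}$ is a local deformation problem follows formally from the properties of the Fontaine--Laffaille functor, and the dimension formula reduces to a Bloch--Kato style count of extensions together with a short computation in the root datum of $\widehat{G}$. Throughout I would lean on \cite{Booher2019}: by the footnote preceding Definition \ref{crystalline local deformation problem}, liftings of $\overline{r}_v$ into $^CG$ with fixed multiplier $\nu$ are equivalent to the setting of \emph{loc.\ cit.} (liftings into $\mathrm{GSp}_N$ or $\mathrm{GO}_N$ with fixed similitude character), so both parts of the theorem translate directly from results there.

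For the first assertion, I would verify the axioms (1)--(6) of Definition \ref{local deformation problem}. Axiom (1) is condition (C3), and (2), (5) are formal. The content of (3), (4), (6) is the exactness and full faithfulness of the Fontaine--Laffaille functor $\mathbf{M}$ on the category of torsion-crystalline $\Gamma_{F_v}$-representations with Hodge--Tate weights in an interval of length $\le p-2$ --- a hypothesis guaranteed by (C3): the Fontaine--Laffaille module of a lift $r_1\oplus r_2$ over a fibre product $R_3 = R_1\times_{R_0}R_2$ is the fibre product of the Fontaine--Laffaille modules, which is again of the prescribed type, giving (3); compatibility with projective limits gives (4); and the fact that $\mathbf{M}$ detects the Fontaine--Laffaille condition along an injection gives (6). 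I would write out the fibre-product step and cite \cite{Booher2019} for the rest.

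For the dimension formula, Definition \ref{L_v} together with Lemma \ref{isomorphisms of tangent spaces} identifies $L_v\subseteq H^1(\Gamma_{F_v},\widehat{\mathfrak{g}}(\kappa))$ with the space of lifts of $\overline{r}_v$ to $\kappa[\epsilon]$ of type $\mathcal{D}_v^{\mathrm{FL}}$ modulo $\exp$-conjugation, i.e.\ with the Fontaine--Laffaille Selmer subgroup $H^1_f(\Gamma_{F_v},\widehat{\mathfrak{g}}(\kappa))$. The torsion analogue of the Bloch--Kato formula $\dim H^1_f(\mathrm{ad}) = \dim H^0(\mathrm{ad}) + \dim D_{\mathrm{dR}}(\mathrm{ad})/\mathrm{Fil}^0$, valid in the Fontaine--Laffaille range by a direct count of extensions of Fontaine--Laffaille modules (\cite[\S5]{Booher2019}), then yields
\[
\dim_\kappa L_v
=
\dim_\kappa H^0(\Gamma_{F_v},\widehat{\mathfrak{g}}(\kappa))
+
\sum_{\tau\in[F]_v}
\dim\bigl(\widehat{\mathfrak{g}}_{\kappa_v}/\mathrm{Fil}^0_\tau\widehat{\mathfrak{g}}_{\kappa_v}\bigr),
\]
where $\mathrm{Fil}^0_\tau$ picks out the non-negative-weight part of $\widehat{\mathfrak{g}}$ for the Hodge cocharacter $\mu_\tau$ of $\overline{r}_v$ at $\tau$.

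The remaining, and main, step is the evaluation of each summand, which is pure group theory. Multiplicity-freeness of the Hodge--Tate weights of $\overline{r}_v$ at $\tau$ means exactly that the pairings of $\mu_\tau$ with the weights $\pm e_i^\ast$ (together with $2e_i^\ast$ for $\mathrm{Sp}_N$, with $e_i^\ast$ for odd $\mathrm{SO}_N$) of the standard representation are pairwise distinct; this forces $\langle\mu_\tau,\alpha\rangle\ne 0$ for every root $\alpha$ of $\widehat{G}$, so $\mu_\tau$ is regular and $\mathrm{Fil}^0_\tau\widehat{\mathfrak{g}}$ is a Borel subalgebra, whence $\dim\bigl(\widehat{\mathfrak{g}}_{\kappa_v}/\mathrm{Fil}^0_\tau\widehat{\mathfrak{g}}_{\kappa_v}\bigr) = \dim(\widehat{G}_{\kappa_v}) - \dim(\widehat{B}_{\kappa_v})$ for every $\tau$. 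Since $p$ is unramified in $F$, $\#[F]_v = [F_v:\Q_p]$, so the sum is $[F_v:\Q_p]\bigl(\dim(\widehat{G}_{\kappa_v})-\dim(\widehat{B}_{\kappa_v})\bigr)$; subtracting $\dim_\kappa H^0(\Gamma_{F_v},\widehat{\mathfrak{g}}(\kappa))$ from the displayed identity finishes the proof. The points to be careful about are the normalization of the Hodge--Tate weights and of the interval $[a_v,b_v]$ --- so that $\mathrm{ad}$ stays in the Fontaine--Laffaille range and $\mathrm{Fil}^0$ really cuts out a Borel rather than a larger parabolic --- and the bookkeeping of the coefficient fields $\kappa_v\subseteq\kappa$ in the Fontaine--Laffaille module attached to $\widehat{\mathfrak{g}}(\kappa)$; both are dispatched using the explicit description of $\widehat{G}$, its weights, and $\rho_G$ from \S\ref{Special orthogonal groups and automorphic Galois representations}.
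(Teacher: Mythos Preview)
Your proposal is correct and follows essentially the same approach as the paper: both parts are reduced to \cite{Booher2019}, with the axioms of Definition \ref{local deformation problem} checked via the exactness and projective-limit properties of the Fontaine--Laffaille functor (the paper cites \cite[Corollaries 5.5, 5.6]{Booher2019} and \cite{Ramakrishna1993} for (2)--(6)), and the dimension formula taken from \cite[Theorem 5.2]{Booher2019}. The only difference is that you unpack the content of that dimension formula --- the identification of $L_v$ with the Fontaine--Laffaille Selmer group, the Bloch--Kato count, and the observation that multiplicity-freeness forces the Hodge cocharacter to be regular so that $\mathrm{Fil}^0_\tau\widehat{\mathfrak{g}}$ is a Borel --- whereas the paper simply cites the reference; your expanded sketch is accurate and matches what is proved in \cite{Booher2019}.
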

	\begin{proof}
		In Definition \ref{local deformation problem}, it is clear that (1) is satisfied for $\mathcal{D}_v^\mathrm{FL}$. The conditions (2), (3) and (5) are given in \cite[Corollary 5.6]{Booher2019}. For (4), we can assume $R_i$ are all Artinian and the $\Gamma_{F_v}$-module $R_i^N$ associated to $r_i$ is of the form $T_{\mathrm{cris}}(M_i)$ for some $M_i$ in $\mathrm{MF}_{\mathcal{O},\mathrm{tor}}^{f,[a,b]}$ (for the undefined notations $T_{\mathrm{cris}}$ and $\mathrm{MF}_{\mathcal{O},\mathrm{tor}}^{f,[a,b]}$, we refer to \cite{Booher2019}). For $i>j$, we have
		\[
		R_j\otimes_{R_i}T_{\mathrm{cris}}(M_i)
		=
		T_{\mathrm{cris}}(R_j\otimes_{R_i}M_i).
		\]
		Thus $M_j=R_j\otimes_{R_i}M_i$ and we write $M:=\lim\limits_{\overleftarrow{n}}M_n$ and $R=\lim\limits_{\overleftarrow{n}}R_n$. Then $R^N=T_{\mathrm{cris}}(M)$ and one checks that $\lim\limits_{\overleftarrow{n}}r_n$ gives rise to an action of $\Gamma_{F_v}$ on $R^N$ which is an element in $\mathcal{D}_v^\mathrm{FL}(R)$ (see \cite[§5.1]{Booher2019} and \cite{Ramakrishna1993}). Similarly, one proves (6) using the characterization of the essential image of $T_{\mathrm{cris}}$ \cite[Corollary 5.5]{Booher2019}.

		The dimension formula is given in \cite[Theorem 5.2]{Booher2019}. 
	\end{proof}

	\begin{remark}\rm
		Let $r=r_\pi$ be as in Theorem \ref{Galois representations attached to auto rep}. Since $\pi$ is assumed to satisfy (C2), the Hodge-Tate weight of $r|_{\Gamma_{F_v}}$ at each $\tau\in[F]_v$ is multiplicity-free (\emph{cf.} Lemma \ref{regular and std-regular, characterisation}).
	\end{remark}

	\subsubsection{Taylor-Wiles deformations}
	In this subsection we prepare some results for 
	the next subsection on Taylor-Wiles primes.
	Take a finite place
	$v$ of $F$ such that
	$q_v\equiv1
	(\mathrm{mod}\,p)$.
	We assume that $v$ satisfies the following
	\begin{assumption}\label{assumption on Taylor-Wiles deformations-Thorne}
		The residual Galois representation
		$\overline{r}_v$
		of
		$\Gamma_{F_v}$
		is unramified
		and
		there is a decomposition into non-degenerate quadratic/symplectic spaces
		\[
		(\overline{r}_v,(-,-))
		=
		(\overline{\psi}_v,(-,-)|_{\overline{\psi}_v})
		\oplus
		(\overline{s}_v,(-,-)|_{\overline{s}_v})
		\]
		where
		$\overline{\psi}_v$
		corresponds
		to the eigenspace of
		Frobenius $\sigma_v$
		of certain eigenvalue,
		which is one of the two types listed in
		Proposition
		\ref{projector induces an isomorphism},
		and
		$\sigma_v$
		acts semi-simply on
		$\overline{\psi}_v$.
		Moreover
		$\overline{\psi}_v$
		is not isomorphic to a sub-quotient of
		$\overline{s}_v$.
	\end{assumption}

	\begin{definition}\label{Taylor-Wiles deformations-Thorne}
		Maintain Assumption \ref{assumption on Taylor-Wiles deformations-Thorne}. We define a local deformation functor
		$\mathcal{D}_v$
		as follows:
		for
		$R$ in
		$\mathrm{CNL}_\mathcal{O}$,
		$\mathcal{D}_v(R)$
		is the set of liftings
		$r$ of
		$\overline{r}_v$
		to
		$R$
		such that
		$r$
		is equivalent to
		a lifting
		$\psi_v\oplus s_v$
		of
		$\overline{\psi}_v\oplus\overline{s}_v$
		with
		$s_v$
		unramified
		and
		$\psi_v | _{I_{F_v}}$
		is a pair of
		characters
		$(\psi,\psi^{-1})$
		in the form of
		Theorem
		\ref{decomposition of the Galois representation}.
		Moreover there is a decomposition into non-degenerate quadratic/symplectic spaces
		\[
		(r,(-,-))=(\psi_v,(-,-)|_{\psi_v})\oplus(s_v,(-,-)|_{s_v}).
		\]
	\end{definition}

	It follows from the definition that
	$\psi_v$ and
	$s_v$
	are uniquely determined by
	$r$
	and we will write them as
	$\psi_v(r)$ and $s_v(r)$.
	We have
	\begin{lemma}\label{Taylor-Wiles deformations a la Thorne}
		$\mathcal{D}_v$
		is a local deformation problem.
	\end{lemma}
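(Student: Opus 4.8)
The plan is to verify the six axioms of Definition \ref{local deformation problem} for the functor $\mathcal{D}_v$ attached to a Taylor--Wiles place $v$. The key observation is that by Proposition \ref{characterization of local deformation problem}, it suffices to exhibit $\mathcal{D}_v$ as the set of liftings whose associated map $R^\mathrm{univ}_v\to R$ kills a fixed ideal $I_v$ of $R^\mathrm{univ}_v$ that is invariant under $\widehat{G}^1(R^\mathrm{univ}_v)$; but in fact it is cleaner here to check the axioms directly, since the defining conditions (existence of a $\rho$-invariant orthogonal/symplectic decomposition $r=\psi_v\oplus s_v$ with $s_v$ unramified and $\psi_v|_{I_{F_v}}$ a pair $(\psi,\psi^{-1})$ of the shape in Theorem \ref{decomposition of the Galois representation}) are each preserved by the operations in question. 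I would first record the decomposition $(\overline{r}_v,(-,-))=(\overline{\psi}_v,(-,-)|_{\overline{\psi}_v})\oplus(\overline{s}_v,(-,-)|_{\overline{s}_v})$ from Assumption \ref{assumption on Taylor-Wiles deformations-Thorne}, note that $\overline{\psi}_v$ and $\overline{s}_v$ share no common subquotient, and conclude that any lifting $r$ in $\mathcal{D}_v(R)$ has its decomposition $r=\psi_v(r)\oplus s_v(r)$ canonically determined: indeed $\psi_v(r)$ is cut out by the idempotent associated with the characteristic polynomial factor of $r(\sigma_v)$ lifting the relevant eigenvalue $\overline{\alpha}$ of $\overline{r}_v(\sigma_v)$, exactly as in the proof of Theorem \ref{decomposition of the Galois representation}. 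This canonicity is what makes the axioms go through, and it is the only genuinely substantive point.

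With that in hand: (1) is immediate, taking $\psi_v=\overline{\psi}_v$, $s_v=\overline{s}_v$. (2) If $r\in\mathcal{D}_v(R)$ and $\phi\colon R\to R'$ is a morphism in $\mathrm{CNL}_\mathcal{O}$, then $\phi\circ r$ inherits the decomposition $\phi\circ\psi_v(r)\oplus\phi\circ s_v(r)$; $\phi\circ s_v(r)$ is unramified and $\phi\circ\psi_v(r)|_{I_{F_v}}$ is again a pair $(\phi\circ\psi,(\phi\circ\psi)^{-1})$ of the required form, and the bilinear form is carried along, so $\phi\circ r\in\mathcal{D}_v(R')$. (5) is clear since the decomposition and all conditions are manifestly stable under $\widehat{G}^1(R)$-conjugation. (6) For an injection $\phi\colon R\hookrightarrow R'$ with $\phi\circ r\in\mathcal{D}_v(R')$: the idempotent of $\phi\circ r(\sigma_v)$ lifting the $\overline{\alpha}$-part lands in $\mathrm{M}_{N,N}(R')$, but since its reduction is already defined over $\kappa$ and the factorization $P(X)=A(X)B(X)$ into coprime factors with $B(\overline\alpha)=B(\overline\alpha^{-1})=0$ can be performed over the henselian local ring $R$ itself (the factors lift uniquely by Hensel, as $\overline A,\overline B$ are coprime), the decomposition of $R^N$ is already defined over $R$; unramifiedness of $s_v$ and the shape of $\psi_v|_{I_{F_v}}$ descend along the injection $\phi$, as does the orthogonal/symplectic structure.

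For (4) (compatibility with projective limits), given a projective system $\{r_i\in\mathcal{D}_v(R_i)\}$, the limit $r=\varprojlim r_i$ is a lifting to $\varprojlim R_i$; the idempotents $e_i$ cutting out $\psi_v(r_i)$ are compatible (by the uniqueness of the Hensel lift of the coprime factorization), so they glue to an idempotent over $\varprojlim R_i$, giving $r=\psi_v\oplus s_v$ with $s_v=\varprojlim s_v(r_i)$ unramified and $\psi_v|_{I_{F_v}}=\varprojlim(\psi,\psi^{-1})$ still of the required shape; the bilinear forms are compatible and glue. The remaining axiom (3) (fiber products $R_3=R_1\times_{R_0}R_2$) is the one I expect to be the main obstacle, though only mildly so: one must check that if $r_1\in\mathcal{D}_v(R_1)$, $r_2\in\mathcal{D}_v(R_2)$ agree over $R_0$, then $r_1\oplus r_2$ (the induced lift to $R_3$) lies in $\mathcal{D}_v(R_3)$. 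The point is that the canonical idempotents $e_1,e_2$ have the same image in $\mathrm{M}_{N,N}(R_0)$ — because each is the Hensel lift of the fixed residual idempotent $\overline e$ and such lifts are unique — hence $(e_1,e_2)$ defines an idempotent over $R_3$, producing the decomposition $(r_1\oplus r_2)=\psi_v\oplus s_v$; then $s_v$ is unramified because $s_v(r_1), s_v(r_2)$ are and unramifiedness is a pointwise (closed) condition stable under fiber product, and $\psi_v|_{I_{F_v}}$ is built from the compatible pair $(\psi^{(1)},\psi^{(2)})$ of tame characters so is again of the form $(\psi,\psi^{-1})$; finally the bilinear forms on $R_1^N$ and $R_2^N$ restrict to the same form on $R_0^N$ and hence define a non-degenerate form on $R_3^N=R_1^N\times_{R_0^N}R_2^N$ making the decomposition orthogonal/symplectic. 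I would then simply cite \cite[Lemma II.9]{Vigneras1998}-style bookkeeping or the analogous argument in \cite[§5]{Thorne2012} for the routine verification that no extra compatibility is lost, and conclude that $\mathcal{D}_v$ satisfies all of (1)--(6), hence is a local deformation problem.
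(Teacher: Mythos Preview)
Your proof is correct. Your emphasis on the canonical idempotent coming from the Hensel lift of the coprime factorization of the characteristic polynomial of $r(\sigma_v)$ is exactly the right organizing principle, and it is what underlies the paper's assertion (just before the lemma) that $\psi_v(r)$ and $s_v(r)$ are uniquely determined by $r$. The paper, however, implements axioms (3) and (4) differently: rather than working with the idempotent, it chooses explicit bases of $R_?^N$ adapted to the decompositions $\psi_?\oplus s_?$ and, for (3), passes to the basis-change matrices $A_?\in\widehat{G}^1(R_?)$, arguing that $A_1,A_2$ agree over $R_0$ so that $A=(A_1,A_2)\in\widehat{G}^1(R_3)$ conjugates $r_1\times r_2$ into block form; for (4) it builds compatible systems of conjugating elements by hand and passes to the limit. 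Your route is more conceptual and makes transparent \emph{why} the decompositions over $R_1$ and $R_2$ must agree over $R_0$ (uniqueness of the Hensel lift), a step the paper's basis argument leaves implicit; the paper's route, on the other hand, is more explicit about actually producing the $\widehat{G}^1$-conjugation that Definition~\ref{Taylor-Wiles deformations-Thorne} demands. One small point: the closing appeal to Vign\'eras and Thorne at the end of your treatment of (3) is unnecessary and should be dropped --- you have already given the full argument.
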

	\begin{proof}
		(1)-(2) and (5)-(6) in 
		Definition
		\ref{local deformation problem}
		are easily verified. It remains to verify (3) and (4).		
		We use the notations from
		Definition
		\ref{local deformation problem}.
		Suppose that
		$\overline{s}_v$
		is of dimension $d_s$.

		We first consider (3).
		Fix a basis
		$(e_1,\cdots,e_N)$
		for
		$R_3^N$
		and 
		denote by the same symbol their images in
		$R_1^N,R_2^N,R_0^N,\kappa^N$.
		We assume that
		\[
		\overline{s}_v
		=
		\kappa(e_1,\cdots,e_{d_s+1}),
		\quad
		\overline{\psi}_v
		=
		\kappa(e_{d_s},\cdots,e_n).
		\]
		Suppose now that
		$(r_?,(-,-))=(\psi_?,(-,-)|_{\psi_?})\oplus(s_?,(-,-)|_{s_?})$ ($?=1,2$).
		We then fix a basis
		$(e_1^{(?)},\cdots,e_{d_s}^{(?)},
		e_{d_s+1}^{(?)},\cdots,e_n^{(?)})$
		of
		$R_?^n$
		and write
		\[
		s_?=R_?(e_1^{(?)},\cdots,e_{d_s}^{(?)}),
		\quad
		\psi_?=R_?(e_{d_s+1}^{(?)},\cdots,e_n^{(?)}).
		\]
		Then
		write
		$\overline{e}_j^{(i)}$
		for the image of
		$e_j^{(i)}$
		in
		$R_0^n$.
		So one has
		$\overline{e}_j^{(1)}=\overline{e}_j^{(2)}$
		for $j=1,\cdots,n$.
		Write
		$A_?\in\,^CG(R_?)$
		for the basis change matrix from
		$(e_j)_j$
		to
		$(e_j^{(?)})_j$
		and
		$\overline{A}_?$
		its image in
		$^CG(R_0)$
		(thus
		$\overline{A}_1=\overline{A}_2
		\in
		\,^CG(R_0)$).
		Since
		$\psi_?\oplus s_?$
		lifts
		$\overline{\psi}_v\oplus\overline{s}_v$,
		one has
		$A_?\in\widehat{G}^1(R_?)$
		($?=1,2$).
		Put
		$A=(A_1,A_2)
		\in\widehat{G}^1(R_3)$,
		then we get
		\[
		A(r_1\times r_2)A^{-1}=(\psi_1\times\psi_2)\oplus(s_1\times s_2)
		\]
		and thus
		$r_1\times r_2\in\mathcal{D}_v(R_3)$.

		For (4),
		suppose that we have
		$r_i=g_i(\psi_i\oplus s_i)g_i^{-1}$
		with
		$g_i\in\widehat{G}^1(R_i)$
		for any
		$i\in I$.
		Write the partial order in
		$I$ as $\ge$.
		For any
		$i\ge j\in I$,
		we write
		$r_{i,j}
		\colon
		\Gamma_{F_v}
		\xrightarrow{r_i}
		\,^CG(R_i)
		\rightarrow
		\,^CG(R_j)$.
		Then put
		\[
		r_j'=\lim\limits_{j\le i\in I}
		r_{i,j}
		\in
		\mathcal{D}_v(R_j).
		\]
		Similarly, one defines
		$g_j',\psi_j',s_j'$.
		Thus one has
		$r_i'=g'_i(\psi'_i\oplus s'_i)(g'_i)^{-1}$.
		Then
		$(r_i')_{i\in I}$
		is again a projective system
		and we put
		\[
		r=\lim\limits_{\overleftarrow{i\in I}}r_i',
		\]
		which,
		by construction,
		is equal to
		$\lim\limits_{\overleftarrow{i\in I}}r_i$.
		Moreover,
		for any
		$i\ge j\in I$,
		the image of
		$g_i'$
		in
		$^CG(R_j)$
		is equal to
		$g_j'$
		and similarly the image of
		$s_i'$ in
		$R_j$
		is equal to
		$s_j'$,
		the image of
		$\psi_i'$ in $R_j$ is equal to
		$\psi_j'$.
		From these projective systems,
		one puts
		$g=\lim\limits_{\overleftarrow{i\in I}}g_i'$,
		$s=\lim\limits_{\overleftarrow{i\in I}}s_i'$
		and
		$\psi=\lim\limits_{\overleftarrow{i\in I}}\psi_i'$.
		Moreover,
		by construction
		\[
		r=g(\psi\oplus s)g^{-1}.
		\]
		Thus we have
		$r\in\mathcal{D}_v'(R)$.
	\end{proof}

	Next we determine
	the subspace
	$L_v$
	of
	$H^1(\Gamma_{F_v},\widehat{\mathfrak{g}}(\kappa))$.
	We fix a basis
	$(e_1,e_2,\cdots,e_{2r})$
	of
	$\overline{\psi}_v$ such that
	the bilinear form $(-,-)|_{\overline{\psi}_v}$
	is given by
	\[
	(x_1e_1+\cdots+x_{2r}e_{2r},y_1e_1+\cdots+y_{2r}e_{2r})|_{\overline{\psi}_v}
	=
	(x_1y_{2r}+\cdots+x_ry_{r+1})+\varepsilon(x_{r+1}y_r+\cdots+x_{2r}y_1).
	\]
	Here $\varepsilon$ is equal to $1$ or $-1$, depending on whether the bilinear form $(-,-)$ is symmetric or symplectic. We write
	$\widetilde{Z}(\mathrm{Ad}\,\overline{\psi}_v)$
	to be the
	$1$-dimensional
	subspace of
	$\mathrm{Ad}\,\overline{\psi}_v$
	consisting of
	diagonal matrices of the form
	$\mathrm{diag}(x\cdot1_r,-x\cdot1_r)$
	with
	$x\in\kappa$.
	\begin{lemma}
		$L_v$
		is the subspace of
		$H^1(\Gamma_{F_v},\mathrm{Ad}\,\overline{s}_v)
		\oplus
		H^1(\Gamma_{F_v},\mathrm{Ad}\,\overline{\psi}_v)$
		consisting of those $1$-cocycles
		$(\phi_1,\phi_2)$
		such that
		$\phi_1|_{I_{F_v}}$
		is trivial,
		$\phi_2|_{I_{F_v}}$
		lies in
		$\widetilde{Z}(\mathrm{Ad}\,\overline{\psi}_v)$.
		Moreover,
		we have
		\[
		\mathrm{dim}_\kappa
		L_v
		-
		\mathrm{dim}_\kappa
		H^0(\Gamma_{F_v},\widehat{\mathfrak{g}}(\kappa))
		=
		1.
		\]
	\end{lemma}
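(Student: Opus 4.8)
The plan is to read off $L_v$ directly from the description of the deformation functor $\mathcal{D}_v$ in Definition \ref{Taylor-Wiles deformations-Thorne}, by decomposing $\widehat{\mathfrak{g}}(\kappa)$ along the orthogonal/symplectic splitting $\overline{r}_v=\overline{\psi}_v\oplus\overline{s}_v$. One has a decomposition of $\Gamma_{F_v}$-modules
\[
\widehat{\mathfrak{g}}(\kappa)
\;\cong\;
\mathrm{Ad}\,\overline{\psi}_v
\;\oplus\;
\mathrm{Ad}\,\overline{s}_v
\;\oplus\;
(\overline{\psi}_v\otimes\overline{s}_v),
\]
where $\mathrm{Ad}\,\overline{\psi}_v$ (resp. $\mathrm{Ad}\,\overline{s}_v$) is the Lie algebra of the isometry group of $(\overline{\psi}_v,(-,-)|_{\overline{\psi}_v})$ (resp. of $(\overline{s}_v,(-,-)|_{\overline{s}_v})$), i.e. the $\wedge^2$, resp. $\mathrm{Sym}^2$, of the corresponding space, and the off-diagonal summand $\overline{\psi}_v\otimes\overline{s}_v$ comes from identifying $\mathrm{Hom}(\overline{\psi}_v,\overline{s}_v)$ with $\mathrm{Hom}(\overline{s}_v,\overline{\psi}_v)$ via the bilinear form. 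Cohomology splits into the three corresponding pieces.

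The first key point I would establish is that $H^1(\Gamma_{F_v},\overline{\psi}_v\otimes\overline{s}_v)=0$. Since $q_v\equiv1\ (\mathrm{mod}\ p)$, both the mod-$\mathfrak{p}$ cyclotomic character and the unramified similitude character $\mu_{\overline{r}}$ are trivial on $\Gamma_{F_v}$, so $\overline{r}_v$ is self-dual modulo $\mathfrak{p}$, and hence $\overline{\psi}_v^{\vee}\cong\overline{\psi}_v$ and $\overline{s}_v^{\vee}\cong\overline{s}_v$ as $\Gamma_{F_v}$-modules. By Assumption \ref{assumption on Taylor-Wiles deformations-Thorne}, together with the construction of $\overline{\psi}_v$ as the full $\sigma_v$-eigenspace block attached to the eigenvalue(s) of Proposition \ref{projector induces an isomorphism} (so that $\overline{s}_v$ carries none of those eigenvalues), $\mathrm{Hom}_{\Gamma_{F_v}}(\overline{\psi}_v,\overline{s}_v)=0$; combined with the self-duality this forces $H^0(\Gamma_{F_v},\overline{\psi}_v\otimes\overline{s}_v)=H^2(\Gamma_{F_v},\overline{\psi}_v\otimes\overline{s}_v)=0$ by local Tate duality, and then $H^1=0$ by the local Euler characteristic formula at $v\nmid p$. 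In particular every class in $H^1(\Gamma_{F_v},\widehat{\mathfrak{g}}(\kappa))$ has a block-diagonal representative, so $L_v\subseteq H^1(\Gamma_{F_v},\mathrm{Ad}\,\overline{s}_v)\oplus H^1(\Gamma_{F_v},\mathrm{Ad}\,\overline{\psi}_v)$ and it remains to describe which pairs $(\phi_1,\phi_2)$ occur.

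A block-diagonal cocycle $(\phi_1,\phi_2)$ gives the lifting $(1+\phi_2)\overline{\psi}_v\oplus(1+\phi_1)\overline{s}_v$ over $\kappa[\epsilon]$; the prescribed similitude and the orthogonal/symplectic splitting are automatic, since $\phi_i$ already lands in the Lie algebra of the isometry group of its block, so by Definition \ref{Taylor-Wiles deformations-Thorne} this lift lies in $\mathcal{D}_v(\kappa[\epsilon])$ exactly when $(1+\phi_1)\overline{s}_v$ is unramified and $(1+\phi_2)\overline{\psi}_v|_{I_{F_v}}$ has the shape $(\psi,\psi^{-1})$ of Theorem \ref{decomposition of the Galois representation}. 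As $\overline{\psi}_v$ and $\overline{s}_v$ are unramified, every coboundary $\partial g$ (with $g\in\widehat{\mathfrak{g}}(\kappa)$) vanishes on $I_{F_v}$, so both conditions are well-defined on cohomology classes: the first becomes $\phi_1|_{I_{F_v}}=0$, and writing $\psi=1+\epsilon\xi$ with $\xi\colon I_{F_v}\to\kappa$ additive, the second becomes the requirement that $\phi_2|_{I_{F_v}}$ take values in the line $\widetilde{Z}(\mathrm{Ad}\,\overline{\psi}_v)$. This is precisely the asserted description of $L_v$.

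For the dimension formula I would compute the two summands. The $\overline{s}_v$-part is $H^1_{\mathrm{ur}}(\Gamma_{F_v},\mathrm{Ad}\,\overline{s}_v)$, of dimension $\dim_\kappa H^0(\Gamma_{F_v},\mathrm{Ad}\,\overline{s}_v)$. For the $\overline{\psi}_v$-part, inflation-restriction gives a short exact sequence
\[
0\to H^1_{\mathrm{ur}}(\Gamma_{F_v},\mathrm{Ad}\,\overline{\psi}_v)\to H^1(\Gamma_{F_v},\mathrm{Ad}\,\overline{\psi}_v)\xrightarrow{\ \mathrm{res}\ }H^1(I_{F_v},\mathrm{Ad}\,\overline{\psi}_v)^{\sigma_v}\to0
\]
(the last term reached surjectively because $H^2(\widehat{\mathbb{Z}},-)=0$), and under the evaluation-at-a-tame-generator identification $H^1(I_{F_v},\mathrm{Ad}\,\overline{\psi}_v)\cong\mathrm{Ad}\,\overline{\psi}_v$ the operator $\sigma_v$ acts through $\mathrm{Ad}\,\overline{\psi}_v(\sigma_v)$ modulo $\mathfrak{p}$ (the extra power of $q_v$ being $1$ mod $p$). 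Choosing the basis $(e_i)$ compatibly with the $\sigma_v$-eigenspaces, one checks that $\widetilde{Z}(\mathrm{Ad}\,\overline{\psi}_v)$ is $\sigma_v$-fixed, hence $\mathrm{res}$ carries the $\overline{\psi}_v$-part of $L_v$ onto this line; so that part has dimension $\dim_\kappa H^0(\Gamma_{F_v},\mathrm{Ad}\,\overline{\psi}_v)+1$. Adding the two summands, and using $\dim_\kappa H^0(\Gamma_{F_v},\widehat{\mathfrak{g}}(\kappa))=\dim_\kappa H^0(\Gamma_{F_v},\mathrm{Ad}\,\overline{s}_v)+\dim_\kappa H^0(\Gamma_{F_v},\mathrm{Ad}\,\overline{\psi}_v)$ (the off-diagonal $H^0$ vanishing from above), yields $\dim_\kappa L_v-\dim_\kappa H^0(\Gamma_{F_v},\widehat{\mathfrak{g}}(\kappa))=1$. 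I expect the main obstacle to be the bookkeeping in the middle step — confirming that the similitude and isometry constraints are automatic and that the inertia conditions are captured exactly by $\phi_1|_{I_{F_v}}=0$ and $\phi_2|_{I_{F_v}}\in\widetilde{Z}(\mathrm{Ad}\,\overline{\psi}_v)$ — together with pinning down the precise sense in which Assumption \ref{assumption on Taylor-Wiles deformations-Thorne} forces the vanishing of the off-diagonal $H^1$.
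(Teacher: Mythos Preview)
Your proposal is correct and follows essentially the same approach as the paper: read off $L_v$ from $\mathcal{D}_v(\kappa[\epsilon])$ using the block decomposition, then compute dimensions by identifying the $\overline{s}_v$-part with $H^1_{\mathrm{ur}}$ and the $\overline{\psi}_v$-part as $H^1_{\mathrm{ur}}$ plus the one extra dimension from $\widetilde{Z}(\mathrm{Ad}\,\overline{\psi}_v)$, together with the vanishing of the off-diagonal $H^0$. The only difference is that you go further and prove $H^1(\Gamma_{F_v},\overline{\psi}_v\otimes\overline{s}_v)=0$ via local duality and Euler characteristic, whereas the paper only needs (and only states) the $H^0$ vanishing, since the definition of $\mathcal{D}_v$ already forces block-diagonal representatives up to conjugacy; your extra step is correct but not required.
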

	\begin{proof}
		The first one follows from the definition of
		$\mathcal{D}_v(\kappa[\epsilon])$.
		For the second,
		note that we have the following long exact sequence
		for any finite
		$\kappa[\Gamma_{F_v}]$-module $M$
		such that
		$I_{F_v}$
		acts trivially on $M$,
		\[
		0
		\rightarrow
		H^0(\Gamma_{F_v},
		M)
		\rightarrow
		H^0(I_{F_v},M)
		\xrightarrow{\sigma_v-1}
		H^0(I_{F_v},M)
		\rightarrow
		H^1(\Gamma_{F_v}/I_{F_v},
		M)
		\rightarrow
		0.
		\]
		Therefore
		\begin{equation}\label{comparison H0 and H1}
			\mathrm{dim}_\kappa
			H^0(\Gamma_{F_v},M)
			=
			\mathrm{dim}_\kappa
			H^1(\Gamma_{F_v}/I_{F_v},M).
		\end{equation}
		Moreover,
		since
		$\overline{\psi}_v$ is not a subquotient of
		$\overline{s}_v$,
		we have a decomposition
		\[
		H^0
		(\Gamma_{F_v},
		\widehat{\mathfrak{g}}(\kappa))
		=
		H^0(\Gamma_{F_v},\mathrm{Ad}\,\overline{\psi}_v)
		\oplus
		H^0(\Gamma_{F_v},\mathrm{Ad}\,\overline{s}_v),
		\]
		(the terms $\mathrm{Hom}_{\kappa[\Gamma_{F_v}]}
		(\overline{s}_v,\overline{\psi}_v)$ and
		$\mathrm{Hom}_{\kappa[\Gamma_{F_v}]}
		(\overline{\psi}_v,\overline{s}_v)$
		vanish).
		Now compare
		the second factor of
		$L_v$
		with
		$H^0(\Gamma_{F_v},\mathrm{Ad}\,\overline{\psi}_v)
		=
		H^1(\Gamma_{F_v}/I_{F_v},\mathrm{Ad}\,\overline{\psi}_v)$:
		since
		$\widetilde{Z}
		(\mathrm{Ad}\,\overline{\psi}_v)$
		is of dimension $1$,
		we see immediately
		\[
		\mathrm{dim}_\kappa L_v
		-
		\mathrm{dim}_\kappa
		H^0(\Gamma_{F_v},
		\widehat{\mathfrak{g}}(\kappa))
		=
		1.
		\]
		
	\end{proof}

	We write
	$\Delta_v$
	for the maximal $p$-power quotient of the
	inertial subgroup of the abelienization
	$\Gamma_{F_v}^\mathrm{ab}$ of
	$\Gamma_{F_v}$.
	So $\Delta_v$ is a cyclic finite group of order the maximal $p$-power dividing
	$q_v-1$.
	For any $r\in\mathcal{D}_v(R)$,
	the associated character
	$\psi_v(r)
	\colon
	\Gamma_{F_v}
	\rightarrow
	R^\times$
	takes $I_{F_v}$ into
	$1+\mathfrak{m}_R$,
	thus
	$\psi_v(r)|_{I_{F_v}}$
	factors through $\Delta_v$.
	From this we get a morphism of
	$\mathcal{O}$-algebras
	\begin{equation}\label{a character of the universal deformation ring}
		\psi_v(r)
		\colon
		\mathcal{O}[\Delta_v]
		\rightarrow
		R.
	\end{equation}
	Write
	$\mathfrak{a}_v
	=
	\mathrm{Ker}
	(\mathcal{O}[\Delta_v]
	\rightarrow
	\mathcal{O})$
	for the augmentation ideal,
	then
	$R/\mathfrak{a}_vR$
	is the maximal quotient of $R$ such that
	the image of $r$ in this quotient is unramified.

	\subsubsection{Existence of Taylor-Wiles primes}
	In this subsection we show the existence
	of Taylor-Wiles primes.
	We need to put some conditions on the image
	of the residual Galois representation
	$\overline{r}$.
	These conditions appear naturally when we try to
	estimate the dimension of
	the universal deformation ring
	$R_\mathcal{S}^T$.
	This is analogous to
	the notions of ``big" in
	\cite[Definition 2.5.1]{ClozelHarrisTaylor2008}
	and
	``adequate" in
	\cite[Definition 2.3]{Thorne2012}:
	
	\begin{definition}\label{big}
		A subgroup
		$H$ of
		$^CG(\kappa)$ is
		\emph{adequate} if the following two conditions hold
		\begin{enumerate}
			\item 
			$H^0(H,\widehat{\mathfrak{g}}(\kappa))=0$,
			
			\item 
			$H^1(H,
			\widehat{\mathfrak{g}}(\kappa))=0$,

			\item 
			$\mathrm{Hom}(H,\kappa)=0$.

			\item 
			for any nonzero
			$\kappa[H]$-submodule
			$W$ of
			$\widehat{\mathfrak{g}}(\kappa)$,
			there is an element $\gamma\in H$
			such that the action of $\gamma$ on $\kappa^N$ (via $\overline{r}$)
			has an eigenvalue $a$ with
			the corresponding generalized eigenspace
			$V_{\gamma,a}$ satisfying the following:
			let
			\[
			\pi_{\gamma,a}
			\colon
			\kappa^N
			\rightarrow
			V_{\gamma,a}
			\]
			denote the $\gamma$-equivariant projection onto
			$V_{\gamma,a}$ and
			\[
			i_{\gamma,a}
			\colon
			V_{\gamma,a}
			\rightarrow
			\kappa^N
			\]
			denote the
			$\gamma$-equivariant injection into $\kappa^N$.
			Then
			we require that the trace
			$
			\mathrm{Tr}
			\left(
			\pi_{\gamma,a}\circ w\circ i_{\gamma,a}
			\right)
			\ne
			0$
			for some $w\in W$.
		\end{enumerate}		
	\end{definition}

	In \cite[Theorem 9 of Appendix]{Thorne2012}, the author
	gives
	some sufficient conditions for
	a subgroup $H$ of
	$^CG(\kappa)$
	to be adequate:
	\begin{lemma}\label{when a subgroup is adequate}
		Let $H$ be a subgroup of $^CG(\kappa)$. Assume that the following hold:
		\begin{enumerate}
			\item
			$p\ge 2(N+1)$,
			
			\item 
			$\kappa$
			contains all the eigenvalues of
			all elements of $\overline{r}(H)$,

			\item 
			$\overline{r}$ is absolutely irreducible,
		\end{enumerate}
		then
		$H$
		is adequate.
	\end{lemma}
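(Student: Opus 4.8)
The plan is to reduce this to the $\mathrm{GL}_N$-version of adequacy proved in the appendix to \cite{Thorne2012}, and then to transfer the four conditions of Definition \ref{big} from $\mathfrak{gl}_N$ to $\widehat{\mathfrak{g}}$, the only extra ingredient being that $p$ is odd. Throughout, write $V=\kappa^N$ for the standard module, on which $H$ acts through $H\subset\,^CG(\kappa)\hookrightarrow\mathrm{GL}_N(\kappa)$; by hypotheses (2) and (3) this action is absolutely irreducible and all eigenvalues of all $\gamma\in H$ on $V$ lie in $\kappa$.

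First I would record the decomposition of $\mathfrak{gl}_N$ as a $\kappa[H]$-module. Let $A$ denote the Gram matrix of the bilinear form defining $\widehat{G}$ (so $A=A_N$ or $A=A_N'$), and set
\[
\theta\colon\mathfrak{gl}_N\longrightarrow\mathfrak{gl}_N,\qquad \theta(X)=-A^{-1}X^{\mathrm{t}}A .
\]
One checks that $\theta$ is an involution whose $+1$-eigenspace is exactly $\widehat{\mathfrak{g}}(\kappa)=\{X:X^{\mathrm{t}}A+AX=0\}$, and that $\theta$ commutes with the conjugation action of $\,^CG(\kappa)$: indeed $H$ preserves the form up to scalars, i.e. $g^{\mathrm{t}}Ag\in\kappa^\times A$ for all $g\in H$, which is all that is needed. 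Since $p\neq2$, this gives an $H$-stable direct sum decomposition $\mathfrak{gl}_N=\widehat{\mathfrak{g}}(\kappa)\oplus\mathfrak{p}$, where $\mathfrak{p}$ is the $-1$-eigenspace of $\theta$; note $1_N\in\mathfrak{p}$ because $\theta(1_N)=-1_N$ (equivalently, $1_N\in\widehat{\mathfrak{g}}(\kappa)$ would force $2A=0$).

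Next I would quote \cite[Theorem 9 of Appendix]{Thorne2012}: by hypothesis (1) together with the absolute irreducibility of $V$ and hypothesis (2), the subgroup $H\subset\mathrm{GL}_N(\kappa)$ is adequate in the $\mathrm{GL}_N$-sense, so that $H^1(H,\mathfrak{gl}_N)=0$, $\mathrm{Hom}(H,\kappa)=0$, and for every nonzero $\kappa[H]$-submodule $W\subseteq\mathfrak{gl}_N$ there are $\gamma\in H$ and an eigenvalue $a\in\kappa$ of $\gamma$ on $V$ with $\mathrm{Tr}(\pi_{\gamma,a}\circ w\circ i_{\gamma,a})\neq0$ for some $w\in W$. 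From here the four conditions of Definition \ref{big} follow quickly. Condition (3) is identical. For (2), $H^1(H,\widehat{\mathfrak{g}}(\kappa))$ is a direct summand of $H^1(H,\mathfrak{gl}_N)=0$. For (1), $H^0(H,\mathfrak{gl}_N)=\mathrm{End}_{\kappa[H]}(V)=\kappa\cdot1_N$ by Schur's lemma (using absolute irreducibility), and since $1_N\in\mathfrak{p}$ this forces $H^0(H,\widehat{\mathfrak{g}}(\kappa))=0$. For (4), a nonzero $\kappa[H]$-submodule $W\subseteq\widehat{\mathfrak{g}}(\kappa)$ is a nonzero $\kappa[H]$-submodule of $\mathfrak{gl}_N$ via the inclusion $\widehat{\mathfrak{g}}(\kappa)\subseteq\mathfrak{gl}_N$ (shrinking $W$ to an irreducible submodule if Thorne's criterion is stated only for irreducibles), so the trace property applies and yields precisely the data required in Definition \ref{big}(4).

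I do not expect a genuine obstacle: the substantial input — that an absolutely irreducible subgroup of $\mathrm{GL}_N(\kappa)$ is adequate when $p\geq2(N+1)$, which uses the classification of finite simple groups — is imported verbatim from \cite{Thorne2012}, and the remainder is bookkeeping. The only delicate points are that the involution $\theta$ is equivariant for the full group $\,^CG$ of similitudes rather than just the isometry group (this needs only $g^{\mathrm{t}}Ag\in\kappa^\times A$), that $1_N\notin\widehat{\mathfrak{g}}(\kappa)$ (which is where $p\neq2$ is used), and the harmless observation that $p\geq2(N+1)>N$ gives $p\nmid N$, so that $\mathfrak{gl}_N=\mathfrak{sl}_N\oplus\kappa\cdot1_N$ and it is immaterial whether Thorne's conclusion is phrased with $\mathfrak{gl}_N$, $\mathfrak{sl}_N$, or $\mathrm{Ad}^0\overline{r}$.
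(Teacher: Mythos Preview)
Your proposal is correct and takes the same approach as the paper, which simply cites \cite[Theorem 9 of Appendix]{Thorne2012} without further argument. In fact you supply the reduction from $\mathrm{GL}_N$-adequacy to $\widehat{\mathfrak{g}}$-adequacy (via the $H$-equivariant splitting $\mathfrak{gl}_N=\widehat{\mathfrak{g}}(\kappa)\oplus\mathfrak{p}$ and the observation $1_N\in\mathfrak{p}$) that the paper leaves implicit.
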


	Recall we have fixed
	$\mathcal{S}=(\mathcal{D}_v)_{v\in S}$
	from
	Definition
	\ref{T-deformation or lifting of type S}. Let $Q$ be a finite set of finite primes $v\notin S$ of $F$
	such that
	$q_v\equiv1
	(\mathrm{mod}\,p)$
	and
	$\overline{r}_v$ satisfies
	Assumption
	\ref{assumption on Taylor-Wiles deformations-Thorne}.
	Then we write
	\[
	S(Q)=S\sqcup Q,
	\quad
	\mathcal{S}(Q)
	=
	(\mathcal{D}_v)_{v\in S(Q)}.
	\index{S@$S(Q),\mathcal{S}(Q)$}
	\]
	We also put
	$\Delta_Q=\prod_{v\in Q}\Delta_v$.

	\begin{proposition}\label{existence of Twylor-Wiles primes}
		We assume
		$\overline{r}$ is
		absolutely irreducible,
		$\overline{r}(\Gamma_{F(\zeta_p)})$
		is adequate, $p\nmid[F^\circ:F]$,
		and for each
		$v\in S$,
		\[
		\mathrm{dim}_\kappa
		L_v
		-
		\mathrm{dim}_\kappa
		H^0(\Gamma_{F_v},
		\widehat{\mathfrak{g}}(\kappa))
		=
		\begin{cases*}
			a_v,
			&
			if $v |p$;
			\\
			0
			&
			otherwise.
		\end{cases*}
		\]
		Here $a_v$ is an integer depending only on $p$.
		Fix a positive integer
		$q_0$ and write
		\[
		q=\max
		\left(
		q_0,
		\mathrm{dim}_\kappa
		H^1_{\mathcal{S}^\perp}
		\left(
		\Gamma_{F_v},
		\widehat{\mathfrak{g}}(\kappa)[1]
		\right)
		\right).
		\]
		Then there is a finite set $Q$ as above with $\#  Q=q$ and the relative dimension of $R_{\mathcal{S}(Q)}$ over $\mathcal{O}$ is
		\[
		\leq
		\#  Q
		+
		\sum_{v |p}a_v
		-\sum_{v |\infty}
		\mathrm{dim}_\kappa
		H^0(\Gamma_{F_v},
		\widehat{\mathfrak{g}}(\kappa)).
		\]
		
	\end{proposition}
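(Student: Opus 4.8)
The plan is to run the standard Greenberg--Wiles/Chebotarev argument for producing Taylor--Wiles primes, adapted to the dual group $^CG$ in place of $\mathrm{GL}_N$ as in \cite[\S2.5--2.7]{ClozelHarrisTaylor2008} and \cite[\S3--4]{Thorne2012}. First I would record the numerical input. Applying Corollary \ref{dimension of H^1} to the deformation problem $\mathcal{S}(Q)=(\mathcal{D}_v)_{v\in S(Q)}$, using the computation $\mathrm{dim}_\kappa L_v-\mathrm{dim}_\kappa H^0(\Gamma_{F_v},\widehat{\mathfrak{g}}(\kappa))=1$ for $v\in Q$ established above for Taylor--Wiles primes together with the hypothesis on $\mathrm{dim}_\kappa L_v-\mathrm{dim}_\kappa H^0(\Gamma_{F_v},\widehat{\mathfrak{g}}(\kappa))$ for $v\in S$, one obtains
\[
h^1_{\mathcal{S}(Q)}\le h^1_{\mathcal{S}(Q)^\perp}[1]-h^0[1]+\# Q+\sum_{v|p}a_v-\sum_{v|\infty}\mathrm{dim}_\kappa H^0(\Gamma_{F_v},\widehat{\mathfrak{g}}(\kappa))+h^0.
\]
Since $\overline{r}$ is absolutely irreducible and $\overline{r}(\Gamma_{F(\zeta_p)})$ is adequate, condition (1) of Definition \ref{big} gives $H^0(\Gamma_{F(\zeta_p)},\widehat{\mathfrak{g}}(\kappa))=0$; as $\zeta_p$ is fixed by $\Gamma_{F(\zeta_p)}$ this also forces $H^0(\Gamma_{F(\zeta_p)},\widehat{\mathfrak{g}}(\kappa)[1])=0$, whence $h^0=h^0[1]=0$. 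By Corollary \ref{relative dimension of deformation rings} the relative dimension of $R_{\mathcal{S}(Q)}$ over $\mathcal{O}$ is $\le h^1_{\mathcal{S}(Q)}$, so the proposition reduces to exhibiting a set $Q$ of the asserted type with $\# Q=q$ and $h^1_{\mathcal{S}(Q)^\perp}[1]=\mathrm{dim}_\kappa H^1_{\mathcal{S}(Q)^\perp}(\Gamma_{F,S(Q)},\widehat{\mathfrak{g}}(\kappa)[1])=0$.

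The core step is the following: if $\mathcal{S}(Q)$ is a deformation problem with $H^1_{\mathcal{S}(Q)^\perp}(\Gamma_{F,S(Q)},\widehat{\mathfrak{g}}(\kappa)[1])\ne0$, then there is a finite place $v\notin S(Q)$ with $q_v\equiv1\pmod p$ such that $\overline{r}_v$ satisfies Assumption \ref{assumption on Taylor-Wiles deformations-Thorne} and adjoining $v$ to $Q$ strictly decreases the dimension of the dual Selmer group. Fix $0\ne\phi\in H^1_{\mathcal{S}(Q)^\perp}(\Gamma_{F,S(Q)},\widehat{\mathfrak{g}}(\kappa)[1])$. Put $F_0=F(\zeta_p)$ and let $L/F_0$ be the extension cut out by $\overline{r}|_{\Gamma_{F_0}}$, so that $\mathrm{Gal}(L/F_0)\simeq\overline{r}(\Gamma_{F_0})$ is adequate; condition (2) of Definition \ref{big} gives $H^1(\mathrm{Gal}(L/F_0),\widehat{\mathfrak{g}}(\kappa))=0$, and since $\mathrm{Gal}(F_0/F)$ has order prime to $p$ this yields $H^1(\mathrm{Gal}(L/F),\widehat{\mathfrak{g}}(\kappa))=0$, hence by inflation--restriction and condition (3) of Definition \ref{big} the restriction $\phi|_{\Gamma_L}\colon\Gamma_L\to\widehat{\mathfrak{g}}(\kappa)[1]$ is nonzero and $\Gamma_F$-equivariant. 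Let $W\subseteq\widehat{\mathfrak{g}}(\kappa)$ be the $\kappa[\Gamma_{F_0}]$-submodule generated by the image of $\phi|_{\Gamma_L}$. Applying condition (4) of Definition \ref{big} to $W$, I would choose $\sigma_0\in\Gamma_F$ so that $\overline{r}(\sigma_0)$ has an eigenvalue $\overline{\alpha}$ of one of the two types listed in Proposition \ref{projector induces an isomorphism} (compatible with the fixed partition $\Omega$), so that $\sigma_0$ fixes $\zeta_p$, and so that $\mathrm{Tr}(\pi_{\sigma_0,\overline{\alpha}}\circ w\circ i_{\sigma_0,\overline{\alpha}})\ne0$ for some $w\in W$. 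Then, by Chebotarev density applied to $\mathrm{Gal}(L'/F)$ where $L'$ is the compositum of $L$ with the fixed field of $\ker(\phi|_{\Gamma_L})$, there is a place $v\notin S(Q)$ with $\mathrm{Frob}_v$ conjugate to a suitable $\sigma\in\Gamma_F$ with $\sigma|_L=\sigma_0|_L$ and $\phi(\sigma)$ prescribed. One checks, exactly as in \cite[Lemma 5.10, Prop.~5.12]{Thorne2012} transported to $^CG$ via the eigenspace decomposition of Theorem \ref{decomposition of the Galois representation}, that $\overline{r}_v$ then satisfies Assumption \ref{assumption on Taylor-Wiles deformations-Thorne} and that $\phi|_{\Gamma_{F_v}}\notin L_v^\perp$, i.e.\ $\phi\notin H^1_{\mathcal{S}(Q\cup\{v\})^\perp}(\Gamma_{F,S(Q\cup\{v\})},\widehat{\mathfrak{g}}(\kappa)[1])$.

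Finally I would assemble the induction. Starting from $Q=\emptyset$ (so $\mathcal{S}(Q)=\mathcal{S}$ with $h^1_{\mathcal{S}^\perp}[1]\le q$), repeatedly apply the core step; each new prime lowers $\mathrm{dim}_\kappa H^1_{\mathcal{S}(Q)^\perp}[1]$, and the Poitou--Tate comparison between the dual Selmer groups of $\mathcal{S}(Q)$ and $\mathcal{S}(Q\cup\{v\})$ (using that $L_v^\perp$ lies in the unramified subspace at $v$ for a Taylor--Wiles prime, by local Tate duality) shows the drop is exactly one. After at most $h^1_{\mathcal{S}^\perp}[1]\le q$ such primes the dual Selmer group vanishes. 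If $\# Q<q$ at that stage, I would pad $Q$ with further places satisfying $q_v\equiv1\pmod p$ and Assumption \ref{assumption on Taylor-Wiles deformations-Thorne}; these exist by the same Chebotarev argument with $\phi=0$, so only the eigenvalue condition on $\overline{r}(\mathrm{Frob}_v)$ is imposed, and since adjoining a Taylor--Wiles prime can only shrink the dual Selmer group, it stays $0$. The resulting $Q$ has $\# Q=q$ and $h^1_{\mathcal{S}(Q)^\perp}[1]=0$, giving the stated bound. The main obstacle is the selection step in the Chebotarev argument: one must simultaneously realize the eigenvalue structure demanded by Assumption \ref{assumption on Taylor-Wiles deformations-Thorne}---which for $^CG$ orthogonal or symplectic is more rigid than in the $\mathrm{GL}_N$ setting, since via Proposition \ref{projector induces an isomorphism} it must fit the fixed partition $\Omega$ with the prescribed multiplicities---and the non-vanishing of $\phi(\mathrm{Frob}_v)$ modulo $L_v^\perp$. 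This is precisely where the full force of the adequacy hypothesis on $\overline{r}(\Gamma_{F(\zeta_p)})$ is used, as well as $p\nmid[F^\circ:F]$, which guarantees that the $\Gamma_F$-action on $\widehat{G}$ factors through a group of order prime to $p$ and lets one read off eigenvalues in $\widehat{G}(\kappa)$ rather than in $^CG(\kappa)$.
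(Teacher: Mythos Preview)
Your argument follows the paper's closely: reduce via Corollary \ref{dimension of H^1} to killing $H^1_{\mathcal{S}(Q)^\perp}(\Gamma_{F,S(Q)},\widehat{\mathfrak{g}}(\kappa)[1])$, then use inflation--restriction, adequacy, and Chebotarev to find primes one at a time. Two points deserve correction.

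First, the paper runs the Chebotarev step over $F^\circ(\zeta_p)$ rather than over your $F_0=F(\zeta_p)$. The reason is that the primes $v\in Q$ must be places where $G$ is \emph{split}: all of \S\ref{smooth representations of G(F_v)} (and hence the reference to Proposition \ref{projector induces an isomorphism} inside Assumption \ref{assumption on Taylor-Wiles deformations-Thorne}) is set up only for split $v$, and this is used in the proof of Theorem \ref{R=T}. Choosing $\gamma\in\Gamma_{F^\circ(\zeta_p)}$ forces $\mathrm{Frob}_v$ to be trivial on $F^\circ$, i.e.\ $G$ split at $v$. This is the genuine role of the hypothesis $p\nmid[F^\circ:F]$: since $[F^\circ(\zeta_p):F(\zeta_p)]$ is then prime to $p$, the vanishing $H^1(\Gamma_{L/F(\zeta_p)},\widehat{\mathfrak{g}}(\kappa))=0$ coming from adequacy transfers to $H^1(\Gamma_{L/F^\circ(\zeta_p)},\widehat{\mathfrak{g}}(\kappa))=0$, so that $\phi|_{\Gamma_L}\neq0$ is still detected over the smaller group. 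Your stated use of $p\nmid[F^\circ:F]$ (reading eigenvalues in $\widehat{G}(\kappa)$ versus $^CG(\kappa)$) is not what is happening here.

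Second, your concern about the eigenvalue $\overline{\alpha}$ being ``compatible with the fixed partition $\Omega$'' is misplaced. There is no single $\Omega$ fixed in advance; in \S\ref{modularity lifting} the parahoric data $\Omega^v,j_0^v,j_1^v$ are chosen \emph{after} $Q$ is constructed, separately for each $v\in Q$, precisely so that the multiplicity of $\overline{\alpha}$ in $\overline{r}(\sigma_v)$ matches some block size $i_{j_0^v}^{\Omega^v}$. So condition (4) of Definition \ref{big} only needs to produce \emph{some} eigenvalue with the trace non-vanishing, and that is all the paper uses.
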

	\begin{proof}
		Since $\overline{r}(\Gamma_{F(\zeta_p)})$ is adequate, Definition \ref{big}(1) and (3) imply that $H^0(\Gamma_{F,S(Q)},\mathfrak{g}[1])=0$ for any finite set $Q$.

		By Corollaries
		\ref{relative dimension of deformation rings}
		and
		\ref{dimension of H^1},
		it is enough to show that there is such a set $Q$ such that
		\[
		H^1_{\mathcal{S}(Q)^\perp}
		(\Gamma_{F,S(Q)},
		\widehat{\mathfrak{g}}(\kappa)[1])=0.
		\]

		Recall the inflation-restriction exact sequence 
		\[
		0
		\rightarrow
		H^1(\Gamma_{F,S(Q)},
		\widehat{\mathfrak{g}}(\kappa)[1])
		\rightarrow
		H^1(\Gamma_{F,S},
		\widehat{\mathfrak{g}}(\kappa)[1])
		\rightarrow
		\oplus_{v\in Q}
		H^1(I_{F_v},
		\widehat{\mathfrak{g}}(\kappa)[1])^{\Gamma_{F_v}}
		\]
		We have the following observation:
		let $v$ be a finite place of $F$ over a prime $\ell$
		such that
		$q_v\equiv1(\mathrm{mod}\,p)$.
		For a finite unramified
		$\kappa[\Gamma_{F_v}]$-module $M$,
		it is easy to see that
		\[
		H^1(I_{F_v},M[1])
		=
		H^1(I_{F_v}^p,M[1])
		\]
		where
		$I_{F_v}^p$ is the maximal
		$p$-profinite quotient of $I_{F_v}$.
		Note that $\Gamma_{F_v}$
		acts on
		$I_{F_v}^p$ by the cyclotomic character
		$\epsilon_v$.
		An element
		$\phi\in
		H^1(I_{F_v}^p,M[1])^{\Gamma_{F_v}}
		$
		satisfies
		$(\sigma_v\phi)(\gamma)=\phi(\gamma)$
		for all $\gamma\in I_{F_v}^p$.
		In other words,
		$\sigma_v(\phi(\sigma_v^{-1}(\gamma)))=\phi(\gamma)$.
		Changing $\sigma_v^{-1}\gamma$ to $\gamma$,
		we have
		\[
		\sigma_v(\phi(\gamma))
		=
		\phi(\sigma_v(\gamma))
		=
		\phi(\gamma^{\mathbf{N}(v)}).
		\]
		Since
		$\phi(\gamma^{\mathbf{N}(v)})
		=
		(1+\epsilon_v(\gamma)+\cdots+\epsilon_v(\gamma)^{\mathbf{N}(v)})
		\phi(\gamma)
		$
		and
		$p|(q_v-1)$,
		we have
		$\sigma_v(\phi(\gamma))
		=\phi(\gamma)$
		for all $\gamma\in I_{F_v}$.
		Therefore
		$H^1(I_{F_v},M[1])^{\Gamma_{F_v}}
		\subset
		H^0(\Gamma_{F_v},M)
		=
		M^{\Gamma_{F_v}}$.

		Now return to our proposition.
		Note that $\overline{r}$ is unramified at $v\in Q$
		and
		$\overline{r}_v$ has a decomposition
		$\overline{\psi}_v\oplus\overline{s}_v$,
		we deduce that
		\begin{align*}
			H^1(I_{F_v},
			\mathrm{Hom}_\kappa
			(\overline{\psi}_v,\overline{s}_v))^{\Gamma_{F_v}}
			&
			\subset
			\mathrm{Hom}_{\kappa[\Gamma_{F_v}]}
			(\overline{\psi}_v,\overline{s}_v),
			\\
			H^1(I_{F_v},
			\mathrm{Hom}_\kappa
			(\overline{s}_v,\overline{\psi}_v))^{\Gamma_{F_v}}
			&
			\subset
			\mathrm{Hom}_{\kappa[\Gamma_{F_v}]}
			(\overline{s}_v,\overline{\psi}_v).
		\end{align*}
		Both spaces on the RHS are zero by our assumptions on $\overline{\psi}_v$ and $\overline{s}_v$.
		For $g\in\Gamma_{F_v}$
		and an eigenvalue $a$ of $\overline{r}(g)$,
		write
		$\pi_{g,a}$
		for the projection of
		$\overline{r}$
		to the eigenspace of $\overline{r}(g)$
		corresponding to $a$ and
		$i_{g,a}$
		for the injection of the
		$a$-eigenspace into $\overline{r}$.
		We have an exact sequence
		\[
		0
		\rightarrow
		H^1
		(\Gamma_{F,S(Q)},
		\widehat{\mathfrak{g}}(\kappa)
		)
		\rightarrow
		H^1
		(\Gamma_{F,S},
		\widehat{\mathfrak{g}}(\kappa)
		)
		\rightarrow
		\bigoplus_{v\in Q}
		\left(
		H^1(I_{F_v},
		\mathrm{Ad}\overline{\psi}_v)^{\Gamma_{F_v}}
		\oplus
		H^1(I_{F_v},
		\mathrm{Ad}\overline{s}_v)^{\Gamma_{F_v}}
		\right)
		\]
		and by the properties of
		Taylor-Wiles deformations
		(\emph{cf.}
		Definition \ref{Taylor-Wiles deformations-Thorne}),
		we get
		the exact sequence
		\[
		0
		\rightarrow
		H^1_{\mathcal{S}(Q)^\perp,T}
		\left(
		\Gamma_{F,S(Q)},
		\widehat{\mathfrak{g}}(\kappa)[1]
		\right)
		\rightarrow
		H^1_{\mathcal{S}^\perp,T}
		\left(
		\Gamma_{F,S},
		\widehat{\mathfrak{g}}(\kappa)[1]
		\right)
		\xrightarrow{\alpha}
		\bigoplus_{v\in Q}
		H^1
		\left(
		\Gamma_{F_v},
		\mathrm{Ad}\overline{\psi}_v[1]
		\right)
		/L_v^\perp
		=
		\bigoplus_{v\in Q}\kappa.
		\]
		Here
		the collection of maps
		$\alpha=(\alpha_v)_{v\in Q}$
		is given by
		\[
		\alpha_v(\phi)
		=
		\mathrm{Tr}
		\left(
		\pi_{\sigma_v,\overline{\psi}_v(\sigma_v)}
		\circ
		\phi(\sigma_v)
		\circ
		i_{\sigma_v,\overline{\psi}_v(\sigma_v)}
		\right)	
		\in
		\kappa.
		\]

		By Cebotarev density theorem,
		it suffices to show that
		for any non-zero
		$\phi\in
		H^1(\Gamma_{F,S},
		\widehat{\mathfrak{g}}(\kappa)[1])$,
		there is an element
		$\gamma\in
		\Gamma_{F^\circ(\zeta_p)}$
		such that
		$\overline{r}(\gamma)$
		has an eigenvalue $a$
		and
		$
		\mathrm{Tr}
		\left(
		\pi_{\gamma,a}
		\circ
		\phi(\gamma)
		\circ
		i_{\gamma,a}
		\right)
		\neq0$.
		Write
		$L/F^\circ(\zeta_p)$
		for the finite  Galois extension
		cut out by
		$\widehat{\mathfrak{g}}(\kappa)$
		(the minimal quotient of
		$\Gamma_{F^\circ(\zeta_p)}$ through which
		$\mathrm{Ad}\,\overline{r}$
		factors).
		Since
		$\overline{r}$
		is trivial over
		$\Gamma_L$,
		for any $\gamma'\in\Gamma_L$,
		$\phi(\gamma'\gamma)
		=
		\phi(\gamma')+\phi(\gamma)
		$.
		So it suffices to show that
		there is an element
		$\gamma\in
		\Gamma_{L/F^\circ(\zeta_p)}$
		such that
		$\overline{r}(\gamma)$
		has an eigenvalue $a$
		and
		$\mathrm{Tr}
		\left(
		\pi_{\gamma,a}
		\circ
		\phi(\Gamma_L)
		\circ
		i_{\gamma,a}
		\right)
		\neq0$.
		By assumption,
		$\overline{r}(\Gamma_{F(\zeta_p)})$
		is adequate and $p\nmid[F^\circ:F]$,
		thus
		\[
		H^1
		(\Gamma_{L/F^\circ(\zeta_p)},
		\widehat{\mathfrak{g}}(\kappa))
		=0.
		\]
		Note that the restriction of the cyclotomic character
		$\epsilon_p$
		to
		$\Gamma_{L/F^\circ(\zeta_p)}$
		is trivial,
		we have therefore
		$H^1
		(\Gamma_{L/F^\circ(\zeta_p)},
		\widehat{\mathfrak{g}}(\kappa)[1])
		=0$.
		By the
		inflation-restriction exact sequence,
		we get an injective map
		\[
		H^1
		(\Gamma_{F^\circ(\zeta_p)},
		\widehat{\mathfrak{g}}(\kappa)[1])
		\hookrightarrow
		H^1(\Gamma_L,
		\widehat{\mathfrak{g}}(\kappa)[1])^{\Gamma_{L/F^\circ(\zeta_p)}}.
		\]
		Since
		$\phi\neq0$, $\phi(\Gamma_L)\neq0$.	Moreover $L/F^\circ(\zeta_p)$	is a Galois extension,
		$\phi(\Gamma_L)$
		is stable under the action of
		$\Gamma_{F^\circ(\zeta_p)}$
		and thus a
		$\kappa[\Gamma_{F^\circ(\zeta_p)}]$-submodule of
		$\widehat{\mathfrak{g}}(\kappa)$.
		Now again by our assumption that
		$\overline{r}(\Gamma_{F(\zeta_p)})$
		is adequate and $p\nmid[F^\circ:F]$ (thus $p\nmid[F^\circ(\zeta_p):F(\zeta_p)]$), we have
		\[
		H^1(\Gamma_{F^\circ(\zeta_p)},\widehat{\mathfrak{g}}(\kappa)[1])=0.
		\]
		We deduce that one can indeed find such a
		$\gamma\in\Gamma_{F^\circ(\zeta_p)}$
		satisfying the above conditions.
		This shows
		$H^1_{\mathcal{S}(Q)^\perp}
		(\Gamma_{F,S(Q)},
		\widehat{\mathfrak{g}}(\kappa)[1])=0$.

	\end{proof}

	\section{Minimal modularity lifting theorem}\label{modularity lifting}	
	In this section, we prove the $R=\mathbb{T}$ theorem (Theorem \ref{R=T}) and deduce the minimal modularity lifting theorem (Theorem \ref{modularity}).

	We retain the notations from
	§\ref{Automorphic forms on G(A_F)}, in particular, $W$ an irreducible algebraic representation of
	$ G$ over
	$\mathcal{O}$.
	Moreover, we assume in this section $K=G(\widehat{\mathcal{O}_F})$.
	Note that $K$ is not necessarily neat.
	We fix a finite set $Q$
	of finite places of $F$ disjoint from $S_p$
	($Q$ may be empty).
	For each $v\in Q$,
	we require that
	\begin{enumerate}
		\item 
		The group $G$ is split at $v$ and $q_v\equiv1(\mathrm{mod}\,p)$;

		\item 
		We fix a subset
		$\Omega^v\subset
		\{0,1,\cdots,n_\mathrm{s}-1\}$
		and write
		$\mathfrak{p}_{\Omega^v}
		\subset
		G(\mathcal{O}_{F,v})$
		for the parahoric subgroup associated to $\Omega^v$;

		\item 
		For each integer
		$j^v=1,2,\cdots,q_{\Omega^v}$,
		we write
		$\mathfrak{p}_{\Omega^v,j^v}
		\subset
		\mathfrak{p}_{\Omega^v}$
		for the normal subgroup as given preceding
		Proposition
		\ref{parahoric invariant subspace is one-dimensional}.
		We then fix two distinct integers
		$j_0^v,j_1^v\in\{1,\cdots,q_{\Omega^v}\}$
		as in
		Proposition
		\ref{projector induces an isomorphism}.
	\end{enumerate}
	We put
	\begin{equation}\label{T_W(K,R)}
		\mathbb{T}_{W}(K,R)
		\subset
		\mathrm{End}_R
		(M_{W,p}(K,R))
		\index{T@$\mathbb{T}_W(K,R)$}
	\end{equation}
	for the $R$-subalgebra
	generated by the spherical
	Hecke operators
	$T_v^{(j)}$
	as in
	(\ref{spherical Hecke operators})
	for
	$j=1,\cdots,n_\mathrm{s}$
	and
	finite places $v\notin S_p$.
	We associate to $K$ two compact open subgroups
	$K_1(Q),K_0(Q)$ of
	$G(\mathbb{A}_{F,f})$
	to
	$K$ as follows:
	\begin{enumerate}
		\item 
		$K_1(Q)=\prod_{v\nmid\infty}K_{1,v}(Q)$
		with
		$K_{1,v}(Q)=K_v$
		for $v\notin Q$
		and
		$=\mathfrak{p}_{\Omega^v,j_0^v}$
		for $v\in Q$;\index{K@$K_1(Q),K_0(Q)$}

		\item 
		$K_0(Q)=\prod_{v\nmid\infty}
		K_{0,v}(Q)$
		with
		$K_{0,v}(Q)=K_v$
		for $v\notin Q$
		and
		$=\mathfrak{p}_{\Omega^v}$
		for
		$v\in Q$.
	\end{enumerate}
	So we have $K_1(Q)\subset K_0(Q)$.\footnote{The group $K_1(Q)$, resp. $K_0(Q)$ should be thought as analogue of the congruence subgroups $\Gamma_1(N)$, resp. $\Gamma_0(N)$ in $\mathrm{SL}_2(\mathbb{Z})$.}
	Note that for $Q=\emptyset$, we have $K_1(Q)=K_0(Q)=K$.
	We write
	$\mathrm{pr}_{\Omega^v,(j_1^v)}$
	for the operator as in the end of
	Proposition
	\ref{projector induces an isomorphism}. Note that these operators $\mathrm{pr}_{\Omega^v,(j_1^v)}$ commute with each other.
	Then we set
	\[
	M_{W,p}^Q(K,\mathcal{O})
	=
	\left(
	\prod_{v\in Q}
	\mathrm{pr}_{\Omega^v,(j_1^v)}
	\right)
	M_{W,p}(K,\mathcal{O})
	\subset
	M_{W,p}(K,\mathcal{O}).
	\]
	So
	$M^Q_{W,p}(K,\mathcal{O})$
	is an $\mathcal{O}$-direct summand of
	$M_{W,p}(W,\mathcal{O})$
	which is invariant under the action of
	$\mathbb{T}_W(K,\mathcal{O})$.
	Write
	$\mathbb{T}_W^{Q}(K,\mathcal{O})$
	for the image of
	$\mathbb{T}_W(K,\mathcal{O})$
	in
	$\mathrm{End}_\mathcal{O}
	(M_{W,p}^Q(K,\mathcal{O}))$.
	Thus we have natural maps
	\begin{equation}\label{morphisms between Hecke algebras}
		\mathbb{T}_W^{Q}(K_1(Q),\mathcal{O})
		\rightarrow
		\mathbb{T}_W^{Q}(K_0(Q),\mathcal{O})
		\rightarrow
		\mathbb{T}_W^{Q}(K,\mathcal{O})
		\rightarrow
		\mathbb{T}_W(K,\mathcal{O}),
	\end{equation}
	the first two are surjective and the last one is injective. Moreover, any maximal ideal $\mathfrak{m}$ of
	$\mathbb{T}_W(K,\mathcal{O})$
	gives rise to maximal ideals
	$\mathfrak{m}_1^Q$,
	$\mathfrak{m}_0^Q$ and $\mathfrak{m}^Q$
	of the first three Hecke algebras (if $Q=\emptyset$, we simply write $\mathfrak{m}_?$ for $\mathfrak{m}_?^\emptyset$).
	Then  we have
	
	\begin{proposition}\label{residual Galois rep associated to auto rep}
		%		Let
		%		$W_a$
		%		be the algebraic
		%		irreducible representation
		%		of
		%		$\mathrm{GL}_N(F\otimes\mathbb{R})$
		%		whose highest weight
		%		is given by
		%		$a=(a_{\tau,j})_{\tau\colon F\rightarrow\mathbb{C},j=1,\cdots,N}
		%		\in
		%		(\mathbb{Z}^N)^{\mathrm{Hom}(F,\mathbb{C})}$.
		Let
		$\mathfrak{m}_1^Q$
		be a maximal ideal of
		$\mathbb{T}^{Q}_W(K_1(Q),\mathcal{O})$ as above satisfying the following: there are a minimal prime ideal $\mathfrak{P}$ of $\mathbb{T}^{Q}_W(K_1(Q),\mathcal{O})$ contained in $\mathfrak{m}_1^Q$ and an irreducible discrete series constituent $\pi$ of $\mathcal{A}(G(\mathbb{A}_F))$ satisfying condition (C2) which is generated by a non-zero vector in $M_{W,p}^Q(K_1(Q),\mathcal{O})$ such that $\mathbb{T}_W^{Q}(K_1(Q),\mathcal{O})$ acts on $\pi$ via the quotient $\mathbb{T}_W^{Q}(K_1(Q),\mathcal{O})/\mathfrak{P}$.
		Then there is a semisimple Galois representation
		\[
		\overline{\rho}_{\mathfrak{m}_1^Q}
		\colon
		\Gamma_F
		\rightarrow
		\,^CG
		\left(
		\mathbb{T}^{Q}_W(K_1(Q),\mathcal{O})/\mathfrak{m}_1^Q
		\right)
		\]
		satisfying the following properties
		\begin{enumerate}
			\item 
			The representation $\overline{\rho}_{\mathfrak{m}_1^Q}$ is unramified outside $S_p\sqcup Q$.

			\item 
			For a finite place 
			$v\notin S_p\sqcup Q$ of $F$, write
			the characteristic polynomial
			of
			$\overline{\rho}_{\mathfrak{m}_1^Q}(\sigma_v)$
			of the Frobenius $\sigma_v$
			as
			\[
			\begin{cases*}
				(X-1)^{\lfloor\frac{n}{2}\rfloor-n_\mathrm{s}}
				\prod_{i=1}^{n_\mathrm{s}}
				(X-x_i)
				(X-x_i^{-1}),
				&
				if $G^\ast=\mathrm{SO}_n^\eta$;
				\\
				(X-1)^{1+\lfloor\frac{n}{2}\rfloor-n_\mathrm{s}}
				\prod_{i=1}^{n_\mathrm{s}}
				(X-x_i)
				(X-x_i^{-1}),
				&
				if $G^\ast=\mathrm{Sp}_n$
			\end{cases*}
			\]
			with
			$x_i$ in a fixed algebraic closure of
			$\mathbb{T}^{Q}_W(K_1(Q),
			\mathcal{O})/\mathfrak{m}_1^Q$ (cf. Lemma \ref{Hecke polynomial for unramified rep}),
			then
			the Hecke operator
			$T_v^{(j)}(\mathrm{mod}\,\mathfrak{m}_1^Q)$
			is equal to
			\[
			\begin{cases*}
				q_v^{\frac{n_\mathrm{s}(n_\mathrm{s}-1)}{2}
					+
					(n-n_\mathrm{s}-j)j}
				\sum_{J}
				\sum_{I\subset J}
				\prod_{i\in I}
				(x_iq_v^{-1})
				\prod_{i\in J\backslash I}
				(x_iq_v^{-1})^{-1},
				&
				if $G^\ast=\mathrm{SO}_n^\eta$;
				\\
				q_v^{\frac{n_\mathrm{s}(n_\mathrm{s}+1)}{2}
					+
					(n-n_\mathrm{s}-j)j}
				\sum_{J}
				\sum_{I\subset J}
				\prod_{i\in I}
				(x_iq_v^{-1})
				\prod_{i\in J\backslash I}
				(x_iq_v^{-1})^{-1},
				&
				if $G^\ast=\mathrm{Sp}_n$,
			\end{cases*}
			\]
			where
			$J$ runs through all subsets of
			$\{1,2,\cdots,n_\mathrm{s}\}$
			of cardinal $j$ and
			$I$ runs through all subsets of $J$
			($j=1,\cdots,n_\mathrm{s}$).

			\item 
			For each $v|p$, $\overline{\rho}_{\mathfrak{m}_1^Q}|_{\Gamma_{F_v}}$ is \emph{torsion crystalline} (that is, there is a crystalline representation $V$ of $\Gamma_{F_v}$ and $\Gamma_{F_v}$-stable lattices $\Lambda\subset\Lambda'$ in $V$ such that $\Lambda'/\Lambda$ is isomorphic to $\overline{\rho}_{\mathfrak{m}_1^Q}|_{\Gamma_{F_v}}$; \emph{cf.} the beginning of §\ref{Fontaine-Lafaille deformations}).

		\end{enumerate}
	\end{proposition}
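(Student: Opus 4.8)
The plan is to produce $\overline{\rho}_{\mathfrak{m}_1^Q}$ by reducing modulo $\mathfrak{m}_1^Q$ a characteristic‑zero Galois representation attached to a single automorphic representation in the packet cut out by $\mathfrak{P}$, and then descending the semisimplified reduction to the residue field $k:=\mathbb{T}^Q_W(K_1(Q),\mathcal{O})/\mathfrak{m}_1^Q$. First I would use Proposition~\ref{the space of modular forms is semisimple}: after transporting along $\iota$, the space $M^Q_{W,p}(K_1(Q),\mathcal{O})\otimes_{\mathcal{O}}\overline{\mathbb{Q}}_p$ is semisimple under the commuting spherical Hecke operators, so it decomposes into generalized Hecke eigenspaces, each contained in (the relevant isotypic component of) an irreducible automorphic representation of $G(\mathbb{A}_F)$. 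By hypothesis one such representation is the discrete series constituent $\pi$, which satisfies (C2), is generated by a vector in $M^Q_{W,p}(K_1(Q),\mathcal{O})$, and on which $\mathbb{T}^Q_W(K_1(Q),\mathcal{O})$ acts through $\mathbb{T}^Q_W(K_1(Q),\mathcal{O})/\mathfrak{P}$. Applying Theorem~\ref{weak transfer pi to Pi} with $S=S_p\sqcup Q\sqcup\{v\mid\infty\}$ I would form the weak transfer $\pi^\sharp$ and then invoke the construction underlying Theorem~\ref{Galois representations attached to auto rep} — whose proof uses only unramifiedness of $\pi$ at $p$ together with (C2), and hence applies verbatim even if $\pi$ is ramified at the primes of $Q$ — to obtain a semisimple
\[
r_\pi\colon\Gamma_F\to\,^CG(\overline{\mathbb{Q}}_p),
\]
which is unramified outside $S_p\sqcup Q$ (since $\pi$ has level $K_1(Q)$, hence is unramified at every finite place outside $Q$), crystalline at each $v\mid p$ with the Hodge–Tate weights of Theorem~\ref{Galois representations attached to auto rep}(2), and whose similitude character reduces modulo $\mathfrak{m}_1^Q$ to a $k$‑valued character.

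Next I would identify the characteristic polynomials. For $v\notin S_p\sqcup Q$ the Weil–Deligne parameter of $r_\pi$ at $v$ is the unramified $C$‑parameter $\iota\circ\phi_{\pi_v}^C$ of Theorem~\ref{Galois representations attached to auto rep}(1), whose value on the Frobenius $\sigma_v$ is read off from the Satake parameter of $\pi_v$. Since $\pi_v$ is an unramified subquotient of an unramified principal series, Lemma~\ref{Hecke polynomial for unramified rep} expresses the eigenvalue of $T_v^{(j)}$ on the $1$‑dimensional space $\pi_v^{G(\mathcal{O}_{F,v})}$ as the symmetric function of the Satake parameters appearing in statement~(2); comparing with the normalisation of the Satake isomorphism in \S\ref{Hecke algebras and rep} yields exactly the asserted identity relating the characteristic polynomial of $r_\pi(\sigma_v)$ to the operators $T_v^{(j)}$, and in particular shows that all the relevant eigenvalues lie in $\mathbb{T}^Q_W(K_1(Q),\mathcal{O})/\mathfrak{P}$.

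I would then reduce and descend. As $\Gamma_F$ is compact and $^CG$ is smooth over $\mathbb{Z}$, $r_\pi$ may be conjugated into $^CG(\mathcal{O}')$ for $\mathcal{O}'$ the ring of integers of a finite extension $E'$ of $E$ into which $\mathbb{T}^Q_W(K_1(Q),\mathcal{O})/\mathfrak{P}$ embeds compatibly with $\mathfrak{m}_1^Q$; reducing modulo the maximal ideal of $\mathcal{O}'$ and semisimplifying the composite with $\widetilde{\xi}\colon\,^CG\hookrightarrow\mathrm{GL}_N$ gives a semisimple $r\colon\Gamma_F\to\,^CG(k')$ with $k'=\mathcal{O}'/\mathfrak{p}'$. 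By the previous paragraph and the Chebotarev density theorem, the characteristic polynomials of $r(\sigma)$, $\sigma\in\Gamma_F$, have coefficients in the image of $\mathbb{T}^Q_W(K_1(Q),\mathcal{O})/\mathfrak{P}$ in $k'$, which is $k$; together with the fact that $\mu_r$ is $k$‑valued, the classification of semisimple $^CG$‑valued representations in terms of their $\mathrm{GL}_N$‑characteristic polynomials together with $\mu$, combined with the vanishing of the Brauer group of the finite field $k$, lets me conjugate $r$ to a semisimple $\overline{\rho}_{\mathfrak{m}_1^Q}\colon\Gamma_F\to\,^CG(k)$, unique up to $^CG(k)$‑conjugacy. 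Property~(1) is then clear, since neither reduction nor semisimplification enlarges the ramification locus; property~(2) is the congruence just established; and property~(3) holds because $\overline{\rho}_{\mathfrak{m}_1^Q}|_{\Gamma_{F_v}}$ is a subquotient of the reduction of the crystalline representation $r_\pi|_{\Gamma_{F_v}}$, and any subquotient of the reduction of a crystalline representation is torsion crystalline in the sense of the statement.

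The hard part will be the descent in the last step: arranging that the semisimple reduction is genuinely $^CG$‑valued over the \emph{precise} residue field $k$, retaining its orthogonal or symplectic structure, \emph{without} assuming $\overline{\rho}_{\mathfrak{m}_1^Q}$ absolutely irreducible. This requires the theory of $^CG$‑valued pseudorepresentations — or, equivalently, a Brauer–Nesbitt argument keeping track of the invariant bilinear form and of $\mu$ — rather than the naive form‑free descent available for $\mathrm{GL}_N$. A more routine point to check is that the construction of \S\ref{Special orthogonal groups and automorphic Galois representations} does apply to $\pi$ when $\pi$ is ramified at $Q$; this is within the scope of the results of \cite{Arthur2013,Shin2024,ChenevierHarris2013} invoked there.
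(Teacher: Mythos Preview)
Your approach is essentially the same as the paper's: the paper's proof consists of the single sentence ``identical to \cite[Proposition~3.4.2]{ClozelHarrisTaylor2008} taking into account Theorem~\ref{Galois representations attached to auto rep}'', and what you have written is precisely an unpacking of that reference in the present $^CG$-valued setting. The paper does add one remark you should be aware of: it explicitly notes that $\overline{\rho}_{\mathfrak{m}_1^Q}$ is \emph{not necessarily unique}, which is exactly the manifestation of the descent subtlety you flag at the end --- the paper does not claim to resolve uniqueness, only existence, so your worry about a full $^CG$-valued Brauer--Nesbitt statement is acknowledged rather than circumvented.
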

	\begin{proof}
		The proof is identical to
		that of
		\cite[Proposition 3.4.2]{ClozelHarrisTaylor2008}
		taking into account of
		Theorem
		\ref{Galois representations attached to auto rep}. Note that in our situation, $\overline{\rho}_{\mathfrak{m}_1^Q}$ is not necessarily unique.
	\end{proof}

	We call
	$\mathfrak{m}_1^Q$
	as in the above proposition
	\emph{non-Eisenstein}
	if
	$\overline{\rho}_{\mathfrak{m}_1^Q}$
	is absolutely irreducible
	(\emph{i.e.}
	the composition of
	$\overline{\rho}_{\mathfrak{m}_1^Q}$
	with the inclusion
	$^CG
	\hookrightarrow
	\mathrm{GL}_N$
	is absolutely irreducible).
	Then 
	we have

	\begin{theorem}\label{Galois representation valued in Hecke algebra-char=0}
		Let
		$\mathfrak{m}_1^Q$
		be as in the preceding proposition which is moreover
		a non-Eisenstein ideal of
		$\mathbb{T}_W^{Q}(K_1(Q),\mathcal{O})$. 
		Then
		$\overline{\rho}_{\mathfrak{m}_1^Q}$
		as in the preceding
		Proposition
		lifts to a Galois representation
		\[
		\rho_{\mathfrak{m}_1^Q}
		\colon
		\Gamma_F
		\rightarrow
		\,^CG
		\left(
		\mathbb{T}_W^{Q}(K_1(Q),\mathcal{O})_{\mathfrak{m}_1^Q}
		\right)
		\]
		satisfying the following properties
		\begin{enumerate}
			\item 
			$\rho_{\mathfrak{m}_1^Q}$
			is unramified at all but finitely many places of $F$.

			\item 
			For any finite place
			$v\notin S_p\sqcup Q$ of $F$,
			$\rho_{\mathfrak{m}_1^Q}$
			is unramified at $v$.
			Write the characteristic polynomial
			of
			$\rho_{\mathfrak{m}_1^Q}(\sigma_v)$
			of the Frobenius $\sigma_v$
			as
			\[
			\begin{cases*}
				(X-1)^{\lfloor\frac{n}{2}\rfloor-n_\mathrm{s}}
				\prod_{i=1}^{n_\mathrm{s}}
				(X-x_i)(X-x_i^{-1}),
				&
				if $G^\ast=\mathrm{SO}_n^\eta$;
				\\
				(X-1)^{1+\lfloor\frac{n}{2}\rfloor-n_\mathrm{s}}
				\prod_{i=1}^{n_\mathrm{s}}
				(X-x_i)(X-x_i^{-1}),
				&
				if $G^\ast=\mathrm{Sp}_n$,
			\end{cases*}
			\]
			with
			$x_i$ in a fixed sufficiently large extension of
			$\mathbb{T}_W^{Q}
			(K_1(Q),\mathcal{O})_{\mathfrak{m}_1^Q}$,
			then
			$T_v^{(j)}$
			is equal to
			\[
			\begin{cases*}
				q_v^{\frac{n_\mathrm{s}(n_\mathrm{s}-1)}{2}
					+
					(n-n_\mathrm{s}-j)j}
				\sum_{I,J}
				\prod_{i\in I}
				(x_iq_v^{-1})
				\prod_{i\in J\backslash I}
				(x_iq_v^{-1})^{-1},
				&
				if $G^\ast=\mathrm{SO}_n^\eta$;
				\\
				q_v^{\frac{n_\mathrm{s}(n_\mathrm{s}+1)}{2}
					+
					(n-n_\mathrm{s}-j)j}
				\sum_{I,J}
				\prod_{i\in I}
				(x_iq_v^{-1})
				\prod_{i\in J\backslash I}
				(x_iq_v^{-1})^{-1},
				&
				if $G^\ast=\mathrm{Sp}_n$,
			\end{cases*}
			\]
			where
			$J$ runs through all subsets of
			$\{1,2,\cdots,n_\mathrm{s}\}$
			of cardinal $j$ and
			$I$ runs through all subsets of $J$
			($j=1,\cdots,n_\mathrm{s}$).

			\item 
			For each
			$v\in Q$,
			suppose that
			$\mathrm{Art}_{F_v}(\omega_v)=\sigma_v$
			on the maximal abelian extension of
			$F_v$, then  		
			there is a character
			\[
			V^{(v)}
			\colon
			F_v^\times
			\rightarrow
			\mathbb{T}^{Q}_W
			(K_1(Q),\mathcal{O})_{\mathfrak{m}_1^Q}^\times
			\]		
			such that
			$V^{(v)}(\alpha)=V_\alpha^{j_0^v}$
			for any
			$\alpha\in\mathcal{O}_{F,v}^\times$
			and
			the restriction to the decomposition group
			$\Gamma_{F_v}$ at $v\in Q$
			is of the form
			\[
			\rho_{\mathfrak{m}_1^Q}
			|_{\Gamma_{F_v}}
			=
			s_v
			\oplus
			\psi_v
			\]
			where
			$s_v$ and
			$\psi_v$ are
			as in
			Theorem
			\ref{decomposition of the Galois representation}
			and moreover, $s_v$ is unramified and
			$\psi_v|_{I_{F_v}}$
			acts by the scalar character
			$
			\left(
			V^{(v)}\circ\mathrm{Art}_{F_v}^{-1}
			\right)
			|_{I_{F_v}}$.
		\end{enumerate}
	\end{theorem}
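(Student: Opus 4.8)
The plan is to build $\rho_{\mathfrak{m}_1^Q}$ by interpolating the Galois representations attached to the automorphic representations occurring in $M_{W,p}^{Q}(K_1(Q),\mathcal{O})_{\mathfrak{m}_1^Q}$, and then to read off the local behaviour at the places $v\in Q$ directly from Theorem \ref{decomposition of the Galois representation}. Write $T=\mathbb{T}_W^{Q}(K_1(Q),\mathcal{O})_{\mathfrak{m}_1^Q}$. First I would observe that $T$ is reduced: the space of automorphic forms on the definite group $G$ is a semisimple admissible module (Proposition \ref{the space of modular forms is semisimple}), so the spherical Hecke algebra $\mathbb{T}_W^{Q}(K_1(Q),E)$ acts diagonalizably and is a finite product of fields; since $M_{W,p}^{Q}(K_1(Q),\mathcal{O})$ is $\mathcal{O}$-free (Corollary \ref{the space of modular forms is finite dimensional} together with assumption (4), using Cauchy's theorem so that each $G'_{g_\alpha}$ has order prime to $p$), $T$ acts faithfully on a free $\mathcal{O}$-module and $T\otimes_\mathcal{O}E$ is reduced, whence $T$ is reduced. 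Thus $T\hookrightarrow\prod_{\mathfrak{P}}T/\mathfrak{P}$, the product over the minimal primes of $T$, each $T/\mathfrak{P}$ a domain finite over $\mathcal{O}$ that embeds into $\overline{\mathbb{Q}}_p$ and, by Proposition \ref{residual Galois rep associated to auto rep}, corresponds to an irreducible discrete series constituent $\pi'$ of $\mathcal{A}(G(\mathbb{A}_F))$ satisfying (C2); to each such $\pi'$ Theorem \ref{Galois representations attached to auto rep} attaches $r_{\pi'}\colon\Gamma_F\to\,^CG(\overline{\mathbb{Q}}_p)$.

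Next I would construct $\rho_{\mathfrak{m}_1^Q}$ over $T$. By Lemma \ref{Hecke polynomial for unramified rep} and Proposition \ref{residual Galois rep associated to auto rep}(2), for every finite place $v\notin S_p\sqcup Q$ the coefficients of the characteristic polynomial of $r_{\pi'}(\sigma_v)$ are given by explicit polynomials in the spherical Hecke operators $T_v^{(j)}$; hence they lie in $T$ and are compatible across the factors $T/\mathfrak{P}$. Chebotarev density and continuity then produce a continuous $N$-dimensional pseudorepresentation $\Gamma_F\to T$ reducing to the trace of $\overline{\rho}_{\mathfrak{m}_1^Q}$. Since $\mathfrak{m}_1^Q$ is non-Eisenstein, i.e. $\overline{\rho}_{\mathfrak{m}_1^Q}$ is absolutely irreducible, this pseudorepresentation lifts to a representation $\rho\colon\Gamma_F\to\mathrm{GL}_N(T)$, unique up to $\widehat{G}^1(T)$-conjugacy and reducing to $\overline{\rho}_{\mathfrak{m}_1^Q}$. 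Each $r_{\pi'}$ preserves a non-degenerate symmetric or symplectic form with fixed similitude character $\nu$; by the uniqueness of the lift, $\rho$ preserves such a form over $T$ up to similitude $\nu$, so after conjugation it factors through $\,^CG(T)$, with $\mathbb{G}_m$-component $\nu$ in the cases $\,^CG=\,^LG\times\mathbb{G}_m$. This is $\rho_{\mathfrak{m}_1^Q}$, and properties (1) and (2) hold because they hold for every $r_{\pi'}$ and $T\hookrightarrow\prod_{\mathfrak{P}}T/\mathfrak{P}$.

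It remains to prove (3), which is where §\ref{smooth representations of G(F_v)} enters. Fix $v\in Q$, let $K_1(Q)^v$ be $K_1(Q)$ with the component at $v$ removed, and put $\Pi=M_{W,p}(K_1(Q)^v,\mathcal{O})_{\mathfrak{m}}$ with its smooth $\,^{}T[G(F_v)]$-action; then $\Pi^{\mathfrak{p}_{\Omega^v,j_0^v}}$ carries the operators $V_k^{j}$ and $V_\alpha^{j_0^v}$, and localizing the identity $M_{W,p}^{Q}(K_1(Q),\mathcal{O})=\big(\prod_{w\in Q}\mathrm{pr}_{\Omega^w,(j_1^w)}\big)M_{W,p}(K_1(Q),\mathcal{O})$, whose factors commute, identifies $T$ with the ring $R_{j_0^v,j_1^v}$ of Theorem \ref{decomposition of the Galois representation}. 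The hypotheses of Proposition \ref{projector induces an isomorphism} and Theorem \ref{decomposition of the Galois representation} are met: $\Pi\otimes_\mathcal{O}\overline{\mathbb{Q}}_p$ and the algebras $R_U\otimes_\mathcal{O}\overline{\mathbb{Q}}_p$ are semisimple by Proposition \ref{the space of modular forms is semisimple}; $\overline{\rho}_{\mathfrak{m}_1^Q}$ is unramified at $v$ and $\overline{\rho}_{\mathfrak{m}_1^Q}(\sigma_v)$ has an eigenvalue of one of the two admissible types by Assumption \ref{assumption on Taylor-Wiles deformations-Thorne}; and the compatibility between $\rho_{\mathfrak{m}_1^Q}$ and the $r_{\pi'}$ demanded there is exactly what was arranged in the previous paragraph. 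Theorem \ref{decomposition of the Galois representation} then yields a decomposition into non-degenerate symmetric or symplectic subspaces
\[
\rho_{\mathfrak{m}_1^Q}|_{\Gamma_{F_v}}=s_v\oplus\psi_v,
\]
with $s_v$ unramified, $\psi_v$ tamely ramified and $\psi_v|_{I_{F_v}}=(\psi',(\psi')^{-1})$, together with the identity $V_\alpha^{j_0^v}=\psi'(\mathrm{Art}_{F_v}(\alpha))$ for $\alpha\in\mathcal{O}_{F,v}^\times$. Defining $V^{(v)}\colon F_v^\times\to T^\times$ by $V^{(v)}(\alpha)=V_\alpha^{j_0^v}$ on $\mathcal{O}_{F,v}^\times$ and $V^{(v)}(\omega_v)=1$ gives a character with $(V^{(v)}\circ\mathrm{Art}_{F_v}^{-1})|_{I_{F_v}}=\psi'$, which is (3).

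The step I expect to be the main obstacle is this last one: not the formal interpolation, which is routine once $\overline{\rho}_{\mathfrak{m}_1^Q}$ is absolutely irreducible, but the passage to the Hecke algebra $T$ itself in Theorem \ref{decomposition of the Galois representation} — in particular the bookkeeping that identifies $R_{j_0^v,j_1^v}$ with $T$, the simultaneous treatment of all $v\in Q$ through the commuting projectors $\mathrm{pr}_{\Omega^v,(j_1^v)}$, and the verification that $\rho_{\mathfrak{m}_1^Q}$ genuinely takes values in $\,^CG(T)$ rather than merely $\,^CG(T[1/p])$ in both shapes of $\,^CG$. One further minor point: although $\overline{\rho}_{\mathfrak{m}_1^Q}$ is not canonical in Proposition \ref{residual Galois rep associated to auto rep}, the non-Eisenstein hypothesis makes it absolutely irreducible, hence determined up to conjugacy by its trace, so the construction does not depend on the chosen representative.
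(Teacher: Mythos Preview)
Your plan is correct and follows the same overall arc as the paper: reduce to the minimal primes of the reduced Hecke algebra, interpolate the $r_{\pi'}$, descend to $T$ using absolute irreducibility, and then read off the local structure at $v\in Q$ from Theorem~\ref{decomposition of the Galois representation}. Part~(3) is handled identically in both.

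The one genuine difference is in how the descent to $T$ is carried out. You invoke the general pseudorepresentation machinery to obtain $\rho\colon\Gamma_F\to\mathrm{GL}_N(T)$ and then argue, via self-duality and uniqueness of the lift, that $\rho$ preserves a form of the correct type and hence conjugates into $\,^CG(T)$. The paper instead first glues the $r_{\pi'}$ into a representation $\rho_R\colon\Gamma_F\to\,^CG(R)$ over the auxiliary ring $R\subset\kappa\oplus\bigoplus_{\mathfrak P}\mathcal O_{\mathfrak P}$, checks via Satake that $\mathrm{Tr}(\rho_R(\Gamma_F))\subset T$, and then proves a dedicated descent lemma (Lemma~\ref{lifting representations}) showing that a $\,^CG(R)$-valued representation whose traces lie in a subring $S$ can be conjugated, by an element of $\,^CG(R)$ congruent to $1$ modulo any prescribed ideal, into $\,^CG(S)$. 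This lemma is exactly the content of the step you flag as delicate: it handles the integrality question and the passage from $\mathrm{GL}_N$ to $\,^CG$ in one stroke, including the verification that the resulting bilinear form is integral and of the correct (symmetric or symplectic) type. Your route is perfectly viable, but the ``uniqueness of the lift implies $\rho$ preserves a form over $T$'' sentence hides precisely the work that Lemma~\ref{lifting representations} makes explicit; in particular, over a non-domain like $T$ one must check that the intertwiner $\rho\simeq\rho^\vee\otimes\nu$ is a unit in $\mathrm{M}_N(T)$ and has the right sign, which is what the paper's reduction to $\kappa[\epsilon]$ and the explicit $A'=A-(b/2)1_N$ computation accomplish.
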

	\begin{proof}		
		For any minimal prime ideal
		$\mathfrak{P}
		\subset
		\mathfrak{m}_1^Q$, we fix an embedding of the algebraic closure of the fraction field of $\mathbb{T}_W^{Q}(K_1(Q),\mathcal{O})/\mathfrak{P}$ into $\overline{\mathbb{Q}}_p$.
		By
		Theorem
		\ref{Galois representations attached to auto rep},
		we have a Galois representation
		$\rho_{\mathfrak{P}}
		\colon
		\Gamma_F
		\rightarrow
		\,^C
		G(\overline{\mathbb{Q}}_p)$
		such that
		the first two properties in the theorem
		are satisfied
		(with $\mathbb{T}_W^{Q}(K_1(Q),\mathcal{O})_{\mathfrak{m}_1^Q}$
		replaced by
		$\overline{\mathbb{Q}}_p$).
		We may assume that
		$\rho_{\mathfrak{P}}$
		factors through
		$^CG(\mathcal{O}_{\mathfrak{P}})$
		where
		$\mathcal{O}_{\mathfrak{P}}$
		is the ring of integers of a
		sufficiently large finite
		extension of
		the fraction field of
		$\mathbb{T}_W^{Q}(K_1(Q),\mathcal{O})/\mathfrak{P}$
		and moreover
		$\rho_{\mathfrak{P}}  	
		\equiv
		\overline{\rho}_{\mathfrak{m}_1^Q}
		(\mathrm{mod}\,
		\mathfrak{m}_{\mathcal{O}_\mathfrak{P}})$
		(using the assumption that
		$\overline{\rho}_{\mathfrak{m}_1^Q}$
		is absolutely irreducible).
		
		Let
		$\kappa_1^Q
		=
		\mathbb{T}_W^{Q}(K_1(Q),\mathcal{O})/
		\mathfrak{m}_1^Q$
		and write
		$R$
		for the subring of
		$\kappa_1^Q
		\oplus
		\left(
		\bigoplus_{\mathfrak{P}
			\subset\mathfrak{m}_1^Q}
		\mathcal{O}_{\mathfrak{P}}
		\right)$
		consisting of elements
		$(a_{\mathfrak{m}_1^Q},a_\mathfrak{P})$
		such that
		$a_\mathfrak{P}  	
		\equiv
		a_{\mathfrak{m}_1^Q}
		(\mathrm{mod}\,\mathfrak{m}_{\mathcal{O}_\mathfrak{P}})$
		for all minimal prime ideals $\mathfrak{P}$
		contained in
		$\mathfrak{m}_1^Q$.
		Thus we get a Galois representation
		\[
		\rho_R
		\colon
		\Gamma_F
		\rightarrow
		\,
		^CG(R).
		\]
		By
		Remark
		\ref{equivalence of various spaces of modular forms},
		the
		Hecke algebra
		$\mathbb{T}_W^{Q}(K_1(Q),\mathcal{O})$
		is reduced and thus
		there is
		an injective morphism
		\[
		\mathbb{T}_W^{Q}(K_1(Q),\mathcal{O})_{\mathfrak{m}_1^Q}
		\hookrightarrow
		R.
		\]
		For a finite place
		$v\nmid2p$ of $F$,
		the Hecke algebra
		$\mathbb{T}_v$
		is generated by
		$T_v^{(j)}$
		with
		$j=1,2,\cdots,n_{\mathrm{s},v}$.
		Here
		$n_{\mathrm{s},v}$
		is as in
		(\ref{n_s}).
		By the Satake
		isomorphism,
		$\mathbb{T}_v$
		is isomorphic to
		$\widetilde{\mathbb{T}}_v
		=
		\mathbb{Z}[\frac{1}{q_v}]
		[(Z_1)^{\pm1},\cdots,(Z_{n_{\mathrm{s},v}})^{\pm1}]^{W_v^\mathrm{s}}$.
		Note that the latter is the same as
		the algebra
		$\mathbb{Z}[\frac{1}{q_v}]
		[(q_vZ_1)^{\pm1},\cdots,(q_vZ_{n_{\mathrm{s},v}})^{\pm1}]^{W_v^\mathrm{s}}$.
		On the other hand,
		for any
		$\gamma\in
		\Gamma_{F_v}$,
		the characteristic polynomial
		$P(X)$
		of
		$\rho_R(\gamma)$
		with eigenvalues
		$\{Z_1^{\pm1},\cdots,Z_{n_{\mathrm{s},v}}^{\pm1},1\}$
		is an element in
		$\widetilde{\mathbb{T}}_v$,
		thus lies in the Hecke algebra
		$\mathbb{T}_v$
		via the Satake isomorphism.
		This shows in particular that
		\[
		\mathrm{Tr}\,\rho_R(\gamma)\in
		\mathbb{T}_W^{Q}(K_1(Q),\mathcal{O})_{\mathfrak{m}_1^Q},
		\]
		for any
		$\gamma\in\Gamma_{F_v}$
		and any
		$v$ as above.
		Since the set of such elements $\gamma$
		is dense in
		$\Gamma_F$,
		one sees that
		$\mathrm{Tr}\,\rho_R(\Gamma_F)$
		is contained in
		$\mathbb{T}_W^{Q}(K_1(Q),\mathcal{O})_{\mathfrak{m}_1^Q}$.  	
		Thus by the following
		Lemma
		\ref{lifting representations},
		we know that
		$\rho_R$
		is
		$^CG(R)$-conjugate
		to a Galois representation
		\[
		\rho_{\mathfrak{m}_1^Q}
		\colon
		\Gamma_F
		\rightarrow
		\,
		^CG
		\left(
		\mathbb{T}_W^{Q}(K_1(Q),\mathcal{O})_{\mathfrak{m}_1^Q}
		\right)
		\]
		satisfying (1) and (2) in the
		theorem.

		For the last point, we take an irreducible automorphic representation $\pi=\otimes_w\pi_w$ of $G(\mathbb{A}_{F,f})$ which lies in $L^2(G(F)\backslash G(\mathbb{A}_{F,f}))$ such that
		$\pi^{K_1(Q)}\ne0$. 
		Now we can apply
		Propositions
		\ref{parahoric invariant subspace is one-dimensional} to $\pi_v$ to conclude the proof.
		%		\xiaoyu{[this is questioned by Jack Thorne]}
	\end{proof}

	\begin{lemma}\label{lifting representations}
		Let
		$S\subset R$
		be two noetherian
		complete local rings
		with
		$\mathfrak{m}_R\cap S=\mathfrak{m}_S$
		and common residue field $\kappa$
		of characteristic
		$p\ne2$.
		Fix a profinite group
		$\Gamma$ and
		$\rho
		\colon
		\Gamma
		\rightarrow
		\,^CG(R)$
		is a continuous group morphism.
		Assume
		that
		\[
		\overline{\rho}:=\rho(\mathrm{mod}\,\mathfrak{m}_R)
		\]
		is absolutely irreducible
		(i.e. the composition of $\overline{\rho}$ with
		the inclusion
		$^CG(\kappa)\rightarrow\mathrm{GL}_N(\kappa)$
		is absolutely irreducible)
		and
		$\mathrm{Tr}(\rho(\Gamma))
		\subset S$.
		Then for any ideal
		$I$ of $R$
		such that
		$\rho(\mathrm{mod}\,I)$
		has image in
		$S/(I\cap S)$,
		there is a continuous morphism
		$\rho'
		\colon
		\Gamma
		\rightarrow
		\,^CG(S)$
		which is conjugate to
		$\rho$
		by an element
		$g\in\,^CG(R)$
		with
		$g\equiv1(\mathrm{mod}\,I)$.
	\end{lemma}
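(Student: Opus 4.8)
The plan is to descend the representation $\rho$ from $^CG(R)$ to $^CG(S)$ using a descent argument for the conjugation action, exploiting the absolute irreducibility of $\overline\rho$ to kill the obstruction. First I would work step by step up the quotients $R/\mathfrak m_R^n$: since $R$ is noetherian and complete local, it suffices to produce, for each $n$, a morphism $\rho_n\colon\Gamma\to{}^CG(S+\mathfrak m_R^n)/\mathfrak m_R^n$ conjugate to $\rho\bmod\mathfrak m_R^n$ by an element of $\widehat G^1(R/\mathfrak m_R^n)$ congruent to $1$ modulo $I$, compatibly as $n$ grows, and then pass to the limit (here one uses that $\mathrm{Tr}\,\rho(\Gamma)\subset S$ is preserved under conjugation and that $S$ is closed in $R$). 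The base case is the hypothesis that $\rho\bmod I$ already has image in $S/(I\cap S)$; more precisely one starts from $n$ with $\mathfrak m_R^n\subset I$ where there is nothing to do, though in fact the cleanest setup is to filter by the ideals $I\cap(S+\mathfrak m_R^n)$ versus $S+\mathfrak m_R^n$ and induct on the length of $R/(S+\mathfrak m_R^n)$ as an $S$-module.

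The inductive step is the heart of the argument. Suppose $\rho$ lands in $^CG(R')$ for some intermediate ring $S\subset R'\subset R$, with $R'$ an $S$-subalgebra, and let $J\subset R'$ be an ideal with $\mathfrak m_{R'}J=0$ and $J\cap S=0$ (so that $R'/J$ maps to a smaller subring). By Remark \ref{dual group is smooth} (smoothness of $^CG$ over $\Z$), any lift of $\rho\bmod J$ to $^CG(R')$ differs from $\rho$ by a cocycle in $Z^1(\Gamma,\widehat{\mathfrak g}(\kappa)\otimes_\kappa J)$, and the ambiguity of lift within a conjugacy class by $\exp(\widehat{\mathfrak g}(\kappa)\otimes J)=\widehat G^1$-elements $\equiv1\bmod J$ is exactly a coboundary (this is the content of Remark \ref{isomorphism of general tangent spaces}). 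So the obstruction to conjugating $\rho$ into $^CG(\text{smaller ring})$ lives in $H^1(\Gamma,\widehat{\mathfrak g}(\kappa)\otimes_\kappa J)$. Now I would use that $\mathrm{Tr}\,\rho(\Gamma)\subset S$: tensoring the standard representation, $\widehat{\mathfrak g}(\kappa)$ is a summand of $\mathfrak{gl}_N(\kappa)=\mathrm{End}(\kappa^N)$ (since $p\ne2$, the invariant bilinear form splits $\mathrm{End}(\kappa^N)$ into symmetric and antisymmetric parts, one of which is $\widehat{\mathfrak g}$), so it suffices to show the class of the $\mathfrak{gl}_N$-valued cocycle $\gamma\mapsto\rho_1(\gamma)\rho(\gamma)^{-1}-1$ vanishes. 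Absolute irreducibility of $\overline\rho$ gives, via the Chebotarev/Burnside argument, that $\kappa[\overline\rho(\Gamma)]=M_N(\kappa)$; combined with the fact that all the traces $\mathrm{Tr}(\rho_1(\gamma))=\mathrm{Tr}(\rho(\gamma))$ agree (both in $S$), one shows the $M_N$-valued cocycle is a coboundary by the classical "pseudo-representation forces conjugacy" argument (as in Carayol--Serre / Nyssen): pick $\gamma_1,\dots,\gamma_{N^2}\in\Gamma$ with $\overline\rho(\gamma_i)$ a $\kappa$-basis of $M_N(\kappa)$, solve a linear system over $R'$ with coefficients congruent mod $\mathfrak m_{R'}$ to an invertible matrix (a Vandermonde-type / basis-change determinant that is a unit in $\kappa$ hence in $R'$) to produce $g\in\mathrm{GL}_N(R')$, $g\equiv1\bmod J$, with $g\rho g^{-1}$ valued in the smaller ring; then check $g$ can be taken in $\widehat G^1$ because the trace condition forces the conjugating element to respect the bilinear form up to the center, and the center $Z({}^CG)\simeq\mathbb G_m$ is already pinned down by $\mu_\rho=\nu$ being $S$-valued.

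The main obstacle I expect is precisely the last point: producing the conjugating element \emph{inside} $^CG$ (i.e.\ respecting the symmetric/symplectic form and the similitude character) rather than merely inside $\mathrm{GL}_N$. The $\mathrm{GL}_N$ descent is classical, but one must check that the bilinear form attached to $\rho$ and the one attached to its conjugate $g\rho g^{-1}$ differ by a scalar in $R'^\times$ which is a square (or absorbable), using $p\ne2$ and the absolute irreducibility (which makes the invariant form unique up to scalar), and then that this scalar is already $1$ because $\mu_\rho=\nu$ has image in $S$; absorbing any discrepancy requires the smoothness of $Z({}^CG)\hookrightarrow{}^CG$ recorded in Remark \ref{dual group is smooth} to adjust $g$ by a central element congruent to $1$ mod $J$. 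Once this is handled at each finite level, the passage to the limit is routine: the conjugating elements $g_n$ can be chosen compatibly (each $\equiv1\bmod I$ and $\equiv g_{n-1}$ on the previous quotient), their limit $g\in{}^CG(R)$ satisfies $g\equiv1\bmod I$, and $\rho':=g\rho g^{-1}$ is valued in $\bigcap_n(S+\mathfrak m_R^n)=S$ by completeness and closedness of $S$ in $R$.
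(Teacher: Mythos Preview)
Your proposal is correct and follows the same overall strategy as the paper: reduce to a square-zero extension, descend the representation in $\mathrm{GL}_N$ using absolute irreducibility plus the trace hypothesis, then correct the conjugating element to lie in ${}^CG$ using uniqueness of the invariant bilinear form up to scalar and $p\neq2$. The paper, however, organizes both the reduction and the core step differently from what you outline.

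For the reduction, rather than inducting up the tower $R/\mathfrak m_R^n$ and tracking an intermediate subring $R'$, the paper shows by a chain of formal manipulations (passing to $R/J$ for suitable $J\subset I$, shrinking $R$ so that $R=S\oplus I$ with $\dim_\kappa I=1$, then comparing two auxiliary statements $\mathbf S_1,\mathbf S_2$) that it suffices to prove the single case $(R,S,I)=(\kappa[\epsilon],\kappa,\kappa\epsilon)$. This sidesteps the compatibility-in-the-limit issues you raise at the end.

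For the core step in that reduced case, the paper does not invoke the Carayol--Serre linear system. Instead it extends $\rho$ to $\rho^\circ\colon\kappa[\Gamma]\to M_N(\kappa[\epsilon])$, uses the trace condition together with surjectivity of $\rho^\circ\bmod\epsilon$ to show $\ker(\rho^\circ\bmod\epsilon)\subset\ker\rho^\circ$, and hence that $\rho^\circ$ factors through $M_N(\kappa)$ as $\mathrm{id}+\epsilon\phi$ with $\phi$ a $\kappa$-derivation of $M_N(\kappa)$. One then writes down $A=\sum_j\phi(E_{j,1})E_{1,j}$ explicitly to see $\phi$ is inner, giving the $\mathrm{GL}_N$-conjugator $1+A\epsilon$. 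The final adjustment $A\mapsto A-(b/2)1_N$ to land in ${}^CG(\kappa[\epsilon])$ is exactly the obstacle you correctly anticipated. Your Carayol--Serre route would also work, but the ``derivations of $M_N(\kappa)$ are inner'' shortcut is more direct and avoids choosing a basis of $M_N(\kappa)$ inside $\overline\rho(\Gamma)$.
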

	\begin{proof}
		We treat the case $G^\ast=\mathrm{SO}_n^\eta$ with $n$ even or $G^\ast=\mathrm{Sp}_n$. The other case is similar.

		We first do some reductions.
		We denote the statement in the lemma by
		\[
		(R,S,\mathfrak{m}_R,\rho,I).
		\]
		For an ideal
		$J$ of $R$ contained in $I$,
		we claim that
		the statement
		$(R/J,S/(J\cap S),\mathfrak{m}_R/J,\rho(\mathrm{mod}\,J),I/J)$
		implies
		$(R,S,\mathfrak{m}_R,\rho,I)$. 
		Indeed,
		there are an element
		$g_1\in
		\,^CG(R/J)$
		with
		$g_1\equiv1(\mathrm{mod}\,I/J)$
		and a morphism
		$\rho_1
		\colon
		\Gamma
		\rightarrow
		\,^CG(S/(J\cap S))$
		such that
		$\rho_1(h)=g_1\rho(h)(\mathrm{mod}\,J)g_1^{-1}$
		for any
		$h\in\Gamma$.
		Taking any lift
		$g\in\,^CG(R)$ of
		$g_1$
		with
		$g\equiv1(\mathrm{mod}\,I)$,		
		$g\rho(h)g^{-1}
		(\mathrm{mod}\,J)
		\in
		\,^CG(S/(J\cap S))$
		and therefore
		$g\rho(h)g^{-1}
		\in
		\,^CG(S)$.
		Let
		$s$ be the maximal integer such that
		$I\subset\mathfrak{m}_R^s$
		and set
		$J=I\cap\mathfrak{m}_R^{s+1}\subset I$.
		Then one has
		$\mathfrak{m}_RI\subset J$.
		Thus,
		up to modulo
		$J$,
		we are reduced to proving the statement
		$(R,S,\mathfrak{m}_R,\rho,I)$
		with
		$\mathfrak{m}_RI=0$.
		Taking
		an ideal
		$J\subset R$
		such that
		$J\subsetneq I=J+(r)$,
		then again
		up to modulo $J$,
		we are reduced to showing the statement
		$(R,S,\mathfrak{m}_R,\rho,I)$
		with
		$\mathfrak{m}_RI=0$
		and
		$\mathrm{dim}_\kappa(I)=1$.
		Up to replacing
		$R$ by
		the subring
		consisting of elements
		$r\in R$
		with
		$r-s\in I$
		for some $s\in S$,
		we can assume that
		\[
		S/(I\cap S)=R/I.
		\]
		Therefore we can further assume
		$I\cap S=0$,
		in which case one has
		$R=S\oplus I$.
		Consider the two statements
		\[
		\mathbf{S}_1
		=
		(R,S,\mathfrak{m}_R,\rho,I),
		\quad
		\mathbf{S}_2
		=
		(R,S,\mathfrak{m}_R,\rho,I'=\mathfrak{m}_S+I).
		\]
		Clearly
		$\mathbf{S}_1$ implies
		$\mathbf{S}_2$.
		Conversely,
		assume that
		$\mathbf{S}_2$ is true,
		then
		there is an element
		$g\in\,^CG(R)$
		with
		$g\equiv1(\mathrm{mod}\,I')$
		such that
		$g\rho(h)g^{-1}
		\in
		\,^CG(S)$.
		Write
		$g_1=g(\mathrm{mod}\,I)
		\in
		\,^CG(S)\subset
		\,^CG(R)$.
		Then
		$g_2=g_1^{-1}g$
		satisfies
		$g_2\equiv1(\mathrm{mod}\,I)$
		and
		$g_2\rho(h)g_2^{-1}
		\in
		\,^CG(S)$,
		which gives
		$\mathbf{S}_1$.
		Up to modulo
		$\mathfrak{m}_SR$ in
		$\mathbf{S}_2$,
		we are reduced to showing
		the statement
		$(\kappa[\epsilon],\kappa,\kappa\epsilon,\rho,I=\kappa\epsilon)$.
		Here
		$\epsilon^2=0$.

		Recall
		$N=2\lfloor n/2\rfloor$.
		We extend
		$\rho$
		to a morphism
		of $\kappa$-algebras
		\[
		\rho^\circ
		\colon
		\kappa[\Gamma]
		\rightarrow
		\mathrm{M}_N(\kappa[\epsilon]),
		\]
		which is surjective since
		$\rho$
		is absolutely irreducible.
		Now for any
		$h\in\kappa[\Gamma]$
		and
		$h'\in\mathrm{Ker}(\rho^\circ)$, we have
		$\rho^\circ(h')\in
		\epsilon
		\mathrm{M}_N(\kappa)$. Moreover,write $\rho^\circ(h')=\epsilon X$ with $X\in \mathrm{M}_N(\kappa)$. Since $hh'\in\mathrm{Ker}(\rho^\circ)$,
		one has
		$\mathrm{Tr}\,\rho^\circ(h)X=0$.
		This shows that
		$\mathrm{Tr}\,\mathrm{M}_N(\kappa[\epsilon]X)=0$
		and therefore
		$\rho^\circ(h')=0$.
		This implies that
		$\rho^\circ$
		factors through
		\[
		\rho':=\rho^\circ(\mathrm{mod}\,I)
		\colon
		\kappa[\Gamma]
		\rightarrow
		\mathrm{M}_N(\kappa)
		\]
		(which is also surjective).
		In other words,
		there is a map
		$\phi
		\in\mathrm{End}_\kappa(\mathrm{M}_N(\kappa))$
		such that
		\[
		\rho^\circ(h)
		=
		\rho'(h)
		+
		\phi(\rho'(h))\epsilon,
		\quad
		h\in
		\kappa[\Gamma].
		\]
		From this we deduce that
		$\phi(ab)=a\phi(b)+\phi(a)b$
		for any
		$a,b\in\mathrm{M}_N(\kappa)$.
		Taking
		\[
		A=\sum_{j=1}^N\phi(E_{j,1})E_{1,j}
		\]
		with
		$E_{i,j}$ the elementary matrices,
		one verifies that
		$\phi(a)=Aa-aA$
		and moreover
		$(1_N-A\epsilon)
		\rho^\circ(h)
		(1_N+A\epsilon)=\rho'(h)$
		for any
		$h\in
		\kappa[\Gamma]$.
		Write
		$\Lambda\in
		\mathrm{Sym}_N(\kappa)$
		for the symmetric matrix
		associated to the quadratic form
		defining
		$^CG$
		and put
		$\Lambda'=
		(1+A\epsilon)^\mathrm{t}
		\Lambda(1+A\epsilon)$.
		Then one has
		\[
		\rho(h)^{-1}(\Lambda^{-1}\Lambda')\rho(h)
		=
		\Lambda^{-1}\Lambda',
		\quad
		\forall
		h\in\Gamma.
		\]
		The irreducibility of $\rho$
		shows that
		$\Lambda^{-1}\Lambda'$
		is a scalar matrix and thus
		$\Lambda'=(1+b\epsilon)\Lambda$
		for some
		$b\in\kappa$.
		Taking
		$A'=A-(b/2)1_N$
		(recall $\mathrm{char}(\kappa)\ne2$),
		one verifies that
		\[
		(1+A'\epsilon)^\mathrm{t}\Lambda(1+A'\epsilon)=\Lambda,
		\quad
		(1-A'\epsilon)\rho^\circ(h)(1+A'\epsilon)=\rho'(h).
		\]
		Thus
		$g=1+A'\epsilon\in
		\,^CG(\kappa[\epsilon])$
		is the desired element.
	\end{proof}

	Fix a maximal ideal
	$\mathfrak{m}$
	of
	$\mathbb{T}_W(K,\mathcal{O})$
	which gives rise to
	maximal ideals
	$\mathfrak{m}_1^Q$,
	resp.
	$\mathfrak{m}_0^Q$,
	$\mathfrak{m}^Q$,
	of
	$\mathbb{T}_W^{Q}(K_1(Q),\mathcal{O})$,
	resp.
	$\mathbb{T}_W^{Q}(K_0(Q),\mathcal{O})$,
	$\mathbb{T}_W^{Q}(K,\mathcal{O})$
	using the morphisms
	in
	(\ref{morphisms between Hecke algebras}).
	From Theorem \ref{Galois representation valued in Hecke algebra-char=0}, we see immediately
	that in the following
	chain of morphisms,
	the first two are surjective
	and the last one is an isomorphism
	(using the relation between
	$T_v^{(j)}$ and the Frobenius elements):
	\[
	\mathbb{T}_W^{Q}(K_1(Q),\mathcal{O})_{\mathfrak{m}_1^Q}
	\twoheadrightarrow
	\mathbb{T}_W^{Q}
	(K_0(Q),\mathcal{O})_{\mathfrak{m}_0^Q}
	\twoheadrightarrow
	\mathbb{T}_W^{Q}(K,\mathcal{O})_{\mathfrak{m}}
	\simeq
	\mathbb{T}_W^{T}(K,\mathcal{O})_{\mathfrak{m}}.
	\]

	We assume in the following that $F$ is unramified at $p$ and for each place $v|p$ of $F$, the Galois representation $\overline{\rho}_{\mathfrak{m}_0^\emptyset}|_{\Gamma_{F_v}}$ satisfies (C3) (\emph{cf.} Remark \ref{remark on (C3) and (C6)}).
	We consider
	deformation problems of the following type for the residual Galois representation
	$\overline{\rho}_{\mathfrak{m}^Q_1}$
	as in
	Definition
	\ref{T-deformation or lifting of type S}:
	\begin{equation}\label{S type of deformation}
		\mathcal{S}
		=
		(\mathcal{D}_v)_{v\in S_p}
		\text{where each }
		\mathcal{D}_v=
		\text{Fontaine-Laffaille deformation
			(\emph{cf.} Definition \ref{crystalline local deformation problem})}.
	\end{equation}
	We then have
	the universal deformation ring
	$R_\mathcal{S}$
	for this deformation problem
	as well as the universal lifting
	\[
	r_\mathcal{S}
	\colon
	\Gamma_F
	\rightarrow
	\,^CG
	(R_\mathcal{S}).
	\]
	Similarly, we have $(R_{\mathcal{S}(Q)},r_{\mathcal{S}(Q)})$ for the deformation problem $\mathcal{S}(Q)$.

	Note that $\rho_{\mathfrak{m}^Q_1}|_{F_v}$ is a lifting of $\overline{\rho}_{\mathfrak{m}^Q_1}|_{F_v}$ of type $\mathcal{D}_v$ for $v|p$ or $v\in Q$. We assume in the following that
	$(
	\mathbb{T}_W(K,\mathcal{O})_{\mathfrak{m}},
	\rho_{\mathfrak{m}}
	)$
	is a lifting of $\overline{\rho}_{\mathfrak{m}}$ of type
	$\mathcal{S}$
	and thus there is a morphism of $\mathcal{O}$-algebras:
	\[
	R_\mathcal{S}
	\twoheadrightarrow
	\mathbb{T}_W(K,\mathcal{O})_{\mathfrak{m}}
	\]
	such that
	the push-forward of
	$r_\mathcal{S}$
	along this morphism gives
	$\rho_{\mathfrak{m}_1}$. This morphism is surjective due to Theorem \ref{Galois representation valued in Hecke algebra-char=0}(2).
	
	Now we have
	\begin{theorem}\label{R=T}
		We assume that $F$ is unramified at $p$.
		Let $\mathfrak{m}$ be a non-Eisenstein maximal ideal of $\mathbb{T}_{W}(K,\mathcal{O})$, $\mathfrak{P}$ a minimal prime ideal contained in $\mathfrak{m}$, $\pi$ an irreducible discrete series constituent of $\mathcal{A}(G(\mathbb{A}_F))$ satisfying condition (C2) which is generated by a non-zero vector in $M_{W,p}(K,\mathcal{O})$ such that $\mathbb{T}_W(K,\mathcal{O})$ acts on $\pi$ via the quotient $\mathbb{T}_W(K,\mathcal{O})/\mathfrak{P}$.
		Suppose that the following are satisfied
		\begin{enumerate}
			\item 
			$\overline{\rho}_{\mathfrak{m}_1^\emptyset}
			(\Gamma_{F(\zeta_p)})$
			is adequate,

			\item 
			$p\nmid[F^\circ:F]$,

			\item 
			for any $g\in G(\A_{F,f})$, $gG(F)g^{-1}\cap K$ contains no elements of order $p$,

			\item 
			for each place $v|p$ of $F$, $\overline{\rho}_{\mathfrak{m}_1^\emptyset}|_{\Gamma_{F_v}}$ satisfies (C3).
		\end{enumerate}
	    Then we have an isomorphism of \emph{local complete intersections}
		$\mathcal{O}$-algebras
		\[
		R_\mathcal{S}
		\simeq
		\mathbb{T}_W(K,\mathcal{O})_{\mathfrak{m}}.
		\]
	\end{theorem}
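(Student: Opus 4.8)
The plan is to follow the classical Taylor--Wiles--Kisin patching strategy, now in the $^CG$-valued setting, using all the machinery assembled in \S\ref{deformation theory} and \S\ref{smooth representations of G(F_v)}. First I would set up the two sides. On the Galois side, Proposition \ref{existence of Twylor-Wiles primes} (applied with the Fontaine--Laffaille deformation problem $\mathcal{S}$ of \eqref{S type of deformation}, whose local defect $\dim_\kappa L_v-\dim_\kappa H^0(\Gamma_{F_v},\widehat{\mathfrak{g}}(\kappa))$ equals $[F_v:\Q_p](\dim\widehat G_{\kappa_v}-\dim\widehat B_{\kappa_v})$ by Theorem \ref{crystalline deformation-dimension defaut}, while it is $0$ at the Taylor--Wiles places by the last lemma of that subsection) furnishes, for every integer $N\geq 1$, a set $Q_N$ of Taylor--Wiles primes with $\#Q_N=q:=\dim_\kappa H^1_{\mathcal{S}^\perp}(\Gamma_{F,S},\widehat{\mathfrak g}(\kappa)[1])$, with $q_v\equiv 1\pmod{p^N}$ for $v\in Q_N$, and with $R_{\mathcal{S}(Q_N)}$ topologically generated over $\mathcal{O}$ by $q+\sum_{v\mid p}a_v-\sum_{v\mid\infty}\dim_\kappa H^0(\Gamma_{F_v},\widehat{\mathfrak g}(\kappa))$ elements. (Here one must use adequacy of $\overline\rho_{\mathfrak m_1^\emptyset}(\Gamma_{F(\zeta_p)})$, $p\nmid[F^\circ:F]$, and (C3) to invoke both Proposition \ref{existence of Twylor-Wiles primes} and Theorem \ref{crystalline deformation-dimension defaut}.) On the automorphic side, for each such $Q=Q_N$ the group $\Delta_Q=\prod_{v\in Q}\Delta_v$ acts on $M^Q_{W,p}(K_1(Q),\mathcal{O})_{\mathfrak m_1^Q}$ through the diamond operators built from $V^{j_0^v}_\alpha$, and this module is finite free over $\mathcal{O}[\Delta_Q]$ by Proposition \ref{compare invariants of different cpt open subgroups} together with hypothesis (4) of Theorem \ref{Theorem-1 R=T}; its $\Delta_Q$-coinvariants recover $M^Q_{W,p}(K,\mathcal{O})_{\mathfrak m}\cong M_{W,p}(K,\mathcal{O})_{\mathfrak m}$ (the last isomorphism via the idempotents $\mathrm{pr}_{\Omega^v,(j_1^v)}$ and the isomorphism \eqref{G(O_v)-invariant isomorphic to pr_{(j_1)}}).

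The key new input linking the two sides is Theorem \ref{decomposition of the Galois representation}: it says precisely that at each $v\in Q$ the deformation $\rho_{\mathfrak m_1^Q}|_{\Gamma_{F_v}}$ is of Taylor--Wiles type $\mathcal{D}_v$ in the sense of Definition \ref{Taylor-Wiles deformations-Thorne}, with the ramified character $\psi_v|_{I_{F_v}}$ given by $V^{(v)}\circ\mathrm{Art}_{F_v}^{-1}$ as in Theorem \ref{Galois representation valued in Hecke algebra-char=0}(3). Consequently $\rho_{\mathfrak m_1^Q}$ is a lift of $\overline\rho_{\mathfrak m_1^\emptyset}$ of type $\mathcal{S}(Q)$, yielding a surjection $R_{\mathcal{S}(Q)}\twoheadrightarrow \mathbb{T}^Q_W(K_1(Q),\mathcal{O})_{\mathfrak m_1^Q}$, and moreover the map $\mathcal{O}[\Delta_Q]\to R_{\mathcal{S}(Q)}$ of \eqref{a character of the universal deformation ring} is compatible, via this surjection, with the diamond-operator action on the Hecke module: the composite $\mathcal{O}[\Delta_Q]\to R_{\mathcal{S}(Q)}\to \mathbb{T}^Q_W(K_1(Q),\mathcal{O})_{\mathfrak m_1^Q}$ is exactly $\prod_v\psi_v(\rho_{\mathfrak m_1^Q})$, whose image acts through $\alpha\mapsto \prod_v\psi'_v(\mathrm{Art}_{F_v}(\alpha))=\prod_v V^{j_0^v}_\alpha$. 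Thus $M^Q_{W,p}(K_1(Q),\mathcal{O})_{\mathfrak m_1^Q}$ is naturally a module over $R_{\mathcal{S}(Q)}\otimes_{\mathcal{O}}\mathcal{O}[\Delta_Q]$ via $R_{\mathcal{S}(Q)}$, recovering $R_{\mathcal{S}}\to\mathbb{T}_W(K,\mathcal{O})_{\mathfrak m}$ modulo the augmentation ideal $\mathfrak a_Q$.

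Next I would carry out the patching. Fix a presentation $R_{\mathcal{S}}=\mathcal{O}[[x_1,\dots,x_g]]/(f_1,\dots,f_r)$ (with $g=h^1_{\mathcal{S}}$, $r=g-(\sum_{v\mid p}a_v-\sum_{v\mid\infty}\dim_\kappa H^0(\Gamma_{F_v},\widehat{\mathfrak g}(\kappa)))$ by Lemma \ref{long exact sequence} / Corollary \ref{dimension of H^1}), and observe that each $R_{\mathcal{S}(Q_N)}$ is generated over $\mathcal{O}[[\Delta_{Q_N}]]$ by the same $g':=q+\sum_{v\mid p}a_v-\sum_{v\mid\infty}\dim_\kappa H^0(\Gamma_{F_v},\widehat{\mathfrak g}(\kappa))$ elements; write $S_\infty=\mathcal{O}[[y_1,\dots,y_q]]$ and $R_\infty=\mathcal{O}[[x_1,\dots,x_{g'}]]$. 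Patching the finite-free $\mathcal{O}[\Delta_{Q_N}/p^N]$-modules $M^{Q_N}_{W,p}(K_1(Q_N),\mathcal{O})_{\mathfrak m_1^{Q_N}}$ together with the compatible quotient rings of $R_{\mathcal{S}(Q_N)}$ and $\mathbb{T}^{Q_N}_W(K_1(Q_N),\mathcal{O})_{\mathfrak m_1^{Q_N}}$ along a sequence of open ideals of $S_\infty$ (Kisin's ultrafilter/pigeonhole argument, or the compactness argument of \cite{ClozelHarrisTaylor2008}, using Proposition \ref{the space of modular forms is semisimple} and Corollary \ref{the space of modular forms is finite dimensional} to bound ranks), I obtain a finite $R_\infty$-module $M_\infty$ which is finite free over $S_\infty$, an $S_\infty$-algebra surjection $R_\infty\twoheadrightarrow R_\infty/\mathfrak a_\infty R_\infty\cong R_{\mathcal{S}}$ with $\dim R_\infty=1+g'=1+q+\dim R_{\mathcal{S}}-\dim S_\infty$, and $M_\infty/\mathfrak a_\infty M_\infty\cong M_{W,p}(K,\mathcal{O})_{\mathfrak m}$ as $R_{\mathcal{S}}$-modules. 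Since $M_\infty$ is $S_\infty$-free of positive rank and $\dim S_\infty=\dim R_\infty-(\dim R_\infty-\dim S_\infty)$, with $R_\infty$ of the same relative dimension over $S_\infty$ as $R_{\mathcal{S}}$ is of dimension over $\mathcal{O}$, a depth/codimension count (Auslander--Buchsbaum, exactly as in \cite[\S3.4--3.5]{ClozelHarrisTaylor2008} or \cite{Kisin2009}) forces $M_\infty$ to be free over $R_\infty$ and $R_\infty\to\mathrm{Ann}$-quotient to be a complete intersection; dividing by $\mathfrak a_\infty$ gives that $R_{\mathcal{S}}\to\mathbb{T}_W(K,\mathcal{O})_{\mathfrak m}$ is an isomorphism of $\mathcal{O}$-algebras and that $R_{\mathcal{S}}$ (hence $\mathbb{T}_W(K,\mathcal{O})_{\mathfrak m}$) is a local complete intersection. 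The main obstacle, and the place where the genuinely new work of this paper is used, is verifying that the patched diamond action is exactly the one coming from $\mathcal{O}[\Delta_Q]\to R_{\mathcal{S}(Q)}$ — i.e. that the Hecke-side operators $V^{j_0^v}_\alpha$ on $M^Q_{W,p}(K_1(Q),\mathcal{O})$ match the Galois-side inertial characters $\psi'_v$ — which is precisely the content of Theorem \ref{decomposition of the Galois representation} and Theorem \ref{Galois representation valued in Hecke algebra-char=0}(3); once this compatibility is in place the patching is formal and the numerical coincidence (encoded in Corollary \ref{dimension of H^1} and Remark \ref{dimension for infinite H_v^0}) does the rest.
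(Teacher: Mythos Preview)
Your proposal is correct in spirit and would succeed, but it takes a genuinely different route from the paper. You carry out the classical Taylor--Wiles--Kisin patching: construct an infinite family of Taylor--Wiles sets $Q_N$ with $q_v\equiv 1\pmod{p^N}$, patch the modules $M^{Q_N}_{W,p}(K_1(Q_N),\mathcal{O})_{\mathfrak{m}_1^{Q_N}}$ into a limit module $M_\infty$ over auxiliary power-series rings $S_\infty\to R_\infty$, and finish with an Auslander--Buchsbaum depth argument. The paper instead invokes Brochard's freeness criterion \cite[Theorem 1.1]{Brochard2017}, which requires only a \emph{single} Taylor--Wiles set $Q$ (as furnished directly by Proposition \ref{existence of Twylor-Wiles primes}) together with the two inputs: (i) $M^Q_{W,p}(K_1(Q),\mathcal{O})_{\mathfrak{m}_1^Q}$ is finite free over $\mathcal{O}[\kappa_Q^\times(p)]$, and (ii) the numerical inequality $\#Q\ge\dim_\kappa\mathfrak{m}_{R_{\mathcal{S}(Q)}}/(\mathfrak{m}_{\mathcal{O}},\mathfrak{m}_{R_{\mathcal{S}(Q)}}^2)$, which follows from the coincidence $\sum_{v\mid p}a_v=\sum_{v\mid\infty}h^0_v$. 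Brochard's theorem then yields the complete-intersection property and freeness directly, and dividing by the augmentation ideal gives $R_{\mathcal{S}}\simeq\mathbb{T}_W(K,\mathcal{O})_{\mathfrak{m}}$. What your approach buys is familiarity (it is the argument of \cite{ClozelHarrisTaylor2008,Kisin2009}); what the paper's buys is brevity and the avoidance of the pigeonhole/ultrafilter step and of the need to strengthen Proposition \ref{existence of Twylor-Wiles primes} to produce primes with $q_v\equiv 1\pmod{p^N}$ for arbitrary $N$. Both routes rely on exactly the same new ingredient --- the automorphic-side compatibility of Theorem \ref{decomposition of the Galois representation} and Theorem \ref{Galois representation valued in Hecke algebra-char=0}(3) --- which you correctly identify as the crux.
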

	\begin{proof}
		We apply
		\cite[Theorem 1.1]{Brochard2017}. We choose a set $Q$
		of
		Taylor-Wiles primes
		as in
		Proposition
		\ref{existence of Twylor-Wiles primes}
		such that
		$G$ is split at any $v\in Q$.
		Moreover, the Galois representation $\rho_{\mathfrak{m}_1}$ gives rise to
		a morphism
		\[
		R_{\mathcal{S}}
		\twoheadrightarrow
		\mathbb{T}_W
		(K_1,\mathcal{O})_{\mathfrak{m}_1},
		\]
		which is surjective since $\mathfrak{m}$ is non-Eisenstein. The same argument as in the proof of \cite[Theorem 3.5.1]{ClozelHarrisTaylor2008} shows that the map $R_{\mathcal{S}(Q)}\to\mathbb{T}_W^{T\sqcup Q}(K_0(Q),\mathcal{O})_{\mathfrak{m}_0^Q}$ is surjective.

		Note that
		$K_1(Q)$
		is a normal subgroup of
		$K_0(Q)$
		whose quotient is
		a $p$-power finite group
		(\emph{cf.} the paragraph preceding
		Proposition \ref{parahoric invariant subspace is one-dimensional})
		\[
		\kappa_Q^\times(p):=
		\prod_{v\in Q}
		\kappa_v^\times(p)
		=
		K_0(Q)/K_1(Q),
		\]
		and thus
		$M_{W,p}
		(K_1(Q),\mathcal{O})$
		is a finite free
		$\mathcal{O}[\kappa_Q^\times(p)]$-module.
		Moreover by
		Proposition
		\ref{compare invariants of different cpt open subgroups},
		$M_{W,p}^Q(K_1(Q),\mathcal{O})$
		is a free
		$\mathcal{O}[\kappa_Q^\times(p)]$-module.
		On the other hand,
		we have a morphism
		\[
		\psi_v(r_{\mathcal{S}(Q)})
		\colon
		\mathcal{O}[\Delta_v]
		\rightarrow
		R_{\mathcal{S}(Q)}.
		\]
		We view
		$\kappa_v^\times(p)$
		as a subgroup of
		$\mathcal{O}_{F,v}^\times$
		via the Teichmüller lifting. We deduce that the following composition
		(write the image of
		$\mathfrak{m}_1^Q
		\subset
		\mathbb{T}_W(K_1(Q),\mathcal{O})$
		in
		$\mathbb{T}_W^{Q}(K_1(Q),\mathcal{O})$
		again by $\mathfrak{m}_1^Q$)
		\[
		\kappa_v^\times(p)
		\rightarrow
		\mathcal{O}_{F,v}^\times
		\xrightarrow{\mathrm{Art}_{F_v}}
		\Gamma_{F_v}^\mathrm{ab}
		\rightarrow
		\Delta_v
		\xrightarrow{\psi_v(r_{\mathcal{S}(Q)})}
		(R_{\mathcal{S}(Q)})^\times
		\rightarrow
		\mathbb{T}_W^{Q}(K_1(Q),\mathcal{O})_{\mathfrak{m}_1^Q}^\times
		\]
		is just the (restriction to $\kappa_v^\times(p)$ of the) map
		$V^{(v)}
		\colon
		F_v^\times
		\rightarrow
		\mathbb{T}_W^{Q}(K_1(Q),\mathcal{O})_{\mathfrak{m}_1^Q}^\times$
		in
		Theorem
		\ref{Galois representation valued in Hecke algebra-char=0}.
		Therefore we obtain the following
		\begin{enumerate}
			\item 
			$R_{\mathcal{S}(Q)}$
			is an
			$\mathcal{O}[\kappa_Q^\times(p)]$-algebra
			and
			$M_{W,p}^Q(K_1(Q),\mathcal{O})_{\mathfrak{m}_1^Q}$
			is a finite free
			$\mathcal{O}[\kappa_Q^\times(p)]$-module,
			which is
			of finite type as a module over
			$R_{\mathcal{S}(Q)}$
			(via the map
			$R_{\mathcal{S}(Q)}
			\rightarrow
			\mathbb{T}_{W,p}^{Q}(K_1(Q),\mathcal{O})_{\mathfrak{m}_1^Q}$);

			\item 
			$\mathrm{dim}_\kappa
			\mathfrak{m}_{\mathcal{O}[\kappa_Q^\times(p)]}
			/
			\left(
			\mathfrak{m}_\mathcal{O},
			\mathfrak{m}_{\mathcal{O}[\kappa_Q^\times(p)]}^2
			\right)
			=
			\# (Q)
			\ge
			\mathrm{dim}_\kappa
			\mathfrak{m}_{R_{\mathcal{S}(Q)}}/
			\left(
			\mathfrak{m}_\mathcal{O},
			\mathfrak{m}_{R_{\mathcal{S}(Q)}}^2
			\right)
			$
			by
			Remark
			\ref{dimension for infinite H_v^0},
			Corollaries
			\ref{dimension of H^1}
			and
			\ref{crystalline deformation-dimension defaut},
			Proposition \ref{existence of Twylor-Wiles primes} (we take $a_v$ to be equal to $h_{v'}^0$ for any $v'|\infty$ as in Remark \ref{dimension for infinite H_v^0}).
		\end{enumerate}
		From this we apply \cite[Theorem 1.1]{Brochard2017}, so	$M_{W,p}^Q(K_1(Q),\mathcal{O})\otimes_\mathcal{O}\kappa$ is finite free over
		$R_{\mathcal{S}(Q)}\otimes_\mathcal{O}\kappa$ and the latter is of complete intersection over $\mathcal{O}[\kappa_Q^\times(p)]\otimes_\mathcal{O}\kappa$.
		We deduce that $R_{\mathcal{S}(Q)}\otimes\kappa$ is isomorphic to
		$\mathbb{T}_W^{Q}
		(K_1(Q),\mathcal{O})_{\mathfrak{m}_1^Q}
		\otimes\kappa$
		and this isomorphism lifts to an isomorphism of
		$\mathcal{O}[\kappa_Q^\times(p)]$-modules
		\[
		R_{\mathcal{S}(Q)}
		\simeq
		\mathbb{T}_W^{Q}(K_1(Q),\mathcal{O})_{\mathfrak{m}_1^Q}.
		\]
		Moreover, by Proposition \ref{compare invariants of different cpt open subgroups}
		and	Proposition	\ref{projector induces an isomorphism},	one has isomorphisms
		\[
		\left(
		M_{W,p}^Q(K_1(Q),\mathcal{O})_{\mathfrak{m}_1^Q}
		\right)_{\kappa_Q^\times(p)}
		\simeq
		M_{W,p}^Q
		(K_0(Q),\mathcal{O})_{\mathfrak{m}_0^Q}
		\simeq
		M_{W,p}(K,\mathcal{O})_{\mathfrak{m}}
		\]
		as $\mathcal{O}$-modules
		(the subscript
		$_{\kappa_Q^\times(p)}$
		means taking the coinvariants
		and $\mathfrak{m}_0^Q$
		is similarly defined as $\mathfrak{m}_1^Q$).
		Therefore
		we have an isomorphism
		\[
		\left(
		\mathbb{T}_W^{Q}(K_1(Q),\mathcal{O})_{\mathfrak{m}_1^Q}
		\right)_{\kappa_Q^\times(p)}
		\simeq
		\mathbb{T}_W(K,\mathcal{O})_{\mathfrak{m}}.
		\]
		By definition of the deformation problems
		$\mathcal{S}$ and $\mathcal{S}(Q)$, we have
		\[
		(R_{\mathcal{S}(Q)})_{\kappa_Q^\times(p)}
		=
		R_{\mathcal{S}}.
		\]
		From this we get the desired isomorphism
		\[
		R_\mathcal{S}
		\simeq
		\mathbb{T}_W(K,\mathcal{O})_\mathfrak{m}.
		\]
	\end{proof}

	From this we deduce immediately the following minimal
	modularity lifting theorem,
	the main result of this article (this is Theorem \ref{main theorem-2} in the introduction):
	\begin{theorem}\label{modularity}
		Let the notations and assumptions be as in Theorem \ref{R=T}. Let
		\[
		r\colon
		\Gamma_F
		\rightarrow
		\,^C
		G(\mathcal{O})
		\]
		be a Galois representation satisfying the following conditions ($\overline{r}$ is
		its reduction modulo $\mathfrak{p}$):
		\begin{enumerate}
			\item
			$r$ is unramified at all finite places of $F$ not over $p$.

			\item 
			$\overline{r}\simeq\overline{r}_{\pi}$ and $\mu_r=\mu_{r_\pi}$.
			
			\item 
			For all $v|p$, $r|_{\Gamma_{F_v}}$ lies in $\mathcal{D}^\mathrm{FL}_{v,\overline{r}_{\pi,v}}(\mathcal{O})$ (\emph{cf.} Definition \ref{crystalline local deformation problem})
			where $\overline{r}_{\pi,v}
			=\overline{r}_\pi|_{\Gamma_{F_v}}$.

		\end{enumerate}

		Then there exist a minimal prime ideal $\mathfrak{P}'$ of $\mathbb{T}_W(K,\mathcal{O})$ contained in $\mathfrak{m}$ and an irreducible discrete series constituent $\pi$ of $\mathcal{A}(G(\mathbb{A}_F))$ generated by a non-zero vector in  $M_{W,p}(K,\mathcal{O})$ such that the action of $\mathbb{T}_W(K,\mathcal{O})$ on $\pi'$ factors through $\mathbb{T}_W(K,\mathcal{O})/\mathfrak{P}'$ and that we have
		\[
		r\simeq
		r_{\pi'}.
		\]
	\end{theorem}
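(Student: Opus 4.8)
The plan is to deduce Theorem \ref{modularity} directly from the $R=\mathbb{T}$ isomorphism of Theorem \ref{R=T}. First I would verify that the hypotheses of the present theorem put us in a situation where Theorem \ref{R=T} applies: conditions (1)--(3) of Theorem \ref{modularity}, together with the standing assumptions carried over (the notations and assumptions of Theorem \ref{R=T}, which include $K=G(\widehat{\mathcal O_F})$, $F$ unramified at $p$, adequacy of $\overline\rho_{\mathfrak m_1^\emptyset}(\Gamma_{F(\zeta_p)})$, $p\nmid[F^\circ:F]$, the order-$p$ condition on $gG(F)g^{-1}\cap K$, and (C3) at places $v\mid p$), guarantee that the deformation problem $\mathcal S$ of (\ref{S type of deformation}) for $\overline\rho_{\mathfrak m}=\overline r_\pi$ is representable by $(R_{\mathcal S},r_{\mathcal S})$ and that the natural surjection $R_{\mathcal S}\twoheadrightarrow \mathbb T_W(K,\mathcal O)_{\mathfrak m}$ is an isomorphism of local complete intersection $\mathcal O$-algebras.

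Next I would show that $r$ itself is a lift of $\overline r_\pi$ of type $\mathcal S$. By condition (2) of Theorem \ref{modularity}, $\overline r\simeq\overline r_\pi$ and $\mu_r=\mu_{r_\pi}$, so $r$ reduces to the right residual representation and has the prescribed similitude character; by condition (1) it is unramified outside $S_p$; and by condition (3) its restriction to each $\Gamma_{F_v}$ with $v\mid p$ lies in the Fontaine--Laffaille deformation problem $\mathcal D^{\mathrm{FL}}_{v,\overline r_{\pi,v}}(\mathcal O)$, which is exactly the local component of $\mathcal S$ at $v$ (cf.\ Definition \ref{crystalline local deformation problem} and (\ref{S type of deformation})). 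Hence $r$ defines a point $x_r\in \mathrm{Def}_{\mathcal S}(\mathcal O)$, equivalently an $\mathcal O$-algebra homomorphism $R_{\mathcal S}\to\mathcal O$ whose push-forward of $r_{\mathcal S}$ is (conjugate to) $r$; here I would invoke absolute irreducibility of $\overline r$ (from adequacy together with the non-Eisenstein hypothesis on $\mathfrak m$) to ensure the lift is well-defined up to $\widehat G^1(\mathcal O)$-conjugacy.

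Composing with the isomorphism $R_{\mathcal S}\simeq\mathbb T_W(K,\mathcal O)_{\mathfrak m}$ from Theorem \ref{R=T}, I obtain an $\mathcal O$-algebra homomorphism $\lambda\colon \mathbb T_W(K,\mathcal O)_{\mathfrak m}\to\mathcal O$; since $\mathcal O$ is a domain, the kernel of $\lambda$ (pulled back to $\mathbb T_W(K,\mathcal O)$) is a prime ideal, and I would take $\mathfrak P'$ to be a minimal prime contained in it and in $\mathfrak m$. By the construction of $\mathbb T_W(K,\mathcal O)$ acting faithfully on $M_{W,p}(K,\mathcal O)$ and the decomposition of this semisimple $G(\mathbb A_F)$-module into Hecke eigensystems (Proposition \ref{the space of modular forms is semisimple}, Remark \ref{equivalence of various spaces of modular forms}, Theorem \ref{weak transfer pi to Pi}), the homomorphism $\lambda$ is realized by an irreducible discrete series constituent $\pi'$ of $\mathcal A(G(\mathbb A_F))$ generated by a nonzero vector of $M_{W,p}(K,\mathcal O)$ on which $\mathbb T_W(K,\mathcal O)$ acts through $\mathbb T_W(K,\mathcal O)/\mathfrak P'$ (and which one checks satisfies (C2), since the Satake parameters matched to $r$ through $\lambda$ have the required std-regularity and self-duality). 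Finally, comparing Frobenius traces via the dictionary between the spherical Hecke operators $T_v^{(j)}$ and the characteristic polynomials of Frobenius (Lemma \ref{Hecke polynomial for unramified rep} and Theorem \ref{Galois representation valued in Hecke algebra-char=0}(2)), the semisimple Galois representations $r$ and $r_{\pi'}$ have the same characteristic polynomials at all $v\notin S_p$, hence are isomorphic by Chebotarev together with absolute irreducibility of $\overline r$ (which forces $r$ absolutely irreducible, so isomorphism of semisimplifications gives $r\simeq r_{\pi'}$). The main obstacle is not any single deep input but the bookkeeping in the middle step: one must check carefully that the abstract $\mathcal O$-point of $R_{\mathcal S}$ produced from $r$ corresponds, under $R_{\mathcal S}\simeq\mathbb T_W(K,\mathcal O)_{\mathfrak m}$, to an honest automorphic eigensystem arising from a discrete series $\pi'$ satisfying (C2) (rather than merely to a minimal prime of the Hecke algebra), and that the local-global compatibility of Theorem \ref{Galois representations attached to auto rep} then upgrades the equality of traces to $r\simeq r_{\pi'}$.
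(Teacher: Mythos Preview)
Your proposal is correct and follows essentially the same approach as the paper: show $r$ is a deformation of type $\mathcal S$, obtain the corresponding $\mathcal O$-point of $R_{\mathcal S}\simeq\mathbb T_W(K,\mathcal O)_{\mathfrak m}$, take $\mathfrak P'$ to be its kernel (which is already a minimal prime since $\mathbb T_W(K,\mathcal O)_{\mathfrak m}$ is a finite reduced torsion-free $\mathcal O$-algebra), and pick $\pi'$ accordingly. The paper's proof is extremely terse and omits the bookkeeping you spell out (the Chebotarev argument for $r\simeq r_{\pi'}$ and the existence of the automorphic constituent realizing the Hecke eigensystem), so your additional detail is welcome rather than a deviation; the ``obstacle'' you flag about (C2) for $\pi'$ is a genuine subtlety that the paper silently passes over.
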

	\begin{proof}
		It is clear that $r$ is a lifting of $\overline{\rho}_{\mathfrak{m}_1^\emptyset}$ of type $\mathcal{S}$ (as defined in (\ref{S type of deformation})), this corresponds to a morphism of $\mathcal{O}$-algebras
		\[
		\mathbb{T}_W(K,\mathcal{O})_{\mathfrak{m}}
		\simeq
		R_\mathcal{S}
		\to
		\mathcal{O}.
		\]
		Then set $\mathfrak{P}'$ to be the kernel in $\mathbb{T}_W(K,\mathcal{O})_\mathfrak{m}$ of the above morphism and choose $\pi'$ as in the theorem. From this we deduce $r\simeq r_{\pi'}$.

	\end{proof}

	\section{Application to Bloch-Kato conjectures}\label{Application to Bloch-Kato conjectures}
	We apply the $R=\mathbb{T}$ result in §\ref{modularity lifting} to the Block-Kato conjecture for the adjoint Galois representation attached to $r_{\pi}\colon\Gamma_F\to\,^CG(\overline{\mathbb{Q}}_p)$ using congruence ideal formalism as in \cite[§8]{HidaTilouine2020}, this will prove Theorem \ref{Theorem 3 Bloch-Kato}.

	We recall the assumptions and notations used in the previous section:
	\begin{enumerate}
		\item 
		we fix an irreducible algebraic representation $W$ of $G$ over $\mathcal{O}$;

		\item 
		$K=G(\widehat{\mathcal{O}_F})$ and for any $g\in G(\A_{F,f})$, $gG(\Q)g^{-1}\cap K$ contains no elements of order $p$;

		\item 
		we assume that $F$ is unramified at $p$;

		\item 
		$\mathfrak{m}$ is a non-Eisenstein maximal ideal of the Hecke algebra $\mathbb{T}_W(K,\mathcal{O})$, $\mathfrak{P}$ a minimal prime ideal contained in $\mathfrak{m}$, $\pi$ an irreducible discrete series automorphic representation of $\mathcal{A}(G(\mathbb{A}_F))$ satisfying condition (C2) which is generated by a non-zero vector in $M_{W,p}(K,\mathcal{O})$ such that $\mathbb{T}_W(K,\mathcal{O})$ acts on $\pi$ via the quotient $\mathbb{T}_W(K,\mathcal{O})/\mathfrak{P}$;

		\item 
		$\overline{\rho}_{\mathfrak{m}}(\Gamma_{F(\zeta_p)})$ is adequate;

		\item 
		$p\nmid[F^\circ:F]$;

		\item 
		for each place $v|p$ of $F$, $\overline{\rho}_{\mathfrak{m}}|_{\Gamma_{F_v}}$ satisfies (C3).
	\end{enumerate}
	Then we have an isomorphism of $\mathcal{O}$-algebras of local complete intersections
	\[
	R_\mathcal{S}\simeq\mathbb{T}_W(K,\mathcal{O})_{\mathfrak{m}}.
	\]

	For ease of notations, we write $\mathbb{T}=\mathbb{T}_W(K,\mathcal{O})_\mathfrak{m}$ and $\rho=\rho_{\mathfrak{m}}=r_\mathcal{S}$. Then the action of $\mathbb{T}$ on $\pi$ gives rise to a morphism of $\mathcal{O}$-algebras
	\[
	\theta_\pi\colon\mathbb{T}\to\mathcal{O}.
	\]

	\subsection{Congruence and differential modules}

	We write $\Omega_{\mathbb{T}/\mathcal{O}}$ for the $\mathbb{T}$-module of Kähler differentials of $\mathbb{T}$ over $\mathcal{O}$ (\emph{cf.}\cite[§8.1]{HidaTilouine2020}). We then set
	\[
	C_1(\theta_\pi):=\Omega_{\mathbb{T}/\mathcal{O}}\otimes_\mathbb{T}\mathcal{O}.
	\]
	This is the \emph{differential module} of the morphism $\theta_\pi$.
	On the other hand, since $\mathbb{T}$ is reduced, the total quotient ring $\mathrm{Frac}(\mathbb{T})$ has a factorization into fields
	\[
	\mathrm{Frac}(\mathbb{T})=K\bigoplus X
	\]
	where $X$ is a product of fields. Write $\mathfrak{a}$ to be the kernel of the map $\mathbb{T}\to\mathrm{Frac}(\mathbb{T})\to X$. Then we set
	\[
	C_0(\theta_\pi)
	:=
	(\mathbb{T}/\mathfrak{a})\otimes_\mathbb{T}\mathcal{O},
	\]
	the \emph{congruence module} of the morphism $\theta_\pi$ (\emph{cf.} \cite[§8.2]{HidaTilouine2020}). By (8.4) of \emph{loc.cit}, both modules $C_0(\theta_\pi)$ and $C_1(\theta_\pi)$ are cyclic $\mathcal{O}$-modules. So there exist elements $c_0(\theta_\pi),c_1(\theta_\pi)\in\mathcal{O}$ such that as $\mathcal{O}$-modules, we have
	\[
	C_i(\theta_\pi)\simeq\mathcal{O}/c_i(\theta_\pi)\mathcal{O},
	\quad
	i=0,1.
	\]
	The element $c_0(\theta_\pi)$ is usually called the \emph{congruence number} of the morphism $\theta_\pi$.
	By a theorem of Tate, we have (\cite[Theorem 8.7]{HidaTilouine2020})
	\begin{proposition}\label{congruence ideal=differential ideal}
		The $\mathcal{O}$-modules $C_0(\theta_\pi)$ and $C_1(\theta_\pi)$ are torsion $\mathcal{O}$-modules of finite type and up to units in $\mathcal{O}$, we have the identity
		\[
		c_0(\theta_\pi)=c_1(\theta_\pi).
		\]
	\end{proposition}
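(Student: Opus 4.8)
The plan is to invoke the classical result of Tate on which \cite[Theorem 8.7]{HidaTilouine2020} is based, adapted to our setting where $\mathbb{T}$ is a reduced complete local $\mathcal{O}$-algebra that is finite flat over $\mathcal{O}$ and is moreover a local complete intersection by Theorem \ref{R=T}. First I would recall why both $C_0(\theta_\pi)$ and $C_1(\theta_\pi)$ are finite torsion $\mathcal{O}$-modules: finiteness is immediate from Corollary \ref{the space of modular forms is finite dimensional}, which gives that $M_{W,p}(K,\mathcal{O})$ is a finite $\mathcal{O}$-module, hence so is $\mathbb{T}=\mathbb{T}_W(K,\mathcal{O})_\mathfrak{m}$ and therefore also $\Omega_{\mathbb{T}/\mathcal{O}}$ and $\mathbb{T}/\mathfrak{a}$; the torsion property follows because, after inverting $p$, the map $\theta_\pi\otimes E$ identifies one factor $E$ of $\mathbb{T}\otimes_\mathcal{O}E=E\oplus(X\otimes_\mathcal{O}E)$, so the complementary idempotent shows $\mathfrak{a}\otimes_\mathcal{O}E$ supplements the kernel of $\theta_\pi\otimes E$, which forces $C_0(\theta_\pi)\otimes_\mathcal{O}E=0$; similarly $\Omega_{(\mathbb{T}\otimes E)/E}=0$ since $\mathbb{T}\otimes E$ is étale over $E$ ($\mathbb{T}$ being reduced and $E$ of characteristic $0$), so $C_1(\theta_\pi)\otimes_\mathcal{O}E=0$. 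Since $C_i(\theta_\pi)$ is a cyclic $\mathcal{O}$-module by \cite[(8.4)]{HidaTilouine2020}, each is of the form $\mathcal{O}/c_i(\theta_\pi)\mathcal{O}$ for some nonzero $c_i(\theta_\pi)\in\mathcal{O}$.

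For the equality $c_0(\theta_\pi)=c_1(\theta_\pi)$ up to units, I would reduce to Tate's lemma: if $A$ is a reduced, finite flat, local complete intersection $\mathcal{O}$-algebra together with a surjection $\theta\colon A\twoheadrightarrow\mathcal{O}$, then the congruence module and the differential module of $\theta$ have the same length (equivalently, the same Fitting ideal in the principal case). The input is precisely that $A=\mathbb{T}$ is a complete intersection over $\mathcal{O}$, which is furnished by Theorem \ref{R=T} (this is where the $R=\mathbb{T}$ theorem feeds in). I would set $\mathfrak{p}_\pi=\ker\theta_\pi$ and use the standard short exact sequences relating $\mathfrak{p}_\pi/\mathfrak{p}_\pi^2$, $\Omega_{\mathbb{T}/\mathcal{O}}\otimes_\mathbb{T}\mathcal{O}$ and the conormal module, together with the Wiles--Lenstra numerical criterion: for a finite flat local $\mathcal{O}$-algebra $A$ with augmentation $\theta\colon A\to\mathcal{O}$, one always has $\mathrm{length}_\mathcal{O}(C_0(\theta))\le\mathrm{length}_\mathcal{O}(C_1(\theta))$, with equality if and only if $A$ is a complete intersection. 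Since $\mathbb{T}$ is a complete intersection, equality holds, giving $c_0(\theta_\pi)=c_1(\theta_\pi)$ up to a unit in $\mathcal{O}$.

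The main obstacle, such as it is, is bookkeeping rather than conceptual: one must check that the hypotheses of Tate's lemma / the Wiles--Lenstra criterion apply verbatim to $\mathbb{T}=\mathbb{T}_W(K,\mathcal{O})_\mathfrak{m}$, namely that $\mathbb{T}$ is $\mathcal{O}$-free of finite rank (from Corollary \ref{the space of modular forms is finite dimensional} together with the structure of $M_{W,p}(K,\mathcal{O})$), reduced (since $M_{W,p}(K,\mathbb{C})$ is a semisimple $G(\mathbb{A}_F)$-module by Proposition \ref{the space of modular forms is semisimple}, so the Hecke action is semisimple after $\otimes\overline{\mathbb{Q}}_p$, and $\mathbb{T}$ embeds into a product of fields as noted in the text), and a complete intersection over $\mathcal{O}$ (Theorem \ref{R=T}). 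With these three properties in hand the proof is a direct citation of \cite[Theorem 8.7]{HidaTilouine2020}, and I would simply record that the argument there applies. One small point to address is that $\theta_\pi$ need only be surjective onto $\mathcal{O}$ after possibly enlarging $E$; since $\mathcal{O}$ is assumed sufficiently large (containing all relevant eigenvalues), this holds, and the congruence and differential ideals are unaffected by such harmless enlargement because $\mathcal{O}'/\mathcal{O}$ is flat.
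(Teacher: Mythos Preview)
Your proposal is correct and follows the same route as the paper: the paper simply cites \cite[Theorem 8.7]{HidaTilouine2020} (Tate's theorem), and you have spelled out the verification of its hypotheses---finiteness and flatness of $\mathbb{T}$ over $\mathcal{O}$, reducedness, and the local complete intersection property supplied by Theorem \ref{R=T}---which is exactly what is implicit in that citation.
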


	\subsection{Differential modules and Selmer groups}
	For an $\mathcal{O}$-module $M$, we write its Pontryagin dual as $M^\vee=\mathrm{Hom}_\mathcal{O}(M,K/\mathcal{O})$.
	For the Galois representation $r_\pi\colon\Gamma_F\to\,^CG(\mathcal{O})$, we
	write $\rho_\pi$ for the adjoint representation of $\Gamma_F$ on $\widehat{\mathfrak{g}}(\mathcal{O})$
	\[
	\rho_\pi:=\mathrm{Ad}^0r_\pi
	\colon
	\Gamma_F
	\to
	\mathrm{Aut}(\widehat{\mathfrak{g}}(\mathcal{O}))
	\]
	Then the discrete Bloch-Kato Selmer group associated to $\rho_\pi(E/\mathcal{O})=\rho_\pi\otimes_\mathcal{O}K/\mathcal{O}$ is defined as follows
	\[
	H^1_\mathrm{BK}(F,\rho_\pi(E/\mathcal{O}))
	:=
	\mathrm{Ker}
	\left(
	H^1(F,\rho_\pi(E/\mathcal{O}))
	\to
	\prod_{v\nmid\infty}
	\frac{H^1(F_v,\rho_\pi(E/\mathcal{O}))}{H^1_f(F_v,\rho_\pi(E/\mathcal{O}))}
	\right).
	\]
	Here $H_f(F_v,\rho_\pi(E/\mathcal{O}))$ is defined as the image under the natural map $\mathrm{pr}_v\colon H^1(F_v,\rho_\pi(E))\to H^1(F_v,\rho_\pi(E/\mathcal{O}))$ of the following subgroup of $H^1(F_v,\rho_\pi(E))$
	\[
	H_f^1(F_v,\rho_\pi(E))
	:=
	\begin{cases*}
		\mathrm{Ker}
		\left(
		H^1(F_v,\rho_\pi(E))\to
		H^1(I_v,\rho_\pi(E))
		\right),
		&
		$v\nmid p$;
		\\
		\mathrm{Ker}
		\left(
		H^1(F_v,\rho_\pi(E))\to
		H^1(F_v,\rho_\pi(B_\mathrm{crys}))
		\right),
		&
		$v\mid p$.
	\end{cases*}
	\]
	Here $I_v$ is the inertial subgroup of $\Gamma_{F_v}$ and $B_\mathrm{crys}$ is Fontaine's period ring.

	Following the proof of \cite[Proposition 3.4]{HidaTilouine2020}
	\begin{proposition}\label{Differential module vs Bloch-Kato Selmer group}
		We have an isomorphism of $\mathcal{O}$-modules
		\[
		H^1_\mathrm{BK}(F,\rho_\pi(E/\mathcal{O}))^\vee
		\simeq
		C_1(\theta_\pi).
		\]
		In particular, the Selmer group $H^1_\mathrm{BK}(F,\rho_\pi(E/\mathcal{O}))^\vee$ is a finitely generated torsion $\mathcal{O}$-module.
	\end{proposition}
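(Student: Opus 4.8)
The plan is to identify $C_1(\theta_\pi) = \Omega_{\mathbb{T}/\mathcal{O}} \otimes_{\mathbb{T}} \mathcal{O}$ with the Pontryagin dual of the Bloch--Kato Selmer group by going through the universal deformation ring and a cotangent space computation, exactly as in the proof of \cite[Proposition 3.4]{HidaTilouine2020}. First I would use the $R = \mathbb{T}$ isomorphism $R_{\mathcal{S}} \simeq \mathbb{T}$ from Theorem \ref{R=T}, under which $\theta_\pi$ corresponds to the $\mathcal{O}$-algebra map $R_{\mathcal{S}} \to \mathcal{O}$ classifying the lift $r_\pi$ itself (recall $\rho = r_{\mathcal{S}}$ pushes forward to $r_\pi$ along $\theta_\pi$). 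Since the formation of Kähler differentials is insensitive to this identification, $C_1(\theta_\pi) \simeq \Omega_{R_{\mathcal{S}}/\mathcal{O}} \otimes_{R_{\mathcal{S}},\theta_\pi} \mathcal{O}$, and standard commutative algebra gives $\Omega_{R_{\mathcal{S}}/\mathcal{O}} \otimes_{R_{\mathcal{S}}} \mathcal{O} \simeq \mathfrak{m}_{\theta_\pi}/(\mathfrak{m}_{\theta_\pi}^2, \mathfrak{p})$ where $\mathfrak{m}_{\theta_\pi} = \ker\theta_\pi$; dualizing, $C_1(\theta_\pi)^\vee$ (or rather the relevant tangent space) is the space of deformations of $r_\pi$ over $\mathcal{O}[\epsilon]/(\epsilon^2)$ of type $\mathcal{S}$ that reduce to $r_\pi$ modulo $\epsilon$ — a relative tangent space at the point $\theta_\pi$ rather than at the residual representation.

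Next I would translate this relative deformation tangent space into Galois cohomology. Reading through the proofs of Lemma \ref{isomorphisms of tangent spaces} and Remark \ref{isomorphism of general tangent spaces} with $E/\mathcal{O}$ (or $\mathcal{O}/\mathfrak{p}^n$, then passing to the limit) in place of $\kappa$, deformations of $r_\pi$ of type $\mathcal{S}$ trivial modulo $\epsilon$ are classified by $H^1_{\mathcal{S}}(\Gamma_F, \mathrm{Ad}^0 r_\pi \otimes E/\mathcal{O}) = H^1_{\mathcal{S}}(\Gamma_F, \rho_\pi(E/\mathcal{O}))$, where the local conditions $L_v$ are those of the Fontaine--Laffaille problem $\mathcal{D}_v^{\mathrm{FL}}$ at $v \mid p$ and "unramified" at the remaining places (recall $\overline{r}_\pi$, hence $r_\pi$, is unramified outside $S_p$, and $K$ is full level so no Taylor--Wiles primes enter the problem $\mathcal{S}$). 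The final step is to check that this $H^1_{\mathcal{S}}$ coincides with the Bloch--Kato Selmer group $H^1_{\mathrm{BK}}(F, \rho_\pi(E/\mathcal{O}))$ appearing in the statement: at places $v \nmid p$ the condition $L_v$ is the unramified (minimal) condition, which for an unramified representation agrees with $H^1_f(F_v, -)$ up to the finite discrepancy controlled by $H^0(I_v, -)$-type terms that vanish here; at places $v \mid p$ one invokes that the Fontaine--Laffaille local deformation condition of \cite[§5]{Booher2019} computes the same local subspace as the crystalline condition $H^1_f(F_v, \rho_\pi(B_{\mathrm{crys}}))$ — this is precisely the content relating Fontaine--Laffaille theory to crystalline Selmer conditions. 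Assembling these identifications yields $H^1_{\mathrm{BK}}(F, \rho_\pi(E/\mathcal{O}))^\vee \simeq C_1(\theta_\pi)$, and finite generation and torsionness follow from Proposition \ref{congruence ideal=differential ideal} (which already states $C_1(\theta_\pi)$ is finite torsion).

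The main obstacle I expect is the careful bookkeeping at $v \mid p$: one must verify that the tangent space cut out by $\mathcal{D}_v^{\mathrm{FL}}$ — defined via Fontaine--Laffaille modules $\mathrm{MF}_{\mathcal{O},\mathrm{tor}}^{f,[a,b]}$ — matches the Bloch--Kato local condition $H^1_f(F_v, \rho_\pi \otimes E) \to H^1(F_v, \rho_\pi(B_{\mathrm{crys}}))$ after the image/reduction map to $E/\mathcal{O}$-coefficients, including the subtlety that the similitude character $\mu_r$ is fixed (so we work with $\mathrm{Ad}^0$, the trace-zero part, matching $\widehat{\mathfrak{g}}$). This requires knowing that the Fontaine--Laffaille functor is fully faithful and exact in the relevant Hodge--Tate range (ensured by condition (C3) and multiplicity-freeness of the Hodge--Tate weights, cf. Theorem \ref{crystalline deformation-dimension defaut}), so that the deformation-theoretic and Galois-cohomological crystalline conditions literally coincide. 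A secondary, more routine point is justifying the passage from Artinian coefficients (where Schlessinger-type arguments and the isomorphisms of Lemma \ref{isomorphisms of tangent spaces} live) to the divisible module $E/\mathcal{O}$, which is handled by taking inverse limits over $\mathcal{O}/\mathfrak{p}^n$ and using that all the cohomology groups involved are of finite type.
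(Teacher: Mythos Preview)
Your approach is essentially the one the paper takes: use $R_\mathcal{S} \simeq \mathbb{T}$, identify $C_1(\theta_\pi)^\vee$ with the relative tangent space of type-$\mathcal{S}$ deformations of $r_\pi$ with $E/\mathcal{O}$-coefficients, and check that the local conditions at each $v$ agree with the Bloch--Kato ones. Two small points are worth flagging.

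First, your cotangent-space formula $\Omega_{R_\mathcal{S}/\mathcal{O}} \otimes_{R_\mathcal{S}} \mathcal{O} \simeq \mathfrak{m}_{\theta_\pi}/(\mathfrak{m}_{\theta_\pi}^2, \mathfrak{p})$ should not have the $\mathfrak{p}$: you want an $\mathcal{O}$-module, not a $\kappa$-vector space, and correspondingly the Pontryagin dual is computed via derivations $R_\mathcal{S} \to E/\mathcal{O}$ (equivalently lifts of $\theta_\pi$ to $\mathcal{O} \oplus (E/\mathcal{O})\epsilon$), not via deformations over $\mathcal{O}[\epsilon]/(\epsilon^2)$. The paper avoids this slip by working directly with $\mathrm{Der}_\mathcal{O}(\mathbb{T}, E/\mathcal{O})$ from the start; your later remark about replacing $\kappa$ by $E/\mathcal{O}$ shows you have the right picture, but the intermediate display should be corrected.

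Second, at $v \mid p$ the paper does not invoke ``Fontaine--Laffaille $=$ crystalline'' as a black box. It gives a concrete argument: since $\Gamma_{F_v}$ is compact, the cocycle $c_r|_{\Gamma_{F_v}}$ lands in some $p^{-k}\mathcal{O}/\mathcal{O}$; one then forms the cartesian square
\[
\begin{tikzcd}
\mathbb{T}\oplus p^{-k}\mathcal{O}\epsilon \arrow[r] \arrow[d] & \mathbb{T}\oplus (p^{-k}\mathcal{O}/\mathcal{O})\epsilon \arrow[d] \\
\kappa\oplus p^{-k}\mathcal{O}\epsilon \arrow[r] & \kappa\oplus (p^{-k}\mathcal{O}/\mathcal{O})\epsilon
\end{tikzcd}
\]
whose bottom arrow is a small surjection, and applies the smoothness of $\mathcal{D}_v^{\mathrm{FL}}$ over small surjections (\cite[Proposition 5.8]{Booher2019}) to lift the Fontaine--Laffaille deformation to characteristic zero, where it is then crystalline. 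This is precisely the mechanism underlying the identification you quote, so your plan is correct, but you should expect to supply this lifting argument explicitly rather than simply cite the equality of local conditions.
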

	\begin{proof}
		Recall we have the following isomorphisms
		\[
		C_1(\theta_\pi)^\vee
		=
		\mathrm{Hom}_{\mathbb{T}}(\Omega_{\mathbb{T}/\mathcal{O}},K/\mathcal{O})
		=
		\mathrm{Der}_\mathcal{O}(\mathbb{T},K/\mathcal{O}).
		\]
		Here $\mathrm{Der}_\mathcal{O}(\mathbb{T},K/\mathcal{O})$ is the $\mathcal{O}$-module of $\mathcal{O}$-linear $K/\mathcal{O}$-valued \emph{continuous} derivations on $\mathbb{T}$. The action of $\mathbb{T}$ on $\mathcal{O}$ and $K$ comes from the morphism $\theta_\pi$. Then it follows that
		\begin{align*}
			\mathrm{Der}_\mathcal{O}(\mathbb{T},K/\mathcal{O})
			&
			=
			\mathrm{Hom}_\mathcal{O}(\mathbb{T},\mathbb{T}\oplus(E/\mathcal{O})\epsilon)
			=
			\mathrm{Hom}_\mathcal{O}(R_\mathcal{S},R_\mathcal{S}\oplus(E/\mathcal{O})\epsilon)
			\\
			&
			=
			\left\{
			r\in\mathrm{Def}_\mathcal{S}(\mathbb{T}\oplus(E/\mathcal{O})\epsilon)
			|
			r(\gamma)
			=
			(1+\epsilon c_r(\gamma))r_{\mathcal{S}}^\mathrm{univ}(\gamma)
			\right\}/\sim
		\end{align*}
		Here $\epsilon^2=0$.
		Then we have an injective map
		\[
		\Phi\colon
		\mathrm{Der}_\mathcal{O}(\mathbb{T},K/\mathcal{O})
		\to
		H^1(\Gamma,\rho_\pi(E/\mathcal{O})),
		\quad
		r\mapsto
		[c_r].
		\]
		Here $[c_r]$ is the equivalence class of the $1$-cocycle $c_r$.

		We claim that $[c_r]$ lies in $H^1_\mathrm{BK}(F,\rho_\pi(E/\mathcal{O}))$:
		\begin{enumerate}
			\item 
			For any $v\nmid p\infty$, the image of $[c_r]$ in $H^1(I_v,\rho_\pi)$ is zero because $r_\pi$ is unramified at $v$. Thus by \cite[Lemma 1.3.5(iv)]{Rubin2000}, the image of $[c_r]$ in $H^1(F_v,\rho_\pi(E/\mathcal{O}))$ lies in $H^1_f(F_v,\rho_\pi(E/\mathcal{O}))$.

			\item 
			For $v|p$, we write $r_v=r_\pi|_{\Gamma_{F_v}}$. Since $\Gamma_{F_v}$ is compact, the representation $r_v\colon\Gamma_{F_v}\to\,^CG(\mathbb{T}\oplus(E/\mathcal{O})\epsilon)$ factors through $^CG(\mathbb{T}\oplus(p^{-k}\mathcal{O}/\mathcal{O})\epsilon)$ for some positive integer $k$. On the other hand, 
			we have natural maps
			\[
			f_1\colon\mathbb{T}\oplus(p^{-k}\mathcal{O}/\mathcal{O})\epsilon\to\kappa\oplus(p^{-k}\mathcal{O}/\mathcal{O})\epsilon
			\]
			(induced by $\mathbb{T}\to\mathbb{T}/\mathfrak{m}_\mathbb{T}=\kappa$) and
			\[
			f_2\colon\kappa\oplus p^{-k}\mathcal{O}\epsilon\to\kappa\oplus(p^{-k}\mathcal{O}/\mathcal{O})\epsilon
			\]
			(induced by $p^{-k}\mathcal{O}\to p^{-k}\mathcal{O}/\mathcal{O}$). It is easy to see that the fiber product of $f_1$ and $f_2$ is equal to $\mathbb{T}\oplus p^{-k}\mathcal{O}\epsilon$ as in the following diagram
			\[
			\begin{tikzcd}
				\mathbb{T}\oplus p^{-k}\mathcal{O}\epsilon
				\arrow[r, "g_1"]
				\arrow[d, "g_2"']
				\arrow[dr, phantom, "\lrcorner", very near start]
				&
				\mathbb{T}\oplus(p^{-k}\mathcal{O}/\mathcal{O})\epsilon
				\arrow[d, "f_2"]
				\\
				\kappa\oplus p^{-k}\mathcal{O}\epsilon
				\arrow[r, "f_1"']
				&
				\kappa\oplus(p^{-k}\mathcal{O}/\mathcal{O})\epsilon
			\end{tikzcd}
			\]
			Since $f_2$ is a small surjection (that is, $\mathrm{Ker}(f_2)\mathfrak{m}_{\kappa\oplus K\epsilon}=0$), by \cite[Proposition 5.8]{Booher2019}, the natural map
			\[
			\mathcal{D}_{v,\overline{r}_v}^\mathrm{FL}(\kappa\oplus p^{-k}\mathcal{O}\epsilon)
			\to
			\mathcal{D}_{v,\overline{r}_v}^\mathrm{FL}(\kappa\oplus (p^{-k}\mathcal{O}/\mathcal{O})\epsilon)
			\]
			is surjective. 
			Thus $f_2\circ r_v\in\mathcal{D}_{v,\overline{r}_v}^\mathrm{FL}(\kappa\oplus(p^{-k}\mathcal{O}/\mathcal{O})\epsilon)$ lifts to some $r'_v\in\mathcal{D}_{v,\overline{r}_v}^\mathrm{FL}(\kappa\oplus p^{-k}\mathcal{O}\epsilon)$. As a result, by the lifting condition in \cite[§2.2]{Booher2019}, there exists $r_v''\in\mathcal{D}_{v,\overline{r}_v}^\mathrm{FL}(\mathbb{T}\oplus p^{-k}\mathcal{O}\epsilon)$ such that $g_2\circ r_v''=r_v'$ and $g_1\circ r_v''=r_v$.
			This implies in particular that the image of $[c_r]$ in $H^1(F_v,\rho_\pi(E/\mathcal{O}))$ lies in $H_f^1(F_v,\rho_\pi(E/\mathcal{O}))$.
		\end{enumerate}

		So we have proved that the image of the map $\Phi$ is contained in $H^1_\mathrm{BK}(F,\rho_\pi(E/\mathcal{O}))$. The converse containment is clear. Thus $\Phi$ is an isomorphism onto $H^1_f(F,\rho_\pi(E/\mathcal{O}))$ and we are done.
	\end{proof}

	\subsection{Congruence modules and Petersson products}
	Recall we have the space of automorphic forms $M_{W,p}(K,\mathcal{O})$ of weight $W$, of level $K$ with coefficients in $\mathcal{O}$. As our group $G$ is compact at infinity, the Petersson product on $M_{W,p}(K,\mathcal{O})$ is particularly simple to describe:
	note that the irreducible algebraic representation $\mathbb{V}=W\otimes K$ of $G$ over $K$ is self-dual and that we have assumed that the Hodge-Tate weights of $r_\pi|_{\Gamma_{F_v}}$ lies in an interval of length at most $(p-2)/2$ for each place $v|p$ of $F$. Thus we can find a lattice $\mathbb{L}$ of $\mathbb{V}$ stable under $G(\mathcal{O}_{F,p})$ and the natural pairing $\mathbb{V}\times\mathbb{V}\to K$ (coming from the self-dualness of this representation) induces a \emph{perfect} pairing
	\[
	\langle-,-\rangle_\mathbb{L}
	\colon
	\mathbb{L}\times\mathbb{L}\to\mathcal{O}.
	\]
	In the following, we assume that $W=\mathbb{L}$. For the double quotient $\Sigma_K=G(F)\backslash G(\mathbb{A}_F)/(G(\mathbb{A}_{F,\infty})\times K)=G(F)\backslash G(\mathbb{A}_{F,f})/K$, we choose a set of representatives $\widetilde{\Sigma}_K$ in $G(\mathbb{A}_{F,f})$. Then we have
	\[
	G(\mathbb{A}_F)
	=
	\bigsqcup_{h\in\widetilde{\Sigma}_K}G(F)h(G(\mathbb{A}_{F,\infty})\times K).
	\]
	We set the measure of $K$ to be $1$.
	For any two automorphic forms $f,g\in M_{W,p}(K,\mathcal{O})$, we define the \emph{Petersson product} $(f,g)_\mathrm{Pet}$ of $f$ and $g$ as
	\[
	(f,g)_\mathrm{Pet}
	=
	\sum_{h\in\widetilde{\Sigma}_K}
	\langle f(h),g(h)\rangle_W.
	\]
	Clearly this is a perfect $\mathcal{O}$-bilinear pairing on $M_{W,p}(K,\mathcal{O})$ and is independent of the choice of $\widetilde{\Sigma}_K$.
	Moreover, one has
	\begin{proposition}\label{T-bilinearity of Petersson product}
		The Petersson product $(-,-)_\mathrm{Pet}$ is $\mathbb{T}_{W}(K,\mathcal{O})$-bilinear, that is, for any $t\in\mathbb{T}_W(K,\mathcal{O})$ and $f,g\in M_{W,p}(K,\mathcal{O})$, we have
		\[
		(tf,g)_\mathrm{Pet}=(f,tg)_\mathrm{Pet}.
		\]		
	\end{proposition}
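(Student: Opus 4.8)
The plan is to verify the Hecke-equivariance of the Petersson product $(-,-)_\mathrm{Pet}$ directly from its explicit description as a sum over the finite double coset $\Sigma_K$. Since $\mathbb{T}_W(K,\mathcal{O})$ is generated as an $\mathcal{O}$-algebra by the spherical Hecke operators $T_v^{(j)}$ attached to finite places $v\notin S_p$ where $K_v=G(\mathcal{O}_{F,v})$ is hyperspecial, and since the pairing is $\mathcal{O}$-bilinear, it suffices to check the identity $(T_v^{(j)}f,g)_\mathrm{Pet}=(f,T_v^{(j)}g)_\mathrm{Pet}$ for each such generator. First I would recall, from Definition \ref{unramified Hecke operators}, the double coset description $K\widetilde{g}K=\bigsqcup_i\widetilde{g_i}K$ (with $\widetilde g$ supported at $v$), so that $(T_v^{(j)}f)(h)=\sum_i(\widetilde{g_i})_p f(h\widetilde{g_i})$; here, since $v\nmid p$, the factor $(\widetilde{g_i})_p$ is trivial, so in fact $(T_v^{(j)}f)(h)=\sum_i f(h\widetilde{g_i})$.

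The key computational step is to rewrite $(T_v^{(j)}f,g)_\mathrm{Pet}=\sum_{h\in\widetilde\Sigma_K}\sum_i\langle f(h\widetilde{g_i}),g(h)\rangle_W$ and to manipulate the double sum over $(h,i)$. The crucial point is that the Hecke operator $T_v^{(j)}$ associated to the double coset $KgK$ is ``self-adjoint'' because $K gK$ is stable under the transpose-inverse (or more precisely because $G(\mathcal{O}_{F,v})\varsigma_{v,j}G(\mathcal{O}_{F,v})=G(\mathcal{O}_{F,v})\varsigma_{v,j}^{-1}G(\mathcal{O}_{F,v})$ up to the obvious involution — the element $\varsigma_{v,j}=\mathrm{diag}(\varpi_v 1_j,1_{n-2j},\varpi_v^{-1}1_j)$ is conjugate in $G(\mathcal{O}_{F,v})$ to its inverse via the longest Weyl element, using the explicit form of the bilinear form on $V$). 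Concretely, if $KgK=\bigsqcup_i\widetilde{g_i}K=\bigsqcup_i K\widetilde{g_i}^{-1}$ (after adjusting representatives, using that $G$ is unimodular and $K$ is its own inverse), then one reindexes: the pairs $(h,i)$ with $h\in\widetilde\Sigma_K$ run bijectively, after replacing $h\widetilde{g_i}$ by a representative $h'$ of its class in $\Sigma_K$ and tracking the residual $K$-factor, over the pairs giving $\sum_{h'}\sum_{i'}\langle f(h'),g(h'\widetilde{g_{i'}})\rangle_W$, which is exactly $(f,T_v^{(j)}g)_\mathrm{Pet}$. The invariance $\langle \gamma w_1,\gamma w_2\rangle_W=\langle w_1,w_2\rangle_W$ of the pairing on $W$ under $G(F)$ (indeed under all of $G$, since $\mathbb{V}$ is self-dual as a $G$-representation and $\mathbb{L}$ is a $G(\mathcal{O}_{F,v})$-stable lattice with perfect induced pairing) is what makes the reindexing legitimate, since passing from $h\widetilde{g_i}$ to a chosen coset representative changes the argument by an element of $G(F)\times K$, under both of which $f$, $g$ and $\langle-,-\rangle_W$ transform compatibly.

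The main obstacle I expect is the bookkeeping in this reindexing: one must be careful that the left-translate $G(F)h\widetilde{g_i}K$ may coincide for different $(h,i)$, and that the double coset $\Sigma_K$ is not a group, so the ``change of variables'' $h\mapsto$ (representative of $h\widetilde{g_i}$) is not literally a bijection on $\widetilde\Sigma_K$ but only on the set of pairs. The clean way to handle this is to pass through the adelic integral description: realize $(f,g)_\mathrm{Pet}$ (up to the normalization $\mathrm{vol}(K)=1$ and the fact that $G(F)\backslash G(\mathbb{A}_F)/G(\mathbb{A}_{F,\infty})$ has finite volume, $G$ being anisotropic mod center) as $\int_{G(F)\backslash G(\mathbb{A}_{F,f})}\langle f(x),g(x)\rangle_W\,dx$, where now the Hecke operator acts by right convolution with the characteristic function $\mathbf{1}_{KgK}$, and the self-adjointness is the standard statement that $\int \langle (\mathbf{1}_{KgK}* f)(x),g(x)\rangle\,dx=\int\langle f(x),(\mathbf{1}_{Kg^{-1}K}*g)(x)\rangle\,dx$ together with $\mathbf{1}_{KgK}=\mathbf{1}_{Kg^{-1}K}$ for our $g=\widetilde{\varsigma_{v,j}}$. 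This avoids the delicate finite combinatorics entirely. I would therefore structure the proof as: (i) reduce to generators $T_v^{(j)}$; (ii) pass to the adelic integral form of $(-,-)_\mathrm{Pet}$; (iii) invoke the standard adjointness of convolution operators together with the symmetry $\mathbf{1}_{K\varsigma_{v,j}K}=\mathbf{1}_{K\varsigma_{v,j}^{-1}K}$, which follows from conjugating $\varsigma_{v,j}$ by the appropriate element of $G(\mathcal{O}_{F,v})$ realizing the relevant Weyl group element, as recorded in the explicit root datum computations preceding Lemma \ref{Hecke polynomial for unramified rep}.
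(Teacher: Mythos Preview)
Your proposal is correct and follows essentially the same approach as the paper: reduce to the generators $T_v^{(j)}$, use that the adjoint of $[K\varsigma_{v,j}K]$ under the Petersson pairing is $[K\varsigma_{v,j}^{-1}K]$, and then invoke the equality of double cosets $G(\mathcal{O}_{F,v})\varsigma_{v,j}G(\mathcal{O}_{F,v})=G(\mathcal{O}_{F,v})\varsigma_{v,j}^{-1}G(\mathcal{O}_{F,v})$. The paper's proof simply asserts both of these facts (citing that $G(\mathcal{O}_{F,v})$ is hyperspecial for the second), whereas you supply the mechanism---the adelic-integral reformulation for adjointness and the Weyl-element conjugation for the double-coset symmetry---so your write-up is more detailed but not genuinely different.
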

	\begin{proof}
		For each place $v\nmid p\infty$, we have defined the Hecke operator $T_v^{(j)}=[G(\mathcal{O}_{F,v})\varsigma_{v,j}G(\mathcal{O}_{F,v})]$ (see (\ref{spherical Hecke operators}). It is easy to see
		\[
		([G(\mathcal{O}_{F,v})\varsigma_{v,j}G(\mathcal{O}_{F,v})]f,g)_\mathrm{Pet}
		=
		(f,[G(\mathcal{O}_{F,v})\varsigma_{v,j}^{-1}G(\mathcal{O}_{F,v})]g)_\mathrm{Pet}.
		\]
		Since $G(\mathcal{O}_{F,v})$ is a hyperspecial subgroup of $G(F_v)$, we have
		\[
		[G(\mathcal{O}_{F,v})\varsigma_{v,j}G(\mathcal{O}_{F,v})]
		=
		[G(\mathcal{O}_{F,v})\varsigma_{v,j}^{-1}G(\mathcal{O}_{F,v})].
		\]
		This proves the proposition.
	\end{proof}

	By \cite[Theorem 4.0.1]{Taibi2018}, we know that the space of automorphic forms $\mathcal{A}(G(\mathbb{A}_F))$ satisfies the \emph{multiplicity-one} theorem and thus the localization
	\[
	\mathbb{M}:=M_{W,p}(K,\mathcal{O})_\mathfrak{m}
	\]
	is free of rank one over the Hecke algebra $\mathbb{T}$. We extend the Petersson product $(-,-)_\mathrm{Pet}$ from $M_{W,p}(K,\mathcal{O})$ to $\mathbb{M}$ (again denoted by $(-,-)_\mathrm{Pet}$), which is a $\mathbb{T}$-bilinear perfect pairing. We choose $f_\pi\in M_{W,p}(K,\mathcal{O})$ which is a vector in $\pi$ such that $f_\pi\not\equiv0(\mathrm{mod}\,\mathfrak{p})$. In particular, $\mathbb{M}$ is generated by $f_\pi$ over $\mathbb{T}$. Then by \cite[§2A]{TilouineUrban2022},
	\begin{proposition}\label{Petersson product vs congruence ideal}
		Up to units in $\mathcal{O}$, we have
		\[
		(f_\pi,f_\pi)_\mathrm{Pet}
		=
		c_0(\theta_\pi).
		\]
	\end{proposition}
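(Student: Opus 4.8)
\textbf{Proof proposal for Proposition \ref{Petersson product vs congruence ideal}.}
The plan is to realize $(f_\pi,f_\pi)_{\mathrm{Pet}}$ as the value of a standard congruence-module pairing attached to $\theta_\pi\colon\mathbb{T}\to\mathcal{O}$, exactly as in the general formalism of \cite[§2A]{TilouineUrban2022}. Recall the set-up: $\mathbb{M}=M_{W,p}(K,\mathcal{O})_\mathfrak{m}$ is free of rank one over $\mathbb{T}$ (by multiplicity one, \cite[Theorem 4.0.1]{Taibi2018}), and $(-,-)_{\mathrm{Pet}}$ is a $\mathbb{T}$-bilinear perfect pairing on $\mathbb{M}$ by Proposition \ref{T-bilinearity of Petersson product} (extended to the localization). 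First I would record the elementary linear-algebra consequence: choosing a $\mathbb{T}$-basis $f_\pi$ of $\mathbb{M}$, the perfect $\mathbb{T}$-bilinear pairing is determined by the single element $(f_\pi,f_\pi)_{\mathrm{Pet}}\in\mathbb{T}$; perfectness over $\mathbb{T}$ forces this element to be a unit in $\mathbb{T}$. Hence without loss of generality we may and do normalize so that $(f_\pi,f_\pi)_{\mathrm{Pet}}=1$ in $\mathbb{T}$ — the ambiguity in $f_\pi$ is precisely $\mathbb{T}^\times$, and we only care about the answer up to $\mathcal{O}^\times$.

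Next I would run the congruence-module computation. Recall from §8.2 of the excerpt the factorization $\mathrm{Frac}(\mathbb{T})=K\oplus X$, with $\mathfrak{a}=\ker(\mathbb{T}\to X)$ and $C_0(\theta_\pi)=(\mathbb{T}/\mathfrak{a})\otimes_\mathbb{T}\mathcal{O}\simeq\mathcal{O}/c_0(\theta_\pi)\mathcal{O}$. Write $\mathfrak{b}=\ker(\theta_\pi)$, so that $\mathbb{T}/\mathfrak{b}=\mathcal{O}$ and, by the standard lemma on congruence modules for a reduced $\mathbb{T}$ finite flat over $\mathcal{O}$, one has $c_0(\theta_\pi)\mathcal{O}=\theta_\pi(\mathrm{Ann}_\mathbb{T}(\mathfrak{b}))$, equivalently $c_0(\theta_\pi)$ generates the image of $\mathfrak{a}\cap(\mathbb{T}/\mathfrak{b})$ under $\mathbb{T}\to\mathcal{O}$. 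Now consider the $\mathbb{T}$-submodule $\mathbb{M}[\mathfrak{b}]=\{x\in\mathbb{M}:\mathfrak{b}x=0\}$, which is the $\pi$-isotypic part and is free of rank one over $\mathbb{T}/\mathfrak{b}=\mathcal{O}$, generated by $f_\pi$ (since $f_\pi\not\equiv0\ (\mathrm{mod}\,\mathfrak{p})$). The point of \cite[§2A]{TilouineUrban2022} is that the $\mathbb{T}$-bilinear perfect pairing on $\mathbb{M}$ restricts on $\mathbb{M}[\mathfrak{b}]$ to an $\mathcal{O}$-bilinear pairing whose discriminant ideal is exactly $c_0(\theta_\pi)\mathcal{O}$: concretely, dualizing the exact sequence $0\to\mathbb{M}[\mathfrak{b}]\to\mathbb{M}\to\mathbb{M}/\mathbb{M}[\mathfrak{b}]\to0$ via the perfect pairing identifies $\mathbb{M}[\mathfrak{b}]^\perp$ with $\mathfrak{b}\mathbb{M}$, and the cokernel of $\mathbb{M}[\mathfrak{b}]\hookrightarrow(\mathbb{M}/\mathfrak{b}\mathbb{M})^\vee\simeq\mathbb{M}[\mathfrak{b}]^\vee$ is $C_0(\theta_\pi)$. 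Tracking $f_\pi$ through this chain of identifications gives $(f_\pi,f_\pi)_{\mathrm{Pet}}\mathcal{O}=c_0(\theta_\pi)\mathcal{O}$ as ideals of $\mathcal{O}$.

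I would spell this out in one short display: the image of $f_\pi$ under $\mathbb{M}[\mathfrak{b}]\to(\mathbb{M}/\mathfrak{b}\mathbb{M})^\vee$, $x\mapsto(y\mapsto(x,y)_{\mathrm{Pet}})$, and the identification $(\mathbb{M}/\mathfrak{b}\mathbb{M})^\vee=\mathrm{Hom}_\mathcal{O}((\mathbb{T}/\mathfrak{b})\oplus(\mathfrak{a}/(\mathfrak{a}\cap\mathfrak{b})),\mathcal{O})$ together with $\mathfrak{a}/(\mathfrak{a}\cap\mathfrak{b})\hookrightarrow\mathcal{O}$ having cokernel $\mathcal{O}/c_0(\theta_\pi)\mathcal{O}$, show that $f_\pi$ generates an $\mathcal{O}$-submodule of index $c_0(\theta_\pi)$ in $\mathbb{M}[\mathfrak{b}]^\vee$; since the pairing on $\mathbb{M}[\mathfrak{b}]$ has matrix $\big((f_\pi,f_\pi)_{\mathrm{Pet}}\big)$ in the basis $f_\pi$, this index equals $(f_\pi,f_\pi)_{\mathrm{Pet}}\mathcal{O}$. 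The main obstacle — and the only genuinely nontrivial input — is verifying that the Petersson product on the localization $\mathbb{M}$ is genuinely $\mathbb{T}$-bilinear and perfect over $\mathbb{T}$ (not merely over $\mathcal{O}$); $\mathbb{T}$-bilinearity at the unramified places is Proposition \ref{T-bilinearity of Petersson product}, and perfectness over $\mathbb{T}$ follows from perfectness over $\mathcal{O}$ together with $\mathbb{M}$ being free of rank one over the Gorenstein (indeed local complete intersection, by Theorem \ref{R=T}) ring $\mathbb{T}$ — this is precisely where the multiplicity-one statement \cite[Theorem 4.0.1]{Taibi2018} and the $R=\mathbb{T}$ isomorphism enter. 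Everything else is the formal argument of \cite[§2A]{TilouineUrban2022}, which I would cite rather than reproduce in full.
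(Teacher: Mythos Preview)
Your second and third paragraphs are essentially the argument of \cite[§2A]{TilouineUrban2022}, which is exactly what the paper invokes (the paper gives no details beyond that citation), so on that level you are following the same route and the computation there is correct.

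However, your first paragraph contains a genuine error that contradicts both the statement and your own later argument. The Petersson pairing $(-,-)_{\mathrm{Pet}}$ is $\mathcal{O}$-valued, not $\mathbb{T}$-valued; ``$\mathbb{T}$-bilinearity'' in Proposition~\ref{T-bilinearity of Petersson product} means the Hermitian-type condition $(tf,g)=(f,tg)$, not $(tf,g)=t\cdot(f,g)$. So $(f_\pi,f_\pi)_{\mathrm{Pet}}$ lives in $\mathcal{O}$, and there is no sense in which it is ``a unit in $\mathbb{T}$''. More importantly, the $f_\pi$ of the proposition is the \emph{eigenform}, i.e.\ an $\mathcal{O}$-basis of $\mathbb{M}[\mathfrak{b}]$, and this is \emph{not} a $\mathbb{T}$-basis of $\mathbb{M}$ when there are congruences: writing $\mathbb{M}=\mathbb{T}e$, one has $\mathbb{M}[\mathfrak{b}]=\mathfrak{a}e\subset\mathfrak{m}\mathbb{M}$ whenever $\mathfrak{a}\neq\mathbb{T}$. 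Your normalization $(f_\pi,f_\pi)_{\mathrm{Pet}}=1$ would force $c_0(\theta_\pi)\in\mathcal{O}^\times$, i.e.\ no congruences, which is precisely what the proposition is measuring. You should simply delete the first paragraph; the argument you want is the one you give afterwards: the perfect $\mathcal{O}$-pairing on $\mathbb{M}$ together with $(tf,g)=(f,tg)$ and $\mathrm{Ann}(\mathfrak{a})=\mathfrak{b}$ (Gorensteinness of $\mathbb{T}$, via Theorem~\ref{R=T}) give $\mathbb{M}[\mathfrak{b}]^\perp=\mathfrak{b}\mathbb{M}$, hence the induced map $\mathbb{M}[\mathfrak{b}]\hookrightarrow\mathbb{M}/\mathfrak{b}\mathbb{M}\xrightarrow{\sim}\mathbb{M}[\mathfrak{b}]^\vee$ has cokernel $\mathbb{T}/(\mathfrak{a}+\mathfrak{b})=C_0(\theta_\pi)$, while in the basis $f_\pi$ its matrix is $((f_\pi,f_\pi)_{\mathrm{Pet}})$.
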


	\subsection{Bloch-Kato conjecture}
	We define the $L$-function $L(\rho_\pi,s)$ attached to the Galois representation $\rho_\pi$ as follows (here we use the isomorphism $\mathbb{C}\simeq\overline{\mathbb{Q}}_p$ to view each factor in the product below as a complex number):
	\[
	L(\rho_\pi,s)
	=
	\prod_{v\nmid p\infty}
	\mathrm{det}(1-\rho_\pi(\mathrm{Fr}_v)^{-1}q_v^{-s})
	\times
	\prod_{v|p}
	\mathrm{det}((1-\phi_v^{-1}q_v^{-s})|_{D_\mathrm{crys}(\mathrm{Ad}^0_v)}),
	\]
	here $\phi_v$ is the crystalline Frobenius to the power $f_v$ where $q_v=p^{f_v}$, $\mathrm{Ad}^0_v$ is the restriction of $\rho_\pi$ to $\Gamma_{F_v}$
	and $D_\mathrm{crys}(\mathrm{Ad}^0_v)=B_\mathrm{crys}\otimes\mathrm{Ad}^0_v$. Let $\pi^\sharp$ be the weak transfer of $\pi$ to $\mathrm{GL}_N$ (see (\ref{Arthur parameter})). Then we can define the Rankin-Selberg $L$-function $L(\pi^\sharp\times(\pi^\sharp)^\vee,s)$ of $\pi^\sharp$. Moreover, it has a factorization into symmetric part and anti-symmetric part
	\[
	L(\pi^\sharp\times(\pi^\sharp)^\vee,s)
	=
	L(\pi^\sharp,\mathrm{Sym}^2,s)
	L(\pi^\sharp,\Lambda^2,s).
	\]
	Then it is easy to see
	\[
	L(\rho_\pi,s)
	=
	\begin{cases*}
		L(\pi^\sharp,\mathrm{Sym}^2,s),
		&
		$G^\ast=\mathrm{SO}_n^\eta$ with $n$ odd;
		\\
		L(\pi^\sharp,\Lambda^2,s),
		&
		$G^\ast=\mathrm{SO}_n^\eta$ with $n$ even or $G^\ast=\mathrm{Sp}_n$.
	\end{cases*}
	\]

	By \cite[Theorem 1.5.3(a)]{Arthur2013}, we know that $L(\rho_\pi,s)$ is holomorphic and non-zero at $s=1$. Thus we have proved the rank part of the The Bloch-Kato conjecture for (the adjoint motive associated to) the adjoint Galois representation $\rho_\pi$ (\cite{BlochKato1990}):
	\begin{theorem}\label{Bloch-Kato, rank part}
		Let $\pi$ be as above. Then
		\[
		\mathrm{ord}_{s=1}L(\rho_\pi,s)
		=0
		=
		\mathrm{dim}_EH^1_\mathrm{BK}(F,\rho_\pi(E))
		-
		\mathrm{dim}_EH^0(F,\rho_\pi(E)).		
		\]
	\end{theorem}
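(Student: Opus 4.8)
The statement of Theorem \ref{Bloch-Kato, rank part} is really the combination of two inputs already at hand: an analytic non-vanishing for the $L$-function, and a finiteness/vanishing statement for the adjoint Selmer group coming from the $R=\mathbb{T}$ machinery. First I would record the identity of $L$-functions established just above the theorem: according to whether $G^\ast=\mathrm{SO}_n^\eta$ with $n$ odd or $G^\ast=\mathrm{SO}_n^\eta$ with $n$ even or $G^\ast=\mathrm{Sp}_n$, one has $L(\rho_\pi,s)=L(\pi^\sharp,\mathrm{Sym}^2,s)$ or $L(\pi^\sharp,\Lambda^2,s)$ respectively, where $\pi^\sharp$ is the weak transfer of Theorem \ref{weak transfer pi to Pi}. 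Since $\pi^\sharp=\boxplus_{i=1}^r\Pi_i$ is an isobaric sum of self-dual cuspidal automorphic representations of $\mathrm{GL}_{N_i}(\mathbb{A}_F)$ by (C2), the symmetric-square and exterior-square $L$-functions decompose as products of $L(\Pi_i,\mathrm{Sym}^2,s)$, $L(\Pi_i,\Lambda^2,s)$ and Rankin--Selberg factors $L(\Pi_i\times\Pi_j,s)$ for $i\ne j$. By \cite[Theorem 1.5.3(a)]{Arthur2013} each $\Pi_i$ is of a definite parity type (orthogonal or symplectic), which precisely determines which of $L(\Pi_i,\mathrm{Sym}^2,s)$, $L(\Pi_i,\Lambda^2,s)$ has a pole at $s=1$; the factor occurring in $L(\rho_\pi,s)$ is in each case the one \emph{without} a pole, and is moreover non-vanishing at $s=1$ by the standard non-vanishing of $L$-functions on the edge of the critical strip (Jacquet--Shalika), while the Rankin--Selberg factors $L(\Pi_i\times\Pi_j,1)$ with $\Pi_i\not\simeq\Pi_j$ are finite and non-zero. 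Hence $L(\rho_\pi,s)$ is holomorphic and non-zero at $s=1$, giving $\mathrm{ord}_{s=1}L(\rho_\pi,s)=0$.

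\textbf{The Selmer side.} For the right-hand equality I would invoke the chain of isomorphisms assembled in §\ref{Application to Bloch-Kato conjectures}. By Proposition \ref{Differential module vs Bloch-Kato Selmer group} there is an isomorphism of $\mathcal{O}$-modules $H^1_\mathrm{BK}(F,\rho_\pi(E/\mathcal{O}))^\vee\simeq C_1(\theta_\pi)$, and by Proposition \ref{congruence ideal=differential ideal} the module $C_1(\theta_\pi)$ is a finitely generated \emph{torsion} $\mathcal{O}$-module, isomorphic to $\mathcal{O}/c_1(\theta_\pi)\mathcal{O}$ for some non-zero $c_1(\theta_\pi)\in\mathcal{O}$ (non-zero because $\mathbb{T}=\mathbb{T}_W(K,\mathcal{O})_\mathfrak{m}$ is reduced and finite flat over $\mathcal{O}$, so $\theta_\pi$ is a projection onto one of its components). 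Consequently $H^1_\mathrm{BK}(F,\rho_\pi(E/\mathcal{O}))$ is a cofinitely generated cotorsion $\mathcal{O}$-module, and tensoring the defining exact sequence $0\to\rho_\pi(\mathcal{O})\to\rho_\pi(E)\to\rho_\pi(E/\mathcal{O})\to 0$ with the relevant cohomology and passing to $E$-coefficients shows that the corank of $H^1_\mathrm{BK}(F,\rho_\pi(E/\mathcal{O}))$, which is zero, equals $\dim_E H^1_\mathrm{BK}(F,\rho_\pi(E))-\dim_E H^0(F,\rho_\pi(E))$. (The contribution of $H^0$ enters because $\rho_\pi(E)$ need not have vanishing Galois invariants a priori; one uses that $H^0_\mathrm{BK}=H^0$ and the long exact sequence in continuous cohomology. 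In fact, since $\overline{\rho}_{\mathfrak{m}}$ is absolutely irreducible, $\widehat{\mathfrak{g}}$ being the trace-zero part and $p$ odd forces $H^0(F,\rho_\pi(E))=0$, but one need not use this.) Combining with the analytic computation yields both equalities simultaneously, which is exactly the displayed rank part of the Bloch--Kato conjecture.

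\textbf{Main obstacle.} The delicate point is not the Selmer-side bookkeeping — that is a formal consequence of Propositions \ref{congruence ideal=differential ideal} and \ref{Differential module vs Bloch-Kato Selmer group}, i.e. of the $R=\mathbb{T}$ theorem — but the precise citation of Arthur's classification needed to pin down the parity of each cuspidal constituent $\Pi_i$ and hence to guarantee that the symmetric-square (resp. exterior-square) $L$-function appearing in $L(\rho_\pi,s)$ is the one that stays finite at $s=1$. Concretely, I would check that in each of the three cases for $G^\ast$ the dual group forces $\pi^\sharp$ to be, in Arthur's terminology, of orthogonal or symplectic type as appropriate, so that $\mathrm{Sym}^2$ or $\Lambda^2$ of $\Pi_i$ has its pole absorbed by the $L$-factor \emph{not} contributing to $L(\rho_\pi,s)$; the remaining positivity/non-vanishing at the edge point is then classical. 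This is the only place where the argument is genuinely representation-theoretic rather than a manipulation of the already-established $R=\mathbb{T}$ isomorphism, and it is where I would be most careful to align conventions with \cite{Arthur2013} and \cite{Shin2024}.
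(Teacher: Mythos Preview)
Your approach is essentially the paper's: the analytic vanishing of the order at $s=1$ is obtained from Arthur's classification (the paper simply cites \cite[Theorem 1.5.3(a)]{Arthur2013} for holomorphy and non-vanishing, whereas you unpack the isobaric decomposition and parity argument explicitly), and the Selmer side is deduced from Propositions \ref{congruence ideal=differential ideal} and \ref{Differential module vs Bloch-Kato Selmer group} exactly as you do. One small correction: the paper does use $H^0(F,\rho_\pi(E))=0$ (via condition (1) in the definition of \emph{adequate}, Definition \ref{big}) to pass from $\mathrm{rk}_\mathcal{O}H^1_\mathrm{BK}(F,\rho_\pi(E/\mathcal{O}))^\vee=0$ to $\dim_E H^1_\mathrm{BK}(F,\rho_\pi(E))=0$, and your assertion that the general identity ``$\mathrm{corank}=\dim H^1_\mathrm{BK}-\dim H^0$'' holds without this input is not justified---so your parenthetical ``one need not use this'' should be dropped.
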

	\begin{proof}
		Note that $\rho_\pi(E)$ is self-dual. Moreover $H^0(F,\rho_\pi(E))=0$ due to the assumption that $r_\pi(\Gamma_{F(\zeta_p)})$ is adequate. On the other hand, we have
		\[
		\mathrm{dim}_EH^1_\mathrm{BK}(F,\rho_\pi(E))
		=
		\mathrm{rk}_\mathcal{O}H^1_\mathrm{BK}(F,\rho_\pi(E/\mathcal{O}))^\vee,
		\]
		which is equal to 0 because of Propositions \ref{congruence ideal=differential ideal} and \ref{Differential module vs Bloch-Kato Selmer group}.
	\end{proof}

	The second part of Bloch-Kato conjecture for $\rho_\pi$ concerns the relation between the special value $L(\pi,\mathrm{Ad},1)$ and the characteristic ideal of the Selmer group $H^1_\mathrm{BK}(F,\rho_\pi(E/\mathcal{O}))^\vee$. However, we can not for the moment establish this case. Instead, we have
	\begin{theorem}\label{Bloch-Kato, special value part}
		Let $\pi$, $f_\pi$, $\theta_\pi$ be as above. Then the characteristic ideal of the Bloch-Kato Selmer group $H^1_\mathrm{BK}(F,\rho_\pi(E/\mathcal{O}))^\vee$ is generated by the Petersson product $(f_\pi,f_\pi)_\mathrm{Pet}$, which is a non-zero element in $\mathcal{O}$
		\[
		\chi_{\mathcal{O}}(H^1_\mathrm{BK}(F,\rho_\pi(E/\mathcal{O})^\vee))
		=
		(f_\pi,f_\pi)_\mathrm{Pet}\mathcal{O}.
		\]
	\end{theorem}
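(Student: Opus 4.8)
The plan is to combine the three preceding propositions into a short chain of identities. First I would recall that by Proposition \ref{Petersson product vs congruence ideal}, up to a unit in $\mathcal{O}$ we have $(f_\pi,f_\pi)_\mathrm{Pet}=c_0(\theta_\pi)$, so it suffices to prove that $\chi_\mathcal{O}(H^1_\mathrm{BK}(F,\rho_\pi(E/\mathcal{O}))^\vee)=c_0(\theta_\pi)\mathcal{O}$. Next I would invoke Proposition \ref{congruence ideal=differential ideal} (Tate's theorem, valid because $R_\mathcal{S}\simeq\mathbb{T}$ is a reduced local complete intersection flat over $\mathcal{O}$, and $\mathbb{T}\otimes_\mathcal{O}E$ has $E$ as a factor since $\theta_\pi$ exists) to replace $c_0(\theta_\pi)$ by $c_1(\theta_\pi)$, the generator of the differential module $C_1(\theta_\pi)=\Omega_{\mathbb{T}/\mathcal{O}}\otimes_\mathbb{T}\mathcal{O}$. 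Finally Proposition \ref{Differential module vs Bloch-Kato Selmer group} gives an isomorphism of $\mathcal{O}$-modules $H^1_\mathrm{BK}(F,\rho_\pi(E/\mathcal{O}))^\vee\simeq C_1(\theta_\pi)$, and since $C_1(\theta_\pi)\simeq\mathcal{O}/c_1(\theta_\pi)\mathcal{O}$ is a finite torsion $\mathcal{O}$-module, its characteristic ideal $\chi_\mathcal{O}$ is precisely $c_1(\theta_\pi)\mathcal{O}$. Stringing these together yields
\[
\chi_\mathcal{O}(H^1_\mathrm{BK}(F,\rho_\pi(E/\mathcal{O}))^\vee)
=c_1(\theta_\pi)\mathcal{O}
=c_0(\theta_\pi)\mathcal{O}
=(f_\pi,f_\pi)_\mathrm{Pet}\mathcal{O}.
\]

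The remaining point to address is the non-vanishing of $(f_\pi,f_\pi)_\mathrm{Pet}$, equivalently that $c_0(\theta_\pi)\neq 0$ in $\mathcal{O}$. This follows because $c_0(\theta_\pi)$ is the congruence number of the map $\theta_\pi\colon\mathbb{T}\to\mathcal{O}$ and $\mathbb{T}=\mathbb{T}_W(K,\mathcal{O})_\mathfrak{m}$ is a reduced, finite flat $\mathcal{O}$-algebra of positive rank, so the factorization $\mathrm{Frac}(\mathbb{T})=K\oplus X$ is nontrivial and the congruence module is $\mathcal{O}$-torsion but nonzero. Alternatively one reads it off from Proposition \ref{congruence ideal=differential ideal}, which already asserts that $C_0(\theta_\pi)$ and $C_1(\theta_\pi)$ are torsion $\mathcal{O}$-modules of finite type; finiteness of $C_1(\theta_\pi)$ together with the isomorphism of Proposition \ref{Differential module vs Bloch-Kato Selmer group} forces $c_1(\theta_\pi)\neq 0$, hence $c_0(\theta_\pi)\neq 0$.

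I do not expect any genuine obstacle here: the theorem is essentially a formal consequence of the $R=\mathbb{T}$ isomorphism of Theorem \ref{R=T} (which underlies Proposition \ref{Differential module vs Bloch-Kato Selmer group}), the compatibility of the Petersson product with the Hecke action (Proposition \ref{T-bilinearity of Petersson product}), and multiplicity one for $\mathcal{A}(G(\mathbb{A}_F))$ from \cite{Taibi2018} (which makes $\mathbb{M}=M_{W,p}(K,\mathcal{O})_\mathfrak{m}$ free of rank one over $\mathbb{T}$, so that the perfect $\mathbb{T}$-bilinear pairing identifies the congruence ideal with $(f_\pi,f_\pi)_\mathrm{Pet}\mathcal{O}$). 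The only mild subtlety worth stating explicitly is that all identities are understood up to multiplication by units in $\mathcal{O}$, as in Propositions \ref{congruence ideal=differential ideal} and \ref{Petersson product vs congruence ideal}; since $\mathcal{O}$ is a discrete valuation ring, equality of ideals is unaffected. With these ingredients the proof is a two-line citation-chain, and I would present it as such.
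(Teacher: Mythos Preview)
Your proposal is correct and follows exactly the paper's approach: the paper's proof is a one-line citation of Propositions \ref{congruence ideal=differential ideal}, \ref{Differential module vs Bloch-Kato Selmer group}, and \ref{Petersson product vs congruence ideal}, which is precisely the chain you spell out. Your additional remarks on non-vanishing and the underlying ingredients (the $R=\mathbb{T}$ theorem, multiplicity one, and $\mathbb{T}$-bilinearity of the Petersson pairing) are accurate elaborations of what those propositions already encode.
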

	\begin{proof}
		This follows from Propositions \ref{congruence ideal=differential ideal},  \ref{Differential module vs Bloch-Kato Selmer group} and \ref{Petersson product vs congruence ideal}.
	\end{proof}

	\printindex


\begin{thebibliography}{widest-label}
		\bibitem[All16]{Allen2016}
		P.Allen,
		Deformations of polarized automorphic Galois representations and adjoint Selmer groups,
		Duke Math. J. 165 (13) 2407 - 2460 (2016).
		
		
		\bibitem[Art01]{Arthur2001}
		J.Arthur,
		A stable trace formula. II. Global descent,
		Invent. Math. 143 (2001), no. 1, 157–220.
		
		
		\bibitem[Art02]{Arthur2002}
		J.Arthur,
		A stable trace formula. I. General expansions,
		J. Inst. Math. Jussieu 1 (2002), no. 2, 175–277.
		
		
		\bibitem[Art03]{Arthur2003}
		J.Arthur,
		A stable trace formula. III. Proof of the main theorems,
		Ann. of Math. (2) 158 (2003), no. 3, 769–873.
		
		
		
		
		\bibitem[Art13]{Arthur2013}
		J.Arthur,
		\textit{The endoscopic classification of
			representations: orthogonal and
			symplectic groups},
		Colloquium Publications,
		vol.61,
		American Mathematical Soc., 2013.
		
		
		
		
		\bibitem[BB05]{BjonerBrenti2005}
		A.Bj\"{o}ner and F.Brenti,
		\textit{Combinatorics of Coxeter groups},
		volume 231 of
		\textit{Graduate Texts in Mathematics},
		Springer, 
		New York,
		2005.
		
		
		
		
		
		%		\bibitem[BC11]{BellaicheChenevier2011}
		%		J.Bella\"{i}che and G.Chenevier,
		%		The sign of Galois representations attached to automorphic forms for unitary groups,
		%		Compositio Mathematica 147.5 (2011): 1337-1352.
		
		
		
		
		\bibitem[BCGP21]{BoxerCalegariGeePilloni2021}
		G.Boxer,F.Calegari, T.Gee and V.Pilloni,
		Abelian surfaces over totally real fields are potentially modular,
		Publications math\'{e}matiques de l'IH\'{E}S 134.1 (2021): 153-501.
		
		
		
		\bibitem[BG14]{BuzzardGee2014}
		K.Buzzard and T.Gee,
		The conjectural connections between automorphic representations
		and Galois representations,
		\textit{Automorphic forms and Galois representations},
		vol.1,
		London Math.Soc. Lecture Note Ser.,
		vol.414,
		Cambridge Univ. Press,
		Cambridge,
		2014,
		pp.135-187.
		
		
		
		
		\bibitem[BGGT14]{Barnet-LambGeeGeraghtyTaylor2014}
		T.Barnet-Lamb, T.Gee, D.Geraghty and R.Taylor,
		Potential automorphy and change of weight,
		Annals of Mathematics (2014): 501-609.
		
		
		
		
		
		\bibitem[BK90]{BlochKato1990}
		S.Bloch and K.Kato,
		Tamagawa numbers of motives,
		in
		\textit{The Grothendieck festschrift},
		vol. 1,
		Progress in Math. 89,
		Birh\"{a}user 1990,
		pp.333-400.
		
		
		
		
		
		%		\bibitem[Boo19]{Booher2019}
		%		J.Booher,
		%		Minimally ramified deformations when $\ell\neq p$,
		%		Compositio Mathematica 155.1 (2019): 1-37.
		
		
		
		
%		\bibitem[BL23]{BooherLevin2023}
%		J.Booher and B.Levin,
%		G-valued crystalline deformation rings in the Fontaine–Laffaille range,
%		Compositio Mathematica 159.8 (2023): 1791-1832.
		
		
		
		
		
		\bibitem[Boo19]{Booher2019}
		J.Booher,
		Producing geometric deformations of orthogonal and symplectic Galois representations,
		Journal of Number Theory 195 (2019): 115-158.
		
		
		
		
		
		
		
		
		\bibitem[Bor76]{Borel1976}
		A.Borel,
		Admissible representations of a semisimple group over a local field with vectors fixed under
		an Iwahori subgroup,
		Inventiones Math.
		35
		(1976),
		pp.233-259.
		
		
		
		
		
		\bibitem[Bro17]{Brochard2017}
		S.Brochard,
		Proof of de Smit's conjecture: a freeness criterion,
		Compositio Math.
		\textbf{153} (2017),
		pp.2310-2317.
		
		
		
		\bibitem[BS07]{BreuilSchneider2007}
		C.Breuil and P.Schneider,
		First steps towards p-adic Langlands functoriality,
		J. Reine Angew. Math. 610 (2007), pp. 149–180.
		
		
%		\bibitem[BZ77]{BernsteinZelevinsky1977}
%		I.N.Bernstein and A.V.Zelevinsky,
%		Induced representations of reductive
%		$\mathfrak{p}$-adic groups.I,
%		Ann.Sci.\'{E}NS.(4),
%		10(4),1977,
%		pp.441-472.
		
		
		
		
		
		
		\bibitem[Cas95]{Casselman1995}
		W.Casselman,
		Introduction to the theory of
		admissible representations of $p$-adic groups,
		preprint.
		
		
		
		
		
		
		\bibitem[CEGGPS16]{CaraianiEmertonGeraghtyPaskunasShin2016}
		A.Caraiani, M.Emerton, T.Gee, D.Geraghty, V.Paškūnas, and S.-W.Shin,
		Patching and the $p$-adic local Langlands correspondence,
		Cambridge Journal of Mathematics 4, no. 2 (2016): 197-287.
		
		
		\bibitem[CEG22]{CalegariEmertonGee2022}
		F.Calegari, M.Emerton and T.Gee,
		Globally realizable components of local deformation rings,
		Journal of the Institute of Mathematics of Jussieu 21.2 (2022): 533-602.
		
		
		
		\bibitem[CH13]{ChenevierHarris2013}
		G.Chenevier and M.Harris,
		Construction of automorphic Galois representations II.
		Camb. Math. J. 1(1), 53–73 (2013)
		
		
		
		
		\bibitem[CG18]{CalegariGeraghty2018}
		F.Calegary and D.Geraghty,
		Modularity beyond the Twylor-Wiles method,
		Inventiones Math
		\textbf{211}(2018),
		no.1,
		pp.297-433.
		
		
		\bibitem[CHT08]{ClozelHarrisTaylor2008}
		L.Clozel, M.Harris and R.Taylor,
		Automorphy for some $l$-adic lifts
		of automorphic mod $l$ Galois representations,
		Publ.Math.Inst.Hautes \'{E}tudes Sci. (2008),
		no.108, pp.1-181.
		
		
		
		
		\bibitem[CL16]{CaraianiLeHung2016}
		A.Caraiani and B.-V. Le Hung,
		On the image of complex conjugation in certain Galois representations,
		Compos. Math., 152(7):1476–1488, 2016.
		
		
		
		\bibitem[FKP22]{FakhruddinKharePatrikis2022}
		N.Fakhruddin, C.Khare, and S.Patrikis,
		Lifting and automorphy of reducible mod $p$ Galois representations over global fields,
		Inventiones mathematicae (2022): 1-78.
		
		
		
		%		\bibitem[FL82]{FontaineLaffaille1982}
		%		J.-M.Fontaine and G.Laffaille,
		%		Construction de repr\'{e}sentations $p$-adiques,
		%		Ann.Sci.ENS.
		%		\textbf{15}(1982),
		%		pp.547-608.
		
		
		
		\bibitem[Ger19]{Geraghty2019}
		D.Geraghty,
		Modularity lifting theorems for ordinary Galois representations,
		Mathematische Annalen (2019) 373,
		pp.1341–1427.
		
		
		
		
		
		
		\bibitem[Gro98]{Gross1998}
		B.H.Gross,
		On the Satake isomorphism,
		in
		\textit{Galois Representations in Arithmetic Algebraic Geometry},
		London Mathematical Society Lecture Notes,
		vol.254,
		1998,
		pp.223-237.
		
		
		
		
		\bibitem[Gro99]{Gross1999}
		B.H.Gross,
		Algebraic modular forms,
		Israel J.Math. \textbf{113} (1999),
		pp.61-93.
		
		
		
		
		%		\bibitem[GRS11]{GinzburgRallisSoudry2011}
		%		D.Ginzburg,  S.Rallis and D.Soudry,
		%		\textit{The descent map from automorphic
			%			representations of $\mathrm{GL}(n)$ to classical groups},
		%		WORLD SCIENTIFIC, 2011.
		
		
		
		
		%		\bibitem[GT05]{GenestierTilouine2005}
		%		A.Genestier and J.Tilouine,
		%		Syst\`{e}me de Taylor-Wiles pour $\mathrm{GSp}_4$,
		%		\textit{Formes automorphes(II), le cas du groupe $\mathrm{GSp}(4)$},
		%		Ast\'{e}risque 302,
		%		SMF,
		%		2005.
		
		
		\bibitem[Gue11]{Guerberoff2011}
		L.Guerberoff,
		Modularity lifting theorems for Galois representations of unitary type,
		Compositio Math. 147(2011),
		pp.1022-1058.
		
		
		
		%		\bibitem[HLS05]{HarrisLiSkinner2005}
		%		H.Michael, J-S. Li, and C.Skinner,
		%		The Rallis inner product formula and p-adic L-functions,
		%		\emph{Automorphic representations, L-functions and applications: progress and prospects} 11 (2005): 225-255.
		
		
		\bibitem[HT20]{HidaTilouine2020}
		H.Hida and J.Tilouine,
		Symmetric power congruence ideals and Selmer groups,
		Journal of the Institute of Mathematics of Jussieu, 2020, 19(5):1521-1572.
		
		
		
		
		
		\bibitem[II10]{IchinoIkeda2010}
		A. Ichino, and T. Ikeda,
		On the periods of automorphic forms on special orthogonal groups and the Gross-Prasad conjecture,
		Geom. Funct. Anal. 19 (2010), no. 5, 1378–1425.
		
		
		
		
		
		%		\bibitem[JS04]{JiangSoudry2004}
		%		D.Jiang and D.Soudry,
		%		Generic representations and local Langlands reciprocity law for $p$-adic $SO_{2n+1}$,
		%		Contributions to automorphic forms, geometry, and number theory (2004): 457-519.
		
		
		
		
		%		\bibitem[Kap69]{Kaplansky1969}
		%		I.Kaplansky,
		%		\emph{Linear algebra and geometry, a second course},
		%		Ally and Bacon, Inc. Boston, MA, 1969.
		
		
		
		
		
		%		\bibitem[KS23a]{KretShin2023a}
		%		A.Kret and S.-W.Shin,
		%		Galois representations for general symplectic
		%		groups,
		%		J. Eur. Math. Soc. 25, 75–152 (2023)
		
		
		%		\bibitem[KS23b]{KretShin2023b}
		%		A.Kret and S.-W.Shin,
		%		Galois representations for even general special orthogonal groups,
		%		Journal of the Institute of Mathematics of Jussieu (2023): 1-92.
		
		
		
		%		\bibitem[KW09a]{KhareWintenberger2009a}
		%		C.Khare and J.-P.Wintenberger,
		%		Serre’s modularity conjecture (I),
		%		Inventiones mathematicae 178.3 (2009): 485-504.
		
		
		%		\bibitem[KW09b]{KhareWintenberger2009b}
		%		C.Khare and J.-P.Wintenberger,
		%		Serre’s modularity conjecture (II),
		%		Inventiones mathematicae 178.3 (2009): 505-586.
		
		
		
		
		
		\bibitem[Kis09]{Kisin2009}
		M.Kisin,
		Moduli of finite flat group schemes, and modularity,
		Ann. of Math. (2) 170, 10851180 (2009)
		
		
		
		\bibitem[Kis09b]{Kisin2009b}
		M.Kisin,
		Modularity of 2-adic Barsotti-Tate representations,
		Inventiones mathematicae 178, no. 3 (2009): 587-634.
		
		
		
		
		\bibitem[Lan02]{Lansky2002}
		J.Lansky,
		Parahoric fixed spaces in unramified principal series representations,
		Pacific J.Math.
		204(2),
		2002,
		pp.433-443.
		
		
		
		
		
		\bibitem[Lan08]{Lan2008}
		K.-W.Lan,
		Arithmetic compactifications of PEL-type Shimura varieties, PhD thesis,
		Harvard University, 2008.
		
		
		
		
		%		\bibitem[MS20]{MagaardSavin2020}
		%		K.Magaard and G.Savin,
		%		Computing finite Galois groups arising from automorphic forms,
		%		Journal of Algebra 561 (2020): 256-272.
		
		
		
		
		
		
		
		\bibitem[LM15]{LapidMao2015}
		E.Lapid, and Z.Mao
		A conjecture on Whittaker–Fourier coefficients of cusp forms,
		Journal of Number Theory 146 (2015): 448-505.
		
		
		
		
		\bibitem[LTXZZ22]{LiuTianXiaoZhangZhu2022}
		Y.Liu, Y.Tian, L.Xiao, W.Zhang, and X.Zhu,
		On the Beilinson–Bloch–Kato conjecture for Rankin–Selberg motives,
		Inventiones mathematicae 228, no. 1 (2022): 107-375.
		
		
		\bibitem[Mil06]{Milne2006}
		J.S.Milne,
		\textit{Arithmetic duality theorems},
		2006,
		BookSurge, LLC,
		Second edition.
		
		
		
		
		
		%		\bibitem[Mui98]{Muic1998}
		%		G.Mui\'{c},
		%		On generic irreducible representations of $Sp(n,F)$ and $SO (2n+ 1, F)$,
		%		Glasnik matematički 33.1 (1998): 19-31.
		
		
		
		
		
		
		%		\bibitem[MW89]{MoeglinWaldspurger1989}
		%		C.M\"{o}glin and J.-L.Waldspurger,
		%		Le spectre r\'{e}siduel de
		%		$\mathrm{GL}(n)$,
		%		Ann.Sci.\'{E}NS.4-i\`{e}me s\'{e}rie
		%		\textbf{22} (1989),
		%		pp.605-674.
		
		
		
		
		
		
		
		
		
		\bibitem[NT21]{NewtonThorne}
		J.Newton and J.Thorne,
		Symmetric power functoriality for holomorphic modular forms,
		Publications math\'{e}matiques de l'IH\'{E}S 134.1 (2021): 1-116.
		
		
		
		
		
		
		\bibitem[NT23]{NewtonThorne2023}
		J.Newton and J.Thorne,
		Adjoint Selmer groups of automorphic Galois representations of unitary type, Journal of the European Mathematical Society (EMS Publishing) 25, no. 5 (2023).
		
		
		
		
		
		\bibitem[Pil12]{Pilloni2012}
		V.Pilloni,
		Modularit\'{e}, formes de Siegel et surfaces ab\'{e}liennes,
		J.reine. angew.Math.
		666(2012),
		pp.335-400.
		
		
		
		
		\bibitem[PT22]{PatrikisTang2022}
		S.Patrikis and S.Tang,
		Potential automorphy of $\mathrm{GSpin}_{2n+1}$-valued Galois representations,
		Mathematische Zeitschrift, 300.2 (2022): 1615-1638.
		
		
		
		
		\bibitem[Ram93]{Ramakrishna1993}
		R.Ramakrishina,
		On a variation of Mazur’s deformation functor, Compos. Math. 87 (3) (1993) 269–286.
		
		
		
		
		\bibitem[Roc98]{Roche1998}
		A.Roche,
		Types and Hecke algebras for principal series
		representations of split
		reductive groups,
		Ann.Sci.ENS,
		\textbf{31}
		(1998),
		pp.361-413.
		
		
		
		
		\bibitem[Rub00]{Rubin2000}
		K.Rubin,
		\textit{Euler systems},
		No. 147. Princeton University Press, 2000.
		
		
		
		
		\bibitem[Sch68]{Schlessinger1968}
		M.Schlessinger,
		Functors of Artin rings,
		Trans.Am.Math.Soc.
		\textbf{130}
		(1968),
		pp.208-222.
		
		
		
		
		\bibitem[Shi24]{Shin2024}
		S.-W.Shin,
		Weak transfer from classical groups to general linear groups,
		2024, to appear in Essentials in Number Theory.
		
		
		
		%		\bibitem[Ser87]{Serre1987}
		%		J.-P.Serre,
		%		Sur les repr\'{e}sentations modulaires de degr\'{e} $2$ de $\mathrm{Gal}(\overline{\mathbb{Q}}/\mathbb{Q})$,
		%		Duke Math.J., 54(1), 1987, 179-230.
		
		
		
		
		\bibitem[Taï16]{Taibi2016}
		O.Taïbi,
		Eigenvarieties for classical groups and complex conjugations in Galois representations,
		Math. Res. Lett., 23(4):1167–1220, 2016.
		
		
		
		\bibitem[Tai18]{Taibi2018}
		O.Ta\"{i}bi,
		Arthur's multiplicity formula for certain inner forms of special orthogonal and symplectic groups,
		Journal of the European Mathematical Society 21.3 (2018): 839-871.
		
		
		
		
		
		\bibitem[Tay12]{Taylor2012}
		R.Taylor,
		The image of complex conjugation in $l$-adic representations associated to automorphic forms,
		Algebra Number Theory, 6(3):405–435, 2012.
		
		
		
		
		\bibitem[Tho12]{Thorne2012}
		J.Thorne,
		On the automorphy of $l$-adic Galois
		representations with small residual image,
		Jounrnal of the Inst. of Math. Jussieu,
		11(2012),
		no.4,
		pp.855-920.
		
		
		
		\bibitem[Tho15]{Thorne2015}
		J.Thorne,
		Automorphy lifting for residually reducible l-adic Galois representations.
		J.Am. Math. Soc. 28(3), 785–870 (2015)
		
		
		
		
		\bibitem[Til02]{Tilouine2002}
		J.Tilouine,
		\textit{Deformations of Galois representations and
			Hecke algebras},
		Metha Institute, 2002.
		
		
		
		
		
		\bibitem[TU22]{TilouineUrban2022}
		J.Tilouine and E.Urban,
		Integral period relations and congruences
		Algebra \& Number Theory 16.3 (2022): 647-695.
		
		
		
		\bibitem[TW95]{TaylorWiles1995}
		R.Taylor and A.Wiles,
		Ring-theoretic properties of certain Hecke algebras,
		Ann. of Math.
		\textbf{141}
		(1995),
		pp.553-572.
		
		
		
		\bibitem[Vig98]{Vigneras1998}
		M.-F.Vign\'{e}ras,
		Induced $R$-representations of $p$-adic reductive groups,
		Selecta Math.
		(N.S.),
		4(4),
		1998,
		pp.549-623.
		
		
		
		\bibitem[Whi22]{Whitmore2022}
		D.Whitmore,
		The Taylor-Wiles method for reductive groups,
		arXiv preprint arXiv:2205.05062 (2022).
		
		
		
		\bibitem[Wil95]{Wiles1995}
		A.Wiles,
		Modular elliptic curves and Fermat's Last Theorem,
		Ann. of Math.,
		\textbf{141}
		(1995),
		pp.443-551.
		
		
		
		
		
		
		%		\bibitem[Zel80]{Zelevinsky1980}
		%		A.V.Zelevinsky,
		%		Induced representations of reductive p-adic groups. II. On irreducible representations of GL(n),
		%		Ann. Sci. \'{E}cole Norm. Sup. (4)13 (1980), no. 2, 165–210.
		
		
		
		%		\bibitem[Zha18]{Zhang2018}
		%		X.Zhang,
		%		Special $L$-values and Selmer groups of Siegel modular forms of genus 2,
		%		preprint,
		%		\url{https://arxiv.org/abs/1811.02031}.
		
		
		
		\bibitem[Zhu21]{Zhu2021}
		X.Zhu,
		A note on integral Satake isomorphisms, 2021,
		\url{https://arxiv.org/abs/2005.13056}.
		
		
		
	\end{thebibliography}
\end{document}